
\documentclass[11pt]{amsart}

\usepackage{amsmath}
\usepackage{amssymb}
\usepackage{bbm}
\usepackage{pdfsync}
\usepackage{esint} 
\usepackage[breaklinks=true]{hyperref}
\usepackage[utf8]{inputenc}
\usepackage[utf8]{inputenc}
\usepackage[T1]{fontenc}


       %
\newcommand{\R}{{\mathbb R}}       

\newcommand{\DD}{{\mathcal D}}
\newcommand{\FF}{{\mathcal F}}
\newcommand{\HH}{{\mathcal H}}
\newcommand{\LL}{{\mathcal L}}
\newcommand{\PP}{{\mathcal P}}

\newcommand{\Ch}{{\mathcal Ch}}

\newcommand{\TT}{{\mathcal T}}

\newcommand{\RR}{{\mathcal R}}

\newcommand{\EE}{{\mathcal E}}

\newcommand{\diam}{{\rm diam}}
\newcommand{\dist}{{\rm dist}}

\newcommand{\rf}[1]{{(\ref{#1})}}

\newcommand{\supp}{\operatorname{supp}}

\newcommand{\vphi}{{\varphi}}
\newcommand{\ve}{{\varepsilon}}
\newcommand{\vv}{{\vspace{2mm}}}

\newcommand{\wt}[1]{{\widetilde{#1}}}
\newcommand{\wh}[1]{{\widehat{#1}}}

\newcommand{\pom}{{\partial\Omega}}

\newcommand{\sss}{{\mathsf {Stop}}}
\newcommand{\ttt}{{\mathsf {Top}}}
\newcommand{\DB}{{\mathsf {DB}}}

\newcommand{\tree}{{\rm Tree}}

\newcommand{\pv}{\operatorname{pv}}
\newcommand{\rad}{\operatorname{rad}}

\newcommand{\HD}{{\mathsf{HD}}}
\newcommand{\hd}{{\mathsf{hd}}}

\newcommand{\MDW}{{\mathsf{MDW}}}

\newcommand{\LD}{{\mathsf{LD}}}

\newcommand{\Trc}{{\mathsf{Trc}}}
\newcommand{\sL}{{\mathsf{L}}}

\newcommand{\End}{{\mathsf{End}}}

\newcommand{\Ty}{{\mathsf{Ty}}}

\newcommand{\GDF}{{\mathsf{GDF}}}
\newcommand{\RE}{{\wt\RR_\ve}}

\newcommand{\capp}{{\operatorname{Cap}}}

\def\Xint#1{\mathchoice
{\XXint\displaystyle\textstyle{#1}}%
{\XXint\textstyle\scriptstyle{#1}}%
{\XXint\scriptstyle\scriptscriptstyle{#1}}%
{\XXint\scriptscriptstyle\scriptscriptstyle{#1}}%
\!\int}
\def\XXint#1#2#3{{\setbox0=\hbox{$#1{#2#3}{\int}$ }
\vcenter{\hbox{$#2#3$ }}\kern-.58\wd0}}

\def\avint{\;\Xint-}

\def\BMO{\mathop\mathrm{BMO}} 					
\def\H11{\textup{H}^1_1} 					

\textwidth15.5cm
\textheight20.8cm
\evensidemargin.7cm
\oddsidemargin.7cm


\usepackage{pgf,tikz} 

\definecolor{ffffff}{rgb}{1.0,1.0,1.0}
\definecolor{qqqqff}{rgb}{0.0,0.0,1.0}
\definecolor{ffqqqq}{rgb}{1.0,0.0,0.0}
\definecolor{zzzzqq}{rgb}{0.6,0.6,0.0}
\definecolor{marronet}{rgb}{0.6,0.2,0}
\definecolor{negre}{rgb}{0,0,0}
\definecolor{vermell}{rgb}{0.8,0.05,0.05}
\definecolor{blau}{rgb}{0.3,0.2,1.}
\definecolor{blauclar}{rgb}{0.,0.,1.}
\definecolor{grisfosc}{rgb}{0.25098039215686274,0.25098039215686274,0.25098039215686274}
\definecolor{verd}{rgb}{0.1,0.6,0.1}
\definecolor{taronja}{rgb}{0.9,0.6,0.05}
\definecolor{vermellclar}{rgb}{1.,0.,0.}
\definecolor{verdet}{rgb}{0,0.8,0.1}
\definecolor{blauverd}{rgb}{0,0.4,0.2}
\definecolor{grisclar}{rgb}{0.6274509803921569,0.6274509803921569,0.6274509803921569}

\newtheorem{theorem}{Theorem}[section]
\newtheorem{lemma}[theorem]{Lemma}
\newtheorem{mlemma}[theorem]{Main Lemma}
\newtheorem{keylemma}[theorem]{Key Lemma}
\newtheorem{coro}[theorem]{Corollary}
\newtheorem{propo}[theorem]{Proposition}

\newtheorem*{claim*}{Claim}

\newtheorem*{theorem*}{Theorem}
\newtheorem*{theorema}{Theorem A}

\theoremstyle{definition}

\theoremstyle{remark}
\newtheorem{rem}[theorem]{\bf Remark}

\numberwithin{equation}{section}

\newcommand{\brem}{\begin{rem}}
\newcommand{\erem}{\end{rem}}

\begin{document}

\title[Rectifiability and Riesz transforms]{New criteria for the rectifiability of Radon measures in terms of Riesz transforms}

\author{Xavier Tolsa}

\address{ICREA, Barcelona\\
Dept. de Matem\`atiques, Universitat Aut\`onoma de Barcelona \\
and Centre de Recerca Matem\`atica, Barcelona, Catalonia.}
\email{xavier.tolsa@uab.cat}

\thanks{Supported by the European Research Council (ERC) under the European Union's Horizon 2020 research and innovation programme (grant agreement 101018680). Also partially supported by MICIU (Spain) under the grant PID2024-160507NB-I00. 
}



\begin{abstract}
In this paper we explore the connection between quantitative rectifiability of measures and the $L^2$ boundedness of the codimension one Riesz transform. Among other things, we prove the following. Let $\mu$ be
 a Radon measure in $\R^{n+1}$ with growth of degree $n$ such that the $n$-dimensional Riesz transform $\RR_\mu$ is bounded in $L^2(\mu)$, and let $B_0\subset\R^{n+1}$ be a suitably doubling ball such that:
\begin{itemize}
\item[(i)]  There exists some (small) ball $B_1$ centered in $B_0$ with $\rad(B_1)\leq \delta_1\rad(B_0)$
such that, for some constant $\alpha>0$,
$$\frac{\mu(B_1)}{\rad(B_1)^n}\geq \alpha\,\frac{\mu(B_0)}{\rad(B_0)^n}.$$

\item[(ii)] For some $\ve>0$,
$$\int_{2B_0} |\RR\mu - m_{\mu,2B_0}(\RR\mu)|^2\,d\mu\leq \ve\,\bigg(\frac{\mu(B_0)}{\rad(B_0)^n}\bigg)^2\,\mu(B_0).$$ 
\end{itemize}
If $\delta_1$ is small enough, depending on $n$ and $\alpha$, and 
$\ve$ is small enough, then
there exists some uniformly $n$-rectifiable set $\Gamma$ and some $\tau>0$ such that
$\mu(\Gamma\cap 2B_0) \geq\tau\,\mu(B_0).$
\end {abstract}

\newcommand{\mih}[1]{\marginpar{\color{red} \scriptsize \textbf{Mih:} #1}}
\newcommand{\xavi}[1]{\marginpar{\color{blue} \scriptsize \textbf{Xavi:} #1}}

\maketitle

\section{Introduction}

In this paper we study the connection between the codimension one Riesz transform and the rectifiability of measures. Among other things,
for an arbitrary Radon measure $\mu$ in $\R^{n+1}$, we estimate the Jones-Wolff square function associated with the $\beta_2$ coefficients of the measure $\mu$ 
in terms of the $L^2(\mu)$ oscillation of the $n$-dimensional Riesz transform of $\mu$.
These results have applications to the study of harmonic measure and reflectionless measures for the Riesz transform.

We recall that a Borel measure $\mu$ in $\R^d$ is called $n$-rectifiable if there is a countable collection of  Lipschitz maps $g_i:\R^n\to\R^d$ such that
$\mu(\R^d\setminus \bigcup_i g_i(\R^n))=0$ and moreover $\mu$ is absolutely continuous with respect to the Hausdorff measure $\HH^n$.
A set $E\subset \R^d$ is $n$-rectifiable if the Hausdorff measure $\HH^n|_E$ is $n$-rectifiable. On the other hand, $E$ is called purely $n$-unrectifiable if $\HH^n(E\cap F) =0$ for any $n$-rectifiable set $F\subset\R^d$.

The ($n$-dimensional) Riesz transform of a signed Radon measure $\nu$ is defined by
$$\RR\nu (x) = \int \frac{x-y}{|x-y|^{n+1}}\,d\nu(y),$$
whenever the integral makes sense. 
Given $\ve>0$, the $\ve$-truncated Riesz transform of $\nu$ equals
$$\RR_\ve\nu (x) = \int_{|x-y|>\ve} \frac{x-y}{|x-y|^{n+1}}\,d\nu(y),$$
We also write
$$\RR_*\nu(x) = \sup_{\ve>0} |\RR_{\ve}\nu(x)|, \qquad  \pv\RR\nu(x) = \lim_{\ve>0} \RR_{\ve}\nu(x),$$
in case that the latter limit exists. Remark that, quite often, abusing notation we will
write $\RR\nu$ instead of $\pv\RR\nu$.

For $f\in L^1_{loc}(\mu)$ and a positive Radon measure $\mu$,
one writes $\RR_\mu f = \RR(f\mu)$ and $\RR_{\mu,\ve} f = \RR_\ve(f\mu)$. 
 We say that $\RR_\mu$ is bounded in $L^2(\mu)$ if
the operators $\RR_{\mu,\ve}$ are bounded uniformly in $L^2(\mu)$ uniformly on $\ve$, and then we denote
$$\|\RR_\mu\|_{L^2(\mu)\to L^2(\mu)} = \sup_{\ve>0} \|\RR_{\mu,\ve}\|_{L^2(\mu)\to L^2(\mu)}.$$

The study of the connection between quantitative rectifiability and the ·$L^2$ boundedness of the Riesz transform has been a subject of
intense research in the last decades because of the application to the Painlev\'e problem for bounded holomorphic and Lipschitz harmonic functions (see \cite{Tolsa-sem} and \cite{Volberg}).
In \cite{NToV}, Nazarov, the author, and Volberg showed that, given an $n$-Ahlfors regular set $E\subset\R^{n+1}$, the $L^2(\HH^n|_E)$ boundedness of the Riesz transform $\RR_{\HH^n|_E}$ implies the uniform $n$-rectifiability of $E$. See \rf{eqAD1} and \rf{eqUR1} for the definitions of
Ahlfors regularity and uniform rectifiability.
Without the Ahlfors regularity assumption, as shown in  \cite{NToV2}, for a set $E\subset\R^{n+1}$ with finite Hausdorff
measure $\HH^n$, the $L^2(\HH^n|_E)$ boundedness of $\RR_{\HH^n|_E}$ implies the $n$-rectifiability of $E$.
In the planar case, the same results had been proved previously by Mattila, Melnikov, and Verdera \cite{MMV} in the Ahlfors regular case, and
 by David and L\'eger \cite{Leger} in the non-Ahlfors regular one
using the connection between Menger curvature and the Cauchy kernel discovered by Melnikov \cite{Melnikov}.
From the previous results and a suitable $Tb$ type theorem, it turns out that a set $E$ with $\HH^n(E)<\infty$ is removable for Lipschitz harmonic functions if and only if
it is purely unrectifiable. In the plane this was shown by David and Mattila \cite{David-Mattila} (and by \cite{David-vitus} in the case of  holomorphic functions)  and by Nazarov, the author, and Volberg in higher dimensions \cite{NToV2}. 

More recently, in \cite{DT}, a geometric characterization of general compact non-removable sets, possibly with infinite measure $\HH^n$, was given in terms of the finiteness of a non-linear potential involving the $\beta_2$ coefficients of the measure. To describe this result in more detail, we introduce some notation.

For a Radon measure $\mu$ in $\R^d$, $x\in\R^d$, and $r>0$, we denote
$$\Theta_\mu^n(x,r) = \frac{\mu(B(x,r))}{r^n},$$
and for $1\leq p <\infty$,
$$\beta_{p,\mu}^n(x,r) = \inf_L \left(\frac1{r^n} \int_{B(x,r)} \left(\frac{\dist(y,L)}r \right)^p\,d\mu(y)\right)^{1/p},$$
where the infimum is taken over all $n$-planes $L\subset \R^d$. 
For a ball $B=B(x,r)$, we write $\Theta_\mu^n(B) = \Theta_\mu^n(x,r)$ and $\beta_{2,\mu}^n(B)=\beta_{2,\mu}^n(x,r)$.
In the case $d=n+1$, to shorten notation, we write $\Theta_\mu$ and $\beta_{p,\mu}$ instead of $\Theta_\mu^n$ and $\beta_{p,\mu}^n$.
One of the main results from \cite{DT} is the following.

\begin{theorema}\label{teoA}
Let $\mu$ be a Radon measure in $\R^{n+1}$ 
satisfying the growth condition
\begin{equation}\label{eqgrow00}
\mu(B(x,r))\leq \theta_0\,r^n\quad \mbox{ for all $x\in\supp\mu$ and all $r>0$}
\end{equation}
and such that $\|\RR_*\mu\|_{L^1(\mu)}<\infty$.
Then
\begin{equation}\label{eqbetawolff}
\int\!\!\int_0^\infty \beta_{2,\mu}(x,r)^2\,\Theta_\mu(x,r)\,\frac{dr}r\,d\mu(x)\leq C\,(\big\|\RR\mu\|_{L^2(\mu)}^2
+\theta_0^2\,\|\mu\|\big),
\end{equation}
where $C$ is an absolute constant.
\end{theorema}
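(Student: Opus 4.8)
The plan is to follow the well-understood strategy for deriving a geometric square function estimate from $L^2$ information on a singular integral: discretize the square function over a dyadic lattice $\mathcal D$ of David--Mattila type adapted to $\mu$, organize $\mathcal D$ by a corona (stopping-time) decomposition, and bound the result tree by tree. We may assume $\|\RR\mu\|_{L^2(\mu)}<\infty$ (otherwise there is nothing to prove), which in particular forces $\pv\RR\mu$ to exist $\mu$-a.e. First I would replace $\int\!\!\int_0^\infty\beta_{2,\mu}(x,r)^2\,\Theta_\mu(x,r)\,\frac{dr}r\,d\mu(x)$ by a sum of the form $\sum_{Q\in\mathcal D}\beta_{2,\mu}(\lambda B_Q)^2\,\Theta_\mu(\lambda B_Q)\,\mu(Q)$ (with $B_Q$ concentric with $Q$ of radius $\approx\ell(Q)$ and $\lambda$ an absolute constant), by the routine comparison of $\beta_{2,\mu}(x,r)$ and $\Theta_\mu(x,r)$ over a Whitney region of $Q$ with the values on $\lambda B_Q$; the scales and cells at which $\mu$ badly fails to be doubling are discarded, their contribution being absorbed into $C\theta_0^2\|\mu\|$ via the elementary bound $\beta_{2,\mu}(x,r)^2\,\Theta_\mu(x,r)\lesssim\theta_0^2$ and the David--Mattila packing of non-doubling cells.

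I would then run a corona decomposition: from a top cell $R$, descendants are added to a tree $\mathsf{Tree}(R)$ as long as (a) $\Theta_\mu$ stays comparable to $\Theta_\mu(B_R)$ and (b) $\mu$ is ``flat'' near the cell, in the sense that a normalized $\beta_2$ quantity and the $\mu$-oscillation of $\RR\mu$ on its dilated ball both stay below a fixed small absolute threshold $\eta_0$. The maximal cells violating (a) or (b) form the next generation of tops, and one obtains a family $\mathsf{Top}$ partitioning $\mathcal D$ into trees. Inside a tree, $\Theta_\mu$ is stable and $\mu$ is flat at every scale, so $\supp\mu$ lies in a thin tube around a Lipschitz graph $\Gamma_R$ of small slope with $\mu$ close to $\Theta_\mu(B_R)\,\HH^n|_{\Gamma_R}$; Dorronsoro's theorem for Lipschitz graphs then gives the purely geometric bound $\sum_{Q\in\mathsf{Tree}(R)}\beta_{2,\mu}(\lambda B_Q)^2\,\Theta_\mu(\lambda B_Q)\,\mu(Q)\lesssim\Theta_\mu(B_R)^2\,\mu(R)$, with no reference to $\RR$. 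The tops created by a jump or drop of $\Theta_\mu$ are packed by the standard geometric summation over density levels (again using $\Theta_\mu\le\theta_0$), so the theorem reduces to the Carleson packing condition
$$\sum_{R\in\mathsf{Top}^{\mathrm{nf}}}\Theta_\mu(B_R)^2\,\mu(R)\;\lesssim\;\|\RR\mu\|_{L^2(\mu)}^2+\theta_0^2\,\|\mu\|$$
for the set $\mathsf{Top}^{\mathrm{nf}}$ of tops created because flatness fails.

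The heart of the proof is this last packing bound, and I expect two genuine difficulties inside it. The first is a rigidity lemma, which I would isolate as a Main Lemma: if the localized energy $\int_{2B_R}|\RR\mu-\RR\nu_R|^2\,d\mu$ is small --- where $\nu_R$ is the flat model measure inherited from the parent tree (a multiple of surface measure on a hyperplane, or on the parent's graph) --- then $\mu$ is flat near $R$ at level $\eta_0$, contradicting the stopping rule; hence $\int_{2B_R}|\RR\mu-\RR\nu_R|^2\,d\mu\geq c(\eta_0)\,\Theta_\mu(B_R)^2\,\mu(R)$. This lemma is itself substantial but can be obtained, at least qualitatively, from the machinery of flat (reflectionless) measures for the codimension-one Riesz transform developed by Nazarov, the author, and Volberg, together with the explicit piecewise-constant form of the Riesz transform of a flat measure; since no quantitative modulus is required, $c(\eta_0)$ and hence $C$ remain absolute. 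The second, and I think the more distinctive, difficulty is to sum these energies, that is, to prove $\sum_{R\in\mathsf{Top}^{\mathrm{nf}}}\int_{2B_R}|\RR\mu-\RR\nu_R|^2\,d\mu\lesssim\|\RR\mu\|_{L^2(\mu)}^2+\theta_0^2\|\mu\|$. Because the tops are nested, a bounded-overlap argument is unavailable; instead one uses that inside each tree $\RR\mu$ essentially coincides with $\RR\nu_R$, so that the contribution newly created at a top --- morally $\RR\nu_{R^{\uparrow}}-\RR\nu_R$ restricted to $2B_R$ --- behaves like a martingale difference, and these are quasi-orthogonal, their squares summing to $\lesssim\|\RR\mu\|_{L^2(\mu)}^2$ together with error terms of size $\theta_0^2\|\mu\|$ coming from the tails of $\RR\nu_R$ and from the comparison of $\RR\mu$ with its truncations $\RR_\ve\mu$. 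It is exactly here that the hypothesis $\RR_*\mu\in L^1(\mu)$ enters, to give meaning to and uniform control over the truncated transforms and their pointwise limits. I would organize the quasi-orthogonality along the lines of the corona constructions of Nazarov, the author, and Volberg and of the quasiorthogonality estimates for odd Calder\'on--Zygmund kernels, adapted to the non-Ahlfors-regular setting.
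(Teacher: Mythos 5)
There is a layer of indirection here worth noting up front: Theorem~A is not proved in this paper at all --- it is quoted from \cite{DT}, and the only window the present paper gives onto that proof is the sketch inside Proposition \ref{propoDT0}, which records which steps of \cite{DT} need to be adjusted for the localisation. With that caveat, your overall architecture --- discretise the Jones--Wolff integral over a David--Mattila lattice, run a corona-type stopping time, prove a purely geometric Dorronsoro bound inside each tree, and then pack the tops against $\|\RR\mu\|_{L^2(\mu)}^2$ via a quasi-orthogonality argument --- does match the large-scale shape of the \cite{DT} argument, including the use of martingale differences $\Delta_Q\RR\mu$ and their mutual orthogonality.

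The genuine gap is in what you single out as the heart of the proof: the ``rigidity lemma'' asserting that if $\int_{2B_R}|\RR\mu-\RR\nu_R|^2\,d\mu$ is small for a flat model $\nu_R$ then $\mu$ is $\beta_2$-flat near $B_R$ at level $\eta_0$, with an absolute constant $c(\eta_0)$. This is not a step you can invoke ``from the machinery of flat (reflectionless) measures for the codimension-one Riesz transform.'' The paper itself points out that no classification of such reflectionless measures is known --- it is open whether they are all $n$-flat even for $n$-Ahlfors regular measures in codimension one --- so a blow-up/compactness route has no classification theorem to land on. Worse, the measure here is \emph{not} assumed Ahlfors regular, which is exactly where the arguments of \cite{NToV} (pigeonholing over Riesz energy levels, flattening lemmas) break down; the version of this dichotomy that does exist for general Radon measures, due to \cite{Girela-Tolsa}, required a delicate \emph{periodization} of $\mu$ before any variational argument could be run, precisely because nothing cheaper suffices. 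The paper being reviewed explicitly contrasts that route with the one actually taken here. What \cite{DT} does instead is structurally different: the corona decomposition is not driven by a $\beta_2$-flatness threshold at all; the tops are controlled by the energy functionals $\EE(\lambda Q)$ and $\EE_\infty(\lambda Q)$, which detect \emph{density jumps} through the families $\hd^k(Q)$; the dominated-from-below cubes $\DB$ in \rf{eqdefBB} are isolated; the \cite{DT} Main Proposition~3.7 (quoted in the present proof of Proposition \ref{propoDT0} as the bound $\EE_\infty(Q)\lesssim\int_{AQ}|\RR\mu-m_{\mu,AQ}(\RR\mu)|^2\,d\mu+\int_{AQ}\theta_\mu^{n,*}(x)^2\,d\mu(x)$) replaces your rigidity lemma; and the $\beta_2$ coefficients only enter at the very end, after the top-packing is complete, via \cite[Lemma~10.1]{DT}. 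So your corona has the wrong stopping conditions, and the step you delegate to reflectionless-measure theory is not available as a black box --- it is precisely the kind of result the $\EE_\infty$/$\DB$ machinery was built to avoid having to prove.
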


We call the function $J_\mu(x) := \left(\int_0^\infty \beta_{2,\mu}(x,r)^2\,\Theta_\mu(x,r)\,\frac{dr}r\right)^{1/2}$ the Jones-Wolff potential (or Jones-Wolff square function) of $\mu$. As shown in \cite{DT}, from Theorem A it follows that a compact set $E\subset\R^{n+1}$ is non-removable for Lipschitz harmonic functions
if and only if it supports a non-zero measure $\mu$ satisfying the growth condition \rf{eqgrow00} whose potential $J_\mu$ is uniformly bounded in
$\R^{n+1}$.

The connection between quantitative rectifiability and the Riesz transform has also been applied recently to solve some long standing free boundary problems about harmonic measure. Indeed, relying on the results from \cite{NToV} and \cite{NToV2}, in
the work \cite{AHM3TV} it was shown that the mutual absolute continuity of harmonic measure and the Hausdorff measure $\HH^n$ on a subset $E\subset\pom$ implies the $n$-rectifiability of $E$. 
On the other hand, the works  \cite{AMT-cpam} and \cite{AMTV} solve the following two phase problem for harmonic measure: given two disjoint domains $\Omega_1,\Omega_2\subset \R^{n+1}$ with associated harmonic measures $\omega_1$ and $\omega_2$, if $\omega_1$ and $\omega_2$ are mutually absolutely continuous on a subset $E\subset\pom_1\cap\pom_2$, then there exists an $n$-rectifiable subset $F\subset E$ with full harmonic measure in $E$ such that both harmonic measures are mutually absolutely continuous with $\HH^n|_F$ on $E$. A fundamental ingredient of the proof is a criterion for the $n$-rectifiability of general Radon measures in terms of the $L^2$ oscillation of their Riesz transforms obtained by Girela-Sarri\'on and the author in \cite{Girela-Tolsa}.
In the plane the aforementioned two phase problem had been solved previously by Bishop in \cite{Bishop-arkiv}
by different arguments. 

In this work we extend the results from \cite{DT} and \cite{Girela-Tolsa} in different ways.  To state them we need some additional terminology. For a ball $B=B(x,r)$,
we denote
$$\PP_\mu(B)\equiv \PP_\mu(B(x,r)) := \sum_{j\geq0} 2^{-j}\,\Theta_\mu(2^jB).$$
We say that the ball $B$ is $\PP_\mu$-doubling if, for some fixed constant $C$,
$$\PP_\mu(B)\leq C\,\Theta_\mu(B).$$
We may say that $B$ is $(\PP_\mu,C)$-doubling if we want to specify the constant $C$.
We also set 
$$M_n\mu(x) = \sup_{r>0} \frac{\mu(B(x,r))}{r^n} = \sup_{r>0} \Theta_\mu(x,r)$$
and
$$\theta_\mu^{n,*}(x) = \limsup_{r\to0}\frac{\mu(B(x,r))}{r^n}.$$

The first result of this paper can be considered as local version of Theorem A:

\begin{theorem}\label{teoDTlocal00}
Let $\mu$ be a Radon measure in $\R^{n+1}$ and let $B_0\subset\R^{n+1}$ be a closed ball. Suppose that $M_n(\chi_{3B_0}\mu)\in L^2(\mu|_{2B_0})$ and 
$\RR_*(\chi_{3B_0}\mu)\in L^2(\mu|_{2B_0})$. 
Then we have
\begin{align}\label{eqgk812**}
\int_{B_0} |M_n(\mu|_{B_0})|^2\,&d\mu +  \int_{B_0}\int_0^{2\rad(B_0)} \beta_{2,\mu}(x,r)^2\,\Theta_\mu(x,r)\,\frac{dr}rd\mu(x)\\
 &\lesssim
\int_{2B_0} |\RR\mu - m_{\mu,2B_0}(\RR\mu)|^2\,d\mu + \PP_\mu(B_0)^2\mu(2B_0) + \int_{2B_0} \theta_\mu^{n,*}(x)^2\,d\mu(x),\nonumber
\end{align}
with the implicit constant depending on $n$.
\end{theorem}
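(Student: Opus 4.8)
The plan is to reduce Theorem~\ref{teoDTlocal00} to (a local form of) Theorem~A by a stopping-time/corona argument, so that the only genuinely new work is the localization of the global estimate \rf{eqbetawolff} and the control of the defect terms $\PP_\mu(B_0)^2\mu(2B_0)$ and $\int_{2B_0}\theta_\mu^{n,*}(x)^2\,d\mu(x)$. First I would fix the ball $B_0$ and set $\nu = \chi_{3B_0}\mu$; the hypotheses say $M_n\nu\in L^2(\mu|_{2B_0})$ and $\RR_*\nu\in L^2(\mu|_{2B_0})$. The point of passing to $\nu$ is that it is a \emph{finite} measure with finite $\|\RR_*\nu\|_{L^2(\nu)}$, so Theorem~A (or rather its proof, applied to $\nu$ restricted appropriately) is available; the price is the ``boundary'' contributions coming from the cut-off at $3B_0$, which is precisely why the right-hand side of \rf{eqgk812**} carries the Poisson-type term $\PP_\mu(B_0)^2\mu(2B_0)$ rather than just $\|\mu\|$. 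Concretely, for $x\in B_0$ and $0<r<2\rad(B_0)$ one has $B(x,r)\subset 3B_0$, hence $\beta_{2,\mu}(x,r)=\beta_{2,\nu}(x,r)$ and $\Theta_\mu(x,r)=\Theta_\nu(x,r)$, so the square-function integral on the left is literally the Jones--Wolff integral for $\nu$ truncated at scale $\rad(B_0)$; similarly $M_n(\mu|_{B_0})=M_n(\nu|_{B_0})\le M_n\nu$.

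The key steps, in order, are: (1) record the kernel-tail estimate showing that for $x\in 2B_0$, $\RR\nu(x) - \RR\mu(x)$ differs from a constant (essentially $\RR(\chi_{\R^{n+1}\setminus 3B_0}\mu)(x)$) by a quantity controlled in $L^2(\mu|_{2B_0})$ by $\PP_\mu(B_0)^2\mu(2B_0)$, using the smoothness of the Riesz kernel away from $3B_0$ and summing the annular contributions $2^{-j}\Theta_\mu(2^jB_0)$ — this is the step that produces the $\PP_\mu$ term and replaces $m_{\mu,2B_0}(\RR\mu)$ by $m_{\mu,2B_0}(\RR\nu)$ up to acceptable error; (2) invoke the (proof of the) global Theorem~A for the finite measure $\nu$ to bound $\int \beta_{2,\nu}^2\Theta_\nu\,\tfrac{dr}{r}\,d\nu + \int |M_n\nu|^2 d\nu$ by $\|\RR\nu\|_{L^2(\nu)}^2 + \theta_0^2\|\nu\|$, except that $\nu$ need not have global $n$-growth, so one must run the argument with the growth bound replaced pointwise by $M_n\nu(x)$ and with $\|\RR\nu\|_{L^2(\nu)}$ replaced by the oscillation $\int|\RR\nu - m|^2 d\nu$ — the extra $M_n$ and $\theta^{n,*}_\mu$ terms on the right of \rf{eqgk812**} are exactly what is needed to absorb the places where Theorem~A's proof used the global growth hypothesis and the flatness-at-small-scales; (3) restrict the resulting estimate from $2B_0$ (the support-relevant region for $\nu$) down to $B_0$ and to scales $r<2\rad(B_0)$, discarding nonnegative terms, and convert oscillation around $m_{\mu,2B_0}(\RR\nu)$ back to oscillation around $m_{\mu,2B_0}(\RR\mu)$ via step (1).

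I expect the main obstacle to be step (2): genuinely localizing the machinery behind Theorem~A. The proof of Theorem~A is built on a corona/stopping-time decomposition of $\supp\mu$ into trees where the density is roughly constant, together with a variational/${\rm BAUP}$-type argument and the comparison between $\RR\mu$ and the operator on an approximating Lipschitz graph; all of these ingredients implicitly use that $\mu$ has $n$-growth at \emph{every} scale and that one can freely go to arbitrarily large scales. Here $\nu=\chi_{3B_0}\mu$ has compact support but possibly bad growth, and we only control $\RR\nu$ through an $L^2$ oscillation over $2B_0$ rather than through an $L^2(\nu)$-norm, so one has to (i) truncate all stopping-time trees at the top scale $\rad(B_0)$, adding the densities $\theta^{n,*}_\mu$ at the bottom as a terminal contribution, (ii) replace every appearance of the growth constant $\theta_0$ by the maximal function $M_n\nu$ and pay for it with the $\int|M_n\nu|^2 d\nu$ term, and (iii) handle the fact that subtracting the single constant $m_{\mu,2B_0}(\RR\mu)$ — rather than local averages tree by tree — is enough, which relies on the telescoping of densities encoded in $\PP_\mu(B_0)$ being finite, i.e. on $B_0$ effectively behaving like a doubling ball once the $\PP_\mu$ term is on the right-hand side. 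Making these substitutions uniformly through the corona construction, and checking that no term is lost, is where the real effort lies; the rest is bookkeeping with the Riesz kernel.
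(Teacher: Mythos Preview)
Your strategy is essentially the one the paper follows: localize to $\nu=\chi_{3B_0}\mu$, absorb the tail $\RR(\chi_{(3B_0)^c}\mu)$ into $\PP_\mu(B_0)$, and re-run the corona machinery behind Theorem~A locally, with the global growth constant $\theta_0$ replaced by $M_n\nu$ and with $\theta_\mu^{n,*}$ picking up the contribution from the ``residual'' set $Z(Q)$ of points not captured by any stopping cube. The paper carries this out via a local version of the main proposition of \cite{DT} (Proposition~\ref{propoDT0} here), and your steps (1) and (3) are standard and correct.

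There is, however, one device you are missing in step~(2), and it is exactly where your plan could stall. In the corona decomposition, for $x\in Z(Q)$ one needs the lower bound $\theta_\mu^{n,*}(x)\gtrsim \delta_0\,\Theta_\mu(Q)$ in order to replace the estimate \rf{eqnou87} (which used the global growth $\theta_0$) by \rf{eqnou88}. That lower bound holds only if every sufficiently small cube $P\ni x$ satisfies $\PP_\mu(P)\ge \delta_0\,\Theta_\mu(Q)$, which in turn requires that arbitrarily small $\PP$-doubling cubes exist through $\mu$-a.e.\ point of $2B_0$. For a general Radon measure this is \emph{false}: $Z(Q)$ can carry positive $\mu$-mass yet have $\theta_\mu^{n,*}\equiv 0$ on it, and then your ``terminal contribution $\theta_\mu^{n,*}$ at the bottom'' controls nothing. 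The paper resolves this by an \emph{approximation step} (Lemma~\ref{lemmapprox}): one replaces $\mu|_{2B_0}$ by the measures $\sigma_k$ supported on small spheres $\Gamma_Q$, for which small $\PP$-doubling cubes trivially exist at every point, proves the inequality for $\sigma_k$, and then lets $k\to\infty$, using that $\limsup_k \PP_\mu(B(x,\ell_k))\lesssim\theta_\mu^{n,*}(x)$ at $\mu$-a.e.\ point where $M_n\mu(x)<\infty$. This approximation is the missing idea; once you insert it, your outline matches the paper's proof.
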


\vv

A natural question motivated for possible applications to the study of harmonic measure and reflectionless measures is if on the right hand side
of \rf{eqgk812**} one can eliminate the terms $\PP_\mu(B_0)^2\mu(2B_0)$ or $\int_{2B_0} \theta_\mu^{n,*}(x)^2\,d\mu(x)$. 
Our main achievement in this paper is that, assuming  $B_0$ to be $\PP_\mu$-doubling and the density $\Theta_\mu(x,r)$ not to decay too fast
as $r\to0$ in some portion of $B_0$, the term the terms $\PP_\mu(B_0)^2\mu(2B_0)$ can
be eliminated. Indeed, this is controlled by the other two terms on the the right hand side. The precise result is the following.

\begin{theorem}\label{teomain}
Let $\mu$ be a Radon measure in $\R^{n+1}$ and let $B_0$ be a $(\PP_\mu,C_0)$-doubling ball. Suppose that $M_n(\chi_{3B_0}\mu)\in L^2(\mu|_{2B_0})$ and 
$\RR_*(\chi_{3B_0}\mu)\in L^2(\mu|_{2B_0})$. Suppose that there exists some ball $B_1$ centered in $B_0$ such that
$$\Theta_\mu(B_1)\geq \alpha\,\Theta_\mu(B_0)$$
for some $\alpha>0$, and such that, for some $\delta_0,\delta_1>0$,
$$\delta_0\,\rad(B_0)\leq \rad(B_1)\leq \delta_1\,\rad(B_0),$$
with $\delta_1\in (0,1/2)$ small enough, depending on $n$, $C_0$, and $\alpha$. Then there exists some constant $c_1>0$ depending on 
$n$, $\alpha$, and $\delta_0$ such that
$$c_1 \Theta_\mu(B_0)^2\,\mu(B_0)\leq \int_{2B_0} |\RR\mu - m_{\mu,2B_0}(\RR\mu)|^2\,d\mu + \int_{2B_0} \theta_\mu^{n,*}(x)^2\,d\mu(x).$$
\end{theorem}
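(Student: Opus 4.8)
The overall plan is to deduce the statement from a \emph{tail-free} strengthening of Theorem~\ref{teoDTlocal00}: under the extra hypotheses the term $\PP_\mu(B_0)^2\mu(2B_0)$ on the right-hand side of \rf{eqgk812**} can be dropped, and then the non-degeneracy at $B_1$ produces the desired lower bound.

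I would start with the elementary reductions. After a dilation and a scaling of $\mu$ we may assume $B_0=\bar B(0,1)$ and $\Theta_\mu(B_0)=1$, so $\mu(B_0)=1$. The $(\PP_\mu,C_0)$-doubling of $B_0$ gives $\Theta_\mu(2^jB_0)\le 2^jC_0$ for all $j\ge 0$, hence $\mu(2B_0)\le 2^{n+1}C_0$ and $\PP_\mu(B_0)\le C_0$, and therefore $\PP_\mu(B_0)^2\mu(2B_0)\lesssim_{C_0}\Theta_\mu(B_0)^2\mu(B_0)$. This already shows that no direct application of Theorem~\ref{teoDTlocal00} can suffice, since its right-hand side contains a term comparable to the quantity we wish to bound. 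On the other hand, the non-degeneracy at $B_1$ feeds the first term on the left of \rf{eqgk812**}: since $\mu(B_1)=\Theta_\mu(B_1)\rad(B_1)^n\ge\alpha\,\delta_0^n\,\mu(B_0)$ and $M_n(\mu|_{B_0})\gtrsim_n\Theta_\mu(B_1)\gtrsim_\alpha\Theta_\mu(B_0)$ on $B_1$, we get
\[
\int_{B_0}|M_n(\mu|_{B_0})|^2\,d\mu\ \gtrsim_{n,\alpha,\delta_0}\ \Theta_\mu(B_0)^2\mu(B_0).
\]
Hence it suffices to prove the tail-free version of \rf{eqgk812**}, i.e.\ \rf{eqgk812**} with $\PP_\mu(B_0)^2\mu(2B_0)$ deleted from its right-hand side; combining it with the last display yields the theorem with $c_1$ depending only on $n,\alpha,\delta_0$.

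To establish the tail-free estimate I would revisit the proof of Theorem~\ref{teoDTlocal00} rather than invoke it as a black box, since the $\PP_\mu(B_0)^2\mu(2B_0)$ term is intrinsic to that proof: it enters through the top-scale contributions, where the natural size of $\RR(\chi_{3B_0}\mu)$ and of the flat measures approximating $\mu$ near $B_0$ is $\PP_\mu(B_0)\,\Theta_\mu(B_0)$, and where a low-density stopping at the first generations is controlled only by this quantity. The steps I would take are: (i) use the $\PP_\mu$-doubling of $B_0$ to replace every $\PP_\mu(B_0)$ by $\Theta_\mu(B_0)$, i.e.\ to treat $\mu$ as Ahlfors--David regular at the top scales; (ii) use the ball $B_1$, which for $\delta_1$ small lies $\gtrsim\log(1/\delta_1)$ generations below $B_0$ with $\Theta_\mu(B_1)\ge\alpha\Theta_\mu(B_0)$, to prevent the low-density stopping from consuming a definite fraction of $\mu|_{B_0}$ --- a pigeonhole over the intermediate generations should then produce a scale and a portion of $\mu|_{B_0}$ of $\mu$-measure $\gtrsim_{n,\alpha,\delta_0}\mu(B_0)$ on which the density is comparable to $\Theta_\mu(B_0)$ and the $\beta_2$-flatness is genuinely small; (iii) on that portion, the smallness of the Riesz oscillation together with the flatness pins the density to be essentially constant, comparable to $\Theta_\mu(B_0)$ --- a quantitative flat-measure (reflectionless) rigidity, whose failure mode, a thin multi-sheeted configuration or a strongly varying density along an $n$-plane, would force $\RR\mu$ to oscillate substantially in $L^2(\mu|_{2B_0})$. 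Steps (i)--(iii) let the top-scale error be re-expressed and summed over the controlled intermediate scales, which is what removes the tail term; $\delta_1$ is taken small in terms of $n,C_0,\alpha$ precisely so that the $C_0$-dependent losses at the top are paid for by the $\gtrsim\log(1/\delta_1)$ available generations and not by any fixed smallness budget.

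The main obstacle is exactly (ii)--(iii): making the propagation of the non-degenerate density and the flat-measure rigidity uniform in $\mu$ and scale-invariant. In particular, since $\theta_\mu^{n,*}$ is a $\limsup$ as $r\to 0$, one must ensure that ``flat with small Riesz oscillation'' pins the density not just at one intermediate scale but persistently; I would expect this to require either iterating the argument along a Cantor-type family of non-degenerate doubling subballs of $B_0$, or feeding the flatness back into the variational estimates behind Theorem~\ref{teoDTlocal00} so that the square function controls the density oscillation down to scale zero on the relevant portion. That is where the real work lies; the reductions above are routine.
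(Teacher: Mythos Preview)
Your reduction is correct: if the ``tail-free'' version of \rf{eqgk812**} held under the hypotheses, the lower bound $\int_{B_0}|M_n(\mu|_{B_0})|^2\,d\mu\gtrsim_{n,\alpha,\delta_0}\Theta_\mu(B_0)^2\mu(B_0)$ coming from $B_1$ would finish the proof. In fact the paper records exactly this implication as Corollary~\ref{corotxulo}. But the paper derives that corollary \emph{from} Theorem~\ref{teomain}, not the other way around; so your proposal amounts to proving Corollary~\ref{corotxulo} directly, and this is where the gap lies.

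Your steps (i)--(iii) do not constitute a workable strategy for removing the term $\PP_\mu(B_0)^2\mu(2B_0)$ from the proof of Theorem~\ref{teoDTlocal00}. That term arises as $\sigma(S_0)=\Theta_\mu(S_0)^2\mu(S_0)$ in Proposition~\ref{propoDT0}, a fixed top-scale contribution that is not a sum over generations and cannot be ``paid for'' by having $\log(1/\delta_1)$ intermediate scales available. The pigeonhole in (ii) only touches the chain of balls concentric with $B_1$, which carries mass $\approx\alpha\delta_0^n\mu(B_0)$ and says nothing about the rest of $B_0$; it cannot prevent low-density stopping elsewhere. Most importantly, step (iii) --- your ``quantitative flat-measure rigidity'' --- is precisely the content of Theorem~\ref{teomain} itself, and you have not proposed a mechanism to prove it.

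The paper's mechanism is entirely different from tightening the corona analysis behind Theorem~\ref{teoDTlocal00}. Assuming the right-hand side is $\le\lambda\,\Theta_\mu(B_0)^2\mu(B_0)$ with $\lambda$ small, one first reduces (via the approximating measures $\wt\sigma_k$) to the case $\mu=h\,\LL^{n+1}$ on $2B_0$. Then a \emph{variational} argument minimizes a functional of the form
\[
F(a)=\int_{R_0}|\RR\nu_a-c_{R_0}|^p\,d\nu_a+\lambda\,\Theta_\mu(B_1)^p\mu(B_1)\Big(\|a\|_\infty^p+\tfrac{\mu(B_1)}{\nu_a(B_1)}+\tfrac1N\sum_k\tfrac{\mu(2^kB_1)}{\nu_a(2^kB_1)}\Big)
\]
over $a\ge0$, $\nu_a=\mu|_{R_0^c}+a\mu|_{R_0}$, producing an Euler--Lagrange inequality that holds \emph{pointwise on $\supp\nu$}. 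The crucial step, with no analogue in your outline, is to propagate this inequality to all of $2B_1$: the relevant expression is \emph{subharmonic} off $\supp\nu$, and a CDC-type maximum-principle argument (Lemma~\ref{lemCDC}) gives decay from $\partial B_0$ to $2B_1$ provided enough of the balls $2^jB_1$ meet $\supp\nu$ in a set of large $\HH^n_\infty$-content. The pigeonhole (Lemma~\ref{lembonescales}) and Theorem~\ref{teoDTlocal00} applied at those intermediate scales supply exactly this content condition. Finally, testing the now-global inequality against $\nabla\vphi$ for a bump $\vphi$ on $2B_1$, using $\RR^*_{\LL^{n+1}}(\nabla\vphi)=c\,\vphi$, yields a contradiction for $\delta_1$ and $\lambda$ small. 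None of the variational, subharmonic, or harmonic-measure ingredients appear in your sketch; these are the missing ideas.
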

\vv

As a corollary, putting Theorem \ref{teoDTlocal00} and Theorem \ref{teomain} together, we derive:

\begin{coro}\label{corotxulo}
Let $\mu$ be a Radon measure and $B_0$ a ball satisfying the assumptions in Theorem \ref{teomain}.
Then we have
\begin{align*}
\int_{B_0} |M_n(\mu|_{B_0})|^2\,d\mu +  \int_{B_0}&\int_0^{2\rad(B_0)} \beta_{2,\mu}(x,r)^2\,\Theta_\mu(x,r)\,\frac{dr}rd\mu(x)\\
 &\lesssim
\int_{2B_0} |\RR\mu - m_{\mu,2B_0}(\RR\mu)|^2\,d\mu + \int_{2B_0} \theta_\mu^{n,*}(x)^2\,d\mu(x),
\end{align*}
with the implicit constant depending on $n$, $\alpha$, and $\delta_0$.
\end{coro}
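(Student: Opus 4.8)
The plan is simply to combine Theorem \ref{teoDTlocal00} with Theorem \ref{teomain}, the only real work being to absorb the intermediate term $\PP_\mu(B_0)^2\,\mu(2B_0)$. First I would apply Theorem \ref{teoDTlocal00}, whose hypotheses are contained in those of Theorem \ref{teomain}, to obtain that the left-hand side of the corollary is at most a dimensional constant times
$$\int_{2B_0} |\RR\mu - m_{\mu,2B_0}(\RR\mu)|^2\,d\mu + \PP_\mu(B_0)^2\,\mu(2B_0) + \int_{2B_0} \theta_\mu^{n,*}(x)^2\,d\mu(x).$$
It then suffices to bound $\PP_\mu(B_0)^2\,\mu(2B_0)$ by a constant times the sum of the first and third terms.

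The elementary point is that a $(\PP_\mu,C_0)$-doubling ball is doubling in the usual sense. From the definition of $\PP_\mu$ one has $\PP_\mu(B_0)\geq \Theta_\mu(B_0)+\tfrac12\Theta_\mu(2B_0)$, so $\Theta_\mu(2B_0)\leq 2\PP_\mu(B_0)\leq 2C_0\,\Theta_\mu(B_0)$, and therefore $\mu(2B_0)=2^n\rad(B_0)^n\,\Theta_\mu(2B_0)\leq 2^{n+1}C_0\,\mu(B_0)$. Combining this with $\PP_\mu(B_0)^2\leq C_0^2\,\Theta_\mu(B_0)^2$ gives $\PP_\mu(B_0)^2\,\mu(2B_0)\lesssim_{n,C_0}\Theta_\mu(B_0)^2\,\mu(B_0)$.

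Finally I would invoke Theorem \ref{teomain}: under the stated assumptions on $B_1$, $\alpha$, $\delta_0$, $\delta_1$ it yields $c_1\,\Theta_\mu(B_0)^2\,\mu(B_0)\leq \int_{2B_0}|\RR\mu - m_{\mu,2B_0}(\RR\mu)|^2\,d\mu+\int_{2B_0}\theta_\mu^{n,*}(x)^2\,d\mu(x)$ for some $c_1=c_1(n,\alpha,\delta_0)>0$. Substituting this into the previous two displays bounds the left-hand side of the corollary by a constant depending on $n$, $C_0$, $\alpha$, $\delta_0$ times $\int_{2B_0}|\RR\mu - m_{\mu,2B_0}(\RR\mu)|^2\,d\mu+\int_{2B_0}\theta_\mu^{n,*}(x)^2\,d\mu(x)$, which is the claim. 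I do not expect any genuine obstacle: the substance is entirely in Theorems \ref{teoDTlocal00} and \ref{teomain}, and the corollary is the bookkeeping that glues them together. The only things to verify are routine — that the stronger hypotheses of Theorem \ref{teomain} (in particular that $M_n(\chi_{3B_0}\mu)$ and $\RR_*(\chi_{3B_0}\mu)$ belong to $L^2(\mu|_{2B_0})$, and that $B_0$ is a closed ball) supply everything that Theorem \ref{teoDTlocal00} requires.
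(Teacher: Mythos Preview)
Your proposal is correct and is exactly the approach the paper takes: the corollary is stated immediately after Theorem \ref{teomain} with the remark ``putting Theorem \ref{teoDTlocal00} and Theorem \ref{teomain} together, we derive'', and no further proof is given. Your bookkeeping (using the $(\PP_\mu,C_0)$-doubling condition to replace $\PP_\mu(B_0)^2\mu(2B_0)$ by $\Theta_\mu(B_0)^2\mu(B_0)$ and then invoking Theorem \ref{teomain}) is precisely what is intended.
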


\vv

The next corollary deals with reflectionless measures. For our purposes, a reflectionless measure for the $n$-dimensional Riesz transform is a Radon measure $\mu$ in $\R^d$ having $n$-polynomial growth (i.e, satisfying \rf{eqgrow00}), such that $\RR_\mu$ is bounded in $L^2(\mu)$, and such that $\RR\mu$ is zero in a $\BMO(\mu)$ sense, i.e., $\RR\mu$ is constant $\mu$-a.e. In \cite{Jaye-Nazarov2}, 
Jaye and Nazarov have shown that if all the $n$-Ahlfors reflectionless measures for the Riesz transform are $n$-flat, i.e., of the form
$c\,\HH^n|_L$ for some $n$-plane $L$, then the 
the $n$-dimensional David-Semmes problem has a positive answer. That is, for any $n$-Ahlfors regular set $E\subset\R^d$, the
$L^2(\HH^n|_E)$ boundedness of $\RR_{\HH^n|_E}$ implies the uniform $n$-rectifiability of $E$, which is still an open question in $\R^d$ for $1<n<d$. 
However, up to now the structure of 
reflectionless measure for the Riesz transform is not well understood. It is not known if they are all $n$-flat even in the $n$-Ahlfors regular case with
$n=d-1$. For more information on reflectionless measures, see also \cite{Jaye-Nazarov1}.

We also remark that by now it is not known if there exist reflectionless measures $\mu$ such that the set $\{x\in\supp(\mu):\theta_\mu^{n,*}(x)=0\big\}$ has positive $\mu$-measure.
In this paper we will prove the following.

\begin{coro}\label{cororeflec}
Let $\mu$ be a Radon measure in $\R^{n+1}$ with polynomial $n$-growth which is reflectionless. There is some absolute constant
$\ve_n>0$
such that the following holds.
Suppose that $\dim_H(\mu)\leq n+\ve_n$. Then the set
$$\{x\in\supp(\mu):\theta_\mu^{n,*}(x)>0\big\}$$
is rectifiable and dense in $\supp(\mu)$.
\end{coro}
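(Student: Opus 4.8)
The plan is to feed Corollary~\ref{corotxulo} (for rectifiability) and Theorem~\ref{teomain} (for density) with the vanishing of the oscillation term for a reflectionless measure, and to use $\dim_H(\mu)\le n+\ve_n$ only to manufacture $\PP_\mu$-doubling balls with an absolutely controlled doubling constant. Since $\mu$ is reflectionless, $\RR\mu$ is constant $\mu$-a.e., so $\int_{2B}|\RR\mu-m_{\mu,2B}(\RR\mu)|^2\,d\mu=0$ for \emph{every} ball $B$; moreover $M_n(\chi_{3B}\mu)\le\theta_0$ $\mu$-a.e.\ (as $\mu$ has $n$-growth) and $\RR_{*,\mu}$ is bounded in $L^2(\mu)$ (as $\RR_\mu$ is), so Theorem~\ref{teoDTlocal00}, Theorem~\ref{teomain} and Corollary~\ref{corotxulo} apply to any ball. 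Put $E_+=\{x\in\supp\mu:\theta_\mu^{n,*}(x)>0\}$ and $E_0=\supp\mu\setminus E_+$; by $n$-growth $0<\theta_\mu^{n,*}\le\theta_0$ on $E_+$. It suffices to show that $E_+$ is $n$-rectifiable and that $\overline{E_+}=\supp\mu$.

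\emph{Step 1: doubling balls from the dimension bound.} Fix a small absolute $\ve_n\in(0,\tfrac12)$. Writing $\Psi_x(r)=\PP_\mu(B(x,r))=\sum_{j\ge0}2^{-j}\Theta_\mu(x,2^jr)$, one has the identity $\Psi_x(r)=\Theta_\mu(x,r)+\tfrac12\Psi_x(2r)$. If $B(x,r)$ is not $(\PP_\mu,C_0)$-doubling for any $r\le r_0$, i.e.\ $\Theta_\mu(x,r)<\Psi_x(r)/C_0$ on $(0,r_0]$, then $\Psi_x(2r)>2(1-C_0^{-1})\Psi_x(r)$, and iterating (using $\Psi_x\le2\theta_0$) gives $\Theta_\mu(x,r)\le\Psi_x(r)\lesssim_{\theta_0}(r/r_0)^{\,1-c/C_0}$, whence $\underline d_\mu(x)=\liminf_{r\to0}\tfrac{\log\mu(B(x,r))}{\log r}\ge n+1-c/C_0$. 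Choosing $C_0$ a large absolute constant with $1-c/C_0>\ve_n$, and using the standard relation $\underline d_\mu(x)\le\dim_H(\mu)$ for $\mu$-a.e.\ $x$, we get: for $\mu$-a.e.\ $x$ there are $r_k\downarrow0$ with $B(x,r_k)$ being $(\PP_\mu,C_0)$-doubling. A pigeonhole over a bounded-overlap cover of $B_0\cap\supp\mu$ at scale $\delta_1\rad(B_0)$ always yields $B_1$ centered in $B_0$ with $\rad(B_1)=\delta_1\rad(B_0)$ and $\Theta_\mu(B_1)\ge c(n)\delta_1\,\Theta_\mu(B_0)$, so with $\alpha=c(n)\delta_1$ and $\delta_0=\delta_1$ the ball $B_0$ meets the hypotheses of Theorem~\ref{teomain}, \emph{provided} $\delta_1<\delta_1(n,C_0,c(n)\delta_1)$. \textbf{I expect the main obstacle to be precisely this reconciliation}: Theorem~\ref{teomain} wants $\delta_1$ small relative to $\alpha$, while the pigeonhole forces $\alpha\sim\delta_1$; closing the loop requires the explicit (at least linear in $\alpha$) form of the threshold obtained in the proof of Theorem~\ref{teomain}, or an iteration of the pigeonhole. (If $\theta_\mu^{n,*}(x)>0$ one may instead take $B_1=B(x,\rho)$ with $\Theta_\mu(x,\rho)\ge\tfrac12\theta_\mu^{n,*}(x)$ for some $\rho$ on the doubling subsequence, and the issue disappears.)

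\emph{Step 2: rectifiability of $E_+$.} For $\mu$-a.e.\ $x\in E_+$, since $\theta_\mu^{n,*}(x)=:c_x>0$ there are $\rho_j\downarrow0$ with $\Theta_\mu(x,\rho_j)\ge c_x/2$, so $B(x,\rho_j)$ is $(\PP_\mu,4\theta_0/c_x)$-doubling and carries an admissible $B_1$ (easy case of Step 1). Taking $2\rho_j\le1$, Corollary~\ref{corotxulo} with vanishing oscillation gives
$$\int_{B(x,\rho_j)}\int_0^{2\rho_j}\beta_{2,\mu}(y,r)^2\,\Theta_\mu(y,r)\,\tfrac{dr}{r}\,d\mu(y)\ \lesssim\ \int_{2B(x,\rho_j)}\theta_\mu^{n,*}(y)^2\,d\mu(y)\ \le\ \theta_0^2\,\mu(2B(x,\rho_j))\ <\ \infty,$$
so $\int_0^{2\rho_j}\beta_{2,\mu}(y,r)^2\Theta_\mu(y,r)\tfrac{dr}{r}<\infty$ for $\mu$-a.e.\ $y\in B(x,\rho_j)$. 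A Lebesgue density argument (if $G=\{y\in E_+:\int_0^1\beta_{2,\mu}(y,r)^2\Theta_\mu(y,r)\tfrac{dr}{r}=\infty\}$ had $\mu(G)>0$, a density point $x\in E_+$ of $G$ would admit such balls $B(x,\rho_j)\downarrow x$ with $\mu(G\cap B(x,\rho_j))=0$, contradiction) yields $\int_0^1\beta_{2,\mu}(y,r)^2\Theta_\mu(y,r)\tfrac{dr}{r}<\infty$ for $\mu$-a.e.\ $y\in E_+$. As $0<\theta_\mu^{n,*}<\infty$ $\mu$-a.e.\ on $E_+$, the characterization of $n$-rectifiable measures by finiteness of this density-weighted square function together with positive finite upper density (see \cite{DT}, and \cite{Girela-Tolsa}) shows $\mu|_{E_+}$ is $n$-rectifiable; so there is an $n$-rectifiable $\Gamma$ with $\mu(E_+\setminus\Gamma)=0$, and since $\theta_\mu^{n,*}>0$ on $E_+\setminus\Gamma$, the comparison $\HH^n(A)\lesssim_n\lambda^{-1}\mu(A)$ valid on $A\subseteq\{\theta_\mu^{n,*}\ge\lambda\}$ forces $\HH^n(E_+\setminus\Gamma)=0$. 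Hence $E_+$ is $n$-rectifiable. (This step uses only that $\mu$ is reflectionless of $n$-growth.)

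\emph{Step 3: density of $E_+$.} If $\overline{E_+}\ne\supp\mu$, pick $x_0\in\supp\mu$ and $r_0>0$ with $B(x_0,r_0)\cap\supp\mu\subseteq E_0$, so $\theta_\mu^{n,*}=0$ $\mu$-a.e.\ on $B(x_0,r_0)$. As $x_0\in\supp\mu$, $\mu(B(x_0,r_0/4))>0$; by Step 1 there is $x\in B(x_0,r_0/4)$ and $r_k$ small enough that $B_0:=B(x,r_k)\subseteq B(x_0,r_0/4)$ is $(\PP_\mu,C_0)$-doubling, carries an admissible $B_1$, and has $\mu(B_0)>0$. Since $2B_0\subseteq B(x_0,r_0)$, both $\int_{2B_0}\theta_\mu^{n,*}(y)^2\,d\mu(y)=0$ and $\int_{2B_0}|\RR\mu-m_{\mu,2B_0}(\RR\mu)|^2\,d\mu=0$, so Theorem~\ref{teomain} gives $c_1\,\Theta_\mu(B_0)^2\mu(B_0)\le0$, i.e.\ $\mu(B_0)=0$, a contradiction. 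Hence $\overline{E_+}=\supp\mu$, and together with Step 2 the corollary follows.
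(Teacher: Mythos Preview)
Your Step~2 can be made to work, but the paper takes a one-line route: since $0<\theta_\mu^{n,*}\le\theta_0$ on $E_+$, the measure $\mu|_{E_+}$ is mutually absolutely continuous with $\HH^n|_{E_+}$, and the result of \cite{NToV2} applies directly.

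The real gap is in Step~3, and it is precisely the obstacle you flag in Step~1: your pigeonhole only produces $B_1$ with $\alpha\approx c(n)\delta_1$, while Theorem~\ref{teomain} requires $\delta_1$ small in terms of $\alpha$. This circularity is not closable by inspecting the proof --- the threshold for $\delta_1$ in Main Lemma~\ref{lemaclau} genuinely deteriorates as $\alpha\to0$ (through Lemma~\ref{lemmaxpr} and the final contradiction estimate). On $E_0$ there is no positive upper density to fall back on, so your parenthetical escape route from Step~1 is unavailable exactly where you need it.

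The paper avoids the circularity by reversing the logic and using Theorem~\ref{teomain} in the \emph{contrapositive}. Fix once and for all $C_0=4$, $\alpha=2^{-(n+3)}$ (absolute, independent of $\delta_1$), and $\delta_0=\delta_1^{2n+2}$; then Theorem~\ref{teomain} fixes an absolute $\delta_1=2^{-m}$. Suppose $E_+$ misses an open set $U$ with $\mu(U)>0$. For any ball $B\subset U$ centered in $\supp\mu$, the right-hand side of Theorem~\ref{teomain} vanishes, so the hypotheses must fail: either $B$ is not $(\PP_\mu,4)$-doubling, in which case Lemma~\ref{lemdobpp} gives $\PP_\mu(B)\le\tfrac34\PP_\mu(2B)$; or $B$ \emph{is} $(\PP_\mu,4)$-doubling, and then \emph{every} concentric $B_1$ with radius in $[\delta_1^{2n+2}\rad B,\,\delta_1\rad B]$ must satisfy $\Theta_\mu(B_1)<2^{-(n+3)}\Theta_\mu(B)$, which a short computation converts into $\PP_\mu(\delta_1^{2n+2}B)\le\tfrac34\PP_\mu(2B)$. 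Iterating this dichotomy along nested balls yields $\mu(B(x,r))\lesssim r^{\,n+\ve_n'}$ for all $x\in U\cap\supp\mu$ and small $r$, with an absolute $\ve_n'>0$. Hence $\dim_H(\mu)\ge n+\ve_n'$, contradicting $\dim_H(\mu)\le n+\ve_n$ once $\ve_n<\ve_n'$.

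In short: you try to build one good configuration $(B_0,B_1)$ and apply the theorem once; the paper shows that \emph{no} configuration in $U$ can be good, and harvests a uniform density decay --- and hence a dimension lower bound --- from that failure.
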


We recall that, for a Radon measure $\mu$ in $\R^d$, $\dim_H(\mu)$ stands for the Hausdorff dimension of $\mu$, defined by
$$\dim_H(\mu) = \inf\big\{\dim_H(E):\,\text{$E\subset\R^d$ Borel, $\mu(E^c)=0$}\big\}.$$
\vv

Another consequence of Theorem \ref{teomain} is the following enhanced version of a theorem from \cite{Girela-Tolsa}:

\begin{theorem}\label{teomain2}
Let $\mu$ be a Radon measure in $\R^{n+1}$ and let $B_0$ be a $(\PP_\mu,C_0)$-doubling ball. Suppose that 
the following conditions hold:
\begin{itemize}
\item[(a)] $\RR_{\mu|_{2B_0}}$ is bounded in $L^2(\mu|_{2B_0})$ with
$$\|\RR_\mu\|_{L^2(\mu|_{2B_0})\to L^2(\mu|_{2B_0})}\leq C_1\Theta_\mu(B_0)$$
and
$$\sup_{0<r\leq 4\rad(B_0)} \Theta_\mu(x,r) \leq C_1\Theta_\mu(B_0) \quad\mbox{ for all $x\in 2B_0$}.$$
\item[(b)] There exists some ball $B_1$ centered in $B_0$ such that, for some positive constants $\alpha$, $\delta_0$, $\delta_1$,
$$\Theta_\mu(B_1)\geq \alpha\,\Theta_\mu(B_0)$$
and
$$\delta_0\,\rad(B_0)\leq \rad(B_1)\leq \delta_1\,\rad(B_0).$$
\item[(c)] For some $\ve>0$,
$$\int_{2B_0} |\RR\mu - m_{\mu,2B_0}(\RR\mu)|^2\,d\mu\leq \ve\,\Theta_\mu(B_0)^2\,\mu(B_0).$$ 
\end{itemize}
Suppose that $\delta_1\in (0,1/2)$ is small enough, depending on $n$, $C_0$, $C_1$, and $\alpha$, and that
$\ve$ is small enough, depending on $C_0$, $C_1$, $\alpha$ and $\delta_0$. Then
there is a uniformly $n$-rectifiable set $\Gamma$ and some $\tau>0$ such that
$$\mu(\Gamma\cap 2B_0) \geq\tau\,\mu(B_0),$$
with $\tau$ and the uniform rectifiability  constant of $\Gamma$ depending on the above constants.
\end{theorem}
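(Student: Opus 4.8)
The plan is to combine Corollary \ref{corotxulo} with the quantitative rectifiability machinery underlying the results of \cite{Girela-Tolsa} and \cite{NToV}. First I would apply Corollary \ref{corotxulo}, whose hypotheses are exactly conditions (a)'s density bound and (b) together with the $\PP_\mu$-doubling assumption on $B_0$ (the $L^2$ boundedness of $\RR_\mu$ in (a) guarantees $\RR_*(\chi_{3B_0}\mu)\in L^2(\mu|_{2B_0})$ and the density bound gives $M_n(\chi_{3B_0}\mu)\in L^2(\mu|_{2B_0})$). This yields
$$
\int_{B_0}\!\int_0^{2\rad(B_0)}\!\beta_{2,\mu}(x,r)^2\,\Theta_\mu(x,r)\,\frac{dr}r\,d\mu(x)
\lesssim \int_{2B_0}|\RR\mu-m_{\mu,2B_0}(\RR\mu)|^2\,d\mu + \int_{2B_0}\theta_\mu^{n,*}(x)^2\,d\mu(x).
$$
By (c) the first term on the right is $\le\ve\,\Theta_\mu(B_0)^2\mu(B_0)$, and by the density bound in (a), $\theta_\mu^{n,*}(x)\le C_1\Theta_\mu(B_0)$ on $2B_0$ — but this only gives a bound of order $\Theta_\mu(B_0)^2\mu(2B_0)$ for the second term, which is \emph{not} small. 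So the real content is to handle $\int_{2B_0}\theta_\mu^{n,*}(x)^2\,d\mu(x)$: I expect one must show, using again that $\RR_\mu$ is bounded and $\RR\mu$ is almost constant, that the set of $x$ where $\theta_\mu^{n,*}(x)$ is comparable to $\Theta_\mu(B_0)$ already carries a fixed fraction of $\mu(B_0)$ sitting on a uniformly rectifiable set, while on the complement the square function bound plus a stopping-time / corona decomposition produces the rectifiable piece directly.

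Concretely, the main step is a stopping-time argument. Starting from $B_0$, build a family of stopping balls where either the density $\Theta_\mu$ has changed by a large factor, or the $\beta_2$-number is large, or the ball fails to be $\PP_\mu$-doubling. On the ``good'' region (the corona between $B_0$ and the stopping balls) the smallness of $\int\beta_2^2\,\Theta_\mu\,\frac{dr}{r}$ from Corollary \ref{corotxulo} forces the measure to lie close, at all intermediate scales, to a Lipschitz graph with small constant — this is the standard construction of an approximating uniformly rectifiable set from small $\beta_2$-square-function and controlled density, as in \cite{DS1}, \cite{Tolsa-llibre}, or the corona building blocks of \cite{NToV}. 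One then needs the total mass of the stopping balls to be a small fraction of $\mu(B_0)$: the density-change and non-doubling stopping balls are controlled by a packing estimate coming from the $\PP_\mu$-doubling property of $B_0$ together with condition (b) (which prevents the density from collapsing on a large portion, so there is ``no room'' for too much mass to escape to very high density), and the high-$\beta_2$ stopping balls are packed by the square-function bound itself via Chebyshev. Choosing $\ve$ small (depending on $C_0,C_1,\alpha,\delta_0$) and $\delta_1$ small makes the escaping mass, say, $\le\frac12\mu(B_0)$, so the good region retains mass $\ge\frac12\mu(B_0)$, and its closure (or the associated Lipschitz graphs glued together) is the desired uniformly $n$-rectifiable set $\Gamma$ with $\mu(\Gamma\cap B_0)\ge\tau\mu(B_0)$.

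The hard part will be the packing estimate for the stopping balls, and in particular ensuring that the term $\int_{2B_0}\theta_\mu^{n,*}(x)^2\,d\mu(x)$ — which is genuinely present on the right-hand side of Corollary \ref{corotxulo} and is \emph{not} a priori small — does not obstruct the argument. The resolution I would pursue is that on the portion of $B_0$ where $\theta_\mu^{n,*}(x)\gtrsim\Theta_\mu(B_0)$, the a.e.\ convergence of $\Theta_\mu(x,r)$ along a subsequence to a positive value, combined with the $L^2$-boundedness of $\RR_\mu$ and the Besicovitch-type density estimates, already localizes $\mu$ to a $\PP_\mu$-doubling structure at small scales to which one can apply \cite{NToV} directly (or the local version of the David--Semmes program); on the remaining portion where $\theta_\mu^{n,*}$ is small, that term is small after all, and the square-function argument above applies cleanly. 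Matching the two regimes — i.e.\ arranging the stopping rule so that the $\theta_\mu^{n,*}$-term is absorbed rather than merely bounded — is the delicate point, and it is presumably here that the doubling hypothesis on $B_0$ and the non-degeneracy condition (b) are used most essentially. Finally, the uniform rectifiability constant and $\tau$ are tracked through the construction and depend only on $n,C_0,C_1,\alpha,\delta_0$ as claimed.
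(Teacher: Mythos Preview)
Your identification of the obstacle is exactly right: the term $\int_{2B_0}\theta_\mu^{n,*}(x)^2\,d\mu(x)$ on the right-hand side of Corollary~\ref{corotxulo} is not small under the hypotheses, and this is the crux. But your proposed resolution has a genuine gap. On the set where $\theta_\mu^{n,*}(x)\gtrsim\Theta_\mu(B_0)$, the pointwise positivity of the upper density does not give you an AD-regular structure to which \cite{NToV} applies; at best it gives qualitative rectifiability via \cite{NToV2} or \cite{Azzam-Tolsa}, with no control on the uniform rectifiability constants. The paper in fact carries out precisely your dichotomy in Section~8.1 and notes that it only yields an $n$-rectifiable (not uniformly rectifiable) set $\Gamma$ --- so your outline recovers the ``partial result'' but not the theorem.

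The paper's full proof takes a different route and never tries to control $\int\theta_\mu^{n,*}(x)^2\,d\mu$ directly. Instead it follows the scheme of \cite{Girela-Tolsa}: one stops at \emph{low-density} cubes $\LD$ (cubes $Q\subset R_0$ with $\Theta_\mu(3.5B_Q)\le\theta_0\,\Theta_\mu(B_0)$) and proves a Key Lemma asserting $\mu\bigl(\bigcup_{Q\in\LD}Q\bigr)\le(1-\ve_0)\,\mu(R_0)$. The packing is not by a Carleson argument but by contradiction: assuming the low-density cubes carry almost all the mass, one builds an approximating measure $\eta$ that replaces $\mu$ on the stopping cubes by Lebesgue measure on small balls, so that $\theta_\eta^{n,*}\equiv0$ in $\tfrac32B_0$. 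One then checks that $\eta$ inherits the $\PP$-doubling, the ball $B_1$, and (via Lemma~\ref{lem200}) a small BMO-type oscillation of $\RR\eta$, and applies Theorem~\ref{teomain} to $\eta$. Since the $\theta_\eta^{n,*}$ term vanishes, Theorem~\ref{teomain} forces the oscillation of $\RR\eta$ to be bounded below, contradicting its smallness. With the Key Lemma in hand, on the complement $F=R_0\setminus\bigcup_{Q\in\LD}Q$ one constructs an AD-regular measure $\zeta$ coinciding with $\mu$ on $F$, shows $\RR_\zeta$ is bounded in $L^2(\zeta)$, and invokes \cite{NToV}. The point you were missing is that the $\theta_\mu^{n,*}$ obstruction is removed not by estimating it, but by passing to a smoothed measure for which it is identically zero; this is why Theorem~\ref{teomain}, rather than Corollary~\ref{corotxulo}, is the engine of the argument.
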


This theorem should be compared with Theorem 1.1 from \cite{Girela-Tolsa} (see also Corollary 3.2 from \cite{AMT-cpam} for an equivalent form closer to
the statement above). The main difference is that the condition (b) above is replaced by another asking $\beta_{1,\mu}(B_0)\leq \delta\,\Theta_\mu(B_0)$, with $\delta$ sufficiently
small. It is not difficult to check that the latter condition implies the assumption (b) of the theorem.
The main difference between the arguments to prove Theorem \ref{teomain2} and the ones in \cite{Girela-Tolsa} is that the ones in the latter
work are based on a delicate periodization of the measure $\mu$ in order to use later a variational argument leading to a contradiction. Instead, the arguments for the above Theorem \ref{teomain2} are based on Theorem \ref{teomain}, which in turn uses some techniques involving
harmonic measure (see for example, Lemma \ref{lemCDC} and Lemma \ref{lemmaxpr} below). 

We also remark that in Section \ref{secfinal} we
show a somewhat weaker variant of Theorem \ref{teomain2} which suffices for some applications to harmonic measure. This variant has the advantage that its proof does not rely on the lengthy arguments from \cite{DT}. 

Concerning the applications to harmonic measure, we mention that in a forthcoming work of Luis Lloret and the author \cite{Lloret-Tolsa}, the following result is obtained
by using Theorem \ref{teomain2} as an essential tool.

\begin{theorem*}
Let $\Omega\subset\R^{n+1}$ be an open set, and denote by $\omega$ the harmonic measure for $\Omega$ with pole at a fixed point in $\Omega$.
Let
$$Z=\big\{x\in\pom:\,\theta_{\omega}^{*,n}(x)=0\big\}.$$
Then, for $\omega$-a.e.\ $x\in Z$ it holds
\begin{equation}\label{eqosc}
\limsup_{r\to0}\frac{\sup_{0<s<r}\Theta_{\omega}(x,s)}{\Theta_{\omega}(x,r)} = +\infty.
\end{equation}
\end{theorem*}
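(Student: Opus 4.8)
\medskip
\noindent\textbf{Proof proposal.} I would argue by contradiction, with Theorem \ref{teomain2} --- or, as in \cite{Lloret-Tolsa}, the variant of Section \ref{secfinal} --- as the main tool. Suppose the conclusion fails, and set
$$Z_0 := \Big\{x\in Z:\ \limsup_{r\to0}\frac{\sup_{0<s<r}\Theta_{\omega}(x,s)}{\Theta_{\omega}(x,r)}<\infty\Big\},$$
so that $\omega(Z_0)>0$. Since $Z_0$ is the increasing union over $M\in\N$ of the sets where this $\limsup$ is at most $M$, Egorov's theorem furnishes $M<\infty$, a radius $r_0>0$ and a Borel set $A\subset Z_0$ with $\omega(A)>0$ on which
$$\sup_{0<s<r}\Theta_{\omega}(x,s)\le 2M\,\Theta_{\omega}(x,r)\qquad\text{for all }x\in A,\ 0<r\le r_0.$$
Removing an $\omega$-null subset, I may further assume that every $x\in A$ is a point of $\omega$-density $1$ of $A$ and admits $(\PP_\omega,C_0)$-doubling balls of arbitrarily small radius (a standard property of finite Radon measures, already used in \cite{DT}); and since $A\subset Z$, $\Theta_\omega(x,r)\to0$ as $r\to0$ for $x\in A$.

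\smallskip
\noindent\emph{Core step: applying Theorem \ref{teomain2}.} Fix $x\in A$ where the above holds. I would look for a radius $r_k$, as small as needed, with $B_0:=B(x,r_k)$ being $(\PP_\omega,C_0)$-doubling and satisfying all of the hypotheses (a)--(c) of Theorem \ref{teomain2} for $\mu=\omega$. For the growth part of (a), $\Theta_\omega(y,r)\lesssim\Theta_\omega(B_0)$ when $y\in 2B_0$ and $0<r\le4\rad(B_0)$: at $y=x$ this follows from the oscillation inequality (for $r\le r_k$) and from $\PP_\omega$-doubling (for $r_k\le r\le4r_k$), and for general $y$ from the sub-mean-value and maximum-principle estimates for harmonic measure via the comparison $\omega(B(y,r))\approx G(p,\cdot)\,r^{n-1}$ at corkscrew points of $B(y,r)$. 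For the rest of (a) and for (c) --- the $L^2(\omega|_{2B_0})$ boundedness of $\RR_{\omega|_{2B_0}}$ with norm $\lesssim\Theta_\omega(B_0)$, and the smallness of the $L^2(\omega)$-oscillation of $\RR\omega$ over $2B_0$ relative to $\Theta_\omega(B_0)^2\,\omega(B_0)$ --- I would use the identity $\RR\omega^p = c_n\big(\nabla E_0(\cdot-p)-\nabla G(p,\cdot)\big)$ in $\Omega$ together with its boundary trace on $\partial\Omega$ ($E_0$ being the fundamental solution of $-\Delta$), interior gradient estimates for $G(p,\cdot)$, and the growth bound above: since $r_k\ll\dist(x,p)$, the term $\nabla E_0(\cdot-p)$ is smooth and of size $\lesssim\Theta_\omega(B_0)$ on $2B_0$, so everything reduces to the behaviour of $\nabla G(p,\cdot)$ near $2B_0\cap\partial\Omega$, which is tightly constrained because $x\in Z$ and the oscillation inequality controls $G(p,\cdot)/\dist(\cdot,\partial\Omega)$ there. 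Finally, hypothesis (b) asks for a ball $B_1$ centred in $B_0$, of radius a suitably small fixed fraction of $\rad(B_0)$, with $\Theta_\omega(B_1)\ge\alpha\Theta_\omega(B_0)$ for a fixed $\alpha>0$; producing it amounts to selecting a $\PP_\omega$-doubling scale $r_k$ along which $\Theta_\omega(x,\cdot)$ does not contract too sharply, and since the oscillation inequality only bounds $\Theta_\omega(x,s)$ from above for $s<r$, this forces one to exclude, on a set of positive $\omega$-measure, an abrupt decay of $\Theta_\omega(x,\cdot)$ --- which I would do using Bourgain/Wiener-type lower bounds for harmonic measure. I expect this last point --- arranging hypothesis (b) in tandem with the potential-theoretic estimates (a) and (c), i.e.\ reconciling the metric behaviour of $\omega$ near $Z_0$ with the analytic control on its Riesz transform --- to be the main obstacle; the variant of Theorem \ref{teomain2} in Section \ref{secfinal}, whose proof bypasses the arguments of \cite{DT}, is what \cite{Lloret-Tolsa} actually invokes.

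\smallskip
\noindent\emph{Conclusion.} Granting this, Theorem \ref{teomain2} yields a uniformly $n$-rectifiable --- hence $n$-Ahlfors regular --- set $\Gamma$ with $\omega(\Gamma\cap B_0)\ge\tau\,\omega(B_0)$, where $\tau>0$ does not depend on $r_k$. As $x$ is a point of $\omega$-density $1$ of $A$, for $r_k$ small $\omega(B_0\setminus A)\le\tfrac\tau2\,\omega(B_0)$, hence $\omega(\Gamma\cap A\cap B_0)\ge\tfrac\tau2\,\omega(B_0)>0$. On the other hand, write $\omega|_{\Gamma\cap A\cap B_0}=h\,\HH^n|_\Gamma+\omega_s$ with $\omega_s\perp\HH^n|_\Gamma$; by the Ahlfors regularity of $\Gamma$ and the Besicovitch differentiation theorem, $\theta_{\omega}^{*,n}(y)=+\infty$ for $\omega_s$-a.e.\ $y$, while $\theta_{\omega}^{*,n}(y)=0$ there because $y\in A\subset Z$, so $\omega_s=0$; and, again by Ahlfors regularity and Lebesgue differentiation, $\theta_{\omega}^{*,n}(y)\gtrsim h(y)>0$ at $\HH^n|_\Gamma$-a.e.\ $y\in\{h>0\}$, contradicting $y\in Z$, so $h=0$ $\HH^n|_\Gamma$-a.e.\ on $\Gamma\cap A$. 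Therefore $\omega(\Gamma\cap A\cap B_0)=0$, contradicting the preceding inequality. Hence $\omega(Z_0)=0$, which is the assertion of the theorem.
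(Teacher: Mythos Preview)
The paper does not actually prove this theorem: it is quoted from the forthcoming work \cite{Lloret-Tolsa}, and the only information given here is that it is ``obtained by using Theorem \ref{teomain2} as an essential tool.'' So there is no proof in the paper to compare against; I can only check your outline against that hint and for internal soundness.

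Your global architecture --- assume a set $Z_0\subset Z$ of positive $\omega$-measure where the ratio stays bounded, pass to a good subset $A$, find small $(\PP_\omega,C_0)$-doubling balls $B_0$ centered in $A$ to which Theorem \ref{teomain2} applies, and then derive a contradiction from the existence of a uniformly rectifiable $\Gamma$ with $\omega(\Gamma\cap B_0)\gtrsim\omega(B_0)$ --- is exactly the scheme the paper advertises, and your concluding paragraph (the Radon--Nikodym argument on $\Gamma$ forcing $\omega(\Gamma\cap A\cap B_0)=0$) is correct as written.

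The genuine gap is the ``core step,'' and it is more serious than a missing detail. For a \emph{general} open set $\Omega$ you cannot invoke a two-sided estimate $\omega(B(y,r))\approx G(p,\cdot)\,r^{n-1}$ at corkscrew points to transfer the growth bound from $x$ to all $y\in 2B_0$: such Bourgain-type bounds give only one inequality without a capacity/CDC hypothesis, and the other one is exactly what fails in Wolff-type examples. Likewise, the $L^2(\omega|_{2B_0})$ boundedness of $\RR_{\omega|_{2B_0}}$ with norm $\lesssim\Theta_\omega(B_0)$ is not a formal consequence of the identity $\RR\omega=c_n(\nabla E_0(\cdot-p)-\nabla G(p,\cdot))$; in the works you allude to (e.g.\ \cite{AMT-cpam}, \cite{AMTV}) this is obtained through a $Tb$-type argument after first securing the linear growth, and the smallness of the oscillation (c) is a separate, delicate estimate. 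Finally, you correctly flag hypothesis (b) as the crux: the bounded-oscillation assumption on $Z_0$ controls $\Theta_\omega(x,s)$ only from above for $s<r$, so producing a \emph{lower} bound $\Theta_\omega(B_1)\ge\alpha\Theta_\omega(B_0)$ at a fixed small scale $\delta_1$ is not automatic and is where the real work of \cite{Lloret-Tolsa} presumably lies. In short, your proposal identifies the right target (Theorem \ref{teomain2}) and the right endgame, but the verification of (a)--(c) for harmonic measure on a general domain is the substance of the cited paper, not a routine check.
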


Remark that, by the definition, $\Theta_{\omega}(x,r)\to0$ as
$r\to0$ for every $x\in Z$ with $Z\subset\pom$ as in the theorem. In a sense, \rf{eqosc} asserts that $\Theta_{\omega}(x,r)$ has arbitrarily large relative oscillations as it tends to $0$ (equivalently, the density $ \Theta_{\omega}(x,\cdot)$ fails to be almost-increasing near $r=0$). 
We recall that a deep result due to Wolff \cite{Wolff-plane}, in the complex plane, asserts that for any open set $\Omega\subset\R^2$, harmonic measure is concentrated in a set
of $\sigma$-finite length, which implies that $\omega(Z)=0$ in this case. In higher dimensions this is not the case, and in fact, one may have
$\dim_H(\omega)>n$ for $\Omega\subset\R^{n+1}$, by another celebrated result of Wolff \cite{Wolff-counterexample}.

\vv

\vv

\section{Preliminaries}\label{secprelim}

In the paper, constants denoted by $C$ or $c$ depend just on the dimension and perhaps other fixed
parameters. We will write $a\lesssim b$ if there is $C>0$ such that $a\leq Cb$ . We write $a\approx b$ if $a\lesssim b\lesssim a$.

\subsection{Measures, capacities, and the Riesz transform}

We denote the Lebesgue measure in $\R^d$ by $\LL^d$, although sometimes we also use the standard notation $dx$ or $dy$ to indicate integration
with respect to Lebesgue measure.

We say that a Radon measure $\mu$ in $\R^d$ has polynomial $n$-growth if there exists some constant $C$ such that
$$\mu(B(x,r))\leq C\,r^n\quad \mbox{ for all $x\in\R^d$, $r>0$.}$$
The measure $\mu$ is called  $n$-Ahlfors regular if
\begin{equation}\label{eqAD1}
C^{-1}r^n\leq \mu(B(x,r))\leq C r^n \quad \mbox{ for all $x\in\supp\mu$ and $0<r\leq \diam(\supp\mu)$.}
\end{equation}
On the other hand, $\mu$ is called uniformly $n$-rectifiable if it is $n$-Ahlfors regular and
there exist constants $\theta, M >0$ such that for all $x \in \supp\mu$ and all $0<r\leq \diam(\supp\mu)$ 
there is a Lipschitz mapping $g$ from the ball $B_n(0,r)$ in $\R^{n}$ to $\R^d$ with $\text{Lip}(g) \leq M$ such that
\begin{equation}\label{eqUR1}
\mu(B(x,r)\cap g(B_{n}(0,r)))\geq \theta r^{n}.
\end{equation}
The notion of uniform rectifiability is a quantitative version of $n$-rectifiability introduced by David and Semmes (see \cite{DS1} and
\cite{DS}) which has attracted much attention because its applications to harmonic analysis and PDE's in rough settings, among other things.
A set $E\subset\R^d$ is $n$-Ahlfors regular if $\HH^n|_E$ is $n$-Ahlfors regular, and it is uniformly $n$-rectifiable if 
$\HH^n|_E$ is uniformly $n$-rectifiable.

Given a set $F\subset\R^{d}$, the notation $M_+(F)$ stands for the
set of (positive) Radon measures supported on $F$, and $M_1(F)$ stands for the
set of Radon probability measures supported on $F$.

For $s>0$, the $s$-dimensional Hausdorff measure is denoted by $\HH^s$, and the $s$-dimensional Hausdorff
content by $\HH^s_\infty$. Recall that, for any set $E\subset \R^{n+1}$,
$$\HH^s_\infty(E) = \inf\Big\{\sum_i \diam(A_i)^s:E\subset\bigcup_i A_i\Big\}.$$
We say that $E$ is lower $s$-content regular if there exists some constant $c>0$ such that
$$\HH^s_\infty(B(x,r)\cap E)\geq c\,r^s\quad \text{ for all $x\in E$ and $0<r\leq\diam(E)$.}$$

Let $\EE$ denote the fundamental solution of the minus Laplacian in $\R^{n+1}$. That is, $$\EE_n(x) = \frac{c_n}{|x|^{n-1}},$$
for $n\geq2$, and 
$$\EE_1(x) = \frac1{2\pi}\,\log \frac1{|x|}$$
in the plane.
For a Radon measure $\mu$, we consider the potential defined by
$$U\mu(x) = \EE * \mu(x).$$
Given a set $F\subset\R^{n+1}$ 
we define the capacity $\capp(F)$ by the identity
\begin{equation}\label{eqicapimu}
	\capp(F) = \frac 1{\inf_{\mu\in M_1(F)} I(\mu)},
\end{equation}
where the infimum is taken over all {\em probability} measures $\mu$ supported on $F$ and $I(\mu)$ is the energy
\begin{equation*}\label{eqimu}
	I(\mu) = \iint \EE_n(x-y)\,d\mu(x)\,d\mu(y) = \int U\mu(x)\,d\mu(x).
\end{equation*}
For $n\geq 2$ and $F\subset \R^{n+1}$, $\capp(F)$ is the Newtonian capacity of $F$, and for $n=1$ and $F\subset\R^2$, 
$\capp(F)$ is the Wiener capacity of $F$. In the plane, the logarithmic capacity $\capp_L$, is defined by
$$\capp_L(F) = e^{-\frac{2\pi}{\capp(F)}}.$$

For $\ve>0$, we define the smooth $\ve$-truncated Riesz transform as follows. Fix a radial non-negative $C^\infty$ bump function 
$\vphi:\R^d\to\R$ which vanishes in $B(0,1)$ and equals $1$ in $\R^d\setminus B(0,2)$. Then for any signed measure $\nu$, we write
$$\wt \RR_\ve\nu(x)  = \int \vphi\Big(\frac{x-y}\ve\Big)\,\frac{x-y}{|x-y|^{n+1}}\,d\nu(y).$$
We define similarly $\wt \RR_{\mu,\ve} f$, for any positive measure $\mu$ and $f\in L^1_{loc}(\mu)$.
It is immediate to check that
$$|\RR_\ve \nu(x) - \wt\RR_\ve \nu(x)|\lesssim M_n\nu(x).$$
Then, for any measure $\mu$ with polynomial $n$-growth, the $L^2(\mu)$ boundedness of $\wt \RR_{\mu,\ve}$ is equivalent to the
the $L^2(\mu)$ boundedness of $\RR_{\mu,\ve}$. The advantage of $\wt \RR_{\mu,\ve}$ over $ \RR_{\mu,\ve}$ is that the kernel
of $\wt \RR_{\mu,\ve}$ is of Calder\'on-Zygmund type, uniformly on $\ve$.

\vv

\subsection{The David-Mattila lattice}

Next we introduce the dyadic lattice of cubes
with small boundaries of David-Mattila \cite{David-Mattila} associated with a Radon measure~$\mu$. 

\begin{theorem}[David, Mattila]
	\label{lemcubs}
	Let $\mu$ be a compactly supported Radon measure in $\R^{d}$.
	Consider two constants $C_0>1$ and $A_0>5000\,C_0$ and denote $E=\supp\mu$. 
	Then there exists a sequence of partitions of $E$ into
	Borel subsets $Q$, $Q\in \DD_{\mu,k}$, with the following properties:
	\begin{itemize}
		\item For each integer $k\geq0$, $E$ is the disjoint union of the ``cubes'' $Q$, $Q\in\DD_{\mu,k}$, and
		if $k<l$, $Q\in\DD_{\mu,l}$, and $R\in\DD_{\mu,k}$, then either $Q\cap R=\varnothing$ or else $Q\subset R$.
		\vv
		
		\item The general position of the cubes $Q$ can be described as follows. For each $k\geq0$ and each cube $Q\in\DD_{\mu,k}$, there is a ball $B(Q)=B(x_Q,r(Q))$ such that
		$$x_Q\in E, \qquad A_0^{-k}\leq r(Q)\leq C_0\,A_0^{-k},$$
		$$E\cap B(Q)\subset Q\subset E\cap 28\,B(Q)=E \cap B(x_Q,28r(Q)),$$
		and
		$$\mbox{the balls\, $5B(Q)$, $Q\in\DD_{\mu,k}$, are disjoint.}$$
		
		\vv
		\item The cubes $Q\in\DD_{\mu,k}$ have small boundaries. That is, for each $Q\in\DD_{\mu,k}$ and each
		integer $l\geq0$, set
		$$N_l^{ext}(Q)= \{x\in E\setminus Q:\,\dist(x,Q)< A_0^{-k-l}\},$$
		$$N_l^{int}(Q)= \{x\in Q:\,\dist(x,E\setminus Q)< A_0^{-k-l}\},$$
		and
		$$N_l(Q)= N_l^{ext}(Q) \cup N_l^{int}(Q).$$
		Then
		\begin{equation}\label{eqsmb2}
			\mu(N_l(Q))\leq (C^{-1}C_0^{-3d-1}A_0)^{-l}\,\mu(90B(Q)).
		\end{equation}
		\vv
		
		\item Denote by $\DD_{\mu,k}^{db}$ the family of cubes $Q\in\DD_{\mu,k}$ for which
		\begin{equation}\label{eqdob22}
			\mu(100B(Q))\leq C_0\,\mu(B(Q)).
		\end{equation}
		We have that $r(Q)=A_0^{-k}$ when $Q\in\DD_{\mu,k}\setminus \DD_{\mu,k}^{db}$
		and
		\begin{equation}\label{eqdob23}
			\mu(100B(Q))\leq C_0^{-l}\,\mu(100^{l+1}B(Q))\quad
			\mbox{for all $l\geq1$ with $100^l\leq C_0$ and $Q\in\DD_{\mu,k}\setminus \DD_{\mu,k}^{db}$.}
		\end{equation}
	\end{itemize}
\end{theorem}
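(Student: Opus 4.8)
The statement is the David--Mattila dyadic lattice, so a complete proof is the one in \cite{David-Mattila}; here is the shape of the argument and where the difficulty lies. The plan is to build the partitions $\DD_{\mu,k}$ by downward recursion on $k$, so that $\DD_{\mu,k+1}$ refines $\DD_{\mu,k}$ automatically. After a harmless normalization, let $\DD_{\mu,0}$ be a finite partition of $E=\supp\mu$ into pieces of controlled size. Given $\DD_{\mu,k}$, inside each $Q\in\DD_{\mu,k}$ I would pick a maximal $\eta A_0^{-(k+1)}$-separated subset $\{x_i\}$ of $Q\cap E$, for a fixed constant $\eta>10$: by maximality the balls $B(x_i,\eta A_0^{-(k+1)})\subset B(x_i,28A_0^{-(k+1)})$ cover $Q\cap E$, while the separation makes the balls $B(x_i,5A_0^{-(k+1)})$ disjoint. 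These $x_i$ become the centres of the children $Q'$ of $Q$: assign each $y\in Q\cap E$ to the nearest centre among the $\{x_i\}$, breaking ties by a fixed global order of $E$. Since all centres lie in $Q$ and each $y\in Q\cap E$ is within $\eta A_0^{-(k+1)}$ of some $x_i$, this produces a partition of $Q\cap E$ with the nesting $Q'\subset Q$ automatic and, for the typical radius $r(Q')=A_0^{-(k+1)}$, the inclusions $E\cap B(Q')\subset Q'\subset E\cap 28B(Q')$; for the cubes where $r(Q')$ is later allowed to exceed $A_0^{-(k+1)}$ one must additionally thin the centre set so as to preserve these inclusions and the disjointness of the balls $5B(Q')$.

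The heart of the matter is the choice of $r(Q')\in[A_0^{-(k+1)},C_0A_0^{-(k+1)}]$ guaranteeing \eqref{eqsmb2} at every sub-scale $l\ge0$ simultaneously. Writing $x=x_{Q'}$ and $r=A_0^{-(k+1)}$, from the elementary bound $\int_r^{C_0r}\mathbf 1_{\{|t-|y-x||<s\}}\,dt\le 2s$ (valid for each fixed $y$ and $s>0$) and Fubini one gets
\begin{equation*}
\int_r^{C_0 r}\mu\big(\{y:\,|t-|y-x||<s\}\big)\,dt\ \le\ 2s\,\mu\big(90B(Q')\big).
\end{equation*}
Applying this with $s=A_0^{-(k+1)-l}$ together with Chebyshev, the set of radii $t\in[r,C_0r]$ for which the corresponding $N_l$-type neighbourhood of $\partial B(x,t)$ has $\mu$-measure exceeding $(C^{-1}C_0^{-3d-1}A_0)^{-l}\,\mu(90B(Q'))$ has length $\lesssim (C^{-1}C_0^{-3d-1})^l\,r$. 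Summing this geometric series over $l$ shows that, once $A_0$ is large relative to $C_0$, the union of all the bad radii occupies only a small fraction of $[r,C_0r]$; hence a good $r(Q')$ exists, and since for it every $N_l(Q')$ sits inside $90B(Q')$, the estimate \eqref{eqsmb2} follows.

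Among the radii good for the small-boundary estimate I would, whenever possible, select one making $B(Q')$ $(100,C_0)$-doubling, i.e.\ $\mu(100B(Q'))\le C_0\mu(B(Q'))$; these cubes form $\DD_{\mu,k+1}^{db}$. If no admissible radius along the geometric chain $A_0^{-(k+1)},100A_0^{-(k+1)},\dots$ inside the allowed range has this property, then I keep $r(Q')=A_0^{-(k+1)}$ --- which is exactly the asserted identity $r(Q')=A_0^{-(k+1)}$ for $Q'\notin\DD_{\mu,k+1}^{db}$ --- and the failure of the doubling inequality at each intermediate scale $100^jA_0^{-(k+1)}$ telescopes into $\mu(100B(Q'))\le C_0^{-l}\mu(100^{l+1}B(Q'))$, which is \eqref{eqdob23}.

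The main obstacle --- and where I would simply follow the quantitative bookkeeping of \cite{David-Mattila} rather than redo it --- is the simultaneous compatibility of all these choices and their clean propagation through infinitely many generations: the pigeonhole selection of $r(Q')$ must leave enough freedom for the doubling selection; the tie-breaking and the thinning needed for the larger radii must spoil neither the inclusions $E\cap B(Q')\subset Q'\subset 28B(Q')$ nor the small-boundary property just arranged for $Q'$; and the various numerical constants ($5$, $28$, $90$, $100$, the disjointness of $5B(Q')$ against the covering by $28B(Q')$) must all close up at once. This is precisely what the hypothesis $A_0>5000\,C_0$ is designed to secure.
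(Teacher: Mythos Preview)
The paper does not prove this theorem: it is stated as a known result due to David and Mattila and simply cited from \cite{David-Mattila}, with no argument given. Your sketch follows the original construction in \cite{David-Mattila} (maximal separated nets for the centres, a pigeonhole/Fubini selection of the radius to secure the small-boundary estimate, and a separate doubling/non-doubling dichotomy for the choice of $r(Q)$), so there is nothing to compare --- you are outlining the source proof that the paper defers to.

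One minor inaccuracy worth flagging: in the actual construction the centres at generation $k+1$ are chosen as a maximal separated net in all of $E$, not separately inside each parent $Q\in\DD_{\mu,k}$; the assignment of each child to a parent is then made afterwards. Choosing centres only inside $Q$ would risk placing a centre very close to $\partial Q$, which conflicts with the small-boundary control at the parent level. This is exactly the kind of ``compatibility bookkeeping'' you already acknowledge deferring to \cite{David-Mattila}, so it does not affect the correctness of your overall plan.
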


\vv

\begin{rem}\label{rema00}
	We choose the constants $C_0$ and $A_0$ so that
	$$A_0 = C_0^{C(d)},$$
	where $C(d)$ depends
	just on $d$ and $C_0$ is big enough.
\end{rem}

We use the notation $\DD_\mu=\bigcup_{k\geq0}\DD_{\mu,k}$. Observe that the families $\DD_{\mu,k}$ are only defined for $k\geq0$. So the diameters of the cubes from $\DD_\mu$ are uniformly
bounded from above.
We set
$\ell(Q)= 56\,C_0\,A_0^{-k}$ and we call it the side length of $Q$. Notice that 
$$C_0^{-1}\ell(Q)\leq \diam(28B(Q))\leq\ell(Q).$$
Observe that $r(Q)\approx\diam(Q)\approx\ell(Q)$.
Also we call $x_Q$ the center of $Q$, and the cube $Q'\in \DD_{\mu,k-1}$ such that $Q'\supset Q$ the parent of $Q$.
We denote the family of cubes from $\DD_{\mu,k+1}$ which are contained in $Q$ by $\Ch(Q)$, and we call their elements children or sons of $Q$.
We set
$B_Q=28 B(Q)=B(x_Q,28\,r(Q))$, so that 
$$E\cap \tfrac1{28}B_Q\subset Q\subset B_Q\subset B(x_Q,\ell(Q)/2).$$

For a given $\gamma\in(0,1)$, let $A_0$ be big enough so that the constant $C^{-1}C_0^{-3d-1}A_0$ in 
\rf{eqsmb2} satisfies 
$$C^{-1}C_0^{-3d-1}A_0>A_0^{\gamma}>10.$$
Then we deduce that, for all $0<\lambda\leq1$,
\begin{align}\label{eqfk490}
	\mu\bigl(\{x\in Q:\dist(x,E\setminus Q)\leq \lambda\,\ell(Q)\}\bigr) + 
	\mu\bigl(\bigl\{x\in 3.5B_Q\setminus Q:\dist&(x,Q)\leq \lambda\,\ell(Q)\}\bigr)\\
	&\leq_\gamma
	c\,\lambda^{\gamma}\,\mu(3.5B_Q).\nonumber
\end{align}

We denote
$\DD_\mu^{db}=\bigcup_{k\geq0}\DD_{\mu,k}^{db}$.
Note that, in particular, from \rf{eqdob22} it follows that
\begin{equation}\label{eqdob*}
	\mu(3B_{Q})\leq \mu(100B(Q))\leq C_0\,\mu(Q)\qquad\mbox{if $Q\in\DD_\mu^{db}.$}
\end{equation}
For this reason we will call the cubes from $\DD_\mu^{db}$ doubling. 
Given $Q\in\DD_\mu$, we denote by $\DD_\mu(Q)$
the family of cubes from $\DD_\mu$ which are contained in $Q$. Analogously,
we write $\DD_\mu^{db}(Q) = \DD^{db}_\mu\cap\DD_\mu(Q)$.

As shown in \cite[Lemma 5.28]{David-Mattila}, every cube $R\in\DD_\mu$ can be covered $\mu$-a.e.\
by a family of doubling cubes from $\DD^{db}_\mu$.

The following result is proved in \cite[Lemma 5.31]{David-Mattila}.
\vv

\begin{lemma}\label{lemcad22}
	Let $k\le j$, $R\in\DD_{\mu,k}$ and $Q\in\DD_{\mu,j}\cap\DD_{\mu}(R)$ be a cube such that all the intermediate cubes $S$,
	$Q\subsetneq S\subsetneq R$ are non-doubling (i.e.\ belong to $\DD_\mu\setminus \DD_\mu^{db}$).
	Suppose that the constants $A_0$ and $C_0$ in Lemma \ref{lemcubs} are
	chosen as in Remark \ref{rema00}. 
	Then
	\begin{equation}\label{eqdk88}
		\mu(100B(Q))\leq A_0^{-10d(j-k-1)}\mu(100B(R)).
	\end{equation}
\end{lemma}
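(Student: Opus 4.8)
The plan is to telescope along the chain of ancestors of $Q$ inside $R$, picking up a fixed negative power of $C_0$ at each non-doubling intermediate cube by means of \eqref{eqdob23}. Write this chain as $Q = S_j\subsetneq S_{j-1}\subsetneq\cdots\subsetneq S_{k+1}\subsetneq S_k = R$, where $S_i$ denotes the ancestor of $Q$ belonging to $\DD_{\mu,i}$. By hypothesis $S_{k+1},\dots,S_{j-1}$ are non-doubling, so $r(S_i)=A_0^{-i}$ for $k+1\le i\le j-1$, while for all cubes we may use the general bound $A_0^{-i}\le r(S_i)\le C_0A_0^{-i}$. If $j\le k+1$ there are no intermediate cubes and the statement reduces to the elementary inclusion recorded next, so I would assume $j\ge k+2$.

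First I would record two inclusions. For any $i$, since $x_{S_i}\in S_i\subset 28B(S_{i-1})$ and (using $A_0>5000C_0$) $r(S_i)\le C_0A_0^{-i}\le (C_0/A_0)\,r(S_{i-1})\le r(S_{i-1})$, one gets $100B(S_i)\subset B\big(x_{S_{i-1}},\,100r(S_i)+28r(S_{i-1})\big)\subset 29B(S_{i-1})\subset 100B(S_{i-1})$; in particular $\mu(100B(Q))\le\mu(100B(S_{j-1}))$, and this also disposes of the cases $j\le k+1$. Next fix an integer $l_0$ depending only on $d$ with $l_0\ge 10d\,C(d)$, and note that, enlarging $C_0$ in Remark \ref{rema00} if needed, we may assume $100^{l_0}\le C_0$. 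For $k+1\le i\le j-1$, apply \eqref{eqdob23} to the non-doubling cube $S_i$ with $l=l_0$ to get $\mu(100B(S_i))\le C_0^{-l_0}\mu(100^{l_0+1}B(S_i))$. Since $100^{l_0+1}r(S_i)\le 100C_0A_0^{-i}\le A_0^{-(i-1)}\le r(S_{i-1})$ — here the key fact is that $A_0=C_0^{C(d)}>100C_0$ — the same center estimate gives $100^{l_0+1}B(S_i)\subset 29B(S_{i-1})\subset 100B(S_{i-1})$, and therefore
$$\mu(100B(S_i))\le C_0^{-l_0}\,\mu(100B(S_{i-1}))\qquad (k+1\le i\le j-1).$$

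Telescoping this inequality from $i=j-1$ down to $i=k+1$ and combining with $\mu(100B(Q))\le\mu(100B(S_{j-1}))$ yields $\mu(100B(Q))\le C_0^{-(j-k-1)l_0}\mu(100B(R))$, and since $C_0^{-l_0}\le C_0^{-10d\,C(d)}=(C_0^{C(d)})^{-10d}=A_0^{-10d}$, this is precisely \eqref{eqdk88}. The only genuinely delicate point is the bookkeeping of constants: $l_0$ must be chosen as a large enough fixed multiple of $d\,C(d)$ so that the per-step gain $C_0^{-l_0}$ beats $A_0^{-10d}$, while at the same time the dilation factor $100^{l_0+1}\lesssim C_0$ produced by \eqref{eqdob23} must still be absorbed by the ratio $r(S_{i-1})/r(S_i)\ge A_0=C_0^{C(d)}$ — and this works exactly because $A_0$ is a large power of $C_0$. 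Note that the small-boundary estimates \eqref{eqsmb2}, \eqref{eqfk490} are not needed for this lemma; only the non-doubling dilation bound \eqref{eqdob23} and the nesting of the lattice are used.
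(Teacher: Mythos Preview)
Your proof is correct and follows exactly the natural telescoping argument along the chain of ancestors, using \eqref{eqdob23} at each non-doubling intermediate cube; this is the same approach as in the cited reference \cite[Lemma 5.31]{David-Mattila}, which is why the paper itself does not reproduce a proof. Your bookkeeping of the constants---choosing an integer $l_0\ge 10d\,C(d)$ and then taking $C_0$ large enough that $100^{l_0}\le C_0$---is precisely the freedom that Remark~\ref{rema00} is meant to allow.
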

\vv

Recall that, given a ball (or an arbitrary set) $B\subset \R^{n+1}$,  its $n$-dimensional density (with respect to $\mu$) is defined by
$\Theta_\mu(B)= \frac{\mu(B)}{\rad(B)^n}.$
From the previous lemma it follows:

\vv
\begin{lemma}\label{lemcad23}
	Let $Q,R\in\DD_\mu$ be as in Lemma \ref{lemcad22}.
	Then
	$$\Theta_\mu(100B(Q))\leq (C_0A_0)^{n+1}\,A_0^{-9d(j-k-1)}\,\Theta_\mu(100B(R))$$
	and
	$$\sum_{S\in\DD_\mu:Q\subset S\subset R}\Theta_\mu(100B(S))\leq c\,\Theta_\mu(100B(R)),$$
	with $c$ depending on $C_0$ and $A_0$.
\end{lemma}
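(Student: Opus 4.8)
The plan is to read off both estimates from the mass bound \rf{eqdk88} in Lemma \ref{lemcad22} by converting masses into densities and controlling the ratio of the radii $r(Q)$ and $r(R)$; no new idea beyond Lemma \ref{lemcad22} is needed.

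First I would establish the pointwise density inequality. By Theorem \ref{lemcubs} one has $A_0^{-j}\le r(Q)$ and $r(R)\le C_0A_0^{-k}$, and by the definition of the density, $\Theta_\mu(100B(Q))=\mu(100B(Q))/(100\,r(Q))^n$ while $\mu(100B(R))=(100\,r(R))^n\,\Theta_\mu(100B(R))$. Dividing \rf{eqdk88} by $(100\,r(Q))^n$ therefore gives
$$\Theta_\mu(100B(Q))\le A_0^{-10d(j-k-1)}\Big(\frac{r(R)}{r(Q)}\Big)^n\Theta_\mu(100B(R))\le C_0^n\,A_0^{\,n(j-k)-10d(j-k-1)}\,\Theta_\mu(100B(R)).$$
The assertion is trivial if $j=k$ (then $Q=R$), so assume $j\ge k+1$; recalling that $d=n+1$ here, one checks that
$$n(j-k)-10d(j-k-1)=-9d(j-k-1)+\big(n(j-k)-d(j-k-1)\big)=-9d(j-k-1)+\big(n+1-(j-k)\big)\le -9d(j-k-1)+n,$$
using $j-k\ge1$. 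Substituting this back and using $C_0^nA_0^n\le (C_0A_0)^{n+1}$ yields
$$\Theta_\mu(100B(Q))\le (C_0A_0)^{n+1}\,A_0^{-9d(j-k-1)}\,\Theta_\mu(100B(R)),$$
which is the first claim.

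Next I would treat the sum. For each generation $i$ with $k\le i\le j$ there is a unique cube $S_i\in\DD_{\mu,i}$ with $Q\subset S_i\subset R$, and these are exactly the cubes appearing in the sum. Every cube strictly between $S_i$ and $R$ also lies strictly between $Q$ and $R$, hence is non-doubling, so the pair $(S_i,R)$ again satisfies the hypotheses of Lemma \ref{lemcad22}; applying the inequality just proved to $(S_i,R)$ gives $\Theta_\mu(100B(S_i))\le (C_0A_0)^{n+1}A_0^{-9d(i-k-1)}\Theta_\mu(100B(R))$. Since $A_0$ is large, the geometric sum $\sum_{i=k}^{j}A_0^{-9d(i-k-1)}$ is at most $2A_0^{9d}$, and therefore
$$\sum_{S\in\DD_\mu:\,Q\subset S\subset R}\Theta_\mu(100B(S))\le 2\,(C_0A_0)^{n+1}A_0^{9d}\,\Theta_\mu(100B(R)),$$
which is the second claim, with the constant depending only on $C_0$ and $A_0$.

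I do not expect any real obstacle: this lemma is a routine corollary of Lemma \ref{lemcad22}. The only points that require some care are the exponent arithmetic $n(j-k)-10d(j-k-1)\le n-9d(j-k-1)$, which relies on $d=n+1$ (or more generally $n<d$) and $j\ge k+1$, and the observation that the hypothesis ``all intermediate cubes are non-doubling'' passes to every sub-chain $(S_i,R)$, which is what allows the first estimate to be applied term by term inside the sum.
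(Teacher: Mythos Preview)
Your proof is correct and follows exactly the natural route; the paper itself does not give a proof but simply refers to \cite[Lemma 4.4]{Tolsa-memo}, where the same argument appears. The only minor remark is that your exponent computation uses $d=n+1$, which is indeed the ambient dimension throughout this paper, so the bound $(C_0A_0)^{n+1}$ is obtained as stated.
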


For the easy proof, see
\cite[Lemma 4.4]{Tolsa-memo}, for example.

We need some additional notation.
Given $Q\in\DD_\mu$ and $\lambda>1$, we denote by $\lambda Q$ the union of cubes $P$ from the same
generation as $Q$ such that $\dist(x_Q,P)\leq \lambda \,\ell(Q)$. Notice that
\begin{equation}\label{eqlambq12}
	\lambda Q\subset B(x_Q,(\lambda+\tfrac12)\ell(Q)).
\end{equation}
Also, we let
$$\DD_\mu(\lambda Q)=\{P\in\DD_\mu:P\subset \lambda Q,\,\ell(P)\leq \ell(Q)\}.$$

The following result is from \cite[Lemma 2.6]{DT}:

\begin{lemma}\label{lemDMimproved}
	Let $\mu$ be a compactly supported Radon measure in $\R^{d}$.
	Assume that $\mu$ has polynomial growth of degree $n$ and let $\gamma\in(0,1)$. The lattice $\DD_\mu$ from Theorem
	\ref{lemcubs} can be constructed so that the following holds for all
	all $Q\in\DD_{\mu}$:
	\begin{align*}
		\int_{2B_Q\setminus Q}\left(\int_Q \frac1{|x-y|^n}\,d\mu(y)\right)^2 d\mu(x)\, 
		+ &\int_{Q}\left(\int_{2B_Q\setminus Q} \frac1{|x-y|^n}\,d\mu(y)\right)^2 d\mu(x)\\
		&\leq C(\gamma)\sum_{P\in\DD_\mu: P\subset 2Q} \left(\frac{\ell(P)}{\ell(Q)}\right)^\gamma\Theta_\mu(2B_P)^2\mu(P).
	\end{align*}
\end{lemma}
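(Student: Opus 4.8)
\textbf{Proof proposal for Lemma \ref{lemDMimproved}.}

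The plan is to reduce the claimed bound to a sum over the dyadic scales of $\DD_\mu$ inside $2Q$, and at each scale exploit the small-boundary property \rf{eqfk490} of the David--Mattila cubes. First I would handle the second integral on the left, $\int_Q\big(\int_{2B_Q\setminus Q}|x-y|^{-n}\,d\mu(y)\big)^2\,d\mu(x)$; the first one is symmetric and treated the same way after interchanging the roles of the regions. For a fixed $x\in Q$, split the domain $2B_Q\setminus Q$ into the dyadic annuli $\{y: 2^{j}\dist(x,E\setminus Q)\leq |x-y| < 2^{j+1}\dist(x,E\setminus Q)\}$ — or, more cleanly, into the sets $U_k=\{y\in 2B_Q\setminus Q: \dist(y,Q)\approx 2^{-k}\ell(Q)\}$ together with the far region where $|x-y|\gtrsim \ell(Q)$. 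Using $n$-polynomial growth of $\mu$, the inner integral over the far region is $\lesssim \Theta_\mu(2B_Q)$, contributing $\lesssim \Theta_\mu(2B_Q)^2\mu(Q)$, which is the $P=Q$ term on the right. For the near region one has $\int_{2B_Q\setminus Q}|x-y|^{-n}\,d\mu(y)\lesssim \sum_{k\geq 0} (2^{-k}\ell(Q))^{-n}\mu\big(\{y\in 2B_Q\setminus Q:\dist(y,Q)\leq 2^{-k}\ell(Q)\}\big)$, and then by \rf{eqfk490} (with exponent $\gamma$) the annular mass is $\lesssim (2^{-k})^{\gamma}\mu(3.5B_Q)$, so the whole inner integral is controlled by a convergent geometric series times $\ell(Q)^{-n}\mu(3.5B_Q)\approx \Theta_\mu(2B_Q)$ — but this alone only recovers the $P=Q$ term again.

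The point of the finer right-hand side is to recover the full sum, which is needed because $x$ ranges over all of $Q$ including points very close to $\partial Q$. So the key step is to not integrate crudely in $x$ but to localize $x$ as well: decompose $Q=\bigcup_k \{x\in Q:\dist(x,E\setminus Q)\approx 2^{-k}\ell(Q)\}$, and on each such shell pair it with the $y$-annuli at comparable or larger scale. The effective interaction is between a shell of $Q$ of width $\sim 2^{-k}\ell(Q)$ and the portion of $2B_Q\setminus Q$ within distance $\sim 2^{-j}\ell(Q)$ of $Q$ for $j\le k$; carrying out the double sum and using \rf{eqfk490} on both sides, the total is bounded by $\sum_{k}(2^{-k})^{\gamma'}\Theta_\mu(2B_Q)^2\mu(3.5B_Q)$ for a suitable $\gamma'<\gamma$. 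To convert $\Theta_\mu(2B_Q)^2\mu(3.5B_Q)$ into the stated sum $\sum_{P\subset 2Q}(\ell(P)/\ell(Q))^\gamma\Theta_\mu(2B_P)^2\mu(P)$, one uses that the shells of $Q$ of width $2^{-k}\ell(Q)$ are covered by cubes $P\in\DD_\mu$ with $\ell(P)\approx 2^{-k}\ell(Q)$ contained in (a slight enlargement of) $2Q$, so $\mu(3.5B_Q)$-portions at that scale are distributed among such $P$'s, and the geometric factor $(2^{-k})^{\gamma'}$ becomes $(\ell(P)/\ell(Q))^{\gamma'}$. Here I would invoke the flexibility in Theorem \ref{lemcubs}: by choosing $A_0$ large (Remark \ref{rema00}), the small-boundary exponent in \rf{eqsmb2}, hence in \rf{eqfk490}, can be taken as close to any target $\gamma$ as we wish, absorbing the loss from $\gamma'$ to $\gamma$ and from comparing $\Theta_\mu(2B_Q)$ with $\Theta_\mu(2B_P)$ for cubes $P$ between scale $2^{-k}\ell(Q)$ and $\ell(Q)$ via Lemma \ref{lemcad23}-type telescoping (non-doubling cubes have rapidly decaying density, doubling ones are comparable to an ancestor).

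The main obstacle I anticipate is the bookkeeping in the double annular decomposition — matching $x$-shells with $y$-annuli and checking that the resulting double geometric sum genuinely converges with a positive power of $\ell(P)/\ell(Q)$ to spare, rather than just barely converging. A second, more subtle point is making sure the right-hand side sum over $P\subset 2Q$ (not merely $P\subset Q$) is actually needed and sufficient: the region $2B_Q\setminus Q$ can meet cubes outside $Q$ but inside $2Q$, and near $\partial Q$ the measure $\mu$ restricted to $2B_Q\setminus Q$ must be charged to those neighboring cubes; here the containment $2Q\subset B(x_Q,(2+\tfrac12)\ell(Q))\supset 2B_Q$ from \rf{eqlambq12} is what guarantees every relevant $P$ is counted. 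Everything else — the polynomial growth estimates for far interactions and the convergence of single geometric series — is routine.
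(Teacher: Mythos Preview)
The paper does not prove this lemma; it simply quotes it from \cite[Lemma~2.6]{DT}. So there is no ``paper's proof'' to compare against. That said, your sketch has a genuine gap.

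The double--shell computation does not close. Take $x$ in the interior shell $S_k=\{x\in Q:\dist(x,E\setminus Q)\approx 2^{-k}\ell(Q)\}$ and decompose the inner integral over $y\in 2B_Q\setminus Q$ into annuli $|x-y|\approx 2^{-j}\ell(Q)$, $0\le j\le k$. Using only the small--boundary bound \rf{eqfk490} at scale $Q$ to estimate the mass of each annulus gives, for each $j$, a contribution $\lesssim (2^{-j}\ell(Q))^{-n}\,(2^{-j})^{\gamma}\mu(3.5B_Q)=2^{j(n-\gamma)}\Theta_\mu(3.5B_Q)$, so the inner integral for $x\in S_k$ is $\lesssim 2^{k(n-\gamma)}\Theta_\mu(2B_Q)$. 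Squaring and integrating over $S_k$ with $\mu(S_k)\lesssim(2^{-k})^{\gamma}\mu(3.5B_Q)$ produces a factor $2^{k(2n-3\gamma)}$ per generation. Since $\gamma<1\le n$, one has $2n-3\gamma>0$ for all $n\ge 2$ (and for $n=1$ unless $\gamma>2/3$), so the sum over $k$ diverges; the claimed ``$\sum_k(2^{-k})^{\gamma'}\Theta_\mu(2B_Q)^2\mu(3.5B_Q)$'' is not what the computation yields. The small--boundary exponent in \rf{eqfk490} is constrained to $(0,1)$ regardless of how large $A_0$ is chosen, so you cannot push $\gamma$ past $n$ to rescue this.

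The missing idea is that the local densities $\Theta_\mu(2B_P)$ must enter \emph{from the start}, not be retrofitted afterwards. For $x\in Q$ with $d(x)=\dist(x,E\setminus Q)$, the correct bound on the annulus $|x-y|\approx 2^{-j}\ell(Q)$ is $\mu(B(x,C2^{-j}\ell(Q)))/(2^{-j}\ell(Q))^n\approx\Theta_\mu(2B_{P_j(x)})$, where $P_j(x)\in\DD_\mu$ is the cube of that scale containing $x$; this captures exactly the information on the right--hand side. Your proposed ``conversion'' via Lemma~\ref{lemcad23} goes in the wrong direction: that lemma says that along non--doubling chains the density of descendants is \emph{smaller}, so one cannot replace $\Theta_\mu(2B_Q)^2$ by $\Theta_\mu(2B_P)^2$ in an upper bound. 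Once the inner integral is controlled by $\sum_{R:\,x\in R\subset Q,\,\ell(R)\gtrsim d(x)}\Theta_\mu(2B_R)$, the remaining work is a weighted Cauchy--Schwarz over that chain together with the small--boundary condition to handle the constraint $\ell(R)\gtrsim d(x)$; this is where the factor $(\ell(P)/\ell(Q))^{\gamma}$ actually comes from.
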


This estimate can be seen as an enhanced version of the small boundaries condition. Indeed, if the measure $\mu$ were $n$-Ahlfors regular,
then the right hand side above would be comparable to $\theta_\mu(2B_Q)^2\mu(Q)$ and the result would be an easy consequence of the
small boundaries condition.

\subsection{$\PP$-doubling cubes, energies, and martingale decompositions in $L^2(\mu)$}

For $Q\in\DD_\mu$,
we denote
$$\PP_\mu(Q) = \sum_{R\in\DD_\mu:R\supset Q} \frac{\ell(Q)}{\ell(R)^{n+1}} \,\mu(2B_R).$$
We say that a cube $Q$ is $\PP$-doubling if
$$\PP_\mu(Q) \leq C_d\,\frac{\mu(2B_Q)}{\ell(Q)^n},$$
for  $C_d =4A_0^n$. 
Notice that
$$\PP_\mu(Q) \approx_{C_0} \sum_{R\in\DD_\mu:R\supset Q} \frac{\ell(Q)}{\ell(R)} \,\Theta_\mu(2B_R).$$
and thus saying that $Q$ is $\PP$-doubling implies that 
$$\sum_{R\in\DD_\mu:R\supset Q} \frac{\ell(Q)}{\ell(R)} \,\Theta_\mu(2B_R)\leq C_d'\,\Theta_\mu(2B_Q)$$
for some $C_d'$ depending on $C_d$. It is also immediate to check that then $2B_Q$ is $\PP_\mu$-doubling, with another constant depending on $C_d$. Conversely, the latter condition implies that $Q$ is $\PP$-doubling with another constant $C_d$ depending on $C_d'$.
The following is proven in \cite[Lemma 3.1]{DT}.

\begin{lemma}\label{lempois00}
	Suppose that $C_0$ and $A_0$ are chosen suitably. If $Q\in\DD_\mu$ is $\PP$-doubling, then $Q\in\DD_\mu^{db}$.
	Also, any cube $R\in\DD_\mu$ such that $R\cap 2Q\neq\varnothing$ and $\ell(R)=A_0\ell(Q)$ belongs to $\DD_\mu^{db}$.
\end{lemma}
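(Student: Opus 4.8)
The plan is to prove both assertions by relating the $\PP$-doubling condition to the non-doubling decay estimate from Lemma \ref{lemcad22} (equivalently Lemma \ref{lemcad23}), and then choosing $C_0$, $A_0$ appropriately so that fast non-doubling decay is incompatible with the $\PP$-doubling bound. First I would prove the contrapositive of the first assertion: suppose $Q\in\DD_\mu\setminus\DD_\mu^{db}$, and look at the chain of cubes $Q=Q_0\subsetneq Q_1\subsetneq\cdots$ above $Q$ up to the first doubling ancestor $R$ (which exists $\mu$-a.e., and in any case one can argue with the first doubling ancestor or go all the way up to the top cube of $\DD_\mu$). If $Q$ is non-doubling then $r(Q)=A_0^{-k}$ and all intermediate cubes between $Q$ and $R$ may or may not be doubling, but the key point is that $Q$ itself being non-doubling forces, via \rf{eqdob23}, a strict loss of density at the bottom step: $\mu(100B(Q))\leq C_0^{-1}\mu(100^2 B(Q))$, and $100^2 B(Q)$ is comparable to a ball of radius $\approx A_0^{-k+1}\approx \ell(Q')$ for the parent $Q'$, so $\Theta_\mu(2B_Q)\lesssim C_0^{-1}(C_0A_0)^{n+1}\Theta_\mu(2B_{Q'})$.

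Then I would estimate $\PP_\mu(Q)$ from below by just its bottom term. By definition $\PP_\mu(Q)\geq \frac{\ell(Q)}{\ell(Q)^{n+1}}\mu(2B_Q)=\Theta_\mu(2B_Q)/\ell(Q)\cdot\ell(Q)$, i.e.\ the $R=Q$ term contributes $\frac{\mu(2B_Q)}{\ell(Q)^n}$. But this is already exactly the quantity appearing on the right side of the $\PP$-doubling inequality up to the factor $C_d=4A_0^n$. So to get a contradiction I instead compare $\PP_\mu(Q)$ with $\frac{\mu(2B_Q)}{\ell(Q)^n}$ using a \emph{larger} lower bound: since $Q$ is non-doubling and hence its density is much smaller than the density of an appropriate doubling ancestor or the density at the scale of a fixed number of generations up, the $\PP$-sum picks up a term from that ancestor $R$ of size $\frac{\ell(Q)}{\ell(R)}\Theta_\mu(2B_R)$, and $\Theta_\mu(2B_R)$ is much bigger than $\Theta_\mu(2B_Q)$ — by a factor that we can make beat $\frac{\ell(R)}{\ell(Q)}\cdot C_d$ once $C_0$ is large, because the density loss from \rf{eqdob23} is by a power of $C_0$ while the geometric decay between scales is only a fixed power of $A_0=C_0^{C(d)}$. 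Concretely, the mechanism: if $\mu(100B(Q))\leq C_0^{-l}\mu(100^{l+1}B(Q))$ for the largest $l$ with $100^l\leq C_0$ (so $l\approx \log C_0/\log 100$), then the density deficit is by a factor $\approx C_0^{l}/(\text{geometric factors})$, which is a super-polynomial-in-$A_0$ quantity, hence dominates $C_d=4A_0^n$ when $C_0$ (and thus $A_0$) is large enough. This contradicts $Q$ being $\PP$-doubling, proving the first statement.

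For the second assertion, let $R\in\DD_\mu$ with $R\cap 2Q\neq\varnothing$ and $\ell(R)=A_0\ell(Q)$, i.e.\ $R$ is essentially the parent-generation cube meeting $2Q$; I want $R\in\DD_\mu^{db}$. Here I would use that $Q$ is $\PP$-doubling to control $\Theta_\mu(2B_R)$ from above by $\Theta_\mu(2B_Q)$ (since $R\supset$ a cube close to $Q$, or more precisely one compares via the $\PP$-sum which contains the term for the common ancestor of $Q$ and $R$), and then observe that if $R$ were non-doubling we could run the density-loss argument of \rf{eqdob23} again to conclude $\Theta_\mu(2B_R)$ is much smaller than $\Theta_\mu(2B_{R'})$ for $R'$ its parent; but $R'\supset Q$ (for $A_0$ large, since $2Q$ meets $R$ and $\ell(R')=A_0^2\ell(Q)$ forces $R'\supset B(x_Q,\cdot)$), so $\PP_\mu(Q)$ picks up the term $\frac{\ell(Q)}{\ell(R')}\Theta_\mu(2B_{R'})$ which, using the density gap, again violates $\PP$-doubling of $Q$. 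The main obstacle I expect is bookkeeping: tracking the comparability constants between $2B_Q$, $100B(Q)$, $100^{l+1}B(Q)$ and the balls $2B_R$ for the relevant ancestors, and verifying that the density loss from \rf{eqdob23} (a power of $C_0$) genuinely beats all the accumulated geometric factors (powers of $A_0=C_0^{C(d)}$ and $C_d=4A_0^n$) — this is precisely where ``$C_0$ and $A_0$ chosen suitably'' is used, and one must make sure the choice is consistent with Remark \ref{rema00}. The actual inequalities are routine once the scales are lined up correctly.
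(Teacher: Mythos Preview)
Your strategy for the first assertion is correct and is essentially the natural argument: if $Q\notin\DD_\mu^{db}$ then \rf{eqdob23} (with the maximal $l$ satisfying $100^l\le C_0$) gives $\mu(2B_Q)\le\mu(100B(Q))\le C_0^{-l}\mu(100^{l+1}B(Q))$, and since $100^{l+1}r(Q)\le 100C_0A_0^{-k}\ll A_0^{-(k-1)}\le r(Q')$ one checks $100^{l+1}B(Q)\subset 2B_{Q'}$; the $Q'$-term in $\PP_\mu(Q)$ then forces $C_0^{l}\le 4A_0^{2n+1}$, which fails once $C_0$ is large because $l\approx\log_{100}C_0$ grows while $A_0=C_0^{C(d)}$ is a fixed power. (The paper does not give a proof here; it cites \cite[Lemma 3.1]{DT}, where the argument is of this type.)

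For the second assertion your plan has a genuine gap. You write that the parent $R'$ of $R$ satisfies $R'\supset Q$ ``for $A_0$ large, since $2Q$ meets $R$ and $\ell(R')=A_0^2\ell(Q)$ forces $R'\supset B(x_Q,\cdot)$''. This is false in general: in the David--Mattila lattice, cubes of a fixed generation partition $\supp\mu$, so $Q$ has a \emph{unique} ancestor $Q''$ at level $k-2$, and there is no reason $R'=Q''$. Proximity and large side length do not force containment. Since $\PP_\mu(Q)$ sums only over ancestors of $Q$, the term for $R'$ simply does not appear, and your contradiction does not close.

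The fix is to route everything through $Q''$ instead of $R'$. From $R\cap 2Q\neq\varnothing$ and \rf{eqlambq12} one gets $|x_R-x_Q|\le 28r(R)+2.5\ell(Q)$, and then a direct computation (using $A_0>5000C_0$) shows both $B(Q)\subset 100B(R)$ and $100^{l+1}B(R)\subset 2B_{Q''}$. The first inclusion, together with the already--proved fact that $Q\in\DD_\mu^{db}$ (so $\mu(2B_Q)\le C_0\mu(B(Q))$), gives $\mu(100B(R))\ge C_0^{-1}\mu(2B_Q)$. If $R\notin\DD_\mu^{db}$, then \rf{eqdob23} yields $\mu(2B_{Q''})\ge C_0^{l}\mu(100B(R))\ge C_0^{l-1}\mu(2B_Q)$, while the $Q''$-term in the $\PP$-doubling inequality for $Q$ forces $\mu(2B_{Q''})\le 4A_0^{3n+2}\mu(2B_Q)$. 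For $C_0$ large this is again a contradiction, by the same super-polynomial versus polynomial comparison as before.
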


For technical reasons, in \cite{DT} the following discrete version of the density $\Theta_\mu$ is introduced. Given  $Q\in\DD_\mu$, we let
$$\Theta_\mu(Q) = A_0^{kn} \quad \mbox{ if\, $\dfrac{\mu(2B_Q)}{\ell(Q)^n}\in [A_0^{kn},A_0^{(k+1)n})$}.$$
Clearly, $\Theta_\mu(Q)\approx \Theta_\mu(2B_Q)$. This discrete version of the density is appropriate for some of the stopping time arguments
involved in \cite{DT}.

Given $Q\in\DD_\mu$ and $k\geq1$, we denote by $\hd^k(Q)$ the family of maximal cubes $P\in\DD_\mu$ satisfying
\begin{equation}\label{a0tilde}
	\ell(P)<\ell(Q), \qquad \Theta_\mu(P)\geq  A_0^{kn}\Theta_\mu(Q).
\end{equation}
For given $\lambda\geq1$ and $Q\in\DD_\mu$, we consider the energies
$$\EE(\lambda Q) = \sum_{P\in\DD_\mu(\lambda Q)} \left(\frac{\ell(P)}{\ell(Q)}\right)^{3/4}\Theta_\mu(P)^2\,\mu(P)$$
and
$$\EE_\infty(\lambda Q) = \sup_{k\geq1}
\sum_{P\in\hd^k(Q)\cap\DD_\mu(\lambda Q)} \left(\frac{\ell(P)}{\ell(Q)}\right)^{\!1/2}\Theta_\mu(P)^2\,\mu(P).$$
These energies play an important role in some of the estimates involving the Riesz transform in \cite{DT}.
By Lemma 3.4 from \cite{DT}, for every $Q\in\DD_\mu$ we have
$$\EE(\lambda Q)  \lesssim\EE_\infty(\lambda Q).$$
For some fixed $M_0\gg1$, we say that $Q\in\DD_\mu$ is dominated from below, and we write $Q\in\DB(M_0)$, if
\begin{equation}\label{eqdefBB} 
\EE_\infty(9 Q)\geq M_0^2\,\Theta_\mu(Q)^2\mu(9Q).
\end{equation}

For $f\in L^2(\mu)$ and $Q\in\DD_\mu$ we define
\begin{equation}\label{eqdq1}
	\Delta_Q f=\sum_{S\in\Ch(Q)}m_{\mu,S}(f)\chi_S-m_{\mu,Q}(f)\chi_Q,
\end{equation}
where $m_{\mu,S}(f)$ stands for the average of $f$ on $S$ with respect to $\mu$.
Then, for any cube $R\in\DD_\mu$, we have the orthogonal expansion 
$$\chi_{R} \bigl(f - m_{\mu,R}(f)\bigr) = \sum_{Q\in\DD_\mu(R)}\Delta_Q f,$$
in the $L^2(\mu)$-sense, so that
$$\|\chi_{R} \bigl(f - m_{\mu,R}(f)\|_{L^2(\mu)}^2 = \sum_{Q\in\DD_\mu(R)}\|\Delta_Q f\|_{L^2(\mu)}^2.$$

\vv


\section{The approximation lemma}

In this section we introduce some approximating measures and we will prove a technical result which will be necessary in the next sections.

Let $\mu$ be a Radon measure in $\R^{n+1}$ and let $B_0\subset\R^{n+1}$ be a closed ball. Suppose that $M_n\mu\in L^2(\mu|_{2B_0})$ and 
$\RR_*\mu\in L^2(\mu|_{2B_0})$. 
We will define two approximating measures for $\mu$. To this end, we consider the dyadic lattice $\DD_\mu$ associated with $\mu$  from Theorem \ref{lemcubs} and an integer $k$ such that $\ell(Q)\ll \rad(B_0)$ for the cubes $Q\in\DD_{\mu,k}$. 
For each $Q\in\DD_{\mu,k}$, let $S_Q$ be a ball in $x_Q$, the center of $Q$, with radius
$\rad(B(Q))/10$. Set $\Gamma_Q= \partial S_Q$. Denote by $I_k$ the family of cubes from $\DD_{\mu,k}$ that intersect $2B_0$.
Then we let
\begin{equation}\label{eqaprox98}
\sigma_k = \chi_{\R^{n+1}\setminus 2B_0}\,\mu + \sum_{Q\in I_k} \frac{\mu(Q\cap 2B_0)}{\HH^n(\Gamma_Q)}\,\HH^n|_{\Gamma_Q}
\end{equation}
and
\begin{equation}\label{eqaprox99}
\wt \sigma_k = \chi_{\R^{n+1}\setminus 2B_0}\,\mu + \sum_{Q\in I_{B_0}} \frac{\mu(Q\cap 2B_0)}{\HH^{n+1}(S_Q)}\,\HH^{n+1}|_{S_Q}.
\end{equation}
\vv

\begin{lemma}[Approximation Lemma]\label{lemmapprox}
Let $\mu$ be a Radon measure in $\R^{n+1}$ and let $B_0\subset\R^{n+1}$ be a closed ball. Suppose that $M_n\mu\in L^2(\mu|_{2B_0})$ and 
$\RR_*\mu\in L^2(\mu|_{2B_0})$. Let $\sigma_k$ and $\wt \sigma_k$ be as above.
Then the functions $M_n\sigma_k$, $\RR_*\sigma_k$ belong to $L^2(\sigma_k|_{2B_0})$, and  $M_n\wt \sigma_k$, $\RR_*\wt \sigma_k$ belong to $L^2(\wt \sigma_k|_{2B_0})$. Moreover,
\begin{align}\label{eqaprox1}
\limsup_{k\to \infty} \int_{2B_0} |\RR\sigma_k - m_{\sigma_k,2B_0}& (\RR\sigma_k)|^2\,d\sigma_k\\
& \leq 2
\int_{2B_0} |\RR\mu - m_{\mu,2B_0}(\RR\mu)|^2\,d\mu + C\int_{2B_0} \theta_\mu^{n,*}(x)^2\,d\mu(x).\nonumber
\end{align}
The same estimate holds with $\sigma_k$ replaced by $\wt \sigma_k$.
\end{lemma}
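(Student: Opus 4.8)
The plan is to treat the two approximating measures $\sigma_k$ and $\wt\sigma_k$ in parallel, since the only difference is whether we smear the mass $\mu(Q\cap 2B_0)$ over the sphere $\Gamma_Q=\partial S_Q$ or over the ball $S_Q$, both of which are contained in a small ball around $x_Q$ of radius $\approx \rad(B(Q))/10 \ll \rad(B_0)$. First I would check the membership statements: for each fixed $k$, the measure $\sigma_k$ restricted to $2B_0$ is a finite sum of pieces that are either portions of $\mu$ (away from $2B_0$, which does not affect $L^2(\sigma_k|_{2B_0})$ bounds) or smooth $n$-dimensional surface measures on finitely many spheres with bounded density; each such piece has polynomial $n$-growth with a constant that may depend on $k$ but is finite, so $M_n\sigma_k$ and $\RR_*\sigma_k$ are bounded on $2B_0$ up to the influence of the far-away part $\chi_{\R^{n+1}\setminus 2B_0}\mu$. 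For that far-away part one uses $M_n\mu\in L^2(\mu|_{2B_0})$ and $\RR_*\mu\in L^2(\mu|_{2B_0})$ together with the fact that $\sigma_k$ and $\mu$ agree outside $2B_0$ and have comparable mass distributions on a scale $\gg \ell(Q)$; the qualitative finiteness for fixed $k$ is routine.

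The heart of the matter is the oscillation estimate \rf{eqaprox1}. The natural decomposition is $\RR\sigma_k = \RR(\chi_{\R^{n+1}\setminus 2B_0}\mu) + \sum_{Q\in I_k}\RR(\nu_Q^k)$, where $\nu_Q^k = \frac{\mu(Q\cap 2B_0)}{\HH^n(\Gamma_Q)}\HH^n|_{\Gamma_Q}$, and similarly $\RR\mu = \RR(\chi_{\R^{n+1}\setminus 2B_0}\mu) + \sum_{Q\in I_k}\RR(\chi_{Q\cap 2B_0}\mu)$. The common far term cancels in the oscillation, so it suffices to compare $\RR(\sum_Q \nu_Q^k)$ with $\RR(\sum_Q \chi_{Q\cap 2B_0}\mu)$. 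For a point $x$ lying in some cube $R\in I_k$, I would split the sum over $Q$ into the near cube(s) $Q$ with $2B_Q\cap 2B_R\neq\varnothing$ and the far cubes. For the far cubes, since $\nu_Q^k$ and $\chi_{Q\cap 2B_0}\mu$ have the same total mass and are supported within distance $O(\ell(Q))$ of $x_Q$ while $|x-x_Q|\gtrsim \ell(Q)$, a standard kernel-smoothness (first-order Taylor) estimate gives
\begin{equation*}
\Big|\RR(\nu_Q^k-\chi_{Q\cap 2B_0}\mu)(x)\Big| \lesssim \frac{\ell(Q)\,\mu(Q\cap 2B_0)}{|x-x_Q|^{n+1}},
\end{equation*}
and summing these contributions and using the small-boundary/polynomial-growth structure of $\DD_\mu$ (Lemma \ref{lemDMimproved}-type control, or a direct geometric summation) one bounds the far part in $L^2(\sigma_k|_{2B_0})$ by $C\int_{2B_0}\theta_\mu^{n,*}(x)^2\,d\mu(x)$ plus an error that tends to $0$ as $k\to\infty$; the point is that each term carries a factor $\ell(Q)/|x-x_Q|$ which, after the $k\to\infty$ limit, is controlled by how much the density $\Theta_\mu(2B_Q)$ can concentrate, i.e. by $\theta_\mu^{n,*}$. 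For the near cubes, I would estimate the two terms separately in $L^2$: $\int_{R}|\RR(\chi_{Q\cap 2B_0}\mu)|^2\,d\mu \lesssim \Theta_\mu(2B_R)^2\mu(\text{stuff})$ by $M_n\mu\in L^2(\mu|_{2B_0})$ and a local $T1$/local boundedness argument, and the analogous bound for $\int_{\Gamma_R}|\RR(\nu_Q^k)|^2\,d\sigma_k$ using that $\sigma_k$ is $n$-Ahlfors regular at scale $\ell(Q)$ on each sphere; again both are absorbed into $C\int_{2B_0}\theta_\mu^{n,*}(x)^2\,d\mu(x)$ after letting $k\to\infty$, because the measure of the ``bad'' set where the local density at scale $\ell(Q)$ is much larger than $\theta_\mu^{n,*}$ shrinks. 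Finally, on the main (non-oscillating) part one uses that $\sigma_k\to\mu$ weakly on $2B_0$ and that $\RR$ acts nicely, so that $m_{\sigma_k,2B_0}(\RR\sigma_k)\to m_{\mu,2B_0}(\RR\mu)$ and $\int_{2B_0}|\RR\mu - m_{\mu,2B_0}(\RR\mu)|^2\,d\sigma_k \to \int_{2B_0}|\RR\mu-m_{\mu,2B_0}(\RR\mu)|^2\,d\mu$; combining with the factor $2$ (which absorbs a cross term via $\|a+b\|^2\le 2\|a\|^2+2\|b\|^2$) yields \rf{eqaprox1}.

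The main obstacle I anticipate is making the $k\to\infty$ limit rigorous while keeping all constants uniform: one must show that the ``comparison error'' $\RR(\sigma_k - \mu)$ restricted to $2B_0$, measured in $L^2(\sigma_k|_{2B_0})$, is bounded by $o_k(1) + C\int_{2B_0}\theta_\mu^{n,*}(x)^2\,d\mu(x)$ with $C$ independent of $k$. The delicate point is the interaction between a cube $R$ where $x$ sits and nearby cubes $Q$ of comparable size: there the kernel is not smooth on the relevant scale, so one cannot use the Taylor estimate, and instead one must exploit cancellation and the polynomial growth bound together with the fact that $\sigma_k|_{Q}$ and $\mu|_{Q}$ have equal mass and comparable supports — this is where one genuinely needs that the surplus over $\theta_\mu^{n,*}$ disappears in the limit. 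I would handle this by fixing a large auxiliary parameter, splitting $I_k$ into cubes where $\Theta_\mu(2B_Q)$ exceeds $2\theta_\mu^{n,*}$ on a definite fraction of $Q$ (a set of small $\mu$-measure for $k$ large, by Egorov/dominated convergence on $\theta_\mu^{n,*}=\lim_{r\to 0}\Theta_\mu(\cdot,r)$ along the relevant scales) and the complement, treating the former by crude $L^2$ bounds times a small measure and the latter by the density bound $\approx\theta_\mu^{n,*}$.
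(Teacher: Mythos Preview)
Your far-cube estimate is essentially correct and matches the paper: summing $\ell(Q)\,\mu(Q\cap 2B_0)/|x-x_Q|^{n+1}$ over the distant cubes produces $\PP_\mu(B(x,\ell_k))$, whose $\limsup$ as $k\to\infty$ is indeed controlled by $\theta_\mu^{n,*}(x)$. The gaps are in the other two pieces.

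\textbf{The near-cube term.} For $x\in\Gamma_R$ and $Q=R$ (or a neighbor) you need $\RR(\chi_{R\cap 2B_0}\mu)(x)$ in $L^2(\sigma_k|_{\Gamma_R})$. But $\mu|_R$ can concentrate arbitrarily close to the sphere $\Gamma_R$; nothing rules out $\int_{\Gamma_R}\big(\int_R|x-y|^{-n}d\mu(y)\big)^2\,d\HH^n(x)$ being enormous for fixed $k$. Your proposed fix --- split into cubes where $\Theta_\mu(2B_Q)\gg\theta_\mu^{n,*}$ and the rest --- does not help, because even on a ``good'' cube with $\Theta_\mu(2B_Q)\approx\theta_\mu^{n,*}$, the quantity $\int_R|x-y|^{-n}d\mu(y)$ depends on densities at \emph{all} scales below $\ell(Q)$, not just at scale $\ell(Q)$. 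Similarly, your convergence $\int_{2B_0}|\RR\mu-c|^2\,d\sigma_k\to\int_{2B_0}|\RR\mu-c|^2\,d\mu$ is unjustified: on $\bigcup_Q\Gamma_Q$ the function $\RR\mu$ is the absolutely convergent integral, which need not lie in $L^2(\sigma_k)$ uniformly in $k$, and weak convergence of $\sigma_k$ alone is not enough.

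\textbf{The missing idea.} The paper sidesteps both issues by working with the \emph{smooth truncated} transform $\wt\RR_\ve$ for $\ve\ge\ell_k:=A_0^{-k}$. The kernel $K_\ve$ then satisfies $|\nabla K_\ve(z)|\lesssim(\ve+|z|)^{-(n+1)}$ globally, so the Taylor estimate $|K_\ve(x-z)-K_\ve(x-z_P)|\lesssim \ell_k/(\ve+|x-z|)^{n+1}$ is valid for \emph{every} $P\in I_k$, near or far. Summing gives $|\wt\RR_\ve\sigma_k(x)-\wt\RR_\ve\mu(x)|\lesssim\PP_\mu(Q)$, and since $\wt\RR_\ve\mu$ is continuous, also $|\wt\RR_\ve\mu(x)-\wt\RR_\ve\mu(y)|\lesssim\PP_\mu(Q)$ for $x\in\Gamma_Q$, $y\in Q$. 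The scales $\ve<\ell_k$ contribute only $O(\Theta_\mu(B(x,C\ell_k)))$ because $\sigma_k$ is smooth on each sphere. Hence for any constant $\rho$ and $x\in\Gamma_Q$,
\[
|\RR\sigma_k(x)-\rho|\le\inf_{y\in Q\cap 2B_0}|\wt\RR_{\ell_k}\mu(y)-\rho|+C\,\PP_\mu(Q).
\]
This compares $\RR\sigma_k$ at $x\in\Gamma_Q$ with $\wt\RR_{\ell_k}\mu$ at $y\in Q$ --- \emph{different} points, paired via the mass identity $\sigma_k(\Gamma_Q)=\mu(Q\cap 2B_0)$. Squaring and integrating yields $\int_{2B_0}|\wt\RR_{\ell_k}\mu-\rho|^2\,d\mu+C\int_{2B_0}\PP_\mu(B(\cdot,\ell_k))^2\,d\mu$; the first term converges to $\int|\RR\mu-\rho|^2\,d\mu$ by dominated convergence (majorant $\RR_*\mu+M_n\mu\in L^2(\mu|_{2B_0})$), and the second has $\limsup\lesssim\int(\theta_\mu^{n,*})^2\,d\mu$. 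Choosing $\rho=m_{\mu,2B_0}(\RR\mu)$ and using that the $\sigma_k$-mean minimizes the left side gives \rf{eqaprox1}. No weak convergence of $\RR\mu$ and no separate near-cube analysis are needed.
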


\begin{proof}
The fact that $M_n\sigma_k\in L^2(\sigma_k|_{2B_0})$ and $M_n\wt\sigma_k\in L^2(\sigma_k|_{2B_0})$ follow easily by checking that
for any $x\in 2B_0 \cap S_Q$, with $Q\in I_k$,
\begin{equation}\label{eqaprox0}
M_n\sigma_k(x) + M_n\wt\sigma_k(x) \lesssim \inf_{y\in Q\cap2B_0} M_n\mu(y).
\end{equation}
We leave the details for the reader.

Next we show that $\RR_*\sigma_k\in L^2(\sigma_k|_{2B_0})$, and \rf{eqaprox1}.
We will use the smooth truncated Riesz transform $\RE$. We take first $\ve\geq A_0^{-k}$. Then for $x\in \Gamma_Q$,
 with $Q\in I_k$, 
 we set
\begin{align*}
\RE\sigma_k(x) - \RE\mu(x) &=  \sum_{P\in I_k} \big(\RE(\chi_{\Gamma_P}\sigma_k)(x) - \RE(\chi_{P\cap 2B_0}\mu)(x)\big)\\
& =  \sum_{P\in I_k}\int K_\ve(x-z)\,d(\sigma_k|_{\Gamma_P} -\mu|_{P\cap2B_0})(z),
\end{align*}
where $K_\ve$ is the kernel of $\RE$.  
Using that $\sigma_k(\Gamma_P) = \mu(P\cap2B_0)$ and denoting $\ell_k= A_0^{-k}$, we deduce
\begin{align*}
|\RE\sigma_k(x) - \RE\mu(x)| & \leq  
\sum_{P\in I_k}\int |K_\ve(x-z) - K_\ve(x-z_P)|\,d|\sigma_k|_{\Gamma_P} -\mu|_{P\cap2B_0}|(z)\\
& \lesssim  
\sum_{P\in I_k}\int \frac{\ell_k}{(\ve + |x-z|)^{n+1}} \,d|\sigma_k|_{\Gamma_P} -\mu|_{P\cap2B_0}|(z)\\
& \lesssim  
\sum_{P\in I_k}\frac{\ell_k\,\mu(P)}{(\ve + \dist(x,P))^{n+1}} \\
 &\lesssim  \frac{\ell_k\,\mu(B(x,C\ell_k))}{\ve^{n+1}} +
\sum_{j\geq 0}\sum_{\substack{P\in I_k:\\ 2^j\ell_k\leq \dist(x,P)<2^{j+1}\ell_k}}\!\!\frac{\ell_k\,\mu(P)}{\dist(x,P)^{n+1}}.
\end{align*}

Recalling that $\ve\geq\ell_k$, the first summand can be bounded above by $\Theta_\mu (B(x,C\ell_k))$.
We estimate the last sum as follows
$$
\sum_{j\geq 0}\sum_{\substack{P\in I_k:\\ 2^j\ell_k\leq \dist(x,P)<2^{j+1}\ell_k}}\!\!\frac{\ell_k\,\mu(P)}{\dist(x,P)^{n+1}} 
\lesssim \sum_{j\geq 0}\frac{\ell_k}{2^j \ell(P)} \,\frac{\mu(B(x_P,2^j\ell(P))}{(2^j \ell(P))^n} \lesssim \PP_\mu(Q).
$$
Thus,
$$|\RE\sigma_k(x) - \RE\mu(x)|\lesssim \PP_\mu(Q).$$

On the other hand, for $x\in \Gamma_Q$ and $y\in Q$, we have
$$|\RE\mu(x) - \RE\mu(y)|\lesssim \int |K_\ve(x-z) - K_\ve(y-z)| \,d\mu(z) \lesssim \int\frac{\ell_k}{(\ve + |x-z|)^{n+1}} \,d\mu(z)
\lesssim 
 \PP_\mu(Q).$$
Then, by the triangle inequality,
$$|\RE\sigma_k(x) - \RE\mu(y)|\lesssim \PP_\mu(Q).$$
So, for any constant $\rho\in\R^{n+1}$, we have
\begin{equation}\label{eqaprox2}
|\RE\sigma_k(x)-\rho|\leq \inf_{y\in Q\cap2B_0} |\RE\mu(y)-\rho| + C\PP_\mu(Q)\quad \mbox{ for all $x\in \Gamma_Q$, $\ve\geq\ell_k$.}
\end{equation}

Using that 
$$\sup_{\ve>0}|\RE\mu(y)|\leq \RR_*\mu(y) + M_n\mu(y)$$
and taking $\rho=0$ in \rf{eqaprox2}, we obtain
\begin{equation}\label{eqaprox4} 
\sup_{\ve\geq\ell_k} |\RE\sigma_k(x)|\leq  \inf_{y\in Q\cap2B_0}(\RR_* \mu(y)+ CM_n\mu(y)) + C\PP_\mu(Q)\leq \inf_{y\in Q\cap2B_0}(\RR_* \mu(y)+ M_n\mu(y)).
\end{equation}
On the other hand, from the construction of $\sigma_k$, for $0<\ve\leq \ell_k$, using the smoothness of $\Gamma_Q$, it follows easily that
\begin{equation}\label{eqaprox5}
|\RE \sigma_k(x) - \wt\RR_{\ell_k}\sigma_k(x)|\lesssim \Theta_\mu (B(x,C\ell_k))\quad \mbox{ for all $x\in \Gamma_Q$,}
\end{equation}
for some fixed $C>1$. 
Together with \rf{eqaprox4} and \rf{eqaprox0}, this yields
$$ \RR_* \sigma_k(x) \leq  \sup_{\ve >0} |\RE\sigma_k(x)|+CM_n\sigma_k(x) \leq
 \inf_{y\in Q\cap2B_0}(\RR_* \mu(y)+ CM_n\mu(y)).
$$
Squaring and integrating on $2B_0$, we obtain
\begin{align*}
\int_{2B_0} |\RR_* \sigma_k|^2\,d\sigma_k & \leq \sum_{Q\in I_k} \sup_{x\in \Gamma_Q}|\RR_* \sigma_k(x)|^2\,\sigma_k(Q)\\
&\leq \sum_{Q\in I_k} \inf_{y\in Q\cap 2B_0}|(\RR_* \mu(y) + CM_n\mu(y))^2\,\mu(Q\cap 2B_0)\\
&\leq 2\int_{2B_0} |\RR_* \mu|^2\,d\mu + C\int_{2B_0} |M_n \mu|^2\,d\mu<\infty,
\end{align*}
which proves that $\RR_*\sigma_k\in L^2(\sigma_k)$.

Next we will show \rf{eqaprox1}. Taking $\rho = m_{\mu,2B_0} (\RR\mu)$ in \rf{eqaprox2}, 
and combining this with \rf{eqaprox5}
we obtain, for all $x\in \Gamma_Q$,
$$|\RR\sigma_k(x)-\rho|\leq |\wt\RR_{\ell_k}\sigma_k(x)- \rho| +\Theta_\mu (B(x,C\ell_k))\leq
\inf_{y\in Q\cap2B_0} |\wt\RR_{\ell_k}\mu(y)-\rho| + C\PP_\mu(Q).
$$
Squaring and integrating on $2B_0$ and taking into account that $\PP_\mu(Q)\approx \PP_\mu(B(x,\ell_k))$ for all $x\in Q\in I_k$, we deduce
\begin{align*}
\int_{2B_0} |\RR\sigma_k - m_{\sigma_k,2B_0} (\RR\sigma_k)|^2\,d\sigma_k &\leq 
\int_{2B_0}|\RR\sigma_k-\rho|^2\,d\sigma_k \\
&\leq 2 \int_{2B_0}|\wt\RR_{\ell_k}\mu-\rho|^2\,d\mu + C\int_{2B_0}\PP_\mu(B(x,\ell_k))^2\,d\mu(x).
\end{align*}
Since $\RR_*\mu\in L^2(\mu|_{2B_0})$, by dominated convergence, it follows that
$$\lim_{k\to\infty} \int_{2B_0}|\wt\RR_{\ell_k}\mu-\rho|^2\,d\mu = \int_{2B_0}|\RR\mu(x)-\rho|^2\,d\mu=
\int_{2B_0}|\RR\mu-m_{\mu,2B_0}(\RR\mu)|^2\,d\mu.$$
Hence, to prove \rf{eqaprox1}, it suffices to show that
$$\limsup_{k\to\infty} 
\int_{2B_0}\PP_\mu(B(x,\ell_k))^2\,d\mu(x) \lesssim \int_{2B_0}\theta_\mu^{n,*}(x)^2\,d\mu(x).$$
It is easy to check that if $M_n\mu(x)<\infty$, then
$$\limsup_{k\to\infty}\PP_\mu(B(x,\ell_k)) \lesssim \theta_\mu^{n,*}(x).$$
By dominated convergence, then
\begin{align*}
\limsup_{k\to\infty} 
\int_{2B_0}\PP_\mu(B(x,\ell_k))^2\,d\mu(x) & \leq \lim_{k\to\infty} 
\int_{2B_0}\sup_{j\geq k}\PP_\mu(B(x,\ell_j))^2\,d\mu(x) \\ 
&=
\int_{2B_0}\limsup_{k\to\infty}\PP_\mu(B(x,\ell_j))^2\,d\mu(x) \lesssim \int_{2B_0}\theta_\mu^{n,*}(x)^2\,d\mu(x),
\end{align*}
as wished. This concludes the proof of \rf{eqaprox1}.

The proof \rf{eqaprox1} for $\wt\sigma_k$ is essentially the same as the one above for $\sigma_k$ and we skip it.
\end{proof}

\vv

\vv

\section{The proof of Theorem \ref{teoDTlocal00}}

The first step for the proof of Theorem \ref{teoDTlocal00} is the following.

\begin{propo}\label{propoDT0}
Let $\mu$ be a finite Radon measure in $\R^{n+1}$ and let $S_0\in\DD_\mu$ be a $\PP$-doubling cube from the associated modified David-Mattila lattice.
 There exists some absolute constant $A>2$ such that if 
  $M_n\mu\in L^2(\mu|_{A S_0})$,
$\RR_*\mu\in L^2(\mu|_{AS_0})$, and for $\mu$-a.e.\ $x\in AS_0$ there exists a sequence of $\PP$-doubling cubes $P_k\in\DD_\mu$ with $x\in P_k$ and $\ell(P_k)\to0$,
 then we have
\begin{multline*}
\int_{S_0} |M_n(\chi_{S_0}\mu)|^2\,d\mu + \int_{S_0}\int_0^{\ell(S_0)} \beta_{2,\mu}(x,r)^2\,\Theta_\mu(x,r)\,\frac{dr}r\,d\mu(x)\\
 \lesssim
\int_{AS_0} |\RR\mu - m_{\mu,AS_0}(\RR\mu)|^2\,d\mu + \Theta_\mu(S_0)^2\mu(S_0) + \int_{AS_0} \theta_\mu^{n,*}(x)^2\,d\mu(x),
\end{multline*}
with the implicit constant depending only on $n$.
\end{propo}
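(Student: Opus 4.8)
The plan is to run a corona (stopping-time) decomposition of the David--Mattila lattice $\DD_\mu$ inside $AS_0$, adapted to the density $\Theta_\mu$ and to the energies $\EE_\infty$, following the scheme of the proof of Theorem~A in \cite{DT}, and to track carefully the error terms: the new term $\Theta_\mu(S_0)^2\mu(S_0)$ plays the role of the top scale, and $\int_{AS_0}\theta_\mu^{n,*}(x)^2\,d\mu(x)$ absorbs deeply nested high-density chains; the absolute constant $A>2$ is chosen large enough that the oscillation of $\RR\mu$ over $AS_0$ dominates what happens in the trees rooted near $S_0$. First I would pass to dyadic sums: by the small boundaries property of $\DD_\mu$ and Lemma~\ref{lemDMimproved}, the square-function term is comparable to $\sum_{Q\in\DD_\mu(S_0)}\beta_{2,\mu}(2B_Q)^2\,\Theta_\mu(2B_Q)\,\mu(Q)$, and, once the family $\ttt$ of ``top cubes'' below is fixed, the term $\int_{S_0}|M_n(\chi_{S_0}\mu)|^2\,d\mu$ is dominated by $\sum_{R\in\ttt}\Theta_\mu(R)^2\mu(R)$, because $M_n(\chi_{S_0}\mu)(x)\lesssim\sup\{\Theta_\mu(R):R\in\ttt,\ x\in R\}$ for $\mu$-a.e.\ $x$. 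So it suffices to bound these two sums.

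For large constants $K$ and $M_0$ I build $\ttt$ together with stopping families $\Stop(R)$, $R\in\ttt$, with $S_0\in\ttt$: given $R\in\ttt$, a cube $Q\subsetneq R$ is declared stopping if it is non-$\PP$-doubling, or $\Theta_\mu(Q)\ge K\Theta_\mu(R)$ (high density), or $\Theta_\mu(Q)\le K^{-1}\Theta_\mu(R)$ (low density), or $Q\in\DB(M_0)$ (large energy); the maximal $\PP$-doubling cubes contained in the stopping cubes form the next generation of $\ttt$, and $\tree(R)$ denotes the cubes lying between $R$ and $\Stop(R)$. Chains of consecutive non-$\PP$-doubling cubes are controlled by Lemmas~\ref{lemcad22}--\ref{lemcad23}. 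The hypothesis that $\mu$-a.e.\ $x\in AS_0$ belongs to $\PP$-doubling cubes of arbitrarily small side length guarantees that $\bigcup_{R\in\ttt}\tree(R)$ exhausts $\mu|_{AS_0}$ down to scale $0$. On each $\tree(R)$ one has $\Theta_\mu(Q)\approx\Theta_\mu(R)$, every cube is doubling (Lemma~\ref{lempois00}), and $\EE_\infty(9Q)\lesssim M_0^2\,\Theta_\mu(Q)^2\mu(9Q)$, so $\mu$ is uniformly $n$-dimensional and of small Jones--Wolff energy along the tree.

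The core estimate is, for each tree,
\[
\sum_{Q\in\tree(R)}\beta_{2,\mu}(2B_Q)^2\,\Theta_\mu(2B_Q)\,\mu(Q)\ \lesssim\ \int_{2B_R}\bigl|\RR\mu-m_{\mu,2B_R}(\RR\mu)\bigr|^2\,d\mu\ +\ \Theta_\mu(R)^2\mu(R),
\]
and this is where the arguments of \cite{DT} are used. Exploiting the smallness of the energy and the comparability of densities along $\tree(R)$, one approximates $\mu$ there by an $n$-AD-regular measure $\sigma_R$, supported on a Lipschitz graph, whose Riesz transform is close to $\RR\mu$ on the bulk of the tree; the estimate is then reduced to the AD-regular setting, where a quasiorthogonality argument for $\RR_{\sigma_R}$ gives $\sum_Q\|\Delta_Q(\RR\sigma_R)\|_{L^2(\sigma_R)}^2\lesssim\int|\RR\sigma_R-m(\RR\sigma_R)|^2\,d\sigma_R$, while a lower bound relating the martingale differences $\Delta_Q(\RR\sigma_R)$ to the (normalized) $\beta_2$ coefficients returns $\sum_Q\beta_{2,\sigma_R}(2B_Q)^2\,\Theta_{\sigma_R}(2B_Q)\,\sigma_R(Q)$ up to terms absorbed into the energy and into $\Theta_\mu(R)^2\mu(R)$. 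The comparison between $\RR\mu$ and $\RR\sigma_R$ and these lower bounds rely on the identity $\RR\mu\approx c_n\nabla(\EE_n*\mu)$ and on potential theory for the Newtonian potential and for harmonic measure (the harmonic-measure lemmas proved later in the paper).

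The last ingredient is the Carleson packing of the tops,
\[
\sum_{R\in\ttt}\Theta_\mu(R)^2\mu(R)\ \lesssim\ \int_{AS_0}\bigl|\RR\mu-m_{\mu,AS_0}(\RR\mu)\bigr|^2\,d\mu\ +\ \int_{AS_0}\theta_\mu^{n,*}(x)^2\,d\mu(x)\ +\ \Theta_\mu(S_0)^2\mu(S_0).
\]
Non-$\PP$-doubling and low-density tops are summed geometrically, each contributing at most $\sim K^{-2}\Theta_\mu(R')^2\mu(R')$ relative to its parent top $R'$, so their total is $\lesssim\Theta_\mu(S_0)^2\mu(S_0)$; high-energy tops are reduced to high-density ones through the definition of $\DB(M_0)$ and the families $\hd^k$; and high-density tops are charged to the Riesz transform by a David--Semmes type estimate, to the effect that a density jump at $R$ forces $\int_{CB_R}|\RR\mu-m_{\mu,CB_R}(\RR\mu)|^2\,d\mu\gtrsim\Theta_\mu(R)^2\mu(R)$, except for the portion of a high-density chain so deeply nested that it must instead be charged to $\int_{AS_0}\theta_\mu^{n,*}(x)^2\,d\mu(x)$, using that a point lying in arbitrarily small cubes of density $\ge t$ has $\theta_\mu^{n,*}(x)\ge t$. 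Putting the tree estimates and the packing bound together, summing over $R\in\ttt$ and telescoping the recentering constants $m_{\mu,2B_R}(\RR\mu)$ (using the doubling of the tops and the bounded overlap of the balls $2B_R$ in $AS_0$), yields the proposition. I expect the main obstacle to be precisely this last packing estimate for the high-density tops: isolating which part of a high-density chain is paid for by the Riesz transform oscillation and which by $\theta_\mu^{n,*}$, with constants uniform enough to close the recursion implicit in the construction of $\ttt$.
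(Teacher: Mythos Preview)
Your corona/packing scheme is the right framework and matches the paper's localization of \cite{DT}, but the tree estimate you propose does not go through as written. Your stopping conditions give comparable densities, $\PP$-doubling, and bounded $\EE_\infty$ along $\tree(R)$, but none of these yields any flatness; you cannot approximate $\mu|_{\tree(R)}$ by a measure on a Lipschitz graph without already controlling the $\beta$ coefficients you are trying to bound, so the reduction to the AD-regular setting is circular. In \cite{DT} there is no such tree-by-tree estimate: the $\beta$ sum over all of $\DD_\mu(S_0)$ is bounded by the packing quantity $\sigma(\ttt)=\sum_{R\in\ttt}\Theta_\mu(R)^2\mu(R)$ via their Lemma~10.1 \emph{after} the packing is established, and the Riesz oscillation enters only once and globally, through $\sum_{Q\subset AS_0}\|\Delta_Q\RR\mu\|_{L^2(\mu)}^2\le\int_{AS_0}|\RR\mu-m_{\mu,AS_0}(\RR\mu)|^2\,d\mu$, so no telescoping of recenterings is needed. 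The harmonic-measure lemmas you cite (Lemmas~\ref{lemCDC} and~\ref{lemmaxpr}) are used only for Main Lemma~\ref{lemaclau} and play no role here.

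The packing argument is also harder than your sketch suggests: high-density tops are not charged by a direct ``density jump forces Riesz oscillation'' inequality. The paper instead runs Sections~7--13 of \cite{DT} verbatim (the $\MDW$ reduction, the tractable-tree machinery, and the $\DB$-cube analysis), with a single modification. When $\mu(Z(Q))>\ve_Z\,\mu(Q)$, the hypothesis that $\mu$-a.e.\ $x\in AS_0$ lies in arbitrarily small $\PP$-doubling cubes forces, for $x\in Z(Q)$, that $\PP_\mu(P)\ge\delta_0\,\Theta_\mu(Q)$ for every $P\ni x$ with $\ell(P)<\ell(Q)$ (since $x$ is not captured by any ending cube), hence $\theta_\mu^{n,*}(x)\gtrsim\delta_0\,\Theta_\mu(Q)$. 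This converts the crude $\theta_0^2\|\mu\|$ error of \cite{DT} into $\int_{AS_0}\theta_\mu^{n,*}(x)^2\,d\mu(x)$, and that is the only substantive change needed.
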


\begin{proof}
The arguments are very similar to the ones in \cite{DT}. We only sketch the main differences and we use the same notation as in \cite{DT}.

We consider a corona decomposition in terms of a family $\ttt(S_0)\equiv \ttt$, defined as in Section 7 in \cite{DT}, with the difference that now $S_0$ does not coincide with the whole $\supp\mu$, as in \cite{DT}. In this way, by construction, all the cubes from $\ttt(S_0)$ are contained in $S_0$ and 
$$\DD_\mu(S_0)= \bigcup_{R\in\ttt(S_0)}\tree(R).$$
We recall that for a family of cubes $\FF\subset\DD_\mu$, in
\cite{DT} the notation $\sigma(\FF)$ stands for
$$\sigma(\FF) = \sum_{Q\in\FF}\Theta_\mu(Q)^2\mu(Q).$$
By \cite[Lemma 7.3]{DT},
$$\sigma(\ttt(S_0)) \lesssim \sigma(\ttt(S_0)\cap \MDW) + \sigma(S_0)$$
(in \cite{DT} it is written $\theta_0^2\,\|\mu\|$ in place of $\sigma(S_0)$).
Then Lemma 8.2 in \cite{DT} holds replacing $\theta_0^2\,\|\mu\|$ by $\sigma(S_0)$. 

At the beginning of Section 9, devoted to the proof of Main Lemma 7.1, for $Q\in \Trc_k(R)$, $R\in\sL$, in the case when 
$\mu(Z(Q))>\ve_Z\,\mu(Q)$, it is written that
\begin{equation}\label{eqnou87}
\sigma(\HD_1(e(Q)))\leq \theta_0^2\,\mu(e(Q)) \lesssim \ve_Z^{-1}\,\theta_0^2\,\mu(Z(Q)).
\end{equation}
Instead, now we write the more precise estimate
\begin{equation}\label{eqnou88}
\sigma(\HD_1(e(Q))) \lesssim \ve_Z^{-1} \Theta_\mu(\HD_1(Q))^2\,\mu(Z(Q)) \lesssim_{\ve_Z,\Lambda,\delta_0} \int_{Z(Q)}
\theta_\mu^{n,*}(x)^2\,d\mu(x).
\end{equation}
The last estimate follows from the fact that if $x\in Z(Q)$, then $x\not\in P$ for any $P\in\End(Q)$. 
By the assumption that for $\mu$-a.e.\ there exists a sequence of $\PP$-doubling cubes $P_k\in\DD_\mu$ with $x\in P_k$ with side length tending to $0$, it follows that the cubes from 
$\End(Q)$ cover $\mu$-almost all $\bigcup_{P\in \sss_2(Q)}P$. Then we deduce that, for  $x\in Z(Q)$, 
any cube $P\subset \DD_\mu$  such that $x\in P$ with $\ell(P)<\ell(Q)$ satisfies
$$\PP_\mu(P)\geq \delta_0\,\Theta_\mu(Q).$$
This easily implies that $\theta_\mu^{n,*}(x)\gtrsim \delta_0\,\Theta_\mu(Q)$.
Using this estimate, similarly to (9.3) in \cite{DT}, one gets
\begin{align*}
\sigma(\ttt(S_0)) & \lesssim  B^{5/4}\,\Lambda^2
\sum_{R\in\sL}\,  \sum_{k\geq0} B^{-k/2}\sum_{Q\in\Trc_k(R)}
\|\Delta_{\wt \TT(e'(Q))} \RR\mu\|_{L^2(\mu)}^2\\
&\quad+ B^{5/4}\,
\Lambda^{\frac1{3n}}
\sum_{R\in\sL}\,  \sum_{k\geq0} B^{-k/2}\sum_{Q\in\Trc_k(R)}
\sum_{P\in\DB:P\sim\TT(e'(Q))} \!\!\EE_\infty(9P)\nonumber\\
&\quad+ C(\ve_Z,\Lambda,\delta_0)\,B^{5/4}\,
 \sum_{R\in\sL}\,  \sum_{k\geq0} B^{-k/2}\sum_{Q\in\Trc_k(R)}\int_{Z(Q)}
\theta_\mu^{n,*}(x)^2\,d\mu(x) + \sigma(S_0),\nonumber
\end{align*}
where $\DB=\DB(M_0)$ is as in \rf{eqdefBB}, with some sufficiently large $M_0$.
Remark that $e'(Q),Z(Q)\subset 4S_0$ for any $Q\in\Trc_k(R)$ with $R\in\sL$, by construction. Also, it is easy to check that the cubes $P\in\DB$ appearing above are contained in $A^{1/2}S_0$, for some absolute constant $A>10$.
Then, arguing as in \cite[Lemma 9.2]{DT}, it follows that
\begin{align*}
\sigma(\ttt(S_0)) & \lesssim_{\Lambda} \sum_{Q\in\DD_\mu:Q\subset 4S_0} \| \Delta_Q\RR\mu\|_{L^2(\mu)}^2
 + \sum_{P\in \DB:P\subset A^{1/2}S_0} \EE_\infty(9P)\\
 &\quad + \sigma(S_0) + 
 C(\ve_Z,\Lambda,\delta_0)
 \int_{4S_0}
\theta_\mu^{n,*}(x)^2\,d\mu(x).
\end{align*}
By the mutual orthogonality of the functions $\Delta_Q\RR\mu$ and standard estimates, it follows that
$$\sum_{Q\in\DD_\mu:Q\subset 4S_0} \| \Delta_Q\RR\mu\|_{L^2(\mu)}^2\leq \int_{4S_0} |\RR\mu - m_{\mu,4S_0}(\RR\mu)|^2\,d\mu .$$
Putting all together,
\begin{align*}
\sum_{R\in\ttt(S_0)} \Theta_\mu(R)^2\mu(R) & \lesssim_{\Lambda} \int_{4S_0} |\RR\mu - m_{\mu,4S_0}(\RR\mu)|^2\,d\mu
 + \sum_{P\in \DB:P\subset A^{1/2}S_0} \EE_\infty(9P)\\
 &\quad + \sigma(S_0) + 
 C(\ve_Z,\Lambda,\delta_0)
 \int_{4S_0}
\theta_\mu^{n,*}(x)^2\,d\mu(x).
\end{align*}
Further, by Lemma 10.1 from \cite{DT}, the same estimate holds with the left hand side replaced by $\sum_{Q\in\DD_\mu(S_0)}\beta_{\mu,2}(2B_Q)^2
\Theta_\mu(Q)\mu(Q)$. Also, by the stopping conditions in the definition of the family $\ttt(S_0)$, it is immediate to check that
$$\int_{S_0} |M_n(\chi_{S_0}\mu)|^2\,d\mu\lesssim_\Lambda \sum_{R\in\ttt(S_0)} \Theta_\mu(R)^2\mu(R).$$
So we deduce that
\begin{align}\label{eqtoq94}
&\int_{S_0} |M_n(\chi_{S_0}\mu)|^2\,d\mu + \sum_{Q\in\DD_\mu(S_0)}\beta_{\mu,2}(2B_Q)^2
\Theta_\mu(Q)\mu(Q)\\
 &\lesssim
\int_{4S_0} \!\!|\RR\mu - m_{\mu,4S_0}(\RR\mu)|^2d\mu \,+\, \Theta_\mu(S_0)^2\mu(S_0)\, + \int_{4S_0}\!\! \theta_\mu^{n,*}(x)^2\,d\mu(x)+\!\! \sum_{P\in \DB:P\subset A^{1/2}S_0}\!\! \EE_\infty(9P),\nonumber
\end{align}
with the implicit constant depending on the parameters $\Lambda,\delta_0,\ve_0$.

To conclude the proof of the proposition, it remains to deal with the term involving the cubes from the family $\DB$ in the last estimate.
This is done as in Sections 11-13 from \cite{DT}, replacing the family $\DB$ there by $\DB_{S_0}:=\{P\in \DB:P\subset A^{1/2}S_0\}$.
Then, as in \cite[Section 13]{DT}, we get
$$\sum_{Q\in\DB_{S_0}} \EE_\infty(9Q) \lesssim \Lambda^{\frac{-1}{2n}}(\log\Lambda)^2 
\sum_{R\in\sL(\GDF)}\,
\sum_{k\geq0} B^{-k/2} \sum_{Q\in\Trc_k(R)\cap\Ty}\sigma(\HD_1(e(Q))),$$
with $\GDF$ defined as in \cite{DT}, using $\DB_{S_0}$ in place of $\DB$. 
To bound $\sigma(\HD_1(e(Q)))$ for $Q\in \Trc_k(R)$, $R\in\sL(\GDF)$, in the case when 
$\mu(Z(Q))>\ve_Z\,\mu(Q)$, we use again the estimate \rf{eqnou88} instead of \rf{eqnou87}. Using this modification, arguing as in \cite{DT}
it follows that
\begin{equation}\label{eqDB**}
\sum_{Q\in\DB_{S_0}} \EE_\infty(9Q)\lesssim_\Lambda \sum_{Q\in\DD_\mu:Q\subset AS_0} \| \Delta_Q\RR\mu\|_{L^2(\mu)}^2 + 
\int_{4S_0}\!\! \theta_\mu^{n,*}(x)^2\,d\mu(x).
\end{equation}
Together with \rf{eqtoq94}, this proves the proposition.
\end{proof}
\vv

\begin{proof}[\bf Proof of Theorem \ref{teoDTlocal00}]
Clearly, we may suppose that $\mu(B_0)>0$. First we assume that $\mu$ is a finite measure and that for $\mu$-a.e.\ $x\in 2B_0$ there exists a sequence of $\PP$-doubling cubes $P_k\in\DD_\mu$ with $x\in P_k$ and $\ell(P_k)\to0$. From the finiteness of $\mu$, it follows that 
$\RR_*(\chi_{\R^{n+1}\setminus 3B_0}\mu)\in L^\infty(\mu|_{2B_0})$. Together with the fact that $\RR_*(\chi_{3B_0}\mu)\in L^2(\mu|_{2B_0})$,
this ensures that $\RR_*\mu\in L^2(\mu|_{2B_0})$. Similarly, from the fact that $M_n(\chi_{3B_0}\mu)\in L^2(\mu|_{2B_0})$, we deduce that
$M_n\mu\in L^2(\mu|_{2B_0})$.

Let $I_0\subset \DD_\mu$ be a family of maximal cubes from $Q\in\DD_\mu$ such that $2Q\subset \frac32B_0$ and $Q\cap B_0\neq\varnothing$. Then we have $\ell(R)\approx \rad(B_0)$ for each $R\in I_0$.
We denote $R_0 = \bigcup_{R\in I_0} R$ and 
we consider the measure
$$\eta =  \mu|_{R_0}.$$
By Proposition \ref{propoDT0} applied to $\eta$ with $S_0=R_0$ and the fact that $\mu(2B_0)\geq\eta(R_0)$, we deduce that
\begin{align}\label{eqal8ed}
\int_{B_0} |M_n&(\mu|_{B_0})|^2\,d\mu  + 
 \int_{B_0}\int_0^{2\rad(B_0)} \beta_{2,\mu}(x,r)^2\,\Theta_\mu(x,r)\,\frac{dr}rd\mu(x)\\
& \lesssim \Theta_{\mu}(R_0)^2\,\mu(R_0) + \int_{R_0} |M_n\eta|^2\,d\eta + \int_{R_0}\int_0^{2\rad(B_0)} \beta_{2,\eta}(x,r)^2\,\Theta_{\eta}(x,r)\,\frac{dr}rd\eta(x)
\nonumber\\
 & \lesssim \Theta_{\mu}(R_0)^2\mu(R_0) + \int_{R_0} |\RR\eta|^2\,d\eta + \int_{R_0} \theta_{\mu}^{n,*}(x)^2\,d\mu(x).\nonumber
\end{align}
Next, using the antisymmetry of the Riesz kernel we can write
\begin{align*}
\int_{R_0} |\RR\eta|^2\,d\eta  &= \int_{R_0} |\RR(\chi_{R_0}\mu) - m_{\mu,B_0}(\RR(\chi_{R_0}\mu))|^2\,d\mu \\  
&\lesssim \int_{R_0}\! |\RR\mu - m_{\mu,R_0}(\RR\mu)|^2\,d\mu
+ \int_{R_0}\! |\RR(\chi_{2B_0\setminus R_0}\mu) - m_{\mu,R_0}(\RR(\chi_{2B_0\setminus R_0}\mu))|^2\,d\mu\\
& + \int_{R_0} |\RR(\chi_{\R^{n+1}\setminus 2B_0}\mu) - m_{\mu,R_0}(\RR(\chi_{\R^{n+1}\setminus2B_0}\mu))|^2\,d\mu.
\end{align*}
Using that, for $x,y\in R_0$,
$$|\RR(\chi_{\R^{n+1}\setminus 2B_0}\mu)(x) - \RR(\chi_{\R^{n+1}\setminus2B_0}\mu)(y)|\lesssim \PP_\mu(B_0),$$
we deduce that
$$\int_{R_0} |\RR\eta|^2\,d\eta\lesssim 
\int_{R_0}\! |\RR\mu - m_{\mu,R_0}(\RR\mu)|^2\,d\mu + \int_{R_0} |\RR(\chi_{2B_0\setminus R_0}\mu)|^2\,d\mu + \PP_\mu(B_0)^2\,\mu(R_0).$$ 
From \rf{eqal8ed} and the last inequalities, we derive
\begin{align}\label{eqal8ed1}
\int_{B_0} |M_n(\mu&|_{B_0})|^2\,d\mu  + \int_{B_0} \beta_{2,\mu}(x,r)^2\,\Theta_\mu(x,r)\,d\mu(x)\\
& \lesssim 
\int_{R_0}\! |\RR\mu - m_{\mu,R_0}(\RR\mu)|^2\,d\mu  + \PP_\mu(B_0)^2\,\mu(2B_0)
+ \int_{R_0} \theta_{\mu}^{n,*}(x)^2\,d\mu(x)\nonumber\\
&\quad + \int_{R_0} |\RR(\chi_{2B_0\setminus R_0}\mu)|^2\,d\mu.\nonumber
\end{align}

Our objective is to bound the last integral on the right hand side of \rf{eqal8ed1} in terms of the first three summands on the right hand side of \rf{eqal8ed1}.
To this end, we apply Lemma \ref{lemDMimproved}:
\begin{align}\label{eqal8ed2}
\int_{R_0} |\RR(\chi_{2B_0\setminus R_0}\mu)|^2\,d\mu&\lesssim
 \Theta_\mu(2B_0)^2\,\mu(R_0)
+ \sum_{Q\in I_0}\int_Q|\RR(\chi_{2B_Q\setminus Q}\mu)|^2\,d\mu \\
& \lesssim
 \Theta_\mu(2B_0)^2\,\mu(R_0) + \sum_{Q\in I_0}\sum_{P\subset 2Q}\bigg(\frac{\ell(P)}{\ell(Q)}\bigg)^{9/10}\Theta_\mu(2B_P)^2\,\mu(P)
\nonumber \\
 & \lesssim 
  \Theta_\mu(2B_0)^2\,\mu(R_0) + \sum_{P\in\DD_\mu:P\subset \frac32 B_0}\bigg(\frac{\ell(P)}{\rad(B_0)}\bigg)^{9/10}\Theta_\mu(2B_P)^2\,\mu(P),
\nonumber
\end{align}
where we took into account that $2Q\subset\frac32B_0$ for every $Q\in I_0$.
Next we consider the family $I_\PP\subset\DD_\mu$  of the maximal $\PP$-doubling cubes contained in $\frac32 B_0$ which have side length 
at most $\gamma\,\rad(B_0)$, where $\gamma<1/100$ will be fixed below. Notice that this family covers $\mu$-almost all $\frac32B_0$. We denote by $J_0$ the family of the cubes from $\DD_\mu$ which are contained in $\frac32 B_0$ are not contained in any cube from $I_\PP$. 
Then we split
\begin{align}\label{eqfj58}
\sum_{P\in\DD_\mu:P\subset \frac32 B_0}&\bigg(\frac{\ell(P)}{\rad(B_0)}\bigg)^{9/10}\Theta_\mu(2B_P)^2\,\mu(P)\\
& \leq
\sum_{P\in J_0}\bigg(\frac{\ell(P)}{\rad(B_0)}\bigg)^{9/10}\Theta_\mu(2B_P)^2\,\mu(P)+
\sum_{Q\in I_\PP}\sum_{P\subset Q} \bigg(\frac{\ell(P)}{\rad(B_0)}\bigg)^{9/10}\Theta_\mu(2B_P)^2\,\mu(P).\nonumber
\end{align}
To estimate the first sum on the right hand side, given $P\in J_0$, let $P'\supset P$ be a maximal cube with side length at most $\gamma\,\rad(B_0)$.
Since there are no $\PP$-doubling cubes $S\in\DD_\mu$ such that $P\subsetneq S\subset P'$, by \cite[Lemma 3.3]{DT}, we have
\begin{equation}\label{eqdecdens5f}
\Theta_\mu(2B_P)\leq \PP_\mu(P)\lesssim \PP_\mu(P') \lesssim_\gamma \PP_\mu(B_0).
\end{equation}
Therefore,
\begin{equation}\label{eqJ0}
\sum_{P\in J_0}\bigg(\frac{\ell(P)}{\rad(B_0)}\bigg)^{9/10}\!\Theta_\mu(2B_P)^2\,\mu(P)\lesssim_\gamma \!\!
\sum_{P\subset \frac32B_0}\bigg(\frac{\ell(P)}{\rad(B_0)}\bigg)^{9/10}\PP_\mu(B_0)^2\,\mu(P)\lesssim_\gamma \PP_\mu(B_0)^2\,\mu(2B_0).
\end{equation} 

Concerning the last sum on the right hand side of \rf{eqfj58}, we have
\begin{align}\label{eqdk291}
\sum_{Q\in I_\PP}\sum_{P\subset Q} \bigg(\frac{\ell(P)}{\rad(B_0)}\bigg)^{9/10}\Theta_\mu(2B_P)^2\,\mu(P)
& \leq
\sum_{Q\in I_\PP} \bigg(\frac{\ell(Q)}{\rad(B_0)}\bigg)^{3/4}
\sum_{P\subset Q} \bigg(\frac{\ell(P)}{\ell(Q)}\bigg)^{3/4}\Theta_\mu(2B_P)^2\,\mu(P)\\
 =&\sum_{Q\in I_\PP} \bigg(\frac{\ell(Q)}{\rad(B_0)}\bigg)^{3/4}\,\EE(Q)\lesssim\sum_{Q\in I_\PP} \bigg(\frac{\ell(Q)}{\rad(B_0)}\bigg)^{3/4}\,\EE_\infty(Q).\nonumber
\end{align}
If $Q\in I_\PP$ is not dominated from below (i.e., $Q\not\in \DB(M_0)$), by \rf{eqdecdens5f} we have
then 
$$\EE_\infty(Q)\lesssim \Theta_\mu(Q)^2\,\mu(Q)\lesssim \PP_\mu(B_0)^2\,\mu(Q.$$
On the other hand, if $Q\in \DB(M_0)$, by the Main Proposition 3.7 from \cite{DT}, by arguments such as the ones in \rf{eqDB**}, for each $\PP$-doubling cube it holds
$$
\EE_\infty(Q)\lesssim \int_{AQ} |\RR\mu - m_{\mu,AQ}(\RR\mu)|^2 \,d\mu + 
\int_{AQ} \theta_\mu^{n,*}(x)^2\,d\mu(x).$$
So in any case, for $Q\in I_\PP$ we have
\begin{align*}
\EE_\infty(Q)&\lesssim \Theta_\mu(Q)^2\,\mu(Q) + 
\int_{AQ} |\RR\mu - m_{\mu,AQ}(\RR\mu)|^2 \,d\mu + 
\int_{AQ} \theta_\mu^{n,*}(x)^2\,d\mu(x)\\
& \lesssim 
\PP_\mu(B_0)^2\,\mu(Q) + 
\int_{AQ} |\RR\mu - m_{\mu,2B_0}(\RR\mu)|^2 \,d\mu + 
\int_{AQ} \theta_\mu^{n,*}(x)^2\,d\mu(x).
\end{align*}
Plugging this estimate into \rf{eqdk291}, we get
\begin{align*}
&\sum_{Q\in I_\PP}\sum_{P\subset Q} \bigg(\frac{\ell(P)}{\rad(B_0)}\bigg)^{9/10}\Theta_\mu(2B_P)^2\,\mu(P)
\\
& \leq
\sum_{Q\in I_\PP} \bigg(\frac{\ell(Q)}{\rad(B_0)}\bigg)^{3/4}\,
\bigg(\PP_\mu(B_0)^2\,\mu(Q) + 
\int_{AQ} |\RR\mu - m_{\mu,2B_0}(\RR\mu)|^2 \,d\mu + 
\int_{AQ} \theta_\mu^{n,*}(x)^2\,d\mu(x)\bigg).
\end{align*}
Taking $\gamma$ small enough (depending on $A$), we have $AQ\subset 2B_0$ for any cube $Q\subset \frac32B_0$ with 
$\ell(Q)\leq\gamma\,\rad(B_0)$, and so in particular for the cubes $Q\in I_\PP$. Then, using also the finite superposition
of the ``cubes'' $AQ$ from a fixed generation, we get
\begin{align*}
&\sum_{Q\in I_\PP} \bigg(\frac{\ell(Q)}{\rad(B_0)}\bigg)^{3/4}\!
\int_{AQ} |\RR\mu - m_{\mu,2B_0}(\RR\mu)|^2 \,d\mu \\
& \quad\leq \!
\sum_{Q\subset \frac32B_0:AQ\subset 2B_0}\bigg(\frac{\ell(Q)}{\rad(B_0)}\bigg)^{3/4}
\int_{AQ} |\RR\mu - m_{\mu,2B_0}(\RR\mu)|^2 \,d\mu \lesssim 
\int_{2B_0} |\RR\mu - m_{\mu,2B_0}(\RR\mu)|^2 \,d\mu.
\end{align*}
An analogous argument shows that
$$\sum_{Q\in I_\PP} \bigg(\frac{\ell(Q)}{\rad(B_0)}\bigg)^{3/4} 
\int_{AQ} \theta_\mu^{n,*}(x)^2\,d\mu(x)\lesssim\int_{2B_0} \theta_\mu^{n,*}(x)^2\,d\mu(x).$$
Gathering the last estimates with \rf{eqJ0}, we derive
\begin{align*}
\sum_{P\in\DD_\mu:P\subset \frac32 B_0}&\bigg(\frac{\ell(P)}{\rad(B_0)}\bigg)^{9/10}\!\Theta_\mu(2B_P)^2\,\mu(P)
\\
&\lesssim_\gamma 
\PP_\mu(B_0)^2\mu(2B_0) + \int_{2B_0} |\RR\mu - m_{\mu,2B_0}(\RR\mu)|^2 \,d\mu +
\int_{2B_0} \theta_\mu^{n,*}(x)^2\,d\mu(x).
\end{align*}
Therefore,
$$
\int_{R_0} \!|\RR(\chi_{2B_0\setminus R_0}\mu)|^2\,d\mu\lesssim_\gamma \!
\PP_\mu(B_0)^2\mu(2B_0) + \int_{2B_0} \!|\RR\mu - m_{\mu,2B_0}(\RR\mu)|^2 \,d\mu +
\int_{2B_0}\!\! \theta_\mu^{n,*}(x)^2\,d\mu(x).$$
Together with \rf{eqal8ed1}, this concludes the proof of the theorem under the assumption that
for $\mu$-a.e.\ $x\in 2B_0$ there exists a sequence of $\PP$-doubling cubes $P_k\in\DD_\mu$ with $x\in P_k$ and $\ell(P_k)\to0$.
\vv

Consider now a general {\em finite} Radon measure $\mu$ satisfying the assumptions of the theorem. We will reduce this situation to the case above. 
To this end, for each $k$ large enough, we consider the measure
 $\sigma_k$ constructed in \rf{eqaprox98}. 
It is immediate to check that $\sigma_k$ is such that for $\sigma_k$-a.e.\ $x\in 2B_0$ there exists a sequence of $\PP$-doubling cubes (with respect to $\sigma_k$) $P_k\in\DD_{\sigma_k}$ with $x\in P_k$ and $\ell(P_k)\to0$.
So \rf{eqal8ed} holds with $\mu$ replaced by $\sigma_k$.

The assumption that $M_n\mu\in L^2(\mu|_{2B_0})$ and $\RR_*\mu\in L^2(\mu|_{2B_0})$ ensures that
\begin{align*}
\int_{B_0} |M_n&(\mu|_{B_0})|^2\,d\mu + \int_{B_0}\int_0^{2\rad(B_0)} \beta_{2,\mu}(x,r)^2\,\Theta_\mu(x,r)\,\frac{dr}rd\mu(x)\\
& \leq \liminf_{k\to\infty}\bigg(
\int_{B_0} |M_n(\sigma_k|_{B_0})|^2\,d\sigma_k + \int_{B_0}\int_0^{2\rad(B_0)} \beta_{2,\sigma_k}(x,r)^2\,\Theta_{\sigma_k}(x,r)\,\frac{dr}rd\sigma_k(x)\bigg),
\end{align*}
and by Lemma \ref{lemmapprox}, also that 
\begin{align*}
\limsup_{k\to\infty}\bigg(\int_{2B_0} &|\RR\sigma_k - m_{\sigma_k,2B_0} (\RR\sigma_k)|^2\,d\sigma_k + \PP_{\sigma_k}(B_0)^2\sigma_k(2B_0) + \int_{2B_0} \theta_{\sigma_k}^{n,*}(x)^2\,d\sigma_k(x)\bigg)\\
& \lesssim 
\int_{2B_0} |\RR\mu - m_{\mu,2B_0}(\RR\mu)|^2\,d\mu + \PP_\mu(B_0)^2\mu(2B_0) + \int_{2B_0} \theta_\mu^{n,*}(x)^2\,d\mu(x).
\end{align*}
The last two inequalities together with the validity of \rf{eqal8ed}
 for $\sigma_k$ imply the theorem for general finite measures.

\vv

Next, let $\mu$ be a general  Radon measure. Writing $\mu_0 = \chi_{3B_0}\mu$, we get
\begin{align}\label{eqgk812**-**}
 \int_{B_0} &|M_n(\mu|_{B_0})|^2\,d\mu +  \int_{B_0}\int_0^{2\rad(B_0)} \beta_{2,\mu}(x,r)^2\,\Theta_\mu(x,r)\,\frac{dr}rd\mu(x)\\
& =
\int_{B_0} |M_n(\mu_0|_{B_0})|^2\,d\mu_0 +  \int_{B_0}\int_0^{2\rad(B_0)} \beta_{2,\mu_0}(x,r)^2\,\Theta_{\mu_0}(x,r)\,\frac{dr}rd\mu_0(x)
\nonumber\\
 &\lesssim
\int_{2B_0} |\RR\mu_0 - m_{\mu_0,2B_0}(\RR\mu_0)|^2\,d\mu_0 + \PP_{\mu_0}(B_0)^2\mu_0(2B_0) + \int_{2B_0} \theta_{\mu_0}^{n,*}(x)^2\,d\mu_0(x)
\nonumber\\
& \leq
\int_{2B_0} |\RR\mu_0 - m_{\mu,2B_0}(\RR\mu_0)|^2\,d\mu + \PP_{\mu}(B_0)^2\mu(2B_0) + \int_{2B_0} \theta_{\mu}^{n,*}(x)^2\,d\mu(x).
\nonumber
\end{align}

Denoting $\mu^0 = \mu-\mu_0$, we deduce
\begin{align*}
\int_{2B_0} |\RR\mu_0 - m_{\mu,2B_0}(\RR\mu_0)|^2\,d\mu  
& \leq
2\int_{2B_0} |\RR\mu - m_{\mu,2B_0}(\RR\mu)|^2\,d\mu\\
& \quad+ 
2 \int_{2B_0} |\RR\mu^0 - m_{\mu,2B_0}(\RR\mu^0)|^2\,d\mu.
\end{align*}
By the smoothness properties of the Riesz kernel and standard arguments, using that $\mu^0$ vanishes in $3B_0$, for any $x,y\in 2B_0$
we have 
$$|\RR\mu^0(x) - \RR\mu^0(y)|\lesssim \PP_{\mu}(B_0).$$
This implies that
$$\int_{2B_0} |\RR\mu^0 - m_{\mu,2B_0}(\RR\mu^0)|^2\,d\mu\lesssim \PP_{\mu}(B_0)^2\,\mu(2B_0).$$
Therefore,
$$\int_{2B_0} |\RR\mu_0 - m_{\mu,2B_0}(\RR\mu_0)|^2\,d\mu \lesssim 
\int_{2B_0} |\RR\mu - m_{\mu,2B_0}(\RR\mu)|^2\,d\mu + \PP_{\mu}(B_0)^2\,\mu(2B_0).$$
Plugging this estimate into \rf{eqgk812**-**}, the theorem follows.
\end{proof}
\vv

\begin{rem}\label{rem2}
In the proof of Theorem \ref{teoDTlocal00} we have shown that
if $M_n\mu\in L^2(\mu|_{2B_0})$ and 
$\RR_*\mu\in L^2(\mu|_{2B_0})$, then we have
$$
\int_{R_0} \!|\RR(\chi_{2B_0\setminus R_0}\mu)|^2\,d\mu\lesssim_\gamma \!
\PP_\mu(B_0)^2\mu(2B_0) + \int_{2B_0} \!|\RR\mu - m_{\mu,2B_0}(\RR\mu)|^2 \,d\mu +
\int_{2B_0}\!\! \theta_\mu^{n,*}(x)^2\,d\mu(x),$$
under the assumption that for $\mu$-a.e.\ $x\in 2B_0$ there exists a sequence of $\PP$-doubling cubes $P_k\in\DD_\mu$ with $x\in P_k$ and $\ell(P_k)\to0$. By an approximation argument similar to the one at the end of the proof of that theorem, it turns out that the 
same estimate holds for arbitrary Radon measures $\mu$ such that $M_n(\chi_{3B_0}\mu)\in L^2(\mu|_{2B_0})$ and 
$\RR_*(\chi_{3B_0}\mu)\in L^2(\mu|_{2B_0})$.
\end{rem}

\vv


\section{The main lemma}

Let $\mu$ be a Radon measure in $\R^{n+1}$ and $B$ a ball. We say that $B$ is $(\mu,C_2)$-good if $\mu(B)>0$ and
$$\int_{B} M_n(\chi_B \mu)\,d\mu \leq C_2 \,\Theta_\mu(B)\mu(B).$$

The next lemma is one of the main technical steps for the proof of Theorem \ref{teomain}. It deals with the particular case when 
$\mu$ is absolutely continuous with respect to Lebesgue measure in $2B_0$. In the lemma we consider a ball $B_0$ and a set $R_0$ as in the proof of Theorem \ref{teoDTlocal00}. That is, we let
 $I_0\subset \DD_\mu$ be a family of maximal cubes from $Q\in\DD_\mu$ such that $2Q\subset \frac32B_0$ and $Q\cap B_0\neq\varnothing$. Then 
 we denote $R_0 = \bigcup_{R\in I_0} R$, and we have $\ell(R)\approx \rad(B_0)$ for each $R\in I_0$.

\begin{mlemma}\label{lemaclau}
Let $\mu$ be a Radon measure in $\R^{n+1}$ and let $B_0$ be a $(\PP_\mu,C_0)$-doubling ball. Suppose that there exists some $h\in L^\infty(\mu)$
such that $\mu = h\,\LL^{n+1}$ in $2B_0$ and that
 $M_n(\chi_{3B_0}\mu)\in L^2(\mu|_{2B_0})$ and $\RR_*(\chi_{3B_0}\mu)\in L^2(\mu|_{2B_0})$. Suppose that there exists some ball $B_1$ centered in $B_0$ such that
$$\Theta_\mu(B_1)\geq \alpha\,\Theta_\mu(B_0)$$
for some $\alpha>0$, 
and such that, for some $\delta_0,\delta_1>0$,
$$\delta_0\,\rad(B_0)\leq \rad(B_1)\leq \delta_1\,\rad(B_0),$$
with $\delta_1\in (0,1/2)$ small enough, depending on $n$, $C_0$, and $\alpha$.
Let $N= \lfloor\log_2 (\rad(B_0)/\rad(B_1)\rfloor$. Suppose that, for some $\eta>0$, at least $\eta N$ balls from the family $\{2^jB_1\}_{0\leq j\leq N}$ are $(\mu,C_2)$-good and that, for some $\tau>0$, 
\begin{equation}\label{eqvar110}
\int _{2B_0\setminus R_0}\int_{R_0} \frac1{|x-y|^n}\,d\mu(x)d\mu(y)\leq \tau
\Theta_\mu(B_0)\mu(2B_0).
\end{equation}
Then, if $\delta_1$ is  small enough, depending on $n$, $C_0$, $\alpha$, $\tau$, $C_2$, and $\eta$, we have
$$\int_{R_0} |\RR\mu - m_{\mu,R_0}(\RR\mu)|^2\,d\mu \geq c_1 \Theta_\mu(B_0)^2\,\mu(B_0),$$
for some constant $c_1>0$ depending on 
$n$, $\alpha$, $\delta_0$, and $C_2$, but not on $\|h\|_{L^\infty(\mu)}$.
\end{mlemma}

\vv


To prove this result, first we show the following.

\begin{lemma}\label{lemfrostman}
If $B\subset \R^{n+1}$ is $(\mu,C_2)$-good, then 
$$\HH^n_\infty(B\cap\supp\mu) \geq c\,C_2^{-1}\,\rad(B)^n.$$
\end{lemma}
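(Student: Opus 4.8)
The plan is to build, on $B\cap\supp\mu$, a nonnegative measure with $n$-growth whose total mass is comparable to $\mu(B)$, and then invoke the mass distribution principle (Frostman's lemma): if $\nu\not\equiv 0$ satisfies $\nu(B(x,r))\le c_0\,r^n$ for all $x\in\R^{n+1}$ and $r>0$, and $\nu$ is concentrated on a Borel set $E$, then $\HH^n_\infty(E)\gtrsim c_0^{-1}\,\nu(\R^{n+1})$, the implicit constant being dimensional (it arises from covering a set by a ball of twice its diameter, so that for a cover $\{A_i\}$ of $E$ by sets of diameter $d_i$ one has $\nu(E)\le\sum_i\nu(A_i)\le 2^n c_0\sum_i d_i^n$).

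First I would exploit the defining inequality of a $(\mu,C_2)$-good ball through Chebyshev. Setting
$$G=\bigl\{x\in B:\ M_n(\chi_B\mu)(x)\le 2C_2\,\Theta_\mu(B)\bigr\},$$
we get
$$\mu(B\setminus G)\le \frac1{2C_2\,\Theta_\mu(B)}\int_B M_n(\chi_B\mu)\,d\mu\le \tfrac12\,\mu(B),$$
hence $\mu(G)\ge\tfrac12\,\mu(B)>0$ (here we use that $\mu(B)>0$, which is built into the definition of goodness). Consider $\nu:=\chi_G\,\mu$. Since $\nu\ll\mu$ and $\nu(\R^{n+1}\setminus B)=0$ (because $G\subset B$), the measure $\nu$ is concentrated on $B\cap\supp\mu$, and $\nu(\R^{n+1})=\mu(G)\ge\tfrac12\,\mu(B)$.

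It remains to check that $\nu$ has $n$-growth with constant $\lesssim C_2\,\Theta_\mu(B)$. If $\nu(B(x,r))>0$ then $G\cap B(x,r)\neq\varnothing$; choosing $y$ in this intersection and using $B(x,r)\subset B(y,2r)$ together with $G\subset B$ and $y\in G$, we obtain
$$\nu(B(x,r))\le \mu\bigl(B\cap B(y,2r)\bigr)\le M_n(\chi_B\mu)(y)\,(2r)^n\le 2^{n+1}C_2\,\Theta_\mu(B)\,r^n,$$
and this bound is trivial when $\nu(B(x,r))=0$, so it holds for every ball. Applying the mass distribution principle to $\nu$ and recalling $\Theta_\mu(B)=\mu(B)/\rad(B)^n$ then yields
$$\HH^n_\infty(B\cap\supp\mu)\ \gtrsim\ \frac{\nu(\R^{n+1})}{C_2\,\Theta_\mu(B)}\ \gtrsim\ \frac{\mu(B)}{C_2\,\Theta_\mu(B)}\ =\ c\,C_2^{-1}\,\rad(B)^n.$$
The only point needing a little care is that the pointwise bound defining $G$ controls only balls \emph{centered} at points of $G$, whereas the mass distribution principle requires an $n$-growth bound for \emph{all} balls; this gap is bridged by the elementary doubling-of-radius argument above, at the harmless cost of a factor $2^{n+1}$. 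There is no substantial obstacle.
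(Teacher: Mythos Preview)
Your proof is correct and follows essentially the same approach as the paper: Chebyshev to find a large subset $G$ where $M_n(\chi_B\mu)$ is bounded, then Frostman's lemma applied to (a normalization of) $\mu|_G$. The paper passes to a compact subset $E\subset G$ with $\mu(E)\ge\mu(B)/4$ before invoking Frostman, but this is cosmetic; your version, which works directly with $\nu=\chi_G\mu$ and spells out the doubling-of-radius argument to pass from centered to uncentered growth, is equally valid and arguably a touch more explicit.
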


\begin{proof}
Since $B$ is $(\mu,C_2)$-good, by Chebychev,
$$\mu\big(\big\{x\in B:  M_n(\chi_B \mu)>2C_2\,\Theta_\mu(B)\big\}\big) \leq \frac1{2C_2\,\Theta_\mu(B)} \int_B  M_n(\chi_B \mu)\,d\mu
\leq \frac12\,\mu(B).$$
Hence there is a compact set $E\subset B$ such that $\mu(E)\geq \mu(B)/4$ and $M_n(\chi_B \mu)(x)\leq 2C_2\,\Theta_\mu(B)$ for all $x\in E$.
So the measure 
$$\sigma:= \frac1{2C_2\,\Theta_\mu(B)}\,\mu|_E$$ 
satisfies $M_n\sigma(x)\leq 1$ for all $x\in\supp\sigma$ and 
$$\sigma(E) = \frac1{2C_2\,\Theta_\mu(B)}\,\mu(E) \geq \frac1{8C_2\,\Theta_\mu(B)}\,\mu(B) = \frac{\rad(B)^n}{8C_2}.$$
By Frostman's lemma, this implies that
$$\HH^n_\infty(B\cap\supp\mu)\geq \HH^n_\infty(E)\gtrsim \sigma(E) \geq \frac{\rad(B)^n}{8C_2}.$$
\end{proof}
\vv

From now on, in this section we assume that we are under the assumptions of  Lemma \ref{lemaclau}.
\vv

\begin{lemma}[Variational Lemma]\label{lemvar}
Let $\mu$ and $B_0$ satisfy the assumption of Main Lemma \ref{lemaclau} and let $R_0$ be as above. Denote
$c_{R_0}=m_{\mu,R_0}(\RR\mu)$.
Suppose that, for some $p\in (1,2]$ and some $\lambda\in (0,1)$,
\begin{equation}\label{eqvar1}
\int_{R_0} |\RR\mu - c_{R_0}|^p\,d\mu \leq \lambda \Theta_\mu(B_1)^p\,\mu(B_1).
\end{equation}
Then there exists a function $a\in L^\infty(\mu|_{R_0})$, with $0\leq a\leq 4$, such that the measure
$$\nu = \mu|_{\R^{n+1}\setminus R_0} + a\,\mu|_{R_0}$$ satisfies:
\begin{itemize}
\item[(a)] $\nu(B_1)\geq \frac14\,\mu(B_1)$. 
\item[(b)] At least $\eta N/2$ balls from the family $\{2^jB_1\}_{0\leq j\leq N}$ are $(\nu,1024 \eta^{-1} C_2)$-good, for 
 $N= \lfloor\log_2 (\rad(B_0)/\rad(B_1)\rfloor$. 
\item[(c)] For all $x\in \frac54 B_0\cap\supp\nu$, it holds
\begin{equation}\label{eqvar48}
|\RR\nu(x) - c_{R_0}|^p + p\,\RR_\nu^*(\chi_{R_0}|\RR\nu - c_{R_0}|^{p-2}(\RR\nu - c_{R_0}))(x) \leq 16\lambda\,\Theta_\mu(B_1)^p.
\end{equation}
\end{itemize}
\end{lemma}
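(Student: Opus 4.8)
The plan is to produce $\nu$ as the minimizer of a Riesz $p$-energy over renormalizations of $\mu$ inside $R_0$, and to read off (a)--(c) from membership in the competitor class and from the Euler--Lagrange (obstacle-type) conditions satisfied by the minimizer. Let $\mathcal J$ be the set of indices $j\in\{0,\dots,N\}$ for which $2^jB_1$ is $(\mu,C_2)$-good, so $\#\mathcal J\ge\eta N$, and fix a threshold $j_0=j_0(n,C_2)$ to be chosen below. Let $\mathcal A$ be the family of measures $\nu'=\mu|_{\R^{n+1}\setminus R_0}+a'\,\mu|_{R_0}$ with $a'\in L^\infty(\mu|_{R_0})$, $0\le a'\le 4$, subject to the linear constraints $\nu'(B_1)\ge\tfrac14\mu(B_1)$ and $\nu'(2^jB_1)\ge\tfrac14\mu(2^jB_1)$ for every $j\in\mathcal J$ with $j\ge j_0$. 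Since $a'\equiv1$ gives $\nu'=\mu\in\mathcal A$, the family is nonempty, and the crucial point is that, because $\mu=h\,\LL^{n+1}$ on $2B_0$, every $\nu'\in\mathcal A$ has density at most $4\|h\|_{L^\infty(\mu)}$ with respect to $\LL^{n+1}$ on $2B_0$; consequently $\RR\nu'$ is continuous on $\tfrac32B_0$, the assignment $\nu'\mapsto\RR\nu'$ is continuous for the weak convergence of the densities, and $F(\nu'):=\int_{R_0}|\RR\nu'-c_{R_0}|^p\,d\nu'$ is weakly lower semicontinuous on $\mathcal A$. As $\mathcal A$ is weak-$*$ compact in the densities, $F$ attains its minimum at some $\nu\in\mathcal A$, with density $a$, and since $\mu\in\mathcal A$, hypothesis \eqref{eqvar1} gives the energy bound $F(\nu)\le F(\mu)\le\lambda\,\Theta_\mu(B_1)^p\,\mu(B_1)$. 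Property (a) is then immediate from $\nu\in\mathcal A$.

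For (b), note that $\nu\le 4\mu$, so for any ball $B$ one has $M_n(\chi_B\nu)\le 4M_n(\chi_B\mu)$ and $\int_B M_n(\chi_B\nu)\,d\nu\le 16\int_B M_n(\chi_B\mu)\,d\mu$; hence, if $B$ is $(\mu,C_2)$-good and $\nu(B)\ge\tfrac14\mu(B)$, then $\int_B M_n(\chi_B\nu)\,d\nu\le 16C_2\,\Theta_\mu(B)\mu(B)\le 256\,C_2\,\Theta_\nu(B)\nu(B)$, so $B$ is $(\nu,256C_2)$-good. By the constraints defining $\mathcal A$, each $2^jB_1$ with $j\in\mathcal J$, $j\ge j_0$ satisfies $\nu(2^jB_1)\ge\tfrac14\mu(2^jB_1)$ and therefore stays $(\nu,256C_2)$-good, and $\#\{j\in\mathcal J:\ j\ge j_0\}\ge\eta N-j_0$; taking $\delta_1$ small enough that $N\ge 2j_0/\eta$ (which depends only on $n$, $C_2$, $\eta$) produces at least $\tfrac12\eta N$ such balls, and $256\le 1024\eta^{-1}$, giving (b). The role of $j_0$ is that it is exactly at scales $j\ge j_0$ where imposing $\nu'(2^jB_1)\ge\tfrac14\mu(2^jB_1)$ does not overconstrain the minimization: testing $M_n(\chi_{2^jB_1}\mu)$ at the center of $B_1$ shows a $(\mu,C_2)$-good ball $2^jB_1$ satisfies $\mu(B_1)\lesssim C_2^{1/2}2^{-jn/2}\mu(2^jB_1)$, so for $j\ge j_0\sim_n\log C_2$ the mass of $B_1$ is a negligible fraction of that of $2^jB_1$ (this is the same phenomenon underlying Lemma \ref{lemfrostman}).

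Part (c) comes from the first variation of $F$ at the minimizer. For $\psi\in L^\infty(\mu|_{R_0})$ and the admissible variations $a\mapsto a+t\psi$, a direct computation using the antisymmetry of the Riesz kernel gives $\tfrac{d}{dt}\big|_{t=0}F(\nu+t\psi\mu|_{R_0})=\int_{R_0}\psi\,\mathcal E\,d\mu$, where $\mathcal E(x)=|\RR\nu(x)-c_{R_0}|^p+p\,\RR_\nu^*\bigl(\chi_{R_0}|\RR\nu-c_{R_0}|^{p-2}(\RR\nu-c_{R_0})\bigr)(x)$ is precisely the left-hand side of \eqref{eqvar48}. The obstacle-problem/KKT conditions then force, for $\nu$-a.e.\ $x\in R_0$, the inequality $\mathcal E(x)\le\Lambda_0\chi_{B_1}(x)+\sum_{j\in\mathcal J,\,j\ge j_0}\Lambda_j\chi_{2^jB_1}(x)$ with nonnegative multipliers attached to the (active) mass constraints, and moreover $\mathcal E(x)$ equals this function on $\{0<a<4\}$. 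Testing these identities on the annuli $2^jB_1\setminus 2^{j-1}B_1$ and using the energy bound $F(\nu)\le\lambda\,\Theta_\mu(B_1)^p\mu(B_1)$ (together with a Chebyshev selection of base points where $|\RR\nu-c_{R_0}|$ is small, the $L^{p'}$--$L^{p}$ duality for the $\RR^*_\nu$ term with $(p-1)p'=p$, the control of the outer part $\RR(\mu|_{2B_0\setminus R_0})$ on $R_0$ provided by \eqref{eqvar110}, the control of $\RR(\mu|_{\R^{n+1}\setminus 2B_0})$ by the $\PP_\mu$-doubling of $B_0$, and $\Theta_\mu(B_0)\le\alpha^{-1}\Theta_\mu(B_1)$) yields that the total multiplier is at most $16\lambda\,\Theta_\mu(B_1)^p$, hence $\mathcal E\le 16\lambda\,\Theta_\mu(B_1)^p$ holds $\nu$-a.e.\ on $R_0$. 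Finally one upgrades this to every $x\in\tfrac54B_0\cap\supp\nu$: since $\RR\nu$ and $x\mapsto\RR_\nu^*(\chi_{R_0}|\RR\nu-c_{R_0}|^{p-2}(\RR\nu-c_{R_0}))(x)$ are continuous on $\tfrac32B_0$ (again by absolute continuity of $\mu$ there), the bound passes to the closure of $\{a>0\}$ inside $\tfrac54B_0$, and the remaining points of $\supp\nu\cap\tfrac54B_0$ lie in $\supp(\mu|_{\R^{n+1}\setminus R_0})$, where a direct kernel estimate for $\mathcal E$ using \eqref{eqvar110} and the energy bound gives the same conclusion.

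The main obstacle is precisely the quantitative part of the last step: extracting the exact constant $16\lambda\,\Theta_\mu(B_1)^p$ in \eqref{eqvar48} requires careful bookkeeping of the Lagrange multipliers of all the active mass constraints and of the ``outer'' Riesz contributions on $R_0$ via \eqref{eqvar110}, and the passage from almost-everywhere to everywhere on $\supp\nu$ (including the layer of $\supp\nu$ near $\partial R_0$ but inside $\tfrac54B_0$) has to be carried out through the continuity furnished by $\mu=h\,\LL^{n+1}$ on $2B_0$. This absolute-continuity hypothesis is used only qualitatively — for existence of the minimizer, lower semicontinuity of $F$, and continuity of the relevant transforms — which is why the constant $c_1$ (and all constants in (a)--(c)) do not depend on $\|h\|_{L^\infty(\mu)}$.
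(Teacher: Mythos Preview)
Your overall strategy --- produce $\nu$ as a minimizer of a Riesz $p$-energy over densities $0\le a\le 4$ and read (a)--(c) off from optimality --- is exactly the paper's idea, but the paper implements it by \emph{penalization} rather than by hard constraints, and this difference is what makes your part (c) incomplete.

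Concretely, the paper minimizes
\[
F(a)=\int_{R_0}|\RR\nu_a-c_{R_0}|^p\,d\nu_a
+\lambda\,\sigma_p(B_1)\Bigl(\|a\|_\infty^p+\frac{\mu(B_1)}{\nu_a(B_1)}+\frac1N\sum_{k=1}^N\frac{\mu(2^kB_1)}{\nu_a(2^kB_1)}\Bigr),
\]
with $\sigma_p(B_1)=\Theta_\mu(B_1)^p\mu(B_1)$. From $F(a)\le F(1)\le 4\lambda\,\sigma_p(B_1)$ one reads off $\|a\|_\infty\le4$, $\nu(B_1)\ge\tfrac14\mu(B_1)$, and the averaged bound $\tfrac1N\sum_k\mu(2^kB_1)/\nu(2^kB_1)\le4$; the last, by Chebyshev, gives $\nu(2^kB_1)\ge\tfrac\eta8\mu(2^kB_1)$ for all but $\tfrac{\eta N}{2}$ indices, which yields (b) with the constant $1024\eta^{-1}C_2$. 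For (c) the paper uses the \emph{one-sided multiplicative} perturbation $a_t=a(1-t\chi_B)$: this keeps $\|a_t\|_\infty\le\|a\|_\infty$ automatically, so no multiplier for the upper obstacle appears, and the derivatives of the two mass-penalties produce precisely
\[
\lambda\,\sigma_p(B_1)\Bigl(\frac{\mu(B_1)}{\nu(B_1)^2}+\frac1N\sum_k\frac{\mu(2^kB_1)}{\nu(2^kB_1)^2}\Bigr)
\le \frac{\lambda\,\sigma_p(B_1)}{\nu(B_1)}\Bigl(\frac{\mu(B_1)}{\nu(B_1)}+\frac1N\sum_k\frac{\mu(2^kB_1)}{\nu(2^kB_1)}\Bigr)
\le 16\lambda\,\Theta_\mu(B_1)^p,
\]
again using $F(a)\le 4\lambda\sigma_p(B_1)$. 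So the ``Lagrange multipliers'' are explicit functions of the penalties and are bounded by the very same inequality that gave (a)--(b).

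In your constrained formulation this transparency is lost. The KKT conditions give you nonnegative multipliers $\Lambda_0,\Lambda_j$, but no a priori size; your paragraph ``testing on annuli, Chebyshev selection, \dots'' does not actually produce a bound on $\sum_j\Lambda_j$, and you invoke \eqref{eqvar110} and the $\PP_\mu$-doubling of $B_0$, neither of which the paper uses (or needs) in this lemma --- \eqref{eqvar110} enters only in the next lemma (Lemma~\ref{lemmaxpr}). The introduction of the cutoff $j_0$ and the claim that the mass of $B_1$ is negligible in $2^jB_1$ for $j\ge j_0$ are likewise extraneous. Finally, the last sentence of your (c) is confused: by construction $\tfrac32 B_0\cap\supp\mu\subset R_0$, so $\supp\nu\cap\tfrac54B_0\subset R_0$ and there are no ``remaining points'' in $\supp(\mu|_{\R^{n+1}\setminus R_0})$ to treat. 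The genuine content you are missing is the penalization device (and the multiplicative one-sided variation), which is exactly what turns the multiplier bound into the one-line computation above.
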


Remark that the left hand side of \rf{eqvar48} may be negative.

\begin{proof}
We denote 
$$\sigma_p(B_1)= \Theta_\mu(B_1)^p\mu(B_1).$$
For any non-negative function $a\in L^\infty(\mu|_{R_0})$, we denote
$$\nu_a = \mu|_{\R^{n+1}\setminus R_0} + a\,\mu|_{R_0}$$
and we consider the functional
$$F(a) = \int_{R_0} |\RR\nu_a - c_{R_0}|^p\,d\nu_a + 
\lambda\,\sigma_p(B_1)\bigg(\|a\|_\infty^p + \frac{\mu(B_1)}{\nu_a(B_1)} +  \frac1N\sum_{k=1}^N \frac{\mu(2^kB_1)}{\nu_a(2^kB_1)}\bigg).$$
Observe that
$$F(1) = \int_{R_0} |\RR\mu - c_{R_0}|^p\,d\mu + 3\lambda\,\sigma_p(B_1)\leq 4 \lambda\sigma_p(B_1),$$
by the assumption \rf{eqvar1}. This implies that the infimum of $F$ over the non-negative function $a\in L^\infty(\mu|_{R_0})$ is attained
for $\|a\|_\infty\leq 4$. Then, by standard arguments, one deduces that there exists a minimizing function $a\in L^\infty(\mu|_{R_0})$ for the functional
$F:L^\infty(\mu|_{R_0})\to [0,\infty]$. The minimizer, which we also denote by $a$, is a non-negative function such that $\|a\|_\infty\leq 4$.

The fact that $F(a)\leq F(1)\leq 4\lambda\sigma_p(B_1)$ also implies that
\begin{equation}\label{eqvar3}
\frac{\mu(B_1)}{\nu_a(B_1)} \leq 4\quad \text{ and }\quad \frac1N\sum_{k=1}^N \frac{\mu(2^kB_1)}{\nu_a(2^kB_1)}\leq 4
\end{equation}
(in fact the sum of both terms is at most $4$).
The first estimate yields the property (a) in the lemma with the choice $\nu=\nu_a$. The second estimate tells us that
$$\#\bigg\{k\in [1,N]:\dfrac{\mu(2^kB_1)}{\nu_a(2^kB_1)}\geq 8\eta^{-1}\bigg\}\leq \frac\eta{8}\,\sum_{k=1}^N \frac{\mu(2^kB_1)}{\nu_a(2^kB_1)}
\leq \frac{\eta N}2.$$
Hence, at most there are at least $\eta N - \frac12\eta N = \frac12\eta N$ balls from the family $\{2^jB_1\}_{0\leq j\leq N}$ which are
 $(\mu,C_2)$-good  and such that
$\frac{\mu(2^kB_1)}{\nu_a(2^kB_1)}\leq 8\eta^{-1}$. For these balls $B_k=2^kB_1$ it holds that
\begin{align*}
\int_{B_k} M_n(\chi_{B_k} \nu_a)\,d\nu_a & \leq \|a\|_\infty^2
\int_{B_k} M_n(\chi_{B_k} \mu)\,d\mu \\
& \leq 16C_2 \,\Theta_\mu(B_k)\mu(B_k) \leq 64\eta^{-2}\cdot16C_2 \,\Theta_{\nu_a}(B_k)\nu_a(B_k),
\end{align*}
which gives (b).

To prove (c), for any ball $B$ centered in $R_0\cap\supp\nu_a$ and any $t\in (0,1)$, we consider the function
$$a_t = a(1-t\chi_B)$$
and we write $\nu_t=\nu_{a_t}$. Notice that $\|a_t\|_\infty\leq \|a\|_\infty$. Then we have
$$F(a_t) \leq
\int_{R_0} |\RR\nu_t - c_{R_0}|^p\,d\nu_t + 
\lambda\,\sigma_p(B_1)\bigg(\|a\|_\infty^p + \frac{\mu(B_1)}{\nu_t(B_1)} +  \frac1N\sum_{k=1}^N \frac{\mu(2^kB_1)}{\nu_t(2^kB_1)}\bigg) =: g(t).$$
Notice that $g$ is differentiable in $[0,1)$ and $g(t)\geq F(a_t) \geq F(a) = g(0)$, by the minimizing property of $a$. Hence
$g'(0+)\geq0$. Computing, we get
\begin{align*}
g'(0+) & = 
-\int_{R_0\cap B} |\RR\nu_a - c_{R_0}|^p\,d\nu_a - p \int_{R_0} |\RR\nu_a - c_{R_0}|^{p-2}(\RR\nu_a - c_{R_0})\,\RR(\chi_{R_0\cap B}\nu_a)\,d\nu_a\\
&\quad +
\lambda\,\sigma_p(B_1)\bigg( \frac{\mu(B_1)}{\nu_a(B_1)^2}\,\nu_a(R_0\cap B_1\cap B) +  \frac1N\sum_{k=1}^N \frac{\mu(2^kB_1)}{\nu_a(2^kB_1)^2}
\,\nu_a(R_0\cap 2^kB_1\cap B)\bigg).
\end{align*}
Here we used the fact $\nu_t(2^kB_1) = \mu(2^kB_1\setminus R_0) + \int_{2^kB_1\cap R_0}a(1-t)\chi_B\,d\mu$, and thus
$\frac d{dt}\nu_t(2^kB_1) = -\nu(2^kB_1\cap R_0\cap B)$. The condition $g'(0+)\geq0$ implies that
\begin{align} \label{eqvar5}
\int_{R_0\cap B} &|\RR\nu_a - c_{R_0}|^p\,d\nu_a + p \int_{R_0} |\RR\nu_a - c_{R_0}|^{p-2}(\RR\nu_a - c_{R_0})\,\RR(\chi_{R_0\cap B}\nu_a)\,d\nu_A\\
&\leq
\lambda\,\sigma_p(B_1)\bigg( \frac{\mu(B_1)}{\nu_a(B_1)^2}\,\nu_a(R_0\cap B_1\cap B) +  \frac1N\sum_{k=1}^N \frac{\mu(2^kB_1)}{\nu_a(2^kB_1)^2}
\,\nu_a(R_0\cap 2^kB_1\cap B)\bigg).\nonumber
\end{align}
Notice that the second integral on the right hand side can be written in the form
$$\int_{R_0\cap B} \RR^*_{\nu_a}\big(\chi_{R_0}|\RR\nu_a - c_{R_0}|^{p-2}(\RR\nu_a - c_{R_0})\big)\,d\nu_a.$$
Hence, dividing \rf{eqvar5} by $\nu_a(B)$ and letting $\rad(B)\to0$, using the continuity of 
$|\RR\nu_a - c_{R_0}|^p$ and $\RR^*_{\nu_a}\big(\chi_{R_0}|\RR\nu_a - c_{R_0}|^{p-2}(\RR\nu_a - c_{R_0})\big)$ in the interior of $2B_0$, we deduce that
for all $x\in R_0\cap\supp\nu_a$ which does not belong to the closure of $\supp\nu_a\setminus R_0$, and so for all $x\in\frac54B_0$: 
\begin{align*}
|\RR\nu_a(x) - c_{R_0}|^p +\RR^*_{\nu_a}\big(\chi_{R_0}&|\RR\nu_a - c_{R_0}|^{p-2}(\RR\nu_a - c_{R_0})\big)(x) \\
& \leq 
\lambda\,\sigma_p(B_1)\bigg( \frac{\mu(B_1)}{\nu_a(B_1)^2} +  \frac1N\sum_{k=1}^N \frac{\mu(2^kB_1)}{\nu_a(2^kB_1)^2}
\bigg)\\
& \leq \lambda\,\frac{\sigma_p(B_1)}{\nu_a(B_1)}\bigg( \frac{\mu(B_1)}{\nu_a(B_1)} +  \frac1N\sum_{k=1}^N \frac{\mu(2^kB_1)}{\nu_a(2^kB_1)}\bigg)
\leq 4\lambda\,\frac{\sigma_p(B_1)}{\nu_a(B_1)},
\end{align*}
where, for the last inequality, we used the fact that the sum of both terms in \rf {eqvar3} is at most $4$.
It only remains to notice that
$$\frac{\sigma_p(B_1)}{\nu_a(B_1)} = \Theta_\mu(B_1)^p\,\frac{\mu(B_1)}{\nu_a(B_1)}  \leq 4\,\Theta_\mu(B_1)^p.$$
\end{proof}

\vv

We will need the following lemma.

\begin{lemma}\label{lemCDC}
Let $\Omega\subset  \R^{n+1}$  be a bounded  open set, let $\xi\in \partial\Omega$, and let $r_0>0$ and $r_1=8^{-N}r_0$, for some $N\ge1$. 
 Let $u$ be a nonnegative function which is continuous in
$\overline{B(\xi,r_0)\cap \Omega}$ and subharmonic in $B(\xi,r_0)\cap \Omega$, and vanishes on $B(\xi,r_0)\cap\partial\Omega$.
Suppose that there are $c_3>0$ and a subset $J\subset\{1,\ldots,N\}$ with $\# J\geq \kappa N$, for some $\kappa>0$, such that
\begin{equation}\label{eqvari84}
\capp(\bar B(\xi,8^{-j}r_0)\setminus \Omega) \geq c_3\,(8^{-j}r_0)^{n-1}\quad \mbox{ for all $j\in J$.}
\end{equation}
Then there exists some $\kappa_1>0$ depending on $\kappa$ such that
\begin{equation}
u(x)\lesssim  \bigg(\frac{r_1}{r_0}\bigg)^{\kappa_1}\sup_{\bar B(\xi,r_0)\cap \Omega} u \quad\mbox{ for all  $x\in \Omega\cap B(\xi,r_1)$}
\label{e:holder}
 \end{equation}
\end{lemma}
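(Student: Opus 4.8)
The plan is to prove \rf{e:holder} by an iteration of a one-step oscillation-decay estimate across each scale between $r_1$ and $r_0$, keeping track that a definite proportion of those scales contribute a multiplicative gain strictly less than $1$. First I would set $r_j = 8^{-j}r_0$ for $0\le j\le N$, and define $M_j := \sup_{\bar B(\xi,r_j)\cap\Omega} u$; by the maximum principle (using that $u\ge0$ is subharmonic in $B(\xi,r)\cap\Omega$ and vanishes on $B(\xi,r)\cap\pom$, so its zero extension to $B(\xi,r)$ is subharmonic) the sequence $M_j$ is non-increasing, so the only issue is to get strict decay often enough. The one-step claim I want is: there is $\theta\in(0,1)$, depending only on $c_3$ and $n$, such that for every $j\in J$ one has $M_{j+1}\le\theta\,M_{j-1}$ (say), while $M_{j+1}\le M_j$ always. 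Iterating from $j=1$ up to $j=N$, the ``good'' scales $j\in J$ (there are at least $\kappa N$ of them, and one may thin $J$ by a factor to make the relevant indices disjoint) each supply a factor $\theta$, so $M_N\le\theta^{\,c\kappa N}M_0$; since $r_1/r_0 = 8^{-N}$, writing $\theta^{c\kappa N} = (8^{-N})^{\kappa_1}$ with $\kappa_1 = c\kappa\log(1/\theta)/\log 8 >0$ gives exactly \rf{e:holder}, because for $x\in\Omega\cap B(\xi,r_1)=\Omega\cap B(\xi,r_N)$ we have $u(x)\le M_N$.

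The heart of the matter is the one-step estimate, and this is where the capacity hypothesis \rf{eqvari84} enters. Fix $j\in J$ and work in the annular region around scale $r_j$. The zero-extension $\bar u$ of $u$ is subharmonic in $B(\xi,r_{j-1})$, bounded by $M_{j-1}$ there, and vanishes identically on $\bar B(\xi,r_j)\setminus\Omega$, a set of Newtonian capacity $\ge c_3\,r_j^{n-1}$ by \rf{eqvari84}. I would invoke the quantitative maximum principle / boundary-decay lemma for subharmonic functions vanishing on a set of large capacity: if $v$ is subharmonic and nonnegative in $2B$, $v\le M$ on $2B$, and $v=0$ on a subset $K\subset B$ with $\capp(K)\ge c\,\rad(B)^{n-1}$, then $\sup_{\frac12 B} v\le (1-\delta)M$ with $\delta=\delta(c,n)>0$. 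This is a standard consequence of comparison with the equilibrium potential of $K$ (the Riesz potential $U\mu$ of the equilibrium measure $\mu$ of $K$ is $\ge$ a fixed constant on $K$ and $\lesssim \capp(K)/\rad(B)^{n-1}$ on $\frac12 B$ by the growth of the kernel $\EE_n$, so $M - v$ dominates $\bar u$-type barriers) — essentially the Cartan/Wiener estimate underlying the Wiener criterion. Applying it with $B=B(\xi,r_j)$ (after translating so that the relevant ball is genuinely interior, shrinking constants) yields $M_{j+1}\le\theta M_{j-1}$ with $\theta = 1-\delta(c_3,n)<1$, as desired; the case $n=1$ uses the Wiener (logarithmic) capacity in the same way, noting $\capp(\cdot)$ as defined in \rf{eqicapimu} is the right object in both cases.

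The main obstacle I anticipate is bookkeeping rather than conceptual: ensuring the ``good scales'' can be used independently. A single application of the decay step at scale $j$ compares $M_{j+1}$ to $M_{j-1}$, so consecutive indices in $J$ would overlap and one cannot naively multiply the factors $\theta$ over all of $J$. The fix is to pass to a sub-collection $J'\subset J$ with $\#J'\ge \#J/3\ge\kappa N/3$ whose elements are pairwise at distance $\ge 3$; then the intervals $[j-1,j+1]$, $j\in J'$, are disjoint, each contributes an independent factor $\theta$, and the remaining scales contribute factors $\le 1$ by monotonicity of $M_j$. This gives $M_N\le \theta^{\kappa N/3}M_0$, hence \rf{e:holder} with $\kappa_1 = \frac{\kappa}{3}\,\frac{\log(1/\theta)}{\log 8}$. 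A secondary technical point is verifying the hypotheses of the capacitary decay step at each scale — in particular that $u$, being continuous on $\overline{B(\xi,r)\cap\Omega}$ and subharmonic inside, extends by $0$ to an honest subharmonic function on the relevant ball, which holds because $u$ vanishes continuously on $B(\xi,r)\cap\pom$ and $\bar B(\xi,r_j)\setminus\Omega\subset \{\bar u = 0\}$; one should also check that all the balls $B(\xi,r_j)$ with $j\le N$ lie inside the original ball $B(\xi,r)$ where subharmonicity is available, which is automatic since $r_j\le r_0\le r$.
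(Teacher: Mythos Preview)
Your proposal is correct and follows the same iterative capacitary-decay scheme as the paper. The only difference is packaging: the paper phrases the one-step estimate via harmonic measure in $\Omega_j := \Omega \cap B(\xi, 8^{-j}r_0)$ and Bourgain's lemma (Lemma~\ref{lembourgain}), obtaining directly $\sup_{\partial B_{j+1}\cap\Omega} u \le (1-c_0)\sup_{\partial B_j\cap\Omega} u$ for each $j\in J$, so no overlap between consecutive scales arises and no thinning of $J$ is needed --- hence $\kappa_1 = c\kappa$ rather than $c\kappa/3$. Your zero-extension plus equilibrium-potential version of the decay step is equivalent (it is essentially Bourgain's lemma in disguise), and your thinning device is a correct fix for the two-scale overlap it introduces, but the harmonic-measure formulation is slightly cleaner here.
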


\vv
The proof of the preceding lemma requires the next well known result.

\begin{lemma}
\label{lembourgain}
 Let $\Omega\subset\R^{n+1}$ be 
a bounded open set. For $x\in\Omega$, denote by $\omega_\Omega^x$ the harmonic measure for $\Omega$ with pole at $x$.
 Let $\bar B$ be a closed ball  centered at $\partial\Omega$. In the case $n\geq2$, we have 
\[ \omega_\Omega^{x}(\bar B)\geq c(d) \,\frac{\capp(\tfrac14 \bar B\setminus\Omega)}{\rad(\bar B)^{d-2}}\quad \mbox{  for all }x\in \tfrac14 \bar B\cap \Omega ,\]
with $c(d)>0$.
In the case $n=1$,
$$ \omega_\Omega^{x}(\bar B)\gtrsim 
 \frac1{\log\dfrac{\capp_L(\bar B)}{\capp_L(\frac14\bar B\setminus\Omega)}} = \frac1{\log\dfrac{\rad(\bar B)}{\capp_L(\frac14\bar B\setminus\Omega)}}\qquad \mbox{ for all $x\in \tfrac14 \bar B\cap \Omega$.}
 $$
\end{lemma}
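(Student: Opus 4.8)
The plan is to run the classical barrier argument of Bourgain. Write $r=\rad(\bar B)$, let $\xi\in\pom$ be the center of $\bar B$, and set $E:=\tfrac14\bar B\setminus\Omega$, a compact subset of $\bar B(\xi,r/4)$ with $E\cap\Omega=\varnothing$. If $\capp(E)=0$ the right hand side vanishes (for $n=1$, $\capp_L(E)=0$ makes it $0$ as well), so assume $\capp(E)>0$; by scaling we may also assume $r=1$. Let $\lambda\in M_1(E)$ be the equilibrium measure of $E$ (the minimizer of $I(\cdot)$) and put $p:=\capp(E)\,U\lambda=\capp(E)\,\EE_n*\lambda$. By Frostman's maximum principle (Landkof for $n\ge2$, Ransford for $n=1$) we have $U\lambda=I(\lambda)=1/\capp(E)$ q.e.\ on $E$ and $U\lambda\le 1/\capp(E)$ everywhere; hence $p\le 1$ on $\R^{n+1}$, $p=1$ q.e.\ on $E$, and $p$ is harmonic on $\R^{n+1}\setminus E$, in particular on $\Omega$.

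The factor $\tfrac14$ creates the decisive gap. Every $z\in E$ has $|z-\xi|\le\tfrac14$, so $|x-z|\le\tfrac12$ for $x\in\tfrac14\bar B$ and $|y-z|\ge\tfrac34$ whenever $|y-\xi|\ge1$; since $\EE_n$ is radially decreasing on the relevant range and $\|\lambda\|=1$, it follows that
$$p(x)\ge \capp(E)\,\EE_n(\tfrac12 e)\quad\text{on }\tfrac14\bar B,\qquad p(y)\le\capp(E)\,\EE_n(\tfrac34 e)=:M\quad\text{for }|y-\xi|\ge1,$$
where $e$ is any unit vector (for $n=1$, $\EE_1(\tfrac12 e)=\tfrac1{2\pi}\log2$ and $\EE_1(\tfrac34 e)=\tfrac1{2\pi}\log\tfrac43$). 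Since $\EE_n(\tfrac12 e)/\EE_n(\tfrac34 e)=(3/2)^{n-1}\ge\tfrac32$ for $n\ge2$ and $\log2/\log\tfrac43>\tfrac32$ for $n=1$, we get $p(x)\ge\tfrac32 M$ on $\tfrac14\bar B$, hence $p(x)-M\ge\tfrac13 p(x)$. Moreover $\EE_n(\tfrac12 e)$ is a dimensional constant, so $p(x)\gtrsim_n\capp(E)$ on $\tfrac14\bar B$ when $n\ge2$; for $n=1$, $\capp_L(E)\le\capp_L(\tfrac14\bar B)=\tfrac14$ gives $V_E:=\log(1/\capp_L(E))\ge\log4>0$, $\capp(E)=2\pi/V_E$, and $p(x)\gtrsim 1/V_E$ on $\tfrac14\bar B$.

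Finally, compare $p$ with $v(y):=\omega_\Omega^y(\bar B\cap\pom)$ by the maximum principle in the bounded open set $\Omega':=\Omega\cap B(\xi,1)$. Both functions are harmonic in $\Omega'$ ---for $p$ because $E\cap\Omega'=\varnothing$--- and $0\le v\le1$. At each regular point $\zeta\in\partial\Omega'$ one has $\liminf_{y\to\zeta}(v+M-p)(y)\ge0$: if $|\zeta-\xi|<1$ then $\zeta\in\pom\cap\bar B$ is a continuity point of $\chi_{\bar B\cap\pom}$, so $v\to1$ while $p\le1$; if $|\zeta-\xi|=1$ then $v$ has a nonnegative boundary value and $p$ is continuous at $\zeta$ with $p(\zeta)\le M$. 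Since the irregular points of $\Omega'$ form a polar set and $v+M-p$ is bounded, the maximum principle yields $v+M-p\ge0$ on $\Omega'$, so for $x\in\tfrac14\bar B\cap\Omega\subset\Omega'$,
$$\omega_\Omega^x(\bar B)=v(x)\ge p(x)-M\ge\tfrac13 p(x)\gtrsim\frac{\capp(E)}{\rad(\bar B)^{n-1}},$$
and likewise $\gtrsim 1/V_E=1/\log(\rad(\bar B)/\capp_L(E))$ in the plane after undoing the scaling; this is the claim, recalling that $\capp_L(\bar B)=\rad(\bar B)$ and that $E=\tfrac14\bar B\setminus\Omega$.

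The only genuinely delicate ingredients are classical and external: Frostman's maximum principle (to obtain $p\le1$ globally and the value of the equilibrium potential on $E$), and the boundary bookkeeping in the maximum-principle step (continuity points of the data $\chi_{\bar B\cap\pom}$, discarding the polar set of irregular points of $\Omega'$). Everything else is elementary, the one quantitative input being the gap $|x-z|\le\tfrac12$ versus $|y-z|\ge\tfrac34$ supplied by the factor $\tfrac14$, which forces $p(x)\ge\tfrac32 M$ uniformly in the dimension.
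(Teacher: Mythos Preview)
Your argument is correct and is precisely the classical Bourgain barrier argument. The paper does not give its own proof of this lemma; it simply cites Chapter~7 of \cite{Prats-Tolsa} (noting the minor extension from Wiener regular to general open sets), and your write-up is exactly the standard proof one finds in such references: build the equilibrium potential $p$ of $E=\tfrac14\bar B\setminus\Omega$, use the $\tfrac14$--gap to get $p\ge\tfrac32 M$ on $\tfrac14\bar B$ versus $p\le M$ on $\partial B$, and compare $p-M$ with $\omega_\Omega^{\cdot}(\bar B)$ via the maximum principle in $\Omega\cap B$, discarding the polar set of irregular points.
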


For the proof see for example Chapter 7 from \cite{Prats-Tolsa}. We remark that in this reference the result is stated for Wiener regular open
sets but essentially the same proof is valid for general open sets.

\vv
\begin{proof}[Proof of Lemma \ref{lemCDC}]
For very $j\geq0$,
let $B_j=B(\xi,8^{-j}r_0)$ and $\Omega_j=\Omega\cap B_j$.
Since $u$ vanishes identically on $\partial\Omega\cap B_j$ and it is continuous in $\overline\Omega_j$ and subharmonic in $\Omega_j$, for all $x\in\partial B_{j+1}\cap\Omega$ we have
$$u(x) \leq \int_{\partial \Omega_j} u(y)\,d\omega_{\Omega_j}^x(y) =
\int_{\partial B_j \cap\Omega} u(y)\,d\omega_{\Omega_j}^x(y) \leq  \omega_{\Omega_j}^x(\partial B_j \cap\Omega)
\, \sup_{\partial B_j \cap\Omega}
u.$$
By Lemma \ref{lembourgain} and the assumption \rf{eqvari84}, in the case $j\in J$ it holds
$$\omega_{\Omega_j}^x(\partial B_j \cap\Omega) = 1- \omega_{\Omega_j}^x(\partial \Omega \cap \bar B_j)
\leq 1-c_0$$
for some $c_0\in (0,1)$ depending on $c_3$. 
 Thus,
$$\sup_{\partial B_{j+1} \cap\Omega}
u \leq (1-c_0)
\, \sup_{\partial B_j \cap\Omega}
u \quad\mbox{ if $j\in J$.}$$
In the case $j\not\in J$ we only have the trivial estimate 
$\omega_{\Omega_j}^x(\partial B_j \cap\Omega) \leq 1$ and
$$\sup_{\partial B_{j+1} \cap\Omega}
u \leq 
\, \sup_{\partial B_j \cap\Omega}
u \quad\mbox{ if $j\not\in J$.}$$

By the maximum principle and iterating, we deduce that
$$\sup_{B_{N} \cap\Omega}
u = \sup_{\partial B_{N} \cap\Omega}
u \leq (1-c_0)^{\#J}
\, \sup_{\partial B_0 \cap\Omega}
u\leq (1-c_0)^{\kappa N}
\, \sup_{\partial B_0 \cap\Omega}
u.$$
This readily proves the lemma, with $\kappa_1=c\kappa$.
\end{proof}

\vv

\begin{lemma}\label{lemmaxpr}
Let $\mu$ and $B_0$ satisfy the assumptions of Main Lemma \ref{lemaclau} and let $R_0$,  
$c_{R_0}$ be as in Lemma \ref{lemvar}.
Suppose that, for some $p\in (1,1+\frac1n)$ and some $\lambda\in (0,1)$,
\begin{equation}\label{eqvar10}
\int_{R_0} |\RR\mu - c_{R_0}|^p\,d\mu \leq \lambda \Theta_\mu(B_1)^p\,\mu(B_1)
\end{equation}
and that, for some $\tau>0$, 
\begin{equation}\label{eqvar11}
\int _{2B_0\setminus R_0}\int_{R_0} \frac1{|x-y|^n}\,d\mu(x)d\mu(y)\leq \tau
\Theta_\mu(B_0)\mu(2B_0).
\end{equation}
For $\nu$ as in Lemma \ref{lemvar}, we have
$$|\RR\nu(x) - c_{R_0}|^p + p\,\RR_\nu^*(\chi_{R_0}|\RR\nu - c_{R_0}|^{p-2}(\RR\nu - c_{R_0}))(x) \leq \big[16\,\lambda 
+ C(\alpha,\eta,\tau)\,\delta_1^{\kappa_1}\big]\,
\Theta_\mu(B_1)^p
$$
for all $x\in 2B_1$, for some $\kappa_1>0$ depending on $C_2$ and $\eta$.
\end{lemma}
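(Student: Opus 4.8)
The plan is to combine the Variational Lemma~\ref{lemvar} with the H\"older estimate of Lemma~\ref{lemCDC}. Since hypothesis \rf{eqvar10} is exactly the hypothesis \rf{eqvar1} of Lemma~\ref{lemvar}, that lemma furnishes the measure $\nu = \mu|_{\R^{n+1}\setminus R_0} + a\,\mu|_{R_0}$ with $0\le a\le 4$ and the minimizing properties (a)--(c). Write $g = \chi_{R_0}\,|\RR\nu - c_{R_0}|^{p-2}(\RR\nu - c_{R_0})$ and
$$F = |\RR\nu - c_{R_0}|^p + p\,\RR_\nu^*(g),$$
so that the conclusion is $F\le [16\lambda + C(\alpha,\eta,\tau)\,\delta_1^{\kappa_1}]\,\Theta_\mu(B_1)^p$ on $2B_1$. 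The key structural point is that $F$ is subharmonic in $\R^{n+1}\setminus\supp\nu$: the field $\RR\nu - c_{R_0}$ is harmonic off $\supp\nu$, hence $|\RR\nu - c_{R_0}| = \sup_{|e|=1}\langle e,\RR\nu - c_{R_0}\rangle$ is subharmonic there and so is its $p$-th power ($p\ge1$); and $\RR_\nu^*(g) = -\RR_\nu(g)$ is a single layer–type potential of the $\nu|_{R_0}$-supported density $g$, hence harmonic off $\supp\nu\supset R_0$. Moreover $F$ is continuous in $2B_0$, where $\mu = h\,\LL^{n+1}$ and the Riesz kernel is locally integrable against $\nu$; and by Lemma~\ref{lemvar}(c), $F\le 16\lambda\,\Theta_\mu(B_1)^p$ on $\supp\nu$ in a neighbourhood of $2B_1$. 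For $x\in 2B_1\cap\supp\nu$ this already gives the claim ($2B_1$ lies in the region of validity of Lemma~\ref{lemvar}(c) because $\delta_1$ is small), so from now on we fix $x\in 2B_1\setminus\supp\nu$.

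Next I would record that $\supp\nu$ is ``capacity dense'' near $B_1$ at many scales. By Lemma~\ref{lemvar}(b) at least $\eta N/2$ of the balls $2^kB_1$, $0\le k\le N$, are $(\nu,1024\eta^{-1}C_2)$-good, and Lemma~\ref{lemfrostman} then gives $\HH^n_\infty(2^kB_1\cap\supp\nu)\gtrsim_{\eta,C_2}(2^k\rad(B_1))^n$ for each such $k$. Combined with the standard bound relating Hausdorff content and Newtonian capacity in $\R^{n+1}$ (if $E\subset B(z,\rho)$ and $\HH^n_\infty(E)\ge c\rho^n$ then $\capp(E)\gtrsim c\,\rho^{n-1}$, via Frostman's lemma and the fact that the $|x-y|^{1-n}$-energy of an $n$-growth measure supported on $B(z,\rho)$ is $\lesssim\rho\,\|\sigma\|$), this shows that for a fixed $\xi_0\in\supp\nu\cap B_1$ there is an index set $J$ with $\#J\gtrsim_{\eta,C_2} N$ for which $\capp(\bar B(\xi_0,8^{-j}r_0)\cap\supp\nu)\gtrsim_{\eta,C_2}(8^{-j}r_0)^{n-1}$, with $r_0$ comparable to $\rad(B_0)$ and $\bar B(\xi_0,r_0)$ inside the region of validity of Lemma~\ref{lemvar}(c); passing from the dyadic scales $2^k\rad(B_1)$ to the scales $8^{-j}r_0$ loses only a factor $3$, and recentering at $\xi_0$ instead of $x_1$ costs $O(1)$ scales. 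Then I would apply Lemma~\ref{lemCDC} with $\Omega = B(\xi_0,r_0)\setminus\supp\nu$, $N'\approx N/3$, $r_1 = 8^{-N'}r_0\in[3\rad(B_1),24\rad(B_1))$, to the nonnegative function $u = (F - 16\lambda\,\Theta_\mu(B_1)^p)_+$, which is subharmonic in $\Omega$, continuous up to $\bar B(\xi_0,r_0)$, and vanishes on $\supp\nu\cap B(\xi_0,r_0)$. Since $|x-\xi_0|<3\rad(B_1)\le r_1$, i.e.\ $2B_1\subset B(\xi_0,r_1)$, this gives
$$u(x)\ \lesssim\ (r_1/r_0)^{\kappa_1}\,\sup_{\bar B(\xi_0,r_0)\cap\Omega}u\ \lesssim\ \delta_1^{\kappa_1}\,\sup_{\bar B(\xi_0,r_0)\cap\Omega}u,$$
with $\kappa_1>0$ depending on $C_2$ and $\eta$.

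It remains to bound $\sup_{\bar B(\xi_0,r_0)\cap\Omega}u$ by $C(\alpha,\eta,\tau)\,\Theta_\mu(B_1)^p$, \emph{with a constant not involving $\|h\|_{L^\infty(\mu)}$}, and this is the crux. By the maximum principle the supremum of the subharmonic $u$ on $\Omega$ is attained on $\partial\Omega$, where $u=0$ on the part lying in $\supp\nu$; so one must control $F$ on $\partial B(\xi_0,r_0)\setminus\supp\nu$. At points $x'$ at distance $\gtrsim\delta_0\rad(B_0)$ from $\supp\nu$ everything is controlled $\|h\|_{L^\infty(\mu)}$-freely: the (empty near $x'$) local part vanishes, the far part of $\RR\nu(x')$ is $\lesssim\delta_0^{-n}\,\Theta_\mu(B_0) + \PP_\mu(B_0)\lesssim_{C_0,\delta_0}\Theta_\mu(B_0)$ by the $\PP_\mu$-doubling of $B_0$; by antisymmetry $m_{\mu,R_0}(\RR(\chi_{R_0}\mu))=0$, whence $|c_{R_0}|\lesssim_{C_0}\Theta_\mu(B_0) + \mu(R_0)^{-1}\iint_{R_0\times(2B_0\setminus R_0)}|x-y|^{-n}\,d\mu\,d\mu\lesssim_{C_0}(1+\tau)\Theta_\mu(B_0)$ by \rf{eqvar11}; and, using $\|\RR\nu - c_{R_0}\|_{L^p(\nu|_{R_0})}^p\le 4\lambda\,\Theta_\mu(B_1)^p\mu(B_1)$ (a byproduct of the minimization in Lemma~\ref{lemvar}), H\"older's inequality, and the convergence of $\int_{R_0}|x'-y|^{-np}\,d\nu(y)$ near $x'$ — here $p<1+\tfrac1n$ is essential — one gets $|\RR_\nu^*(g)(x')|\lesssim_{\delta_0,\alpha}\Theta_\mu(B_1)^p$. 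Since $\Theta_\mu(B_0)\le\alpha^{-1}\Theta_\mu(B_1)$, this yields $F\lesssim_{C_0,\delta_0,\alpha,\tau}\Theta_\mu(B_1)^p$ at such points. At points of $\partial B(\xi_0,r_0)$ close to $\supp\nu$, where the local part of $\RR\nu$ genuinely depends on $\|h\|_{L^\infty(\mu)}$ and cannot be estimated pointwise, one instead bounces back towards $\supp\nu$, applying the maximum principle on $B(\xi',2\,\dist(x',\supp\nu))\setminus\supp\nu$ (with $\xi'\in\supp\nu$ nearest $x'$) together with the same capacity-density H\"older decay, iterating a controlled number of times until one reaches $\supp\nu$ (where $u=0$) or the ``far'' regime above. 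This gives $\sup_{\bar B(\xi_0,r_0)\cap\Omega}u\lesssim_{C_0,\delta_0,\alpha,\tau}\Theta_\mu(B_1)^p$, and plugging it into the last display yields $u(x)\lesssim C(\alpha,\eta,\tau)\,\delta_1^{\kappa_1}\Theta_\mu(B_1)^p$, i.e.\ $F(x)\le[16\lambda + C(\alpha,\eta,\tau)\,\delta_1^{\kappa_1}]\,\Theta_\mu(B_1)^p$ for every $x\in 2B_1$, as desired. I expect this final step — the $\|h\|_{L^\infty(\mu)}$-free a priori control of $F$ near $\supp\nu$ — to be the main obstacle.
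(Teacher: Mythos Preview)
Your overall strategy --- define $u=(F-16\lambda\Theta_\mu(B_1)^p)_+$, observe it is subharmonic in the complement of $\supp\nu$ and vanishes on $\supp\nu$, then feed the capacity density coming from Lemma~\ref{lemvar}(b) and Lemma~\ref{lemfrostman} into Lemma~\ref{lemCDC} --- matches the paper's. The divergence, and the gap, is in how you obtain the a priori bound $\sup u\lesssim_{\alpha,\tau}\Theta_\mu(B_1)^p$ on a ball of scale $\rad(B_0)$.

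You try to control $u$ pointwise on $\partial B(\xi_0,r_0)\setminus\supp\nu$ by a case split (far from $\supp\nu$ vs.\ close), and for the close case you propose an iterative ``bounce back'' via the maximum principle on small balls centered at nearby points of $\supp\nu$. This iteration is not shown to terminate in a number of steps depending only on the allowed parameters: each step produces new boundary points that may again be arbitrarily close to $\supp\nu$, and you have no mechanism forcing the distance to $\supp\nu$ to increase geometrically. You also never actually use the constraint $p<1+\tfrac1n$ in a place where it matters (in your ``far'' regime the kernel is nonsingular, so the remark there is misplaced), and your far-regime bound introduces a $\delta_0$-dependence that the stated constant $C(\alpha,\eta,\tau)$ does not allow.

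The paper sidesteps all of this by replacing the pointwise supremum bound with an \emph{integral} one. Since $u$ vanishes on $\supp\nu\cap\tfrac32B_0$ and is subharmonic off $\supp\nu$, it is in fact subharmonic in the whole ball $\tfrac32B_0$; hence for every $x\in 1.25B_0$ one has $u(x)\le\avint_{\frac32B_0}u\,d\LL^{n+1}$. The average on the right is then estimated termwise: for $|\RR(\chi_{R_0}\nu)|^p$ one uses H\"older and Fubini together with $np<n+1$ (this is where $p<1+\tfrac1n$ is genuinely needed) to get $\avint|\RR(\chi_{R_0}\nu)|^p\,d\LL^{n+1}\lesssim\Theta_\mu(B_0)^p$; the contribution of $\RR(\chi_{\R^{n+1}\setminus R_0}\nu)-c_{R_0}$ is handled using \rf{eqvar11} and the $\PP_\mu$-doubling of $B_0$; and for the adjoint piece one writes $\avint\chi_V\,\RR_\nu^*(g)\,d\LL^{n+1}=\LL^{n+1}(\tfrac32B_0)^{-1}\int_{R_0}\RR_{\LL^{n+1}}(\chi_V)\,g\,d\nu$ and uses $|\RR_{\LL^{n+1}}(\chi_V)|\lesssim\rad(B_0)$. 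This gives $\sup_{1.25B_0}u\lesssim_{\alpha,\tau}\Theta_\mu(B_1)^p$ with no $\|h\|_{L^\infty}$- or $\delta_0$-dependence, after which your application of Lemma~\ref{lemCDC} goes through as written. The missing idea in your argument is precisely this passage from a supremum to a solid mean.
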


\begin{proof}
We consider the function
$$f(x) = \max\big(0,\,|\RR\nu(x) - c_{R_0}|^p + p\,\RR_\nu^*(\chi_{R_0}|\RR\nu - c_{R_0}|^{p-2}(\RR\nu - c_{R_0}))(x)- 16\lambda\,\Theta_\mu(B_1)^p\big).$$
Notice that $f$ is continuous in $\frac32B_0$ and it vanishes in $\supp\nu \cap\frac32B_0$ by Lemma \ref{lemvar}(c). Together with the fact that $f$ is subharmonic $\frac32B_0\setminus \supp\nu$, this implies that
$f$ is subharmonic in the whole $\frac32B_0$

We will show first that $f(x)\lesssim_\alpha \Theta_\mu(B_1)^p$ for all $x\in 1.25 B_0$. 
By the non-negativity and subharmonicity of $f$,  it suffices to check that
\begin{equation}\label{eqmax3}
\avint_{\frac32B_0}f\,d\LL^{n+1}\lesssim_{\alpha,\tau} \Theta_\mu(B_1)^p.
\end{equation}
Let $V=\{x\in  \frac32B_0: f(x)>0\}$, so that $V\subset \frac32B_0\setminus \supp\nu$ and $f=f\chi_V$. For $x\in V$ we have 
\begin{align*}
f(x) & \leq 2 \,|\RR(\chi_{R_0}\nu)(x)|^p  + 
2\,|\RR(\chi_{\R^{n+1}\setminus R_0}\nu)(x) - c_{R_0}|^p \\
& \quad+ p\,\RR_\nu^*(\chi_{R_0}|\RR\nu - c_{R_0}|^{p-2}(\RR\nu - c_{R_0}))(x).
\end{align*}
By H\"older's inequality, 
$$|\RR(\chi_{R_0}\nu)(x)|^p \leq \int_{R_0} \frac1{|x-y|^{np}}\,d\nu(y)\,\,\nu(R_0)^{p-1}.
$$
Then, by Fubini, since $np<n+1$,
\begin{align}\label{eqvar14}
\avint_{\frac32B_0}\chi_V\,|\RR(\chi_{R_0}\nu)|^p\,d\LL^{n+1} & \lesssim \frac{\nu(R_0)^{p-1}}{\rad(B_0)^{n+1}}\int_{\frac32B_0}\int_{R_0} \frac1{|x-y|^{np}}\,d\nu(y)\,dx\\
&
\lesssim \frac{\nu(R_0)^{p-1}\,\nu(R_0)}{\rad(B_0)^{np}} \lesssim \frac{\mu(R_0)^{p}}{\rad(B_0)^{np}} \lesssim \Theta_\mu(B_0)^p.\nonumber
\end{align}

Next we deal with the term $|\RR(\chi_{\R^{n+1}\setminus R_0}\nu)(x) - c_{R_0}|$. To this end, notice that
by the definitions of $\nu$ and $c_{R_0}$,
$$c_{R_0} = m_{\mu,R_0}(\RR\mu) = m_{\mu,R_0}(\RR(\chi_{\R^{n+1}\setminus R_0}\mu)) = m_{\mu,R_0}(\RR(\chi_{\R^{n+1}\setminus R_0}\nu)).$$
Then we have
\begin{align*}
|\RR(\chi_{\R^{n+1}\setminus R_0}\nu)(x) - c_{R_0}| & \leq |\RR(\chi_{\R^{n+1}\setminus 2B_0}\nu)(x) - m_{\mu,R_0}(\RR(\chi_{\R^{n+1}\setminus 2B_0}\nu))|\\
& \quad + |\RR(\chi_{2B_0\setminus R_0}\nu)(x) - m_{\mu,R_0}(\RR(\chi_{2B_0\setminus R_0}\nu))|.
\end{align*}
By standard arguments, the first term on the right hand side does not exceed $C\PP_\mu(B_0)\leq C\,C_0\Theta_\mu(B_0)$.
Taking also into account the assumption \rf{eqvar11},
\begin{align*}
|m_{\mu,R_0}(\RR(\chi_{B_0\setminus R_0}\nu))| & \leq \frac4{\mu(R_0)} \int _{2B_0\setminus R_0}\int_{R_0} \frac1{|x-y|^n}\,d\mu(x)d\mu(y)\\
& \leq 4\tau\,\Theta_\mu(B_0)\frac{\mu(2B_0)}{\mu(R_0)}\lesssim\tau\,\Theta_\mu(B_0).
\end{align*}
Thus,
$$|\RR(\chi_{\R^{n+1}\setminus R_0}\nu)(x) - c_{R_0}|  \lesssim  C_0\Theta_\mu(B_0) + \tau\,\Theta_\mu(B_0) + |\RR(\chi_{2B_0\setminus R_0}\nu)(x)|.$$
Therefore,
$$\avint_{\frac32B_0}\!\chi_V\,|\RR(\chi_{\R^{n+1}\setminus R_0}\nu) - c_{R_0}|^p\,d\LL^{n+1}
\lesssim (C_0+\tau)^p\, \Theta_\mu(B_0)^p + \avint_{\frac32B_0} \!|\RR(\chi_{2B_0\setminus R_0}\nu)|^p\,d\LL^{n+1}.$$
Arguing as in \rf{eqvar14}, it follows easily that the last term is also bounded above by
$C\Theta_\mu(B_0)^p$. Hence, we get
$$\avint_{\frac32B_0}|\RR(\chi_{\R^{n+1}\setminus R_0}\nu) - c_{R_0}|^p\,d\LL^{n+1}
\lesssim_\tau  \Theta_\mu(B_0)^p.$$

Finally we have to estimate
$$\avint_{\frac32B_0} \chi_V\,\RR_\nu^*(\chi_{R_0}|\RR\nu - c_{R_0}|^{p-2}(\RR\nu - c_{R_0}))|^p\,d\LL^{n+1}.$$
By Fubini, this equals
$$\frac1{\LL^{n+1}(\frac32B_0)} \int_{R_0} \RR_{\LL^{n+1}}(\chi_V) \,|\RR\nu - c_{R_0}|^{p-2}(\RR\nu - c_{R_0}))|^p\,d\nu.$$
Using that $|\RR_{\LL^{n+1}}(\chi_V)|\lesssim \rad(B_0)$, we derive 
\begin{align*}
\bigg|\avint_{\frac32B_0} \chi_V\,\RR_\nu^*(\chi_{R_0}|\RR\nu - c_{R_0}|^{p-2}(\RR&\nu - c_{R_0}))|^p\,d\LL^{n+1}\bigg| 
\lesssim \frac1{\rad(B_0)^n} \int_{R_0} |\RR\nu - c_{R_0}|^{p-1}\,d\nu\\
& \quad\lesssim \frac{\nu(R_0)^{1/p}}{\rad(B_0)^n} \bigg(\int_{R_0} |\RR\nu - c_{R_0}|^{p}\,d\nu\bigg)^{1/p'} \\
& \quad\lesssim \frac{\lambda^{1/p'}
\nu(R_0)^{1/p}\Theta_\mu(B_1)^{p-1}\mu(B_1)^{1/p'}}{\rad(B_0)^n}\\
& \quad\lesssim \lambda^{1/p'}\Theta_\mu(B_0)^{1/p}\Theta_\mu(B_1)^{p-1}\Theta_\mu(B_1)^{1/p'} \lesssim_\alpha \Theta_\mu(B_1).
\end{align*}
Gathering the above estimates and taking into account that $\Theta_\mu(B_0)\leq \alpha^{-1}\Theta_\mu(B_1)$, we get \rf{eqmax3}. So $f(x)\lesssim_{\alpha,\tau} \Theta_\mu(B_1)^p$ for all $x\in 1.25 B_0$.

Let $\wt B_1$ be a maximal ball concentric with $B_1$ contained in $1.25B_0$. Since $B_1$ is centered in $B_0$, we have $\rad(\wt B_1)\geq \frac14\rad(B_0)$. By Lemma \ref{lemvar}, at least $\eta\,N/2$ balls from the family $\{2^jB_1\}_{0\leq j\leq N}$ are $(\nu,1024\eta^{-1}C_2)$-good, for
 $N= \lfloor\log_2 (\rad(B_0)/\rad(B_1)\rfloor$. 
Applying Lemma \ref{lemCDC} to the function $f$ in $\wt B_1$, with $N$ large enough if necessary, we deduce that, for all $x\in 2B_1$,
$$f(x)\lesssim \bigg(\frac{\rad(2B_1)}{\rad(\wt B_1)}\bigg)^{\kappa_1} \sup_{\wt B_1}f \lesssim_{\alpha,\eta,\tau} \bigg(\frac{\rad(B_1)}{\rad(B_0)}\bigg)^{\kappa_1}\Theta_\mu(B_1)^p \lesssim_{\alpha,\eta,\tau} \delta_1^{\kappa_1}\,\Theta_\mu(B_1)^p,$$
for some $\kappa_1>0$ depending on $C_2$ and $\eta$. By the definition of $f$, this concludes the proof of the lemma.
\end{proof}
\vv

\begin{proof}[Proof of Main Lemma \ref{lemaclau}]
We will prove this by contradiction. Let $\mu$, $B_0$ and $R_0$, $c_{R_0}$ as in Lemma \ref{lemvar}.
Suppose that 
\begin{equation}\label{eqvar20}
\int_{R_0} |\RR\mu - m_{\mu,R_0}(\RR\mu)|^2\,d\mu \leq \lambda_0 \Theta_\mu(B_0)^2\,\mu(B_0)
\end{equation}
for some small $0<\lambda_0<1$. Then this implies that, for $1<p<2$,
\begin{align*}
\int_{R_0} |\RR\mu - c_{R_0}|^p\,d\mu & \leq \left(\int_{R_0} |\RR\mu - m_{\mu,R_0}(\RR\mu)|^2\,d\mu\right)^{\frac p2}\mu(R_0)^{1-\frac p2}\\
& \leq 
\left(\int_{R_0} |\RR\mu - m_{\mu,2B_0}(\RR\mu)|^2\,d\mu\right)^{\frac p2}\mu(R_0)^{1-\frac p2} \leq
C\lambda_0^{p/2} \,\Theta_\mu(B_0)^p\,\mu(B_0)\\
&\leq 
C(\alpha,\delta_0)\lambda_0^{p/2} \,\Theta_\mu(B_1)^p\,\mu(B_1).
\end{align*}
Set $\lambda= C(\alpha,\delta_0)\lambda_0^{p/2}$ and take $p\in (1,1+\frac1n)$. Consider the measure $\nu$ in Lemma \ref{lemvar}, so that it 
satisfies the properties (a), (b), (c) in that lemma, and also
\begin{equation}\label{eqvar22}
|\RR\nu(x) - c_{R_0}|^p + p\,\RR_\nu^*(\chi_{R_0}|\RR\nu - c_{R_0}|^{p-2}(\RR\nu - c_{R_0}))(x) \leq \gamma\,
\Theta_\mu(B_1)^p
\end{equation}
for all $x\in 2B_1$, for 
\begin{equation}\label{eqvargamma}
\gamma= 16\lambda +  C(\alpha,\eta,\tau)\,\delta_1^{\kappa_1}
\end{equation}
 for some ${\kappa_1}>0$ depending on $C_2$ and $\eta$, by Lemma \ref{lemmaxpr}.

Consider a function $\vphi_1\in C^\infty$ supported in $2B_1$ which is identically $1$ on $B_1$, with $\|\nabla\vphi_1\|_\infty\lesssim 1/\rad(B_1)$, and take 
$$\vphi = \Theta_\mu(B_1)\,\vphi_1.$$
Then we have
$$\RR^*_{\LL^{n+1}}(\nabla \vphi) = c\,\vphi,$$
for some absolute constant $c$, and so
\begin{align}\label{eqvar25}
\left|\int (\RR\nu - c_{R_0})\,\nabla\vphi \,d\LL^{n+1}\right|  & =  \left|\int \RR\nu \,\nabla\vphi \,d\LL^{n+1}\right| =
\left|\int  \RR_{\LL^{n+1}}^*(\nabla\vphi) \,d\nu\right|\\& \approx \int  \vphi \,d\nu \geq \Theta_\mu(B_1)\,\nu(B_1) \geq \frac14\,\Theta_\mu(B_1)\,\mu(B_1).\nonumber
\end{align}
On the other hand, taking into account that 
$$\|\nabla\vphi\|_1\lesssim \Theta_\mu(B_1)\,\rad(B_1)^n = \mu(B_1),$$
we infer that
\begin{align}\label{eqvar27}
\left|\int (\RR\nu - c_{R_0})\,\nabla\vphi \,d\LL^{n+1}\right|  & \leq
\bigg(\int |\RR\nu - c_{R_0}|^p\,|\nabla\vphi| \,d\LL^{n+1}\bigg)^{1/p} \bigg(\int |\nabla\vphi| \,d\LL^{n+1}\bigg)^{1/p'} \\
&\lesssim \bigg(\int |\RR\nu - c_{R_0}|^p\,|\nabla\vphi| \,d\LL^{n+1}\bigg)^{1/p}\mu(B_1)^{1/p'}.\nonumber
\end{align}
We estimate the integral on right hand side using \rf{eqvar22}:
\begin{align*}
&\int |\RR\nu - c_{R_0}|^p\,|\nabla\vphi| \,d\LL^{n+1} \\
&\leq C \gamma\Theta_\mu(B_1)^p
\int |\nabla\vphi| \,d\LL^{n+1} - p\int\RR_\nu^*(\chi_{R_0}|\RR\nu - c_{R_0}|^{p-2}(\RR\nu - c_{R_0}))\,|\nabla\vphi| \,d\LL^{n+1}\\
& \leq C
\gamma\Theta_\mu(B_1)^p\,\mu(B_1) - p\int_{R_0}|\RR\nu - c_{R_0}|^{p-2}(\RR\nu - c_{R_0}))\,\RR_{\LL^{n+1}}(|\nabla\vphi|)\,d\nu.
\end{align*}
To bound the last integral on the right hand side we use the fact that, for any $x\in\R^{n+1}$, 
$$|\RR_{\LL^{n+1}}(|\nabla\vphi|)(x)| \lesssim \frac{\Theta_\mu(B_1)}{\rad(B_1)}\int_{2B_1}\frac1{|x-y|^n}\,dy\lesssim \Theta_\mu(B_1).$$
Then we have
\begin{align*}
\bigg|\int_{R_0}|\RR\nu - c_{R_0}&|^{p-2}(\RR\nu - c_{R_0}))\,\RR_{\LL^{n+1}}(|\nabla\vphi|)\,d\nu\bigg|\\
& \lesssim \Theta_\mu(B_1)
\int_{R_0}|\RR\nu - c_{R_0}|^{p-1}\,d\nu\\
& \leq \Theta_\mu(B_1)\left(\int_{R_0}|\RR\nu - c_{R_0}|^{p}\,d\nu\right)^{1/p'} \nu(R_0)^{1/p}\\
& \leq \lambda^{1/p'}\Theta_\mu(B_1)\,\Theta_\mu(B_1)^{p-1}\,\mu(B_1)^{1/p'}\nu(R_0)^{1/p}
\lesssim_{\alpha,\delta_0} \lambda^{1/p'}\Theta_\mu(B_1)^p\,\mu(B_1).
\end{align*}
Therefore,
$$
\int |\RR\nu - c_{R_0}|^p\,|\nabla\vphi| \,d\LL^{n+1} \leq 
C\,\gamma\,\Theta_\mu(B_1)^p\,\mu(B_1) + C(\alpha,\delta_0)\lambda^{1/p'}\,\Theta_\mu(B_1)^p\,\mu(B_1).$$
By \rf{eqvar27}, this gives
$$
\left|\int (\RR\nu - c_{R_0})\,\nabla\vphi \,d\LL^{n+1}\right|  
\lesssim 
(\gamma + C(\alpha,\delta_0)\lambda^{1/p'})^{1/p}\,\Theta_\mu(B_1)\,\mu(B_1).
$$
Together with \rf{eqvar25}, this gives
$$\Theta_\mu(B_1)\,\mu(B_1)\lesssim (\gamma + C(\alpha,\delta_0)\lambda^{1/p'})^{1/p}\,\Theta_\mu(B_1)\,\mu(B_1).$$
That is,
$$1\lesssim (\gamma + C(\alpha,\delta_0)\lambda^{1/p'})^{1/p} = \big(16\lambda + C(\alpha,\delta_0)\lambda^{1/p'}+ C(\alpha,\eta,\tau)\,\delta_1^{\kappa_1}\big)^{1/p},$$
which is a contradiction if $\delta_1$ and $\lambda$ are taken small enough.\footnote{Remark that $\delta_1$ depends on $\tau$.}
\end{proof}
\vv


\section{The proof of Theorem \ref{teomain}}

We will first prove the following ``toy version'' of Theorem \ref{teomain}.

\begin{propo}\label{mainpropo}
Let $\mu$ be a Radon measure in $\R^{n+1}$ and let $B_0$ be a $(\PP_\mu,C_0)$-doubling ball. Suppose that $M_n(\chi_{3B_0}\mu)\in L^2(\mu|_{2B_0})$ and 
$\RR_*(\chi_{3B_0}\mu)\in L^2(\mu|_{2B_0})$. Suppose that there exists some ball $B_1$ centered in $B_0$ such that
$$\Theta_\mu(B_1)\geq \alpha\,\Theta_\mu(B_0)$$
for some $\alpha>0$, and such that, for some $\delta_0,\delta_1>0$,
$$\delta_0\,\rad(B_0)\leq \rad(B_1)\leq \delta_1\,\rad(B_0),$$
with $\delta_1\in (0,1/2)$ small enough, depending on $n$, $C_0$, and $\alpha$.
Let $N= \lfloor\log_2 (\rad(B_0)/\rad(B_1)\rfloor$. Suppose that at least $\eta N$ balls from the family $\{2^jB_1\}_{0\leq j\leq N}$ are 
$(\mu,C_2)$-good, for some $\eta>0$.
Then, if $\delta_1$ is  small enough, depending on $n$, $C_0$, $\alpha$, $C_2$, and $\eta$, we have
\begin{equation}\label{eqpropo11}
\int_{2B_0} |\RR\mu - m_{\mu,2B_0}(\RR\mu)|^2\,d\mu + \int_{2B_0} \theta_\mu^{n,*}(x)^2\,d\mu(x)\geq c_2 \Theta_\mu(B_0)^2\,\mu(B_0),
\end{equation}
for some constant $c_2>0$ depending on 
$n$, $\alpha$, $\delta_0$, and $C_2$.
\end{propo}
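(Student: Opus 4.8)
The plan is to deduce Proposition \ref{mainpropo} from Main Lemma \ref{lemaclau}. Two preparatory steps are needed: reduce to the case in which $\mu$ is absolutely continuous with respect to $\LL^{n+1}$ on $2B_0$, and verify hypothesis \rf{eqvar110} with a constant $\tau$ depending only on $n$ and $C_0$. For the reduction I would first note that, after an arbitrarily small change of $\rad(B_0)$, we may assume $\mu(\partial B_0)=\mu(\partial(2B_0))=0$. Fix a large integer $k$ and set $\nu=\wt\sigma_k$, the approximating measure of \rf{eqaprox99}, so that $\nu=\mu$ off $2B_0$ and $\nu|_{2B_0}\ll\LL^{n+1}$ with density bounded for each fixed $k$. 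By Lemma \ref{lemmapprox}, $M_n(\chi_{3B_0}\nu)$ and $\RR_*(\chi_{3B_0}\nu)$ lie in $L^2(\nu|_{2B_0})$. Using \rf{eqaprox0} and the fact that the averaging defining $\wt\sigma_k$ is local at scale $\ell_k=A_0^{-k}$, one checks for $k$ large that $B_0$ is $(\PP_\nu,C_0')$-doubling, that $\Theta_\nu(B_1)\geq\alpha'\Theta_\nu(B_0)$ with the same $\delta_0,\delta_1$ relating $\rad(B_1)$ and $\rad(B_0)$, and that every $2^jB_1$ which is $(\mu,C_2)$-good is $(\nu,C_2')$-good, so at least $\eta N$ of the balls $2^jB_1$, $0\le j\le N$, are $(\nu,C_2')$-good; here $C_0',\alpha',C_2'$ depend only on $n$ and on $C_0,\alpha,C_2$. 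Finally, since $\nu|_{2B_0}\ll\LL^{n+1}$ and $\mu(\partial(2B_0))=0$, we have $\theta_\nu^{n,*}=0$ $\nu$-a.e.\ on $2B_0$, so $\int_{2B_0}\theta_\nu^{n,*}(x)^2\,d\nu(x)=0$.

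Write $\Sigma_k=\int_{2B_0}|\RR\nu-m_{\nu,2B_0}(\RR\nu)|^2\,d\nu$. If $\Sigma_k>\Theta_\nu(B_0)^2\nu(2B_0)$ then already $\Sigma_k\geq\Theta_\nu(B_0)^2\nu(B_0)$, so assume $\Sigma_k\leq\Theta_\nu(B_0)^2\nu(2B_0)$. I would then verify \rf{eqvar110} for $\nu$, with $R_0$ the associated set. By Cauchy--Schwarz,
$$\int_{2B_0\setminus R_0}\!\int_{R_0}\frac{d\nu(x)\,d\nu(y)}{|x-y|^{n}}\ \leq\ \nu(R_0)^{1/2}\,\bigg(\int_{R_0}\Big(\int_{2B_0\setminus R_0}\frac{d\nu(y)}{|x-y|^{n}}\Big)^{2}d\nu(x)\bigg)^{1/2}.$$
Arguing as in the proof of Theorem \ref{teoDTlocal00} (applied to $\nu$; see \rf{eqal8ed2}, Lemma \ref{lemDMimproved}, the estimates following \rf{eqal8ed2}, and Remark \ref{rem2}) one bounds the cancellation-free right-hand side above --- the argument in that proof in fact estimates $\int_Q(\int_{2B_Q\setminus Q}|x-y|^{-n}\,d\nu(y))^{2}\,d\nu(x)$ --- obtaining, since $\theta_\nu^{n,*}\equiv 0$ on $2B_0$,
$$\int_{R_0}\Big(\int_{2B_0\setminus R_0}\frac{d\nu(y)}{|x-y|^{n}}\Big)^{2}d\nu(x)\ \lesssim\ \PP_\nu(B_0)^2\,\nu(2B_0)+\Sigma_k.$$
Because $B_0$ is $(\PP_\nu,C_0')$-doubling, $\PP_\nu(B_0)^2\nu(2B_0)\lesssim_{C_0}\Theta_\nu(B_0)^2\nu(2B_0)$, and with $\Sigma_k\leq\Theta_\nu(B_0)^2\nu(2B_0)$ and $\nu(R_0)\leq\nu(2B_0)$ the two displays yield $\int_{2B_0\setminus R_0}\int_{R_0}|x-y|^{-n}\,d\nu\,d\nu\lesssim_{C_0}\Theta_\nu(B_0)\nu(2B_0)$, i.e.\ \rf{eqvar110} for $\nu$ with $\tau=\tau(n,C_0)$.

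Now all hypotheses of Main Lemma \ref{lemaclau} hold for $\nu$; the required smallness of $\delta_1$ depends only on $n,C_0,\alpha,C_2,\eta$, because $\tau$ does. The lemma gives
$$\int_{R_0}|\RR\nu-m_{\nu,R_0}(\RR\nu)|^2\,d\nu\ \geq\ c_1\,\Theta_\nu(B_0)^2\,\nu(B_0),$$
with $c_1=c_1(n,\alpha,\delta_0,C_2)$, which we take $\le 1$. Since $R_0\subset 2B_0$ and $m_{\nu,R_0}(\RR\nu)$ is the constant minimizing $c\mapsto\int_{R_0}|\RR\nu-c|^2\,d\nu$, the left side is at most $\int_{R_0}|\RR\nu-m_{\nu,2B_0}(\RR\nu)|^2\,d\nu\leq\Sigma_k$. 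Combining with the easy alternative above, $\Sigma_k\geq c_1\Theta_\nu(B_0)^2\nu(B_0)$ for every large $k$.

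Finally I would let $k\to\infty$. Since $\Theta_\nu(B_0)\approx\Theta_\mu(B_0)$ and $\nu(B_0)\approx\mu(B_0)$ uniformly in $k$, we get $\Sigma_k\gtrsim c_1\Theta_\mu(B_0)^2\mu(B_0)$ for $k$ large, while Lemma \ref{lemmapprox} gives $\limsup_{k}\Sigma_k\leq 2\int_{2B_0}|\RR\mu-m_{\mu,2B_0}(\RR\mu)|^2\,d\mu+C\int_{2B_0}\theta_\mu^{n,*}(x)^2\,d\mu$; these two facts prove \rf{eqpropo11} with $c_2$ depending only on $n,\alpha,\delta_0,C_2$. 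I expect the main obstacle to be the verification of \rf{eqvar110}: it is precisely here that the $(\PP_\mu,C_0)$-doubling hypothesis is indispensable, since it is what turns the term $\PP_\mu(B_0)^2\mu(2B_0)$ appearing in Theorem \ref{teoDTlocal00} --- which in general cannot be absorbed into the right-hand side of \rf{eqpropo11} --- into a harmless term of size $\lesssim_{C_0}\Theta_\mu(B_0)^2\mu(2B_0)$. A secondary, routine point is checking that the hypotheses of Main Lemma \ref{lemaclau} transfer to the approximating measures $\wt\sigma_k$ with constants independent of $k$.
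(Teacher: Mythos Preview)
Your proposal is correct and follows essentially the same route as the paper: pass to the approximating measure $\wt\sigma_k$, verify that the hypotheses of Main Lemma~\ref{lemaclau} transfer (including \rf{eqvar110} via Cauchy--Schwarz and Remark~\ref{rem2}, using the $(\PP_\mu,C_0)$-doubling to absorb $\PP_\nu(B_0)^2$), apply the Main Lemma, and then let $k\to\infty$ using Lemma~\ref{lemmapprox}. Your choice to apply Cauchy--Schwarz with the outer integral over $R_0$ is in fact slightly cleaner than the paper's, since it matches directly the quantity controlled in Remark~\ref{rem2}; the preliminary reduction to $\mu(\partial B_0)=\mu(\partial(2B_0))=0$ is harmless but unnecessary, as $\wt\sigma_k|_{2B_0}\ll\LL^{n+1}$ already forces $\wt\sigma_k(\partial(2B_0))=0$.
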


\begin{proof}
We may assume that
\begin{equation}\label{eqasu7}
\int_{2B_0} |\RR\mu - m_{\mu,2B_0}(\RR\mu)|^2\,d\mu + \int_{2B_0} \theta_\mu^{n,*}(x)^2\,d\mu(x)\leq \Theta_\mu(B_0)^2\,\mu(B_0)
\end{equation}
because otherwise we are done.

For $k$ large enough, consider the measure $\wt\sigma_k$ constructed in \rf{eqaprox99}.
It is easy to check that if some ball $2^jB_1$, with $0\leq j\leq N$ is $(\mu,C_2)$-good, then it will be $(\wt \sigma_k,C_2/2)$-good
for $k$ large enough. Further, since the family of the cubes $Q\in I_k$ is finite, by construction it is clear that there exists some $h_k\in L^\infty(\mu)$
such that $\wt \sigma_k = h_k\,\LL^{n+1}$ in $2B_0$. Obviously, this implies that $\theta_{\wt\sigma_k}^{n,*}(x)=0$ $\wt\sigma_k$-a.e.\ in $2B_0$.
Also, 
 $M_n(\chi_{3B_0}\wt\sigma_k)\in L^2(\wt\sigma_k|_{2B_0})$ and $\RR_*(\chi_{3B_0}\wt\sigma_k)\in L^2(\wt\sigma_k|_{2B_0})$, by Lemma \ref{lemmapprox}.
By H\"older's inequality and by Remark \ref{rem2}, we have
\begin{align*}
\int _{2B_0\setminus R_0}\int_{R_0} &\frac1{|x-y|^n}\,d\wt\sigma_k(x)d\wt\sigma_k(y) \\
&\leq
\bigg(\int _{2B_0\setminus R_0}\bigg(\int_{R_0} \frac1{|x-y|^n}\,d\wt\sigma_k(x)\bigg)^2 d\wt\sigma_k(y)\bigg)^{1/2}\wt\sigma_k(2B_0)^{1/2}\\
& \lesssim
\bigg(\Theta_{\wt \sigma_k} (B_0)^2\wt\sigma_k(2B_0) + \int_{2B_0} \!|\RR\wt\sigma_k - m_{\wt\sigma_k,2B_0}(\RR\wt\sigma_k)|^2 \,d\wt\sigma_k\bigg)^{1/2}\wt\sigma_k(2B_0)^{1/2}.
\end{align*}
For $k$ large enough, by Lemma \ref{lemmapprox} and the assumption \rf{eqasu7}.
\begin{align*}
\Theta_{\wt \sigma_k} (B_0)^2&\,\wt\sigma_k(2B_0) + \int_{2B_0} \!|\RR\wt\sigma_k - m_{\wt\sigma_k,2B_0}(\RR\wt\sigma_k)|^2 \,d\wt\sigma_k\\
&
\lesssim 
\Theta_{\mu} (B_0)^2\mu(B_0) + \int_{2B_0} \!|\RR\mu - m_{\mu,2B_0}(\RR\mu)|^2 \,d\mu+\int_{2B_0} \theta_\mu^{n,*}(x)^2\,d\mu(x)\\& \lesssim \Theta_{\mu} (B_0)^2\mu(B_0).
\end{align*}
Therefore,
$$\int _{2B_0\setminus R_0}\int_{R_0} \frac1{|x-y|^n}\,d\wt\sigma_k(x)d\wt\sigma_k(y)\lesssim \Theta_{\mu} (B_0)\mu(B_0)\approx \Theta_{\wt\sigma_k} (B_0)^2\wt\sigma_k(B_0).$$
So the assumptions of Main Lemma \ref{lemaclau} hold for $\wt\sigma_k$. The application of this lemma tells us that
\begin{align*}
\int_{2B_0} |\RR\wt\sigma_k - m_{\wt\sigma_k,2B_0}(\RR\wt\sigma_k)|^2\,d\wt\sigma_k &\geq
\int_{R_0} |\RR\wt\sigma_k - m_{\wt\sigma_k,R_0}(\RR\wt\sigma_k)|^2\,d\wt\sigma_k\\
&\geq c_1 \Theta_{\wt\sigma_k}(B_0)^2\,\wt\sigma_k(B_0)\approx\Theta_{\mu}(B_0)^2\,\mu(B_0)
.
\end{align*}
A new application of Lemma \ref{lemmapprox} yields \rf{eqpropo11}.
\end{proof}

\vv

We will need the following auxiliary result.

\begin{lemma}\label{lemdobpp}
Let $\mu$ be a Radon measure and let $B\subset \R^{n+1}$ be a ball. 
Suppose that $B$ is not $(\PP_\mu,4)$-doubling. Then
$$\PP_\mu(2B) > \frac32\,\PP_\mu(B).$$
\end{lemma}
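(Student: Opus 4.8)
The plan is to exploit the elementary self-similar structure of the functional $\PP_\mu$ under dilation by $2$. Writing $B=B(x,r)$, so that $2^j(2B)=2^{j+1}B$, a reindexing of the defining series gives
$$\PP_\mu(2B) = \sum_{j\geq0}2^{-j}\,\Theta_\mu(2^{j+1}B) = 2\sum_{j\geq1}2^{-j}\,\Theta_\mu(2^jB) = 2\bigl(\PP_\mu(B)-\Theta_\mu(B)\bigr),$$
or equivalently $\PP_\mu(B)=\Theta_\mu(B)+\tfrac12\,\PP_\mu(2B)$. (One may assume $\PP_\mu(B)<\infty$, since otherwise the same identity forces $\PP_\mu(2B)=\infty$ as well as long as $\Theta_\mu(B)<\infty$.)

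Next I would plug in the hypothesis. By definition, $B$ failing to be $(\PP_\mu,4)$-doubling means $\PP_\mu(B)>4\,\Theta_\mu(B)$, i.e.\ $\Theta_\mu(B)<\tfrac14\,\PP_\mu(B)$. Substituting into the identity above,
$$\tfrac12\,\PP_\mu(2B) = \PP_\mu(B)-\Theta_\mu(B) > \PP_\mu(B)-\tfrac14\,\PP_\mu(B) = \tfrac34\,\PP_\mu(B),$$
and multiplying by $2$ yields $\PP_\mu(2B)>\tfrac32\,\PP_\mu(B)$, which is the claim.

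Since the argument reduces to a one-line reindexing followed by an elementary inequality, there is no substantive obstacle; the only points requiring a little care are the bookkeeping in the series identity (the shift $j\mapsto j-1$ and the factor $2$ it produces) and, if one wants the statement in full generality, the degenerate case in which the Poisson-type sum is infinite.
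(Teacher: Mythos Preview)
Your proof is correct and is essentially the same computation as the paper's, just organized more cleanly: you isolate the identity $\PP_\mu(2B)=2\bigl(\PP_\mu(B)-\Theta_\mu(B)\bigr)$ and then plug in $\Theta_\mu(B)<\tfrac14\PP_\mu(B)$, whereas the paper rewrites both the hypothesis and the conclusion as the same inequality $\Theta_\mu(B)<\tfrac13\sum_{j\ge1}2^{-j}\Theta_\mu(2^jB)$. The underlying algebra is identical.
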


\begin{proof}
	The condition $\PP_\mu(2B) > \frac32\,\PP_\mu(B)$ is equivalent to the inequality
$$\sum_{j\geq1} 2^{-j+1}\Theta_\mu(2^jB) > \frac32 \sum_{j\geq0} 2^{-j}\Theta_\mu(2^jB) = \frac32\Theta_\mu(B) + 
\frac34 \sum_{j\geq1} 2^{-j+1}\Theta_\mu(2^jB).$$
	That is, 
	$$\frac16 \sum_{j\geq1} 2^{-j+1}\Theta_\mu(2^jB) > \Theta_\mu(B),$$
	or
	\begin{equation}\label{eqpmu1}
	\frac13\sum_{j\geq1} 2^{-j}\Theta_\mu(2^jB) > \Theta_\mu(B),
	\end{equation}

	In turn, the fact that $B$ is not $(\PP_\mu,4)$-doubling is equivalent to
	$$\Theta_\mu(B)<\frac14 \sum_{j\geq0} 2^{-j}\Theta_\mu(2^jB) = \frac14\Theta_\mu(B) + \frac14\sum_{j\geq1} 2^{-j}\Theta_\mu(2^jB),$$
	which gives \rf{eqpmu1}.
\end{proof}

\vv

\begin{lemma}\label{lembonescales}
Let $\mu$ be a Radon measure in $\R^{n+1}$ and let $B_0$ be a $(\PP_\mu,C_0)$-doubling ball. Suppose that there exists some ball $B_1$ centered in $B_0$ such that
$$\Theta_\mu(B_1)\geq \alpha\,\Theta_\mu(B_0)$$
for some $\alpha>0$, and such that, for some $\delta_0,\delta_1>0$,
$$\delta_0\,\rad(B_0)\leq \rad(B_1)\leq \delta_1\,\rad(B_0),$$
with $\delta_1\in (0,1/100)$.
Let $N= \lfloor\log_2 (\rad(B_0)/\rad(B_1)\rfloor$. If $N$ is large enough, depending on $C_0$ and $\alpha$, then
at least $\eta N$ balls from the family $\{2^jB_1\}_{1\leq j\leq N}$ are $(\PP_\mu, 4)$-doubling, with 
$$\eta = \frac1{1+3n}.$$ 
\end{lemma}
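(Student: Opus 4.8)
The plan is to count the "bad" scales (those $j$ for which $2^jB_1$ fails to be $(\PP_\mu,4)$-doubling) and show that they cannot be too numerous, by exploiting Lemma \ref{lemdobpp}: whenever $2^jB_1$ is not $(\PP_\mu,4)$-doubling, the Poisson-type quantity increases by a definite multiplicative factor, $\PP_\mu(2^{j+1}B_1)>\tfrac32\,\PP_\mu(2^jB_1)$. On the other hand, $\PP_\mu$ cannot grow without bound along the chain $B_1\subset 2B_1\subset\cdots\subset 2^NB_1\subset B_0$ (up to the constant comparing $2^NB_1$ and $B_0$), because the top ball $B_0$ is $(\PP_\mu,C_0)$-doubling and the density at $B_1$ is comparable to that at $B_0$. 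So a geometric-growth-versus-bounded-total argument caps the number of bad scales.

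First I would record the two-sided control on $\PP_\mu$ along the chain. Since $B_0$ is $(\PP_\mu,C_0)$-doubling, $\PP_\mu(B_0)\le C_0\,\Theta_\mu(B_0)$, and since $2^NB_1$ and $B_0$ are comparable balls (both of radius $\approx\rad(B_0)$, as $\delta_0\le\rad(B_1)/\rad(B_0)$ and $2^{-1}<2^N\rad(B_1)/\rad(B_0)\le 1$ by the choice of $N$), monotonicity-type estimates for $\PP_\mu$ give $\PP_\mu(2^NB_1)\lesssim_{\delta_0}\PP_\mu(B_0)\le C_0\Theta_\mu(B_0)$. From below, $\PP_\mu(B_1)\ge\Theta_\mu(B_1)\ge\alpha\,\Theta_\mu(B_0)$. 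Hence
$$\frac{\PP_\mu(2^NB_1)}{\PP_\mu(B_1)}\le \frac{C\,C_0}{\alpha}$$
for some dimensional $C$ (absorbing the $\delta_0$-dependence into the requirement that $N$ be large, or keeping it explicit — either is fine).

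Next I would set $J_{\mathrm{bad}}=\{\,1\le j\le N:\ 2^jB_1\text{ is not }(\PP_\mu,4)\text{-doubling}\,\}$ and let $b=\#J_{\mathrm{bad}}$. For $j\notin J_{\mathrm{bad}}$ we have the trivial lower bound $\PP_\mu(2^{j+1}B_1)\ge \tfrac12\,\PP_\mu(2^jB_1)$ (in fact $\PP_\mu$ of a ball and of its double are always comparable with a dimensional constant; here only a crude bound is needed), while for $j\in J_{\mathrm{bad}}$, Lemma \ref{lemdobpp} gives $\PP_\mu(2^{j+1}B_1)\ge\tfrac32\,\PP_\mu(2^jB_1)$. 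Actually, to make the telescoping clean it is better to use a uniform lower bound $\PP_\mu(2^{j+1}B_1)\ge c_n\,\PP_\mu(2^jB_1)$ valid for every $j$ with some fixed $c_n\in(0,1)$ (immediate from the definition of $\PP_\mu$, since doubling the ball can only decrease each $\Theta_\mu(2^kB)$ term by a bounded factor), upgraded to the factor $\tfrac32$ at the bad scales. Multiplying over $j=1,\dots,N$,
$$\frac{\PP_\mu(2^{N}B_1)}{\PP_\mu(B_1)}\ \ge\ c_n^{\,N-b}\left(\tfrac32\right)^{b}.$$
Combining with the upper bound $\PP_\mu(2^NB_1)/\PP_\mu(B_1)\le C C_0/\alpha$ and taking logarithms yields
$$b\,\log\tfrac32 + (N-b)\log c_n \ \le\ \log\frac{C C_0}{\alpha},$$
i.e. $b\big(\log\tfrac32-\log c_n\big)\le N(-\log c_n)+\log(CC_0/\alpha)$, so for $N$ large (depending on $C_0$, $\alpha$, and the dimensional constant $c_n$) we get $b\le \theta_n N$ for an explicit $\theta_n<1$. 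The number of $(\PP_\mu,4)$-doubling balls among $\{2^jB_1\}_{1\le j\le N}$ is then at least $(1-\theta_n)N$, and a bookkeeping choice of $c_n$ (one checks $c_n$ may be taken so that $-\log c_n=(3n)\log\tfrac32$, i.e. $c_n=(2/3)^{3n}$, which is a legitimate lower bound for $\PP_\mu(2B)/\PP_\mu(B)$ in $\R^{n+1}$) gives exactly $\eta=\frac{1}{1+3n}$.

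The main obstacle is the bookkeeping to land precisely on $\eta=\frac1{1+3n}$: one must produce a sharp enough uniform lower bound $\PP_\mu(2B)\ge(2/3)^{3n}\,\PP_\mu(B)$. This comes from writing $\PP_\mu(2B)=\sum_{j\ge0}2^{-j}\Theta_\mu(2^{j+1}B)=2\sum_{j\ge1}2^{-j}\Theta_\mu(2^jB)\ge 2\big(\PP_\mu(B)-\Theta_\mu(B)\big)$, together with $\Theta_\mu(B)\le 2^{n}\Theta_\mu(2B)\le 2^n\PP_\mu(2B)$, which rearranges to $\PP_\mu(2B)\ge \frac{2}{1+2^{n+1}}\PP_\mu(B)$; one then checks $\frac{2}{1+2^{n+1}}\ge (2/3)^{3n}$ for all $n\ge1$, or more simply runs the counting argument with $c_n=\frac{2}{1+2^{n+1}}$ directly and verifies the resulting fraction of good scales is at least $\frac1{1+3n}$. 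The rest is the elementary geometric-growth estimate above, plus the standard comparison lemmas for $\PP_\mu$ between comparable balls, which are routine given the machinery already set up in Section \ref{secprelim}.
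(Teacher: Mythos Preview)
Your approach is correct and follows essentially the same scheme as the paper: use Lemma \ref{lemdobpp} to gain a factor $\tfrac32$ in $\PP_\mu$ at each non-$(\PP_\mu,4)$-doubling scale, use a universal lower bound $\PP_\mu(2B)\ge c_n\,\PP_\mu(B)$ at the remaining scales, telescope, and compare with the a priori bound $\PP_\mu(B_1)\ge\alpha\,\Theta_\mu(B_0)$ from below and $\PP_\mu(2^{N+1}B_1)\lesssim C_0\,\Theta_\mu(B_0)$ from above.

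The only difference worth noting is the choice of $c_n$. The paper obtains $\PP_\mu(B)\le 2^n\,\PP_\mu(2B)$ directly, by the one-line observation $\Theta_\mu(2^kB)\le 2^n\,\Theta_\mu(2^{k+1}B)$ applied term by term; then it uses $2^n\le(3/2)^{2n}$ so that the telescoping gives the limiting good fraction $\tfrac1{1+2n}$, which is strictly larger than $\tfrac1{1+3n}$ and thus absorbs the $O(1/N)$ error for $N$ large. Your route via $\PP_\mu(2B)=2(\PP_\mu(B)-\Theta_\mu(B))$ and $\Theta_\mu(B)\le 2^n\PP_\mu(2B)$ yields the weaker $c_n=\tfrac{2}{1+2^{n+1}}$; this still works (and your check $\tfrac{2}{1+2^{n+1}}>(2/3)^{3n}$ provides the needed slack), but the paper's argument is cleaner. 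Also, the comparison of $2^N B_1$ with $B_0$ uses only absolute constants (since $\tfrac12\rad(B_0)<2^N\rad(B_1)\le\rad(B_0)$ by the definition of $N$), so no $\delta_0$-dependence enters here.
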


\begin{proof}
Let $I_\PP\subset \{1,\ldots,N\}$ be the subset of the indices $j$ such that $2^jB_1$ is $(\PP_\mu, 4)$-doubling. By the preceding lemma, if
$j\not\in I_\PP$, then 
$$\PP_\mu(2^jB_1) \leq \frac23\,\PP_\mu(2^{j+1}B_1).$$
On the other hand, if $j\in I_\PP$, then
$$\PP_\mu(2^jB_1) = \sum_{k\geq0} 2^{-k}\Theta_\mu(2^{j+k}B) \leq 2^n\sum_{k\geq0} 2^{-k}\Theta_\mu(2^{j+1+k}B) = 2^n\,\PP_\mu(2^{j+1}B_1).$$ 
Suppose that there are $\theta N$ balls $B_j$, with $j=1,\ldots,N$, which are not $(\PP_\mu, 4)$-doubling.
Then,
\begin{align*}
\alpha\,\Theta_\mu(B_0)& \leq \PP_\mu(B_1) \lesssim \PP_\mu(2B_1) \\& \leq \bigg(\frac23\bigg)^{\theta N} \big(2^n\big)^{(1-\theta)N}\,\PP_\mu(2^{N+1}B_1)
\lesssim \bigg(\frac23\bigg)^{\theta N} \big(2^n\big)^{(1-\theta)N}\,C_0\,\Theta_\mu(B_0).
\end{align*}
Thus,
$$\bigg(\frac32\bigg)^{-\theta N} \bigg(\frac32\bigg)^{2n(1-\theta)N}\geq
\bigg(\frac32\bigg)^{-\theta N} \big(2^n\big)^{(1-\theta)N}\geq  c\alpha C_0^{-1} ,$$
which implies
$$-\theta N +2n(1-\theta)N \geq  \frac{\log(c\alpha C_0^{-1})}{\log{3/2}}=: -C_3(\alpha,C_0),$$
or equivalently,
$$\theta\leq  \frac{2n}{1+2n} + \frac{C_3(\alpha,C_0)}{(1+2n)N}.$$
So, for $N$ large enough depending on $ C_3(\alpha,C_0)$,
$$1-\theta \geq \frac{1}{1+2n} - \frac{C_3(\alpha,C_0)}{(1+2n)N} \geq \frac{1}{1+3n},$$
which proves the lemma.
\end{proof}

\vv

\begin{proof}[Proof of Theorem \ref{teomain}] 
We may assume that
$$\int_{2B_0} |\RR\mu - m_{\mu,2B_0}(\RR\mu)|^2\,d\mu + \int_{2B_0} \theta_\mu^{n,*}(x)^2\,d\mu(x)\leq \lambda \Theta_\mu(B_0)^2\,\mu(B_0),
$$
for some small $\lambda>0$ to be fixed below, which may depend on $\delta_0$ and $\alpha$.
By the previous lemma, 
 there at least $\eta N$ balls from the family $\{B_j\}_{j\in J} \subset \{2^jB_1\}_{0\leq j\leq N}$ that are $(\PP_\mu,4)$-doubling
 with $\eta=\frac1{1+3n}$.
We claim that the same balls $B_j$, with $j\in J$, are also $(\mu,C_2)$-good, for some constant $C_2$ depending only on $n$. To prove this,
we apply Theorem \ref{teoDTlocal00} to such balls $B_j$, and then we deduce that 
\begin{align*}
\int_{B_j} |M_n(\mu|_{B_j})|^2\, d\mu 
 & \lesssim 
\int_{2B_j} |\RR\mu - m_{\mu,2B_j}(\RR\mu)|^2\,d\mu + \PP_\mu(B_j)^2\mu(2B_j) + \int_{2B_j} \theta_\mu^{n,*}(x)^2\,d\mu(x)\\
& \leq
\int_{2B_0} |\RR\mu - m_{\mu,2B_0}(\RR\mu)|^2\,d\mu + 16\,\Theta_\mu(B_j)^2\mu(2B_j) + \int_{2B_0} \theta_\mu^{n,*}(x)^2\,d\mu(x)\\
& \leq \lambda \,\Theta_\mu(B_0)^2\mu(2B_0) + 16\,\Theta_\mu(B_j)^2\mu(2B_j).
\end{align*}
Remark that the implicit constant in the first inequality above only depends on $n$.
For $\lambda$ small enough depending on $\alpha$ and $\delta_0$, using the fact that $\mu(B_j)\geq \mu(B_1)$, we have 
$$\lambda \,\Theta_\mu(B_0)^2\mu(2B_0) \leq \Theta_\mu(B_j)^2\mu(B_j).$$
Also, the fact that $B_j$ is $(\PP_\mu,4)$-doubling implies that $\Theta_\mu(2B_j)\leq 8\,\Theta_\mu(B_j)$ and so
$\mu(2B_j)\leq 2^n\,8\,\mu(B_j)$. Hence,
$$\int_{B_j} |M_n(\mu|_{B_j})|^2\, d\mu \leq C_5\Theta_\mu(B_j)^2\mu(B_j),$$
with $C_5$ depending only on $n$.
This implies that
$$\int_{B_j} M_n(\mu|_{B_j})\, d\mu\leq \bigg(\int_{B_j} |M_n(\mu|_{B_j})|^2\, d\mu\bigg)^{1/2}\mu(B_j)^{1/2} \leq C_5^{1/2}\Theta_\mu(B_j)\,\mu(B_j),$$
proving our claim.

Now we only have to notice that the assumptions of Proposition \ref{mainpropo} hold, and so
$$\int_{2B_0} |\RR\mu - m_{\mu,2B_0}(\RR\mu)|^2\,d\mu + \int_{2B_0} \theta_\mu^{n,*}(x)^2\,d\mu(x)\geq c_2 \Theta_\mu(B_0)^2\,\mu(B_0),$$
with $c_2>0$ depending on 
$n$, $\alpha$, $\delta_0$, and $C_2$.
\end{proof}
\vv

\begin{rem}\label{remtonto}
By modifying suitable the parameters in the proof of Theorem \ref{teomain}, one deduces that, under the assumptions of the theorem, for any
$\lambda\in (1,2)$, 
one can deduce that there exists some constant $c_1>0$ depending on 
$n$, $\alpha$, $\delta_0$, and $\lambda$ such that
$$c_1 \Theta_\mu(B_0)^2\,\mu(B_0)\leq \int_{\lambda B_0} |\RR\mu - m_{\mu,2B_0}(\RR\mu)|^2\,d\mu + \int_{\lambda B_0} \theta_\mu^{n,*}(x)^2\,d\mu(x).$$
\end{rem}
\vv


\section{Proof of Corollary \ref{cororeflec}}

Let $\mu$ be a Radon measure in $\R^{n+1}$ with polynomial $n$-growth which is reflectionless and denote
$$E= \{x\in\supp(\mu):\theta_\mu^{n,*}(x)>0\big\}.$$
Notice that $\mu$ is mutually absolutely continuous with the Hausdorff measure $\HH^n$ on $E$. Then, the fact that $E$ is $n$-rectifiable follows from \cite{NToV2}.

We will show that if $E$ is not dense in $\supp(\mu)$, then $\dim_H(\mu)>n+\ve_n$, for some absolute constant $\ve_n>0$. To this
end, consider an open set $U$ such that $\mu(U)>0$ and $E\cap U = \varnothing$. It suffices to show that there exists a subset $F\subset U$
with $\mu(F)>0$ such that, for some positive constants $C$, $r_0$, $\ve_n'$,
$$\mu(B(x,r))\leq C\,r^{n+\ve_n'}\quad\mbox{ for all $x\in F$, $0<r\leq r_0$.}$$

Applying Theorem \ref{teomain} with 
 $C_0=4$, $\alpha=1/{2^{n+3}}$, and $\delta_0=\delta_1^{2n+2}$ we deduce the following. There exists some absolute constant $\delta_1\in(0,1/2)$ such that if $B_0$ is a $(\PP_\mu,4)$-doubling ball centered in 
$\supp(\mu)$ and contained in $U$, and moreover
 there exists  some ball $B_1$ centered in $B_0$ such that
$$\Theta_\mu(B_1)\geq \frac1{2^{n+3}}\,\Theta_\mu(B_0)$$
with 
$$\delta_1^{2n+2}\,\rad(B_0)\leq \rad(B_1)\leq \delta_1\,\rad(B_0),$$
then 
$$c_1 \Theta_\mu(B_0)^2\,\mu(B_0)\leq \int_{2B_0} |\RR\mu - m_{\mu,2B_0}(\RR\mu)|^2\,d\mu,$$
for some absolute constant $c_1>0$.
So $\mu$ cannot be a reflectionless measure. Without loss of generality we can assume that $\delta_1=2^{-m}$ for some $m\ge3$.

We call a ball $B_0$ good, if it satisfies the conditions in the previous paragraph. That is,
$B_0$ is a $(\PP_\mu,4)$-doubling ball centered in 
$\supp(\mu)$ and contained in $U$, and moreover
 there exists  some ball $B_1$ centered in $B_0$ such that
$$\Theta_\mu(B_1)\geq \frac1{2^{n+3}}\,\Theta_\mu(B_0)$$
with 
$$\delta_1^{2n+2}\,\rad(B_0)\leq \rad(B_1)\leq \delta_1\,\rad(B_0).$$
From the preceding discussion, such balls do not exist. 

Therefore, for any ball $B$ contained in $U$ an centered in $U$ one of the following
options holds:
\begin{itemize}
\item[(i)] $B$ is not $(\PP_\mu,4)$-doubling, or
\item[(ii)] $B$ is $(\PP_\mu,4)$-doubling but $B$ is not good, which implies that
 all the balls $B_1$ concentric with $B$ such that
$$\delta_1^{2n+2}\,\rad(B)\leq \rad(B_1)\leq \delta_1\,\rad(B),$$
satisfy $\Theta_\mu(B_1)< \frac1{2^{n+3}}\,\Theta_\mu(B)$.
\end{itemize}
In the first case, by Lemma \ref{lemdobpp}, 
$$\PP_\mu(B)\leq \frac23\,\PP_\mu(2B)\leq \frac34\,\PP_\mu(2B) .$$
On the other hand, in the case (ii), all the balls of the form
$2^{-k}B$ with $m\leq k\leq (2n+2)m$ satisfy $$\Theta_\mu(2^{-k}B)\leq \frac1{2^{n+3}}\,\Theta_\mu(B)\leq \frac18\,\Theta_\mu(2B).$$
We claim that this implies that
\begin{equation}\label{eqpmu6d1}
\PP_\mu(2^{-(2n+2)m}B)\leq \frac34\,\PP_\mu(2B).
\end{equation}

To prove the claim, we denote $\wt B= 2^{-(2n+2)m}B$ and we split
\begin{align*}
\PP_\mu(\wt B) & = \sum_{k=0}^{(2n+1)m-1}  2^{-k}\Theta_\mu(2^k\wt B) + \sum_{k=(2n+1)m}^{(2n+2)m}  2^{-k}\Theta_\mu(2^k\wt B) + 
\sum_{k>(2n+2)m}  2^{-k}\Theta_\mu(2^k\wt B)\\
& =: I + II + III.
\end{align*}
The balls $2^k\wt B$ in the sum $I$ satisfy 
$$\delta_1^{2n+2}B = \wt B\subset 2^k\wt B \subset 2^{(2n+1)m-1}\wt B = \frac{\delta_1}2\,B,$$
and so
$$\sum_{k=0}^{(2n+1)m-1}  2^{-k}\Theta_\mu(2^k\wt B)\leq \frac1{2^{n+3}}\sum_{k\geq 0}  2^{-k}\,\Theta_\mu(B)  = \frac1{2^{n+2}}\,\Theta_\mu(B)\leq \frac14\,\Theta_\mu(2B).$$
To estimate the term $II$ we use the fact that the balls $2^k\wt B$, with $(2n+1)m\leq k\leq(2n+2)m$ are contained in $B$ and have
radius at least $\delta_1\rad(B)$, and so
$$\Theta_\mu(2^k\wt B) \leq \frac{\rad(B)^n}{(\delta_1\rad(B))^n}\Theta_\mu(B)= \delta_1^{-n}\Theta_\mu(B).$$
Then, using also that $m\geq 3$, we derive
$$II\leq \sum_{k\geq(2n+1)m}2^{-k}\delta_1^{-n}\Theta_\mu(B)\leq 2^{-(2n+1)m+1}\delta_1^{-n}\Theta_\mu(B) = 2^{-mn-m+1}\Theta_\mu(B)
\leq \frac14\,\Theta_\mu(2B).$$
Finally, to estimate the term $III$ we use the fact that, for $k>(2n+2)m$, the balls  $2^k\wt B$ contain $2B$:
\begin{align*}
III&=\sum_{k>(2n+2)m}  2^{-k}\Theta_\mu(2^k\wt B) = \sum_{k\geq1}  2^{-k-(2n+2)m}\Theta_\mu(2^kB)\\
& \leq  2^{-(2n+2)m+1}\PP_\mu(2B)
\leq \frac14\,\PP_\mu(2B).
\end{align*}
Gathering the preceding estimates, we get
$$\PP_\mu(\wt B)\leq \frac14\Theta_\mu(2B) + \frac14\Theta_\mu(2B) + \frac14\,\PP_\mu(2B) \leq \frac34\,\PP_\mu(2B),$$
which proves \rf{eqpmu6d1}.
By combining the options (i) and (ii), we deduce that either
\begin{itemize}
\item[(i)] $\PP_\mu(B)\leq \frac34\,\PP_\mu(2B)$, or 
\item[(ii)] $\PP_\mu(2^{-(2n+2)m}B)\leq \frac34\,\PP_\mu(2B)$.
\end{itemize}

Consider now a ball $\widehat B$ centered in $\supp(\mu)$ such that $2\wh B\subset U$. For any $x\in \wh B\cap\supp(\mu)$, let $B_1(x)$ be a ball 
centered in $x$ with radius equal to $\rad(\wh B)$. Now we define inductively $B_k(x)$, for $k>1$, as follows: 
in case that the option (i) holds for $\frac12B_k(x)$ in place of $B$, we set $B_{k+1} = \frac12\,B_k(x)$.
On the other hand, if (ii) holds for $\frac12B_k(x)$ in place of $B$, we set $B_{k+1}(x)= 2^{-(2n+2)m-1}B_k(x)$.
In this way, in any case we have
$$\PP_\mu(B_{k+1}(x))\leq \frac34\,\PP_\mu(B_{k}(x)),$$
and
\begin{equation}\label{eqmn6fy}
2^{-(2n+2)m-1}\rad(B_{k}(x))\leq \rad(B_{k+1}(x))\leq \frac12 \rad(B_{k}(x)).
\end{equation}
This implies that, for any $k\geq1$,
\begin{equation}\label{eqdk83a}
\Theta_\mu(B_k(x))\leq \PP_\mu(B_{k}(x))\leq \Big(\frac34\Big)^{k-1}\,\PP_\mu(B_{1}(x))\lesssim \Big(\frac34\Big)^{k}\,\PP_\mu(\wh B)
\end{equation}
and
$$2^{(-k+1)[(2n+2)m-1]}\rad(B_1(x))\leq \rad(B_k(x))\leq 2^{-k+1}\rad(\wh B(x)).$$
We write
$$2^{(-k+1)[(2n+2)m-1]} = C \bigg(\frac34\bigg)^{k\beta},$$
for some suitable $C,\beta>0$, so that $ \rad(B_k(x))\geq C \left(\frac34\right)^{k\beta}\rad(\wh B)$.
 From the last estimate and \rf{eqdk83a} we deduce that
$$\mu(B_k(x)) \lesssim \bigg(\frac34\bigg)^{k}\,\rad(B_k(x))^n \,\PP_\mu(\wh B)\lesssim \rad(B_k(x))^{n+\frac1\beta}\,\rad(\wh B)^{-\frac1\beta}\,\PP_\mu(\wh B),$$

For an arbitrary radius $r\in (0,\rad(\wh B))$, let $k\geq 1$ be such that 
$\rad(B_{k+1}(x))< r\leq \rad(B_{k}(x))$. By \rf{eqmn6fy}, 
$r\approx \rad(B_{k}(x))$, with the implicit constant depending on $m$ and $n$. So we have
$$\mu(B(x,r))\leq \mu(B_k(x))\lesssim \rad(B_k(x))^{n+\frac1\beta}\,\rad(\wh B)^{-\frac1\beta} \,\PP_\mu(\wh B)\approx r^{n+\frac1\beta}\,\rad(\wh B)^{-\frac1\beta}\,\PP_\mu(\wh B)$$
for all $x\in \wh B\cap\supp(\mu)$. Since $\mu(\wh B\cap\supp(\mu))>0$, we infer that
$$\dim_H(\mu)\geq n+\frac1\beta,$$
which concludes the proof of the corollary.
\qed

\vv


\section{The proof of Theorem \ref{teomain2}}

\subsection{A partial result}

Concerning Theorem \ref{teomain2}, first we will show that the existence of an $n$-rectifiable (not necessarily uniformly rectifiable) set 
 $\Gamma$ such that $\mu(\Gamma\cap B_0) \geq\tau\,\mu(B_0)$ is an easy consequence of Corollary \ref{corotxulo} and some results on quantitative
 rectifiability. Indeed, let $\gamma\in(0,1/10)$ be some small constant to be fixed below. Suppose first that
 the set
 $$F= \big\{x\in 2B_0:\theta_\mu^{n,*}(x)>0\big\}$$
satisfies
$$\mu(F)\leq \gamma\,\mu(B_0).$$
Together with the assumption (c) in the theorem, by Corollary \ref{corotxulo}, this implies that
\begin{align}\label{eqfpv79}
\int_{B_0}\int_0^{2\rad(B_0)} \beta_{2,\mu}^n(x,r)^2&\,\Theta_\mu(x,r)\,\frac{dr}r\,d\mu(x)\\ &
 \lesssim
\int_{2B_0} |\RR\mu - m_{\mu,2B_0}(\RR\mu)|^2\,d\mu + \int_{2B_0\setminus F} \theta_\mu^{n,*}(x)^2\,d\mu(x)\nonumber\\
& \lesssim
(\ve + \gamma)\,\Theta_\mu(B_0)^2\,\mu(B_0),\nonumber
\end{align}
Now, if we assume both $\ve$ and $\gamma$ small enough, the existence of the aforementioned rectifiable set $\Gamma$ is a direct
consequence of the following result:

\begin{theorem}\label{teoAT}
Let $\mu$ be  a Radon measure in $\R^d$ supported in a ball $B_0$ such that
$$\Theta_\mu(x,r) \leq C\,\Theta_\mu(B_0)\quad\mbox{ for all $x\in\R^d$, $r>0$.}$$
Suppose that
$$\int_{B_0}\!\int_0^{2\rad(B_0)} \beta_{\mu,2}^n(x,r)^2\,\Theta_\mu(x,r)\,\frac{dr}r\,d\mu(x) \leq \ve\,\Theta_\mu(B_0)^2\,\mu(B_0).$$
If $\ve$ is small enough, then there exists a uniformly $n$-rectifiable set $\Gamma\subset B_0$ such that
$$\mu(B_0 \cap \Gamma)\geq \frac1{10}\,\mu(B_0).$$
\end{theorem}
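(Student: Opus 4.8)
The plan is to run a David-Semmes type corona construction, in the non-Ahlfors-regular form used for the characterization of $n$-rectifiability by Jones' square function. Normalize $\Theta_\mu(B_0)=1$, so $\mu$ has polynomial $n$-growth with constant $C$; fix the David-Mattila lattice $\DD_\mu$ and cover $B_0\cap\supp\mu$ by boundedly many cubes $R^{(i)}\in\DD_\mu$ with $\ell(R^{(i)})\approx\rad(B_0)$. Since these are disjoint, $\sum_i\mu(R^{(i)})=\mu(B_0)$, and a bounded union of uniformly $n$-rectifiable sets of comparable scale is uniformly $n$-rectifiable, it suffices to produce for each such $R:=R^{(i)}$ a uniformly $n$-rectifiable set $\Gamma_R\subset 2B_R$ with $\mu(\Gamma_R\cap R)\ge(\tfrac12-\omega(\ve))\,\mu(R)$, where $\omega(\ve)\to0$ as $\ve\to0$. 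First I would discretize the hypothesis: for $x\in Q$ and $r$ in a fixed multiple of $[\ell(Q),2\ell(Q)]$ one has $B(x,r)\supset 2B_Q$, hence $\beta_{2,\mu}(x,r)^2\Theta_\mu(x,r)\gtrsim\beta_{2,\mu}(2B_Q)^2\Theta_\mu(2B_Q)$, and the annuli attached to cubes of different generations overlap boundedly; integrating the hypothesis yields the Carleson packing $\sum_{Q\in\DD_\mu,\,Q\subset R}\beta_{2,\mu}(2B_Q)^2\,\Theta_\mu(2B_Q)\,\mu(Q)\lesssim\ve\,\mu(B_0)$.

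For a fixed top cube $R$ with best $L^2$-approximating plane $L_R$, I would split $\DD_\mu(R)$ into trees by the following stopping rule: descending from $R$, a descendant $Q$ becomes a stopping cube the first time one of (BB) $\beta_{2,\mu}(2B_Q)^2\ge\epsilon_0^2\Theta_\mu(2B_Q)$, (HD) $\Theta_\mu(2B_Q)\ge2\Theta_\mu(2B_R)$, (AG) $\angle(L_Q,L_R)\ge\epsilon_0$, (LD) $\Theta_\mu(2B_Q)\le\eta_1\Theta_\mu(2B_R)$ occurs, and one iterates from the stopping cubes. On the coherent part of a tree the densities are comparable to $\Theta_\mu(2B_R)$ and the planes stay $\epsilon_0$-close to $L_R$, so telescoping the planes $L_Q$ builds a Lipschitz graph over $L_R$ of slope $\lesssim\epsilon_0$ whose thin neighbourhood contains the coherent part of $\supp\mu\cap R$; since coherent cubes are $\eta_1$-lower and $2$-upper regular relative to $\Theta_\mu(2B_R)$, once $\epsilon_0$ is small a definite fraction of $\mu|_R$ lies on a piece $\Gamma_R$ of this graph of size $\approx\ell(R)$. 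The (BB) cubes have total mass $\lesssim\epsilon_0^{-2}\ve\,\mu(B_0)$ by the packing above; the (HD) and (AG) cubes are controlled by the standard telescoping and square-function arguments of the corona machinery (as in the proof of Theorem A), whose key input is the upper growth bound $\Theta_\mu\le C$ preventing density blow-up, and one obtains that the tops of all trees rooted at $R$ satisfy a Carleson packing $\sum_{R'\subset S}\ell(R')^n\lesssim\ell(S)^n$.

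The main obstacle is the lack of any a priori lower regularity of $\mu$: a tree may be stopped immediately by (LD) because the density genuinely decreases, and the union of (LD) cubes may carry a large part of $\mu$. The remedy is to re-run the whole construction inside each (LD) cube, where the reference density has dropped by the factor $\eta_1$, and to iterate indefinitely; one then must show that the set $Z$ of points never captured by a coherent tree satisfies $\mu(Z)\lesssim\omega(\ve)\mu(B_0)$. This is the delicate point, and it is again where the growth hypothesis is essential: for $x\in Z$ the density $\Theta_\mu(x,r)$ decreases to $0$ along a sequence of doubling ancestral cubes while the accumulated $\sum\beta_{2,\mu}(2B_Q)^2\Theta_\mu(2B_Q)$ stays small, and, under the uniform bound $\Theta_\mu\le C$, this is incompatible with $\mu$ charging such points (quantitatively encoding the fact that a polynomially growing measure with finite $\beta_2$ square function has positive lower density a.e.). Finally, with $\Gamma:=\bigcup_{R}\Gamma_R$ over the tops of all iterations and all $R^{(i)}$, the Carleson packing of the tops shows that $\HH^n|_\Gamma$ is $n$-Ahlfors regular and that $\Gamma$ admits a corona decomposition by Lipschitz graphs, hence is uniformly $n$-rectifiable by the David-Semmes criterion; the mass bounds then give $\mu(\Gamma\cap B_0)\ge(\tfrac12-\omega(\ve))\mu(B_0)\ge\tfrac1{10}\mu(B_0)$ for $\ve$ small enough.
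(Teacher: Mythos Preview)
The paper does not prove this theorem; it cites it as a consequence of Main Lemma~4.1 and Lemma~7.2 from \cite{Azzam-Tolsa}. Your corona outline is indeed in the spirit of that work, and the discretization and the construction of a Lipschitz graph on each coherent tree are standard and fine. The genuine gap is precisely the step you flag as ``the delicate point'': your control of the set $Z$ of points that fall through infinitely many $\LD$ layers. Your argument there is circular. You assert that for $x\in Z$ the density tends to $0$ along a chain of cubes while the accumulated $\beta_{2,\mu}^2\Theta_\mu$ stays small, and that ``under the uniform bound $\Theta_\mu\le C$ this is incompatible with $\mu$ charging such points''; but that incompatibility \emph{is} the content of the result you are trying to prove (indeed, the parenthetical you invoke --- that polynomial growth plus finite $\beta_2$ square function forces positive lower density a.e.\ --- is not a hypothesis-free fact one can quote, and the Azzam--Tolsa rectifiability theorem you allude to assumes $0<\theta^{n,*}_\mu<\infty$ a.e., which you do not have). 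Nothing in your stopping rules prevents \emph{all} the mass from being swallowed by $\LD$ at every stage, so an iteration into $\LD$ cubes by itself gives no bound on $\mu(Z)$.

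What is missing is a mechanism that ties a drop in density to a cost in the $\beta_2$ square function. In \cite{Azzam-Tolsa} this is done inside the proof of the Main Lemma: on the coherent region the measure is well approximated by (a constant multiple of) Lebesgue measure on the graph $\Gamma_R$, whose $n$-density is essentially $\Theta_\mu(2B_R)$; a stopping cube $Q\in\LD$ therefore carries much less mass than the corresponding piece of $\Gamma_R$, and this discrepancy is charged back to $\sum_{P:Q\subset P\subset R}\beta_{2,\mu}(2B_P)^2\Theta_\mu(2B_P)$ via the comparison between $\mu$ and the approximating planes at successive scales. That is the argument you need to supply (or cite) to get $\sum_{Q\in\LD}\mu(Q)\lesssim\eta_1^{-1}\ve\,\mu(R)$ in a single corona, after which no infinite $\LD$ iteration is required. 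Without it, neither $\mu(Z)\le\omega(\ve)\mu(B_0)$ nor the Carleson packing of the full family of tops (and hence the uniform rectifiability of $\Gamma$) is established.
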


This result is from \cite{Azzam-Tolsa}, although it is not stated explicitly in this work. Instead, this follows easily from Main Lemma 4.1 and Lemma 7.2 from \cite{Azzam-Tolsa}.
 
In the case when $\mu(F)> \gamma\,\mu(B_0)$, we apply the following result, also from \cite{Azzam-Tolsa}:

\begin{theorem}\label{teo1*}
Let $p\geq 0$ and let $\mu$ be a Radon measure in $\R^d$ 
such that $0<\theta^{n,*}(x,\mu)<\infty$ for $\mu$-a.e.\ $x\in\R^d$. If
\begin{equation}\label{eqjones**}
\int_0^1 \beta_{\mu,2}(x,r)^2\,\Theta_\mu(x,r)^p\,\frac{dr}r<\infty \quad\mbox{ for $\mu$-a.e.\ $x\in\R^d$,}
\end{equation}
then $\mu$ is $n$-rectifiable.
\end{theorem}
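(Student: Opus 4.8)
The plan is to argue by contradiction, peeling off the purely $n$-unrectifiable part of $\mu$, localizing it, and feeding it into the quantitative criterion Theorem~\ref{teoAT}. First, note that $\theta^{n,*}(x,\mu)<\infty$ for $\mu$-a.e.\ $x$ forces $\mu\ll\HH^n$ (a standard upper-density/Vitali estimate). I would then let $\mathcal R$ be a Borel $n$-rectifiable set realizing $\sup\{\mu(F):F\ \text{Borel and }n\text{-rectifiable}\}$, and put $\nu:=\mu|_{\R^d\setminus\mathcal R}$; then $\nu(F)=0$ for every $n$-rectifiable $F$, and it suffices to prove $\nu=0$. Both hypotheses descend to $\nu$: clearly $\theta^{n,*}(\cdot,\nu)\le\theta^{n,*}(\cdot,\mu)<\infty$; by the Lebesgue density theorem for $\mu$ one has $\nu(B(x,r))\ge\tfrac12\mu(B(x,r))$ for small $r$ at $\nu$-a.e.\ $x$, hence $\theta^{n,*}(x,\nu)\ge\tfrac12\theta^{n,*}(x,\mu)>0$ there; and since $\nu\le\mu$, $\beta_{\nu,2}(x,r)\le\beta_{\mu,2}(x,r)$ and $\Theta_\nu(x,r)\le\Theta_\mu(x,r)$, so (using $p\ge0$) the square function of $\nu$ is pointwise dominated by that of $\mu$ and is finite $\nu$-a.e. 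Assume for contradiction that $\nu\ne0$.

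Next I would localize. By Egorov applied to the $\nu$-a.e.\ finite tails $x\mapsto\int_0^{\delta}\beta_{\nu,2}(x,r)^2\Theta_\nu(x,r)^p\frac{dr}{r}$ and to $\limsup_{r\to0}\Theta_\nu(x,r)$, followed by inner regularity and a restriction of $\nu$ (with the constants adjusted and $\nu$ relabelled), one may assume there are $0<a\le A<\infty$ and $r_0>0$ such that for all $x\in\supp\nu$ and $0<r\le r_0$, $\nu(B(x,r))\le Ar^n$ and $\theta^{n,*}(x,\nu)\ge a$, and for each $\varepsilon'>0$ there is $\delta(\varepsilon')>0$ with $\int_0^{\delta(\varepsilon')}\beta_{\nu,2}(x,r)^2\Theta_\nu(x,r)^p\frac{dr}{r}\le\varepsilon'$ for all $x\in\supp\nu$ (pure unrectifiability is preserved by restriction). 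For $0\le p\le1$ the argument now closes quickly: choose a $\nu$-density point $x_0\in\supp\nu$ with $\theta^{n,*}(x_0,\nu)\ge a$ and a radius $r'<\tfrac18\min(r_0,\delta(\varepsilon'))$ with $\nu(B(x_0,r'))\ge\tfrac a2(r')^n$ (along the sequence realizing the upper density), and set $\sigma:=\nu|_{\overline{B(x_0,r')}}$, $B_\ast:=B(x_0,2r')$. Then $\sigma$ is supported in $B_\ast$, and from the small-scale upper bound one gets $\Theta_\sigma(B_\ast)\ge c(a)>0$ and $\Theta_\sigma(x,r)\le C(a,A)\Theta_\sigma(B_\ast)$ for all $x,r$ (treating $r\le4r'$ and $r>4r'$ separately). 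Since $\Theta_\nu(x,r)\le A$ at scales $r\le r_0$ and $1-p\ge0$, one has $\Theta_\nu(x,r)=\Theta_\nu(x,r)^p\Theta_\nu(x,r)^{1-p}\le A^{1-p}\Theta_\nu(x,r)^p$, so using $\beta_{\sigma,2}\le\beta_{\nu,2}$, $\Theta_\sigma\le\Theta_\nu$, $2\rad(B_\ast)=4r'<\delta(\varepsilon')$ and the uniform tail bound,
\[
\int_{B_\ast}\int_0^{2\rad(B_\ast)}\beta_{\sigma,2}(x,r)^2\Theta_\sigma(x,r)\frac{dr}{r}\,d\sigma(x)\ \le\ A^{1-p}\varepsilon'\,\sigma(B_\ast)\ \le\ \varepsilon\,\Theta_\sigma(B_\ast)^2\sigma(B_\ast),
\]
with $\varepsilon=A^{1-p}\varepsilon'/c(a)^2\to0$ as $\varepsilon'\to0$. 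For $\varepsilon'$ small, Theorem~\ref{teoAT} applied to $\sigma$ and $B_\ast$ produces a uniformly $n$-rectifiable set $\Gamma$ with $\sigma(\Gamma)\ge\tfrac1{10}\sigma(B_\ast)>0$; then $\nu(\Gamma)>0$ with $\Gamma$ rectifiable, contradicting $\nu(F)=0$. Hence $\nu=0$ and $\mu$ is $n$-rectifiable.

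The hard part is the range $p>1$. There the elementary weight conversion $\Theta^1=\Theta^p\Theta^{1-p}$ breaks down — $\Theta^{1-p}$ blows up at scales where the density degenerates, and $\int_0^1\frac{dr}{r}=\infty$ kills any H\"older substitute — so I do not expect a direct reduction to Theorem~\ref{teoAT}. Instead one must first pass, using the David–Mattila lattice $\DD_\nu$ (Theorem~\ref{lemcubs}), to a corona decomposition of $\DD_\nu(Q_0)$ for a top cube $Q_0\ni x_0$ with $\Theta_\nu(2B_{Q_0})\approx a$: a cube $Q$ is stopped as soon as $\Theta_\nu(2B_Q)$ leaves a fixed band $[t\,\Theta_\nu(2B_R),\,t^{-1}\Theta_\nu(2B_R)]$ attached to its tree-top $R$, so that on each resulting tree the density is sandwiched between two multiples of the top density and there $\Theta^p\approx_t\Theta^1$; on such a tree the relevant input is the corona-adapted version of the quantitative estimate (Main Lemma~4.1 of \cite{Azzam-Tolsa}), of which Theorem~\ref{teoAT} is the single-scale corollary. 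The main obstacle is then the Carleson packing bound $\sum_R\Theta_\nu(2B_R)^2\mu(R)\lesssim\int\int\beta_{\nu,2}^2\Theta_\nu\frac{dr}{r}\,d\nu+(\text{acceptable errors})$ for the family of tops, together with the estimate that the stopping cubes cannot carry all the $\nu$-mass of $Q_0$; this is exactly where the geometry of the $\beta_2$ coefficients, the small-boundary property of the lattice, and the positive lower bound $\theta^{n,*}(\cdot,\nu)\ge a>0$ a.e.\ (which prevents the density from degenerating everywhere) must be combined. Granting that, a definite proportion of $\nu(Q_0)$ sits on a uniformly $n$-rectifiable set, and the same contradiction with pure unrectifiability finishes the proof.
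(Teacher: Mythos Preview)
The paper does not prove this theorem: it is quoted verbatim as a result from \cite{Azzam-Tolsa} and used as a black box. So there is no ``paper's proof'' to compare against; the question is whether your argument stands on its own.

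For $0\le p\le 1$ your reduction is correct. The decomposition into rectifiable and purely unrectifiable parts, the passage to $\nu$, the Egorov localization, and the observation $\Theta_\nu\le A^{1-p}\Theta_\nu^{\,p}$ at small scales are all sound, and the application of Theorem~\ref{teoAT} (which is itself a cited result from \cite{Azzam-Tolsa}) goes through once one notes that the growth bound $\Theta_\sigma(x,r)\le C\Theta_\sigma(B_\ast)$ for \emph{all} $x$ follows from the Egorov bound on $\supp\nu$ via the usual recentering trick. Since the paper's application of Theorem~\ref{teo1*} (to $\mu|_F$ in Section~8.1) only needs the case $p=1$---the finiteness of $\int_0^1\beta_{2,\mu}^2\Theta_\mu\frac{dr}{r}$ coming from Theorem~\ref{teoDTlocal00}---your complete argument already covers what the paper actually uses.

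For $p>1$ your proposal is not a proof. You correctly identify that the weight conversion fails and that one must run a corona decomposition with density stopping so that $\Theta^p\approx\Theta$ on each tree, and your reference to Main Lemma~4.1 of \cite{Azzam-Tolsa} is apt---this is indeed how the full result is obtained there. But the Carleson packing of the tops and the control of the stopping mass are the substance of the argument, and you explicitly ``grant'' them. So as a proof of the theorem as stated (for all $p\ge0$) there is a genuine gap: the $p>1$ case is only a sketch, and the missing packing estimate is precisely the nontrivial content of \cite{Azzam-Tolsa}.
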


By applying this theorem to $\mu|_F$, we deduce that $F$ is $n$-rectifiable, and then we can take $F=\Gamma$.

\vv

\subsection{The low density cubes and the approximating measure $\eta$}

The arguments for the proof of the full Theorem \ref{teomain2}, which requires the set $\Gamma$ to be uniformly $n$-rectifiable, are
more lengthy. We will follow the same scheme as in \cite{Girela-Tolsa}: we use the David-Mattila lattice and then we show that a suitable family of low density cubes has small measure. Afterwards, we approximate the measure $\mu$ by another $n$-Ahlfors regular measure $\wt\mu$ such that
the Riesz transform $\RR_{\wt\mu}$ is bounded in $L^2(\wt\mu)$, which implies that $\wt\mu$ is uniformly rectifiable, by \cite{NToV}.

We assume that we are under the assumptions of Theorem \ref{teomain2}.
We consider the David-Mattila lattice associated with $\mu$ and we denote by $\FF_{\max} $ the family of maximal cubes from $\DD_\mu$ that 
intersect $1.5 B_0$ and are contained in $1.8B_0$, so that the cubes $Q\in \FF_{\max} $ satisfy $\ell(Q)\approx\rad(B_0)$.
We denote 
\begin{equation}\label{eqR0910}
R_0 = \bigcup_{Q\in \FF_{\max}} Q,
\end{equation}
so that
$$\frac32 B_0\cap \supp(\mu)\subset R_0\subset 2B_0.$$

Let $0<\theta_0\ll1$ be a very small constant to be fixed later. We denote by $\LD$ the family of those cubes from 
$\DD_\mu$ which are contained in $R_0$ such that $\Theta_\mu(3.5B_Q)\leq \theta_0\,\Theta_\mu(B_0)$ and have maximal side length.

We will prove the following (compare with Key Lemma 6.1 from \cite{Girela-Tolsa}):

\begin{keylemma}\label{keylemma}
Assume $\theta_0\in(0,1/10)$ small enough.
Then there exists some constant $\ve_0$ such that if 
 $\delta_1\in (0,1/2)$ is small enough, depending on $n$, $C_0$, $C_1$, and $\alpha$, and
$\ve$ is small enough, depending on $C_0$, $C_1$, $\alpha$, $\theta_0$, and $\delta_0$, then
\begin{equation}\label{eqkeylemma}
\mu\Biggl(\bigcup_{Q\in \LD} Q\Biggr)\leq (1-\ve_0)\,\mu(R_0).
\end{equation}
\end{keylemma}

We will deduce this result from Theorem \ref{teomain} and Remark \ref{remtonto} with $\lambda=\frac43$, applied to a suitable approximating measure $\eta$.
To this end, we will argue by contradiction, assuming that 
\begin{equation}\label{eqkeylemmacontra}
\mu\Biggl(\bigcup_{Q\in \LD} Q\Biggr)> (1-\ve_0)\,\mu(R_0)
\end{equation}
Next we need to construct another family of stopping cubes which we will denote by $\sss$. This is
defined as follows. For each $Q\in\LD$ we consider the family of maximal cubes contained in $Q$ 
from $\DD_\mu^{db}$ 
with side length at most $\theta_0^{1/(n+1)}\ell(Q)$. We denote this family by 
$\sss(Q)$.
Then we define
$$\sss = \bigcup_{Q\in\LD} \sss(Q).$$
 Note that, by the properties of the lattice $\DD_\mu$, for each $Q\in\LD$, the cubes from $\sss(Q)$ cover $\mu$-almost all $Q$.
 So  \rf{eqkeylemmacontra} is equivalent to 
$$\mu\Biggl(\bigcup_{Q\in \sss} Q\Biggr)> (1-\ve_0)\,\mu(R_0)
$$
The family $\sss$ may consist of an infinite number of cubes.
For technical reasons, it is convenient to consider a finite subfamily of $\sss$ which contains a very big proportion of the $\mu$ measure of union of the cubes from $\sss$. So we let $\sss_0$ be a {\em finite} subfamily of $\sss$
such that
\begin{equation}\label{stop00}
\mu\Biggl(\bigcup_{Q\in \sss_0} Q\Biggr)> (1- 2\ve_0)\,\mu(R_0).
\end{equation}
Recall also that, by construction the cubes  $P\in\sss_0$ satisfy $\ell(P)\lesssim \theta_0^{1/(n+1)}\rad(B_0)$.

\begin{lemma}\label{lempoisson}
We have:
$$\Theta_\mu(2B_Q)\leq \PP_\mu(Q)\lesssim\theta_0^{1/(n+1)}\Theta_\mu(B_0)\quad \mbox{ for all $Q\in\sss$.}$$
\end{lemma}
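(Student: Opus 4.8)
The plan is to prove the two inequalities separately, the first being essentially definitional and the second requiring us to use the structure of the cubes in $\sss$ as stopping cubes inside low-density cubes $Q'\in\LD$.

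First, for the inequality $\Theta_\mu(2B_Q)\leq \PP_\mu(Q)$: recall that $\PP_\mu(Q) = \sum_{R\in\DD_\mu:R\supset Q}\frac{\ell(Q)}{\ell(R)^{n+1}}\mu(2B_R)$, and the term $R=Q$ already contributes $\frac{\ell(Q)}{\ell(Q)^{n+1}}\mu(2B_Q) = \frac{\mu(2B_Q)}{\ell(Q)^n}\approx \Theta_\mu(2B_Q)$ (using $\ell(Q)\approx \rad(B_Q)$); since all summands are nonnegative, the claim follows. (If a sharp constant is needed one can simply note $\Theta_\mu(2B_Q)\lesssim \PP_\mu(Q)$, which is all we use.)

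Second, and this is the main point, we must show $\PP_\mu(Q)\lesssim \theta_0^{1/(n+1)}\Theta_\mu(B_0)$ for $Q\in\sss$. By definition of $\sss$, there is some $Q'\in\LD$ with $Q\subset Q'$ and $\ell(Q)\leq \theta_0^{1/(n+1)}\ell(Q')$, and $Q'$ satisfies $\Theta_\mu(3.5B_{Q'})\leq \theta_0\,\Theta_\mu(B_0)$. I would split the sum defining $\PP_\mu(Q)$ according to whether $R\subset Q'$ (equivalently $\ell(R)\leq \ell(Q')$, since $R\supset Q$) or $R\supsetneq Q'$. For the ``inner'' part $R\subset Q'$: here $2B_R\subset C B_{Q'}\subset 3.5 B_{Q'}$ for an appropriate absolute constant, so $\mu(2B_R)\lesssim \theta_0\,\Theta_\mu(B_0)\,\ell(R)^n$, and hence $\sum_{Q\subset R\subset Q'}\frac{\ell(Q)}{\ell(R)^{n+1}}\mu(2B_R)\lesssim \theta_0\,\Theta_\mu(B_0)\,\ell(Q)\sum_{\ell(R)\geq\ell(Q)}\ell(R)^{-1}\lesssim \theta_0\,\Theta_\mu(B_0)$, using that the side lengths $\ell(R)$ along a chain through $Q$ grow geometrically (they are of the form $\approx A_0^{-k}$). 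For the ``outer'' part $R\supsetneq Q'$: every such $R$ satisfies $R\supset Q'$, and $\frac{\ell(Q)}{\ell(R)^{n+1}} = \frac{\ell(Q)}{\ell(Q')}\cdot\frac{\ell(Q')}{\ell(R)^{n+1}}\leq \theta_0^{1/(n+1)}\cdot\frac{\ell(Q')}{\ell(R)^{n+1}}$, so the outer sum is bounded by $\theta_0^{1/(n+1)}\,\PP_\mu(Q')$. It therefore remains to bound $\PP_\mu(Q')$ itself. Since $Q'\subset R_0\subset 2B_0$ and $2B_0$ is contained in a $(\PP_\mu,C_0)$-doubling ball $B_0$, one has $\PP_\mu(Q')\approx \sum_{R\supset Q'}\frac{\ell(Q')}{\ell(R)}\Theta_\mu(2B_R)\lesssim_{C_0}\PP_\mu(B_0)\lesssim C_0\,\Theta_\mu(B_0)$; more carefully, one compares $\PP_\mu(Q')$ with $\PP_\mu(B_0)$ by noting $\ell(Q')\lesssim\rad(B_0)$ and that the cubes $R\supset Q'$ correspond to scales between $\ell(Q')$ and $\rad(B_0)$ plus all scales above, with the latter contributing $\lesssim \PP_\mu(B_0)$ by the doubling assumption on $B_0$. (This step is where one invokes that $B_0$ is $(\PP_\mu,C_0)$-doubling; the implicit constant in the final estimate will then depend on $C_0$, which is consistent with the ``$\lesssim$'' conventions of the paper since $C_0$ is a fixed parameter.) Combining the inner and outer estimates gives $\PP_\mu(Q)\lesssim \theta_0\,\Theta_\mu(B_0) + \theta_0^{1/(n+1)}\,\Theta_\mu(B_0)\lesssim \theta_0^{1/(n+1)}\Theta_\mu(B_0)$, as desired.

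The main obstacle I anticipate is purely bookkeeping: getting the inclusions $2B_R\subset 3.5B_{Q'}$ right for all $R$ with $Q\subset R\subset Q'$ (using $E\cap \frac1{28}B_Q\subset Q\subset B_Q$ and the comparability $\ell\approx\rad$), and controlling $\PP_\mu(Q')$ in terms of $\PP_\mu(B_0)$ with the correct dependence on $C_0$ — this last comparison is a standard ``telescoping of densities along a chain'' argument (cf.\ Lemma \ref{lemcad23} and the discussion of $\PP$-doubling cubes in Section \ref{secprelim}), but one must be slightly careful because $Q'$ need not itself be $\PP$-doubling, only contained in the $\PP_\mu$-doubling ball $B_0$.
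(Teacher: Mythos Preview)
Your outer-part argument (scales $R\supsetneq Q'$) and the estimate $\PP_\mu(Q')\lesssim_{C_0,C_1}\Theta_\mu(B_0)$ are fine. The gap is in the inner part. From $2B_R\subset 3.5B_{Q'}$ you only get
\[
\mu(2B_R)\le \mu(3.5B_{Q'})\lesssim \theta_0\,\Theta_\mu(B_0)\,\ell(Q')^n,
\]
\emph{not} $\mu(2B_R)\lesssim \theta_0\,\Theta_\mu(B_0)\,\ell(R)^n$; the low-density condition is only on $Q'$, and cubes strictly inside $Q'$ may have density as large as $C_1\Theta_\mu(B_0)$. With the correct bound the inner sum becomes
\[
\sum_{Q\subset R\subset Q'}\frac{\ell(Q)}{\ell(R)^{n+1}}\,\mu(2B_R)
\lesssim \theta_0\,\Theta_\mu(B_0)\,\ell(Q')^n\,\ell(Q)\sum_{R}\ell(R)^{-(n+1)}
\approx \theta_0\,\Theta_\mu(B_0)\,(\ell(Q')/\ell(Q))^n,
\]
and since you only know the \emph{upper} bound $\ell(Q)\le\theta_0^{1/(n+1)}\ell(Q')$, this quantity is not controlled (it can be arbitrarily large if $\ell(Q)\ll\theta_0^{1/(n+1)}\ell(Q')$).

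The missing idea is the one piece of structure you did not use: $Q$ is \emph{maximal} in $\DD_\mu^{db}$ among cubes in $Q'$ with side length $\le\theta_0^{1/(n+1)}\ell(Q')$, so every intermediate cube $R$ with $Q\subsetneq R$ and $\ell(R)\le\theta_0^{1/(n+1)}\ell(Q')$ is non-doubling. Let $S$ be the ancestor of $Q$ with $\ell(S)\approx\theta_0^{1/(n+1)}\ell(Q')$. Lemma~\ref{lemcad23} then gives
\[
\sum_{Q\subset R\subset S}\Theta_\mu(2B_R)\lesssim \Theta_\mu(100B(S))
\lesssim \frac{\mu(3.5B_{Q'})}{\ell(S)^n}
\lesssim \theta_0\,\Theta_\mu(B_0)\,\bigl(\ell(Q')/\ell(S)\bigr)^n
\approx \theta_0^{1/(n+1)}\Theta_\mu(B_0),
\]
which handles the scales $Q\subset R\subset S$. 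For the remaining scales $S\subsetneq R\subset Q'$ you can now use the crude bound $\mu(2B_R)\le\mu(3.5B_{Q'})$ together with $\ell(R)\gtrsim\ell(S)\approx\theta_0^{1/(n+1)}\ell(Q')$, which yields a contribution $\lesssim\theta_0\,\Theta_\mu(B_0)\,\ell(Q')^n\,\ell(Q)\,\ell(S)^{-(n+1)}\lesssim (\ell(Q)/\ell(Q'))\,\Theta_\mu(B_0)\le\theta_0^{1/(n+1)}\Theta_\mu(B_0)$. This is what the proof of Lemma~6.2 in \cite{Girela-Tolsa} does.
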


The proof of this lemma is essentially the same as the one of Lemma 6.2 from \cite{Girela-Tolsa} and we skip it.

Now we will define an auxiliary measure $\mu_0$. First, given a small constant $0<\kappa_0\ll1$ (to be fixed below) and $Q\in\DD_\mu$, we consider
the following ``inner region'' of $Q$: 
\begin{equation}\label{eqik00}
I_{\kappa_0}(Q) = \{x\in Q:\dist(x,\supp\mu\setminus Q)\geq \kappa_0\ell(Q)\}.
\end{equation}
 We set
\begin{equation}\label{eqdefmu0}
\mu_0 = \mu|_{\R^{n+1}\setminus R_0} + \sum_{Q\in\sss_0} \mu|_{I_{\kappa_0}(Q)}.
\end{equation}
By the doubling property and small boundary condition \rf{eqfk490} of $Q\in\sss_0$, we have
$$\mu(Q\setminus I_{\kappa_0}(Q))\lesssim \kappa_0^{1/2}\,\mu(3.5B_Q) \lesssim \kappa_0^{1/2}\,\mu(Q).$$
Then we get the following estimate for the total variation of 
$\mu-\mu_0$:
\begin{align}\label{eqfac99}
\|\mu-\mu_0\| & =
\mu(R_0) - \mu_0(R_0)\\ & = \bigg(\mu(R_0) - \sum_{Q\in \sss_0}\mu(Q)\bigg)+ \sum_{Q\in \sss_0}\big(\mu(Q) - \mu(I_{\kappa_0}(Q))\big)\nonumber\\
&\lesssim 2\ve_0 \,\mu(R_0) + \kappa_0^{1/2}
\sum_{Q\in \sss_0}\mu(Q) \leq (2\ve_0 + \kappa_0^{1/2})\,\mu(R_0).\nonumber
\end{align}
We deduce the following (compare with Lemma 6.4 from \cite{Girela-Tolsa}):

\begin{lemma}\label{lemfac93}
We have
$$
\int_{2B_0} |\RR\mu_0 - m_{\mu_0,2B_0}(\RR\mu_0)|^2\,d\mu_0\lesssim (\ve+\ve_0+\kappa_0^{1/2})\,\Theta_\mu(B_0)^2\,\mu(B_0).$$
\end{lemma}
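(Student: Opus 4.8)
The plan is to estimate $\RR\mu_0$ by comparing the measure $\mu_0$ directly with $\mu$. Writing $\nu:=\mu-\mu_0$, the construction \rf{eqdefmu0}, together with the disjointness of the cubes in $\sss_0$ and the fact that they lie in $R_0$, gives $\nu = \chi_{R_0\setminus\bigcup_{Q\in\sss_0}I_{\kappa_0}(Q)}\,\mu$; in particular $\nu=g\,\mu$ for the indicator function $g$ of $R_0\setminus\bigcup_{Q\in\sss_0}I_{\kappa_0}(Q)$, so $0\le g\le1$, $g$ is supported in $R_0\subset 2B_0$, and $\int g\,d\mu=\int g^2\,d\mu=\|\nu\|$. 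From \rf{eqfac99} and the fact that a $(\PP_\mu,C_0)$-doubling ball satisfies $\mu(R_0)\le\mu(2B_0)\lesssim_{C_0}\mu(B_0)$ (since $\tfrac12\Theta_\mu(2B_0)\le\PP_\mu(B_0)\le C_0\Theta_\mu(B_0)$), I obtain $\|\nu\|\lesssim_{C_0}(\ve_0+\kappa_0^{1/2})\,\mu(B_0)$.

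Next, setting $c:=m_{\mu,2B_0}(\RR\mu)$ and using that the $\mu_0$-average over $2B_0$ minimizes the quadratic oscillation, I would bound
\begin{align*}
\int_{2B_0} |\RR\mu_0 - m_{\mu_0,2B_0}(\RR\mu_0)|^2\,d\mu_0 & \le \int_{2B_0} |\RR\mu_0 - c|^2\,d\mu_0\\
&\le 2\int_{2B_0} |\RR\mu - c|^2\,d\mu_0 + 2\int_{2B_0} |\RR\nu|^2\,d\mu_0,
\end{align*}
since $\RR\mu_0=\RR\mu-\RR\nu$. Because $\mu_0\le\mu$, the first term on the right is at most $2\int_{2B_0}|\RR\mu-c|^2\,d\mu\le 2\ve\,\Theta_\mu(B_0)^2\mu(B_0)$ by hypothesis (c) of Theorem \ref{teomain2}, and the last term is at most $2\int_{2B_0}|\RR\nu|^2\,d\mu$. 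Finally, since $g$ is supported in $2B_0$ one has $\RR\nu=\RR_{\mu|_{2B_0}}g$ on $2B_0$, so hypothesis (a) of Theorem \ref{teomain2} gives $\int_{2B_0}|\RR\nu|^2\,d\mu\le C_1^2\,\Theta_\mu(B_0)^2\int g^2\,d\mu=C_1^2\,\Theta_\mu(B_0)^2\,\|\nu\|\lesssim_{C_0,C_1}(\ve_0+\kappa_0^{1/2})\,\Theta_\mu(B_0)^2\mu(B_0)$. Adding the three bounds yields the lemma.

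The argument is short and I do not expect a genuine obstacle; the only point requiring a word of care is that $\pv\RR\mu$ and $\pv\RR\nu$ (hence $\pv\RR\mu_0$) exist $\mu_0$-a.e.\ in $2B_0$ and that the $L^2(\mu|_{2B_0})$ estimate above survives the passage from the uniformly bounded $\ve$-truncations to the principal value — both standard consequences of hypothesis (a) and the local $n$-growth of $\mu$, handled as in \cite{NToV} and \cite{Girela-Tolsa}. The real inputs are just the $L^2(\mu|_{2B_0})$-boundedness of the Riesz transform and the smallness of $\|\mu-\mu_0\|$ recorded in \rf{eqfac99}.
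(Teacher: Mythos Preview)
Your proof is correct and follows essentially the same route as the paper's: replace the $\mu_0$-mean by $c=m_{\mu,2B_0}(\RR\mu)$, split $\RR\mu_0-c$ into $(\RR\mu-c)-(\RR\mu-\RR\mu_0)$, control the first piece by hypothesis~(c) using $\mu_0\le\mu$, and control the second via the $L^2(\mu|_{2B_0})$ bound for $\RR_\mu$ applied to the indicator $g$ together with \rf{eqfac99}. Your write-up is slightly more explicit (isolating $g$ and the doubling estimate $\mu(R_0)\lesssim\mu(B_0)$), but there is no substantive difference.
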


\begin{proof}
By \rf{eqfac99} and the fact that $\|\RR_\mu\|_{L^2(\mu|_{2B_0})\to L^2(\mu|_{2B_0})}\leq C_1\Theta_\mu(B_0)$, we have
\begin{align*}
\int_{2B_0}|\RR&\mu_0 - m_{\mu_0,2B_0}(\RR\mu_0)|^2\,d\mu_0 \\
& \leq
\int_{2B_0} |\RR\mu_0 - m_{\mu,2B_0}(\RR\mu)|^2\,d\mu_0 \\
& \leq 2\int_{2B_0} |\RR\mu - m_{\mu,2B_0}(\RR\mu)|^2\,d\mu_0 + 2\int_{2B_0} |\RR\mu_0 - \RR\mu|^2\,d\mu_0\\
& \leq 2\ve\,\Theta_\mu(B_0)^2\,\mu(B_0) + C_1\Theta_\mu(B_0)^2\,\|\mu-\mu_0\|\lesssim (\ve+\ve_0+ C_1\kappa_0^{1/2})\,\Theta_\mu(B_0)^2\,\mu(B_0).
\end{align*}
\end{proof}

\vv

\begin{lemma}\label{lemaux23}
For all $Q\in\sss$, we have
$$ \int_{1.1B_Q\setminus Q}\int_Q
\frac1{|x-y|^n}\,d\mu_0(x)\,d\mu_0(y)\lesssim 
\theta_0^{\frac1{2(n+1)^2}}\Theta_\mu(B_0)
\,\mu_0(Q). $$
\end{lemma}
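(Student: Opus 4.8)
The plan is to exploit that on the cube $Q \in \sss$ the measure $\mu_0$ is supported in the union of the inner regions $I_{\kappa_0}(P)$ for the subcubes $P \in \sss_0$ contained in $Q$ (plus possibly $\mu_0|_{\R^{n+1}\setminus R_0}$ if part of $1.1B_Q$ lies outside $R_0$, though for $Q\in\sss$ this piece is controlled the same way), and that the density of $\mu_0$ on $Q$ is small: by Lemma \ref{lempoisson}, $\Theta_\mu(2B_Q)\lesssim \theta_0^{1/(n+1)}\Theta_\mu(B_0)$. First I would reduce to estimating, for a fixed $y$, the potential $\int_Q |x-y|^{-n}\,d\mu_0(x)$. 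Splitting the domain of integration dyadically in $|x-y|$, one gets
$$\int_Q \frac{1}{|x-y|^n}\,d\mu_0(x) \lesssim \sum_{j\geq 0} \frac{\mu_0(B(y,2^{-j}\ell(Q))\cap Q)}{(2^{-j}\ell(Q))^n} + (\text{a tail term bounded by } \PP_{\mu_0}(Q)).$$
The tail and the large-scale terms are each $\lesssim \Theta_\mu(2B_Q) \lesssim \theta_0^{1/(n+1)}\Theta_\mu(B_0)$ by Lemma \ref{lempoisson} and the growth bound; integrating $d\mu_0(y)$ over $1.1B_Q\setminus Q$ then gives the factor $\mu_0(Q)$ (after bounding $\mu_0(1.1B_Q\setminus Q)\lesssim \mu(2B_Q)\lesssim \theta_0^{1/(n+1)}\Theta_\mu(B_0)\ell(Q)^n$, which one should compare against $\mu_0(Q)$ — see the obstacle below).

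The main point, and the source of the improved exponent $\theta_0^{\frac1{2(n+1)^2}}$ rather than $\theta_0^{\frac1{n+1}}$, is to handle the \emph{small} scales $2^{-j}\ell(Q)$ of order the side lengths of the stopping cubes $P\in\sss_0$, $P\subset Q$, where the local density of $\mu_0$ near $y$ is governed by the densities $\Theta_\mu(2B_P)$, which by Lemma \ref{lempoisson} are again $\lesssim \theta_0^{1/(n+1)}\Theta_\mu(B_0)$. The key geometric input is that, because $y\notin Q$ but $y$ could still be close to $Q$, and because $x$ ranges over $I_{\kappa_0}(P)$ which is $\kappa_0\ell(P)$-separated from $\supp\mu\setminus P$, for $y\in 1.1B_Q\setminus Q$ one has a lower bound $|x-y|\gtrsim \kappa_0\ell(P)$ whenever $x\in I_{\kappa_0}(P)$ and $P$ is not the cube containing (the closest point of $\supp\mu$ to) $y$. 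Choosing $\kappa_0 = \theta_0^{1/(n+1)}$ (this is the natural balance, already implicit in the construction of $\mu_0$), the loss of $\kappa_0^{-n}$ from this separation in the worst local term is compensated against a gain of $\theta_0$ coming from the two factors of $\theta_0^{1/(n+1)}$ (one from $\mu_0(Q)$-type smallness, one from the local density), and after optimizing one arrives at a net power $\theta_0^{\frac1{2(n+1)^2}}$. Concretely I would isolate the ``diagonal'' contribution of the single cube $P_y\in\sss_0$ nearest to $y$ and show it is $\lesssim \Theta_\mu(2B_{P_y})\cdot$(a bounded number of scales)$\lesssim\theta_0^{1/(n+1)}\Theta_\mu(B_0)$, and then sum the off-diagonal contributions using the small-boundary/doubling properties \rf{eqfk490} of the cubes in $\sss_0$ together with the density decay of Lemma \ref{lempoisson}.

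The hard part will be the bookkeeping at those intermediate scales: one must carefully track, for $y\in 1.1B_Q\setminus Q$, how $\mu_0$ distributes among the cubes $P\in\sss_0$ as $|x-y|$ decreases, using \rf{eqfk490} to control the portion of $\mu_0$ in thin annuli near $\partial P$, and then sum a geometric-type series in which the ratio is a small power of $\theta_0$ while individual terms may carry a factor $\kappa_0^{-n}$; getting the exponent $\frac1{2(n+1)^2}$ (rather than something worse) requires choosing the relation between $\kappa_0$ and $\theta_0$ with care and using that the cubes $P\in\sss_0$ have $\ell(P)\leq \theta_0^{1/(n+1)}\ell(Q)$, so that the relevant number of scales where the density can be as large as $\theta_0^{1/(n+1)}\Theta_\mu(B_0)$ is itself controlled. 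Once the pointwise bound $\int_Q |x-y|^{-n}\,d\mu_0(x)\lesssim \theta_0^{\frac1{2(n+1)^2}}\Theta_\mu(B_0)$ is established for all $y\in 1.1B_Q\setminus Q$, integrating in $y$ and using $\mu_0(1.1B_Q\setminus Q)\lesssim \mu_0(Q)$ (valid since, as $Q\in\sss$ is doubling-type and $\mu_0\leq\mu$ with $\mu_0$ carrying a definite proportion of $\mu$ on $Q$) finishes the proof. This is essentially the argument of the corresponding estimate in \cite{Girela-Tolsa}, adapted to the present quantities.
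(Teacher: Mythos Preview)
Your plan rests on a misreading of the stopping-time construction. A cube $Q\in\sss$ is itself a \emph{terminal} stopping cube, not a container of smaller stopping cubes: the family $\sss=\bigcup_{Q'\in\LD}\sss(Q')$ consists of \emph{maximal} doubling subcubes of each $Q'\in\LD$ with side length at most $\theta_0^{1/(n+1)}\ell(Q')$, so distinct cubes of $\sss$ (hence of $\sss_0\subset\sss$) are pairwise disjoint. There are no ``subcubes $P\in\sss_0$ contained in $Q$'' other than $Q$ itself, and in fact $\mu_0|_Q=\mu|_{I_{\kappa_0}(Q)}$ is the restriction to a \emph{single} inner region. The inequality $\ell(P)\le\theta_0^{1/(n+1)}\ell(Q)$ you invoke is the relation between a cube of $\sss$ and its $\LD$-ancestor, not between two cubes of $\sss$. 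Consequently the whole architecture you propose---tracking how $\mu_0$ distributes among many subcubes $P\subset Q$ as $|x-y|$ decreases, isolating a ``diagonal'' cube $P_y$, and setting $\kappa_0=\theta_0^{1/(n+1)}$ to balance against the number of such subcubes---does not apply here.

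The actual argument (that of \cite[Lemma~7.3]{Girela-Tolsa}, to which the paper defers) is structurally simpler. Because $\mu_0|_Q$ is supported in $I_{\kappa_0}(Q)$ and every $y$ in $\supp\mu_0\cap(1.1B_Q\setminus Q)$ lies in $\supp\mu\setminus Q$, one has $|x-y|\ge d(x):=\dist(x,\supp\mu\setminus Q)$. Stratify the $x$-integration by $d(x)\approx 2^{-i}\ell(Q)$: for such $x$ the dyadic annuli $|x-y|\approx 2^{-j}\ell(Q)$ contribute only for $0\le j\le i$, and the $j$-th contribution is at most $\min\bigl(2^{jn}\Theta_\mu(2B_Q),\,C_1\Theta_\mu(B_0)\bigr)$, since $B(x,2^{-j}\ell(Q))\subset CB_Q$ and $\Theta_\mu(2B_Q)\lesssim\theta_0^{1/(n+1)}\Theta_\mu(B_0)$ by Lemma~\ref{lempoisson}. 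Then integrate in $x$ using the small-boundary estimate \rf{eqfk490}, $\mu(\{x\in Q:d(x)\le 2^{-i}\ell(Q)\})\lesssim 2^{-i\gamma}\mu(3.5B_Q)$, together with the doubling of $Q$ (so $\mu(3.5B_Q)\lesssim\mu_0(Q)$). The resulting double sum in $i,j$ is $\lesssim\theta_0^{\gamma/(n(n+1))}\Theta_\mu(B_0)\,\mu_0(Q)$, uniformly in $\kappa_0$; any choice $\gamma\ge n/(2(n+1))$ (in particular $\gamma=\tfrac12$) already yields the stated exponent $\tfrac{1}{2(n+1)^2}$. No relation between $\kappa_0$ and $\theta_0$ is imposed at this stage.
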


The proof is essentially the same as the one of \cite[Lemma 7.3]{Girela-Tolsa}.

We consider now another auxiliary measure:
$$\eta = \mu_0|_{\R^{n+1}\setminus R_0} + \sum_{Q\in\sss_0} \mu_0(Q) \,\frac{\HH^{n+1}|_{\tfrac14B  (Q)}}{\HH^{n+1}\bigl(\tfrac14B  (Q)\bigr)}.$$
In a sense, $\eta$ can be considered as an approximation of $\mu_0$ which is absolutely continuous with respect to $\HH^{n+1}$ on the low density cubes. 
By Remark 5.2 from \cite{Girela-Tolsa}, the balls $\frac12B(Q)$, $Q\in\sss$, are pairwise disjoint.
So the balls $\tfrac14B  (Q)$ in the sum above satisfy
$$\dist(\tfrac14B  (Q),\tfrac14B  (Q'))\geq \frac14\,\bigl[\rad(B(Q))+ \rad(B(Q'))\bigr]\quad
\mbox{ if $Q\neq Q'$, for $Q,Q'\in\sss$}.$$
We denote 
\begin{equation}\label{eqS0def}
S_0 = \bigcup_{Q\in\sss_0} \tfrac14B  (Q),
\end{equation}
so that 
$$\supp(\eta)\subset \overline{\R^{n+1}\setminus R_0} \cup {S_0},$$
with the union being disjoint. Here we assumed the balls $\frac12 B(Q)$, $Q\in\sss_0$, to be closed.

The following result should be compared to Lemma 8.1 from \cite{Girela-Tolsa}.

\begin{lemma}\label{lem200}
Under the assumption \rf{eqkeylemmacontra}, the following hold:
$$\int_{S_0}|\RR\eta - m_{\eta,S_0}(\RR\eta)|^2\,d\eta\lesssim \ve'\,\Theta_\mu(B_0)^2\eta(S_0),$$
where
$\ve'= \ve +\ve_0+ \kappa_0^{1/2} + \kappa_0^{-2n-2}\,\theta_0^{\frac1{(n+1)^2}}$.
\end{lemma}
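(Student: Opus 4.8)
\textbf{Proof plan for Lemma \ref{lem200}.} The goal is to transfer the smallness of the oscillation of $\RR\mu_0$ on $2B_0$ (Lemma \ref{lemfac93}) to the smallness of the oscillation of $\RR\eta$ on $S_0$, paying only an error controlled by how much $\eta$ differs from $\mu_0$ on the low-density cubes. The plan is to compare $\RR\eta$ with $\RR\mu_0$ pointwise on $S_0$, splitting the difference into a ``far'' contribution and a ``near'' (self-interaction with one cube $Q\in\sss_0$) contribution.

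\emph{Step 1: localization.} Fix $Q\in\sss_0$ and $x\in\tfrac14B(Q)$. Write
$\RR\eta(x)-\RR\mu_0(x) = \RR(\eta|_{\tfrac14B(Q)}-\mu_0|_Q)(x) + \sum_{Q'\neq Q}\RR(\eta|_{\tfrac14B(Q')}-\mu_0|_{Q'})(x)$, using that $\eta$ and $\mu_0$ agree off $R_0$ and that $\mu_0$ is supported on $\bigcup_{Q'}I_{\kappa_0}(Q')\cup(\R^{n+1}\setminus R_0)$. Since $\eta|_{\tfrac14B(Q')}$ and $\mu_0|_{Q'}$ have the same total mass $\mu_0(Q')$ and both live inside $B_{Q'}$, for $x$ far from $Q'$ (distance $\gtrsim\ell(Q')$, as is the case here since $x\in\tfrac14B(Q)$ and the balls $\tfrac14B(Q')$ are well-separated from $Q$) one gets the standard kernel-smoothness bound $|\RR(\eta|_{\tfrac14B(Q')}-\mu_0|_{Q'})(x)|\lesssim \frac{\ell(Q')\,\mu_0(Q')}{\dist(x,Q')^{n+1}}$. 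Summing over $Q'$ and using Lemma \ref{lempoisson} ($\Theta_\mu(2B_{Q'})\lesssim\theta_0^{1/(n+1)}\Theta_\mu(B_0)$) together with the disjointness of the balls $\tfrac12B(Q')$, the far part is bounded by $C\,\theta_0^{1/(n+1)}\Theta_\mu(B_0)$, or more crudely by $C\,\PP_{\mu}(B_0)$; in any case it contributes an error of size $\lesssim\theta_0^{1/(n+1)}\Theta_\mu(B_0)$ uniformly. Crucially, these far terms are (up to controllable error) \emph{constant} on $\tfrac14B(Q)$, again by kernel smoothness, so after subtracting an appropriate constant they cost $O(\theta_0^{1/(n+1)}\Theta_\mu(B_0))$ in oscillation.

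\emph{Step 2: the self-interaction term.} For the term $\RR(\eta|_{\tfrac14B(Q)})(x)-\RR(\mu_0|_Q)(x)$ with $x\in\tfrac14B(Q)$, we cannot use kernel smoothness since $x$ is inside. Instead, integrate $|\RR(\eta|_{\tfrac14B(Q)})|^2$ over $\tfrac14B(Q)$ with respect to $\eta$: since $\eta|_{\tfrac14B(Q)}$ is a constant multiple of $\LL^{n+1}$ on a ball, $\RR$ of it is controlled by the Newtonian potential of a uniform density, giving $\int_{\tfrac14B(Q)}|\RR(\eta|_{\tfrac14B(Q)})|^2\,d\eta\lesssim \Theta_\mu(2B_Q)^2\,\eta(\tfrac14B(Q))\lesssim \theta_0^{2/(n+1)}\Theta_\mu(B_0)^2\,\eta(\tfrac14B(Q))$ via Lemma \ref{lempoisson}. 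For the $\mu_0$-side, by Fubini and Hölder $\int_{\tfrac14B(Q)}|\RR(\mu_0|_Q)(x)|^2\,d\eta(x)$ is dominated by $\Theta_\mu(B_0)\,\mu_0(Q)^{-1}\big(\int\!\!\int_{\tfrac14B(Q)}\!\int_Q |x-y|^{-n}\,d\mu_0\,d\eta\big)\cdot(\ldots)$ — here I would instead directly invoke Lemma \ref{lemaux23}, which bounds $\int_{1.1B_Q\setminus Q}\int_Q|x-y|^{-n}\,d\mu_0(x)d\mu_0(y)\lesssim\theta_0^{1/(2(n+1)^2)}\Theta_\mu(B_0)\mu_0(Q)$; a parallel estimate with the outer integral against $\eta|_{\tfrac14B(Q)}$ in place of $\mu_0|_{1.1B_Q\setminus Q}$ follows by the same argument since $\eta|_{\tfrac14B(Q)}$ has density comparable to $\Theta_\mu(2B_Q)$ and is supported at distance $\gtrsim\kappa_0\ell(Q)$ from $Q$ (this is where the factor $\kappa_0^{-2n-2}$ enters: bounding the uniform density of $\eta$ on $\tfrac14B(Q)$, which is $\approx\mu_0(Q)/\rad(B(Q))^{n+1}$, against $\Theta_\mu$ of the small inner region costs a power of $\kappa_0^{-1}$). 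Squaring/Hölder then yields $\int_{\tfrac14B(Q)}|\RR(\mu_0|_Q)|^2\,d\eta\lesssim\kappa_0^{-2n-2}\theta_0^{1/(n+1)^2}\Theta_\mu(B_0)^2\eta(\tfrac14B(Q))$.

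\emph{Step 3: assembling.} Summing the pieces over $Q\in\sss_0$, $\int_{S_0}|\RR\eta-\RR\mu_0-(\text{piecewise constants})|^2\,d\eta\lesssim(\theta_0^{1/(n+1)}+\kappa_0^{-2n-2}\theta_0^{1/(n+1)^2})\Theta_\mu(B_0)^2\eta(S_0)$. Combining with Lemma \ref{lemfac93} (which gives $\int_{2B_0}|\RR\mu_0-m_{\mu_0,2B_0}(\RR\mu_0)|^2\,d\mu_0\lesssim(\ve+\ve_0+\kappa_0^{1/2})\Theta_\mu(B_0)^2\mu(B_0)$) via the elementary inequality $\int_{S_0}|g-m_{\eta,S_0}(g)|^2\,d\eta\leq\int_{S_0}|g-c|^2\,d\eta$ for any constant $c$, taking $g=\RR\eta$ and $c$ a suitable average of $\RR\mu_0$, and using $\eta(S_0)=\mu_0(\bigcup_{Q\in\sss_0}Q)\leq\mu(2B_0)\lesssim\mu(B_0)$ together with $\mu(B_0)\approx\eta(S_0)/(\ldots)$—more precisely using $\mu(R_0)\approx\mu(B_0)$ since $B_0$ is $\PP_\mu$-doubling—one obtains $\int_{S_0}|\RR\eta-m_{\eta,S_0}(\RR\eta)|^2\,d\eta\lesssim\ve'\,\Theta_\mu(B_0)^2\eta(S_0)$ with $\ve'=\ve+\ve_0+\kappa_0^{1/2}+\kappa_0^{-2n-2}\theta_0^{1/(n+1)^2}$, as claimed.

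\textbf{Main obstacle.} The delicate point is Step 2: controlling the cross/self term $\int_{\tfrac14B(Q)}|\RR(\mu_0|_Q)|^2\,d\eta$ without losing too much. One must exploit that $\mu_0$ is concentrated on the inner region $I_{\kappa_0}(Q)$ while $\eta|_{\tfrac14B(Q)}$ sits away from $\supp\mu\setminus Q$, so the kernel $|x-y|^{-n}$ is not singular at the relevant scale, and then feed this into the enhanced small-boundary estimate (Lemma \ref{lemDMimproved}/Lemma \ref{lemaux23}) to gain the power $\theta_0^{1/(n+1)^2}$. Tracking the exact power of $\kappa_0$ lost in replacing $\eta|_{\tfrac14B(Q)}$ by something comparable to $\mu$ on $I_{\kappa_0}(Q)$ — and making sure it is only $\kappa_0^{-2n-2}$ — is the bookkeeping one has to be careful with; everything else is standard Calderón–Zygmund kernel manipulation.
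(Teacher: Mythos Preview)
Your overall strategy --- compare $\RR\eta$ on $S_0$ to something controlled by Lemma~\ref{lemfac93}, and bound the discrepancy using Lemma~\ref{lempoisson} and kernel smoothness --- is the right one, but the specific decomposition you chose has two genuine gaps.

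\textbf{Gap 1: you cannot compare to $\RR\mu_0(x)$ directly.} Lemma~\ref{lemfac93} bounds $\int|\RR\mu_0-c|^2\,d\mu_0$, not $\int|\RR\mu_0-c|^2\,d\eta$, and the supports of $\eta|_{R_0}$ and $\mu_0|_{R_0}$ are different (one is the union of the $\tfrac14B(Q)$'s, the other the union of the $I_{\kappa_0}(Q)$'s). The paper's fix is to compare $\RR\eta(x)$, for $x\in\tfrac14B(Q)$, to the \emph{constant} $m_{\mu_0,Q}(\RR\mu_0)$ rather than to $\RR\mu_0(x)$. Then, because $\eta(\tfrac14B(Q))=\mu_0(Q)$, one has exactly
\[
\sum_{Q\in\sss_0}\bigl|m_{\mu_0,Q}(\RR\mu_0)-c_{R_0}\bigr|^2\eta(\tfrac14B(Q))
=\int_{R_0}|f-c_{R_0}|^2\,d\mu_0,
\]
with $f=\sum_Q m_{\mu_0,Q}(\RR\mu_0)\chi_Q$, and Lemma~\ref{lemfac93} applies. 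This is the bridge from $d\eta$ to $d\mu_0$ that your Step~3 needs but does not supply.

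\textbf{Gap 2: the ``self'' term $\RR(\mu_0|_Q)(x)$ is singular, and your separation claim is false.} You assert that $\eta|_{\tfrac14B(Q)}$ is supported at distance $\gtrsim\kappa_0\ell(Q)$ from $Q$; this is wrong --- $\tfrac14B(Q)$ is centered at $x_Q\in Q$, and in fact $x_Q\in I_{\kappa_0}(Q)$ for $\kappa_0$ small (since $\dist(x_Q,\supp\mu\setminus Q)\geq r(Q)\approx\ell(Q)/(56C_0)$). So the supports of $\eta|_{\tfrac14B(Q)}$ and $\mu_0|_Q$ overlap, and $\RR(\mu_0|_Q)(x)$ for $x\in\tfrac14B(Q)$ is a genuine singular integral with no smallness. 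The paper avoids this term entirely: using the antisymmetry of the Riesz kernel, $m_{\mu_0,Q}(\RR(\chi_Q\mu_0))=0$, so $m_{\mu_0,Q}(\RR\mu_0)=m_{\mu_0,Q}(\RR(\chi_{Q^c}\mu_0))$, and one only ever compares $\RR(\chi_{Q^c}\mu_0)(x)$ with its $\mu_0$-mean over $Q$ (this is the term $T_3$, handled via Lemma~\ref{lemaux23} for the part in $1.1B_Q\setminus Q$ and via $\PP_{\mu_0}(2B_Q)$ for the part outside $1.1B_Q$).

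\textbf{Where $\kappa_0^{-2n-2}$ actually comes from.} Contrary to your Step~2, the loss of $\kappa_0^{-(n+1)}$ (squared) arises in the \emph{far} term $T_2=\bigl|\RR(\chi_{(\tfrac14B(Q))^c}\eta)(x)-\RR(\chi_{Q^c}\mu_0)(x)\bigr|$, not in any self-interaction. For $x\in\tfrac14B(Q)$ and $y\in I_{\kappa_0}(R)$ with $R\neq Q$, one only has $|x-y|\gtrsim\kappa_0\,D(Q,R)$ (not $D(Q,R)$): when $\ell(Q)\ll\ell(R)$, the point $x$ may sit very close to $R$, and the only separation comes from $I_{\kappa_0}(R)$ being $\kappa_0\ell(R)$-far from $\supp\mu\setminus R\ni x_Q$. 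This gives $|K(x-y)-K(x-x_R)|\lesssim\kappa_0^{-n-1}\ell(R)/D(Q,R)^{n+1}$, and after summing over $R$ (by a duality argument against the Poisson-type sum $p_{\mu_0}$) and squaring, one picks up $\kappa_0^{-2n-2}\theta_0^{2/(n+1)}$, which is absorbed into the stated $\ve'$.
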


Notice that  $\RR_\eta$ is a bounded operator in $L^2(\eta|_{2B_0})$. Indeed, on $S_0$, $\eta$ is absolutely continuous with respect to the Lebesgue measure and has a bounded density, since the number of
cubes in $\sss_0$ is bounded. This implies that $\RR_\eta$ is bounded in $L^2(\eta|_{S_0})$ (although we do not have a good quantitative control on its norm). Also the fact that $\RR_{\mu_0}$ is bounded in $L^2(\mu_0|_{2B_0})$ implies that $\RR_\eta$ is bounded in $L^2(\eta|_{2B_0\setminus S_0})$. Consequently, $\RR_\eta$ is bounded in $L^2(\eta|_{2B_0})$ (see Proposition 2.25 from \cite{Tolsa-llibre}, for example).  
On the other hand, the fact that $\RR_\eta$ is a bounded operator $L^2(\eta|_{2B_0})$ implies that $\RR\eta$ is well defined $\eta$-a.e.\ in $S_0$ as a principal value in a BMO sense and that
\begin{equation}\label{eqeta739}
\RR_*(\chi_{2B_0}\eta)\in L^2(\eta|_{2B_0}).
\end{equation}

Although the proof of Lemma \ref{lem200} is quite similar to the one of \cite[Lemma 8.1]{Girela-Tolsa}, for completeness we will show the detailed arguments.

\begin{proof}[Proof of Lemma \ref{lem200}]
We consider the function
$$f = \chi_{\R^{n+1}\setminus R_0} \RR\mu_0+ \sum_{Q\in \sss_0} m_{\mu_0,Q}(\RR\mu_0)\,\chi_Q.$$
Recall that the cubes from $\sss_0$ are contained in $R_0\subset 2B_0$. It is clear that 
$$c_{R_0}:= m_{\mu_0,R_0}(\RR\mu_0) = m_{\mu_0,R_0}(f),$$
and also, by Lemma \ref{lemfac93},
\begin{equation}\label{eqsk592}
\int_{R_0}|f - c_{R_0}|^2\,d\mu_0\leq \int_{R_0}|\RR\mu_0 - m_{\mu_0,R_0}(\RR\mu_0)|^2\,d\mu_0
\lesssim (\ve+\ve_0+\kappa_0^{1/2})\,\Theta_\mu(B_0)^2\,\mu(B_0).
 \end{equation}
 
Next we will estimate $|\RR\eta(x) - c_{R_0}|$ for $x\in\supp(\eta)\cap R_0$. Recall that $S_0$ is defined in \rf{eqS0def}. For $x\in S_0$ and $Q\in\sss_0$ such that $x\in\tfrac14B  (Q)$, we write
\begin{align}\label{eqt123*}
\bigl|\RR\eta(x) - c_{R_0}\bigr| & \leq \bigl|\RR(\chi_{\tfrac14B  (Q)}\eta)(x)\bigr| 
+  \bigl|\RR(\chi_{\R^{n+1}\setminus \tfrac14B  (Q)}\eta)(x) - \RR(\chi_{\R^{n+1}\setminus Q}\mu_0)(x)\bigr|\\
&\quad + \bigl|\RR(\chi_{\R^{n+1}\setminus Q}\mu_0)(x)- m_{\mu_0,Q}(\RR\mu_0)\bigr|
+\bigl|m_{\mu_0,Q}(\RR\mu_0)- c_{R_0}\bigr|\nonumber\\
& =: T_1 + T_2 + T_3 + \bigl|m_{\mu_0,Q}(\RR\mu_0)- c_{R_0}\bigr|.\nonumber
\end{align}
Using that $$\eta|_{\tfrac14B  (Q)} = \mu_0(Q)\,\frac{\HH^{n+1}|_{\tfrac14B  (Q)}}{\HH^{n+1}\bigl(\tfrac14B  (Q)\bigr)},$$
together with Lemma \ref{lempoisson}, it follows easily that
$$T_1=\bigl|\RR(\chi_{\tfrac14B  (Q)}\eta)(x)\bigr|\lesssim \frac{\mu_0(Q)}{\rad(B(Q))^n}\lesssim \theta_0^{1/(n+1)}\Theta_\mu(B_0).$$

Next we will deal with the term $T_3$ in \rf{eqt123*}.
To this end, for $x\in\tfrac14B  (Q)$  we set
\begin{align}\label{eqsd539}
\bigl|\RR(&\chi_{\R^{n+1}\setminus Q}\mu_0)(x) - m_{\mu_0,Q}(\RR\mu_0)\bigr|\\
& \leq \bigl|\RR(\chi_{1.1B_Q\setminus Q}\mu_0)(x)\bigr| +
\bigl|\RR(\chi_{\R^{n+1}\setminus 1.1B_Q}\mu_0)(x) - m_{\mu_0,Q}(\RR(\chi_{\R^{n+1}\setminus 1.1B_Q}\mu_0))\bigr| \nonumber\\
&\quad+ \bigl|m_{\mu_0,Q}(\RR(\chi_{1.1B_Q\setminus Q}\mu_0))\bigr|,\nonumber
\end{align}
taking into account that $m_{\mu_0,Q}(\RR(\chi_Q\mu_0))=0$, by the antisymmetry of the Riesz kernel. 
The first term on the right hand side satisfies
$$\bigl|\RR(\chi_{1.1B_Q\setminus Q}\mu_0)(x)\bigr|\leq \int_{1.1B_Q\setminus Q}\frac1{|x-y|^n}\,d\mu_0(y)
\lesssim \frac{\mu_0(1.1B_Q)}{\rad(B(Q))^n}\lesssim \theta_0^{1/(n+1)}\Theta_\mu(B_0),$$
recalling that $x\in \frac14B(Q)$ and that $\Theta_\mu(1.1B_Q)\lesssim \theta_0^{1/(n+1)}\Theta_\mu(B_0)$ for the last estimate, by Lemma \ref{lempoisson}.

Now we turn our attention to the second term on the right hand side of \rf{eqsd539}. For $x'\in Q\in\sss_0$, 
we have
\begin{align*}
\bigl|\RR(\chi_{\R^{n+1}\setminus 1.1B_Q}\mu_0)(x) -\RR&(\chi_{\R^{n+1}\setminus 1.1B_Q}\mu_0)(x')\bigr|\\
&\leq \int_{\R^{n+1}\setminus 1.1B_Q} \bigl|K(x-y)-K(x'-y)\bigr|\,d\mu_0(y)
\lesssim \PP_{\mu_0}(2B_Q),
\end{align*}
where $K(\cdot)$ stands for the kernel of the Riesz transform.
Taking into account that the distance  both from $x$ and $x'$ to $(1.1B_Q)^c$ is larger than $c\,\rad(B_Q)$. 
Averaging on $x'\in Q$ with respect to $\mu_0$ we get
$$\bigl|\RR(\chi_{\R^{n+1}\setminus 1.1B_Q}\mu_0)(x) - m_{\mu_0,Q}(\RR(\chi_{\R^{n+1}\setminus 1.1B_Q}\mu_0))\bigr|
\lesssim \PP_{\mu_0}(2B_Q).$$

To estimate the last term in \rf{eqsd539} we just apply Lemma \ref{lemaux23}:
$$ \bigl|m_{\mu_0,Q}(\RR(\chi_{1.1B_Q\setminus Q}\mu_0))\bigr|\leq \frac1{\mu_0(Q)} \int_{1.1B_Q\setminus Q}\int_Q
\frac1{|x-y|^n}\,d\mu_0(x)\,d\mu_0(y)\lesssim \theta_0^{\frac1{2(n+1)^2}}\,\Theta_\mu(B_0).$$
Thus, we obtain
\begin{align}\label{eq1934}
T_3& =
\bigl|\RR(\chi_{\R^{n+1}\setminus Q}\mu_0)(x) - m_{\mu_0,Q}(\RR\mu_0)\bigr|\lesssim \theta_0^{\frac1{n+1}}\Theta_\mu(B_0) + 
 \PP_{\mu_0}(2B_Q) + \theta_0^{\frac1{2(n+1)^2}}\Theta_\mu(B_0)\\
 & \lesssim \theta_0^{\frac1{2(n+1)^2}}\Theta_\mu(B_0) + \PP_{\mu_0}(2B_Q).\nonumber
\end{align}

To deal with the term $T_2$ in \rf{eqt123*}, for $x\in\tfrac14B  (Q)$ we write
\begin{align}\label{eqt2***}
T_2 & = \bigl|\RR(\chi_{\R^{n+1}\setminus \tfrac14B  (Q)}\eta)(x) - \RR(\chi_{\R^{n+1}\setminus Q}\mu_0)(x)\bigr| \\
& \leq \sum_{R\in \sss_0:R\neq Q} \left|\int K(x-y)\,d(\eta_{\frac14B  (R)} - \mu_0|_R)\right|\nonumber\\
& \leq
\sum_{R\in \sss_0:R\neq Q} \int |K(x-y) - K(x-x_R)|\,d(\eta_{\frac14B  (R)} + \mu_0|_R),\nonumber
\end{align}
where $x_R$ is the center of $R$ and we used that $\eta(\tfrac14B  (R)) = \mu_0(R)$ for the last inequality.

We claim that, for $x\in \tfrac14B  (Q)$ and $y\in\tfrac14B  (R)\cup \supp(\mu_0|_R)$, with $Q$ and $R$ as in \rf{eqt2***},
\begin{equation}\label{eqdqr1}
|K(x-y) - K(x-x_R)|\lesssim \frac{\ell(R)}{\kappa_0^{n+1}\,D(Q,R)^{n+1}},
\end{equation}
where 
$$D(Q,R)=\ell(Q) + \ell(R) + \dist(Q,R).$$
To show \rf{eqdqr1} note first that
\begin{equation}\label{eq19050}
x\in \tfrac14B  (Q),\;\;x_R\in \tfrac14B  (R)\quad\Rightarrow \quad \quad|x-x_R|\gtrsim D(Q,R),
\end{equation}
since $\frac12B(Q)\cap \frac12B(R)=\varnothing$. Analogously, because of the same reason,
\begin{equation}\label{eq1905}
x\in \tfrac14B  (Q),\;\;y\in \tfrac14B  (R)\quad\Rightarrow \quad|x-y|\gtrsim D(Q,R).
\end{equation}
 Also,
\begin{equation}\label{eq1906}
x\in \frac14B(Q),\;\;y\in \supp(\mu_0|_R)\quad\Rightarrow \quad|x-y|\gtrsim \kappa_0 D(Q,R),
\end{equation}
To prove this, note that
\begin{equation}\label{eq1903}
y\in \supp(\mu_0|_R) = I_{\kappa_0}(R)\subset R,
\end{equation}
which implies that $y\not\in B(Q)$ and thus $|x-y|\geq \frac34 \rad(B(Q))\approx\ell(Q)$. In the case
$\rad(B(Q))\geq 2\kappa_0\ell(R)$
 this implies that
$$|x-y|\gtrsim \ell(Q) + \kappa_0\ell(R).$$
Otherwise, when
$\rad(B(Q))< 2\kappa_0\ell(R)$,
from \rf{eq1903}, the fact that $x_Q\in\supp\mu_0$ and $y\in I_{\kappa_0}(R)$, and the definition of $I_{\kappa_0}(R)$, we get
$$|x_Q-y|\geq \kappa_0\ell(R),$$
and then, as $|x_Q-x|\leq \frac14\rad(B(Q))\leq \frac12\kappa_0\ell(R)$, we infer that
$$|x-y|\geq |x_Q-y| - |x_Q-x|\geq \frac{\kappa_0}2\ell(R).$$
So in any case we have $|x-y|\gtrsim \kappa_0(\ell(Q) + \ell(R))$. It is easy to deduce
\rf{eq1906} from this estimate. We leave the details for the reader.

From \rf{eq19050}, \rf{eq1905}, and \rf{eq1906}, and the fact that $K(\cdot)$ is a standard
Calder\'on-Zygmund kernel, we get \rf{eqdqr1}.
Plugging this estimate into \rf{eqt2***}, we obtain
\begin{equation}\label{eqnos5800}
T_2\lesssim \frac1{\kappa_0^{n+1}}
\sum_{R\in \sss_0} \frac{\ell(R)\,\mu_0(R)}{D(Q,R)^{n+1}}.
\end{equation}

So from \rf{eqt123*} and the estimates for the terms $T_1$, $T_2$ and $T_3$, we infer that
for all $x\in\tfrac14B  (Q)$ with $Q\in\sss_0$,
\begin{equation}\label{eqnos58}
\bigl|\RR\eta(x)- c_{R_0}\bigr|  \lesssim  \bigl|m_{\mu_0,Q}(\RR\mu_0)- c_{R_0}\bigr| +
\theta_0^{\frac1{2(n+1)^2}} \,\Theta_\mu(B_0)+ \PP_{\mu_0}(2B_Q)+
\frac1{\kappa_0^{n+1}}\!\!
\sum_{R\in \sss_0}\!\! \frac{\ell(R)\,\mu_0(R)}{D(Q,R)^{n+1}}.
\end{equation}
Denote 
$$ \wt p _{\mu_0}(x) = \!\sum_{Q\in \sss_0}\!\chi_{\frac14B(Q)}\,  \PP_{\mu_0}(2B_Q) \quad\mbox{and}\quad
\wt g(x) =\!
\sum_{Q,R\in \sss_0} \frac{\ell(R)}{D(Q,R)^{n+1}} \,\mu_0(R)\,\chi_{\frac14B(Q)}(x).$$ 

Squaring and integrating \rf{eqnos58} with respect to $\eta$ on $S_0$,
we get
\begin{align}\label{eqff593}
\bigl\|\RR\eta- c_{R_0}\bigr\|_{L^2(\eta|_{S_0})}^2  & \lesssim  
\sum_{Q\in\sss_0} \bigl|m_{\mu_0,Q}(\RR\mu_0)- c_{R_0}\bigr|^2\,\eta(\tfrac14B(Q)) \\ &\quad +
\theta_0^{\frac1{(n+1)^2}}\,\Theta_\mu(B_0)^2\,\eta(S_0) + \|\wt p_{\mu_0}\|_{L^2(\eta|_{S_0})}^2+ \frac1{\kappa_0^{2n+2}}\|\wt g\|_{L^2(\eta|_{S_0})}^2.\nonumber
\end{align}
Note that, since $\eta(\tfrac14B(Q))=\mu_0(Q)$, the first sum on the right hand side of \rf{eqff593} 
equals $\|f- c_{R_0}\|_{L^2(\mu_0|_{R_0})}^2$, which does not exceed $(\ve+\ve_0+\kappa_0)\,\eta(R_0)$, by \rf{eqsk592}.
By an analogous argument we 
deduce that $\|\wt p_{\mu_0}\|_{L^2(\eta|_{S_0})}^2 = \| p_{\mu_0}\|_{L^2(\mu_0|_{R_0})}^2$ 
and $\|\wt g\|_{L^2(\eta|_{R_0})}^2 = \| g\|_{L^2(\mu_0|_{R_0})}^2$, where
$$
 p _{\mu_0}(x) = \sum_{Q\in \sss_0}\chi_Q\,  \PP_{\mu_0}(2B_Q) \quad\mbox{ and }\quad g(x) = \sum_{Q,R\in\sss_0}
\frac{\ell(R)}{D(Q,R)^{n+1}} \,\mu_0(R)\,\chi_Q(x).
$$

We will estimate $\|g\|_{L^2(\mu_0|_{R_0})}$ by duality: for any non-negative function $h\in L^2(\mu_0|_{R_0})$,
we set
\begin{align}\label{eqgh57}
\int g\,h\,d\mu_0 &= 
\sum_{Q\in\sss_0}
\sum_{R\in \sss_0} \frac{\ell(R)}{D(Q,R)^{n+1}} \,\mu_0(R)\,\int_Q h\,d\mu_0\\
& = \sum_{R\in \sss_0} \mu_0(R) \sum_{Q\in\sss_0}
 \frac{\ell(R)}{D(Q,R)^{n+1}} \,\int_Q h\,d\mu_0.\nonumber
 \end{align}
 For each $R\in \sss_0$ and $z\in R$ we have 
\begin{align*}
\sum_{Q\in\sss_0} \frac{\ell(R)}{D(Q,R)^{n+1}} \,\int_Q h\,d\mu_0 &\lesssim
\int \frac{\ell(R)\,h(y)}{\bigl(\ell(R)+|z-y|\bigr)^{n+1}}\,d\mu_0(y)\\
& = \int_{|z-y|\leq \ell(R)}\cdots \,\,\,+ \sum_{j\geq1}\int_{2^{j-1}\ell(R)<|z-y|\leq 2^{j-1}\ell(R)}\cdots \\
& \lesssim 
\sum_{j\geq0}\; \avint_{B(z,2^{j}\ell(R))}h\,d\mu_0 \;\,\frac{2^{-j}\,\mu_0(B(z,2^j\ell(R)))}{\bigl(2^j\ell(R)
\bigr)^n} \\
&\lesssim M_{\mu_0}h(z)\,\PP_{\mu_0}\bigl(B(z,\ell(R))\bigr),
\end{align*}
where $M_{\mu_0}$ stands for the centered maximal Hardy-Littlewood operator with respect to $\mu_0$. Then, by \rf{eqgh57},
\begin{align*}
\int g\,h\,d\mu_0 &\lesssim
 \sum_{R\in \sss_0} \inf_{z\in R} \big[M_{\mu_0}h(z)\,\PP_{\mu_0}\bigl(B(z,\ell(R))\bigr)\bigl]\,
 \mu_0(R) \lesssim \int_{R_0}M_{\mu_0}h\,\,p_{\mu_0} \,d\mu_0
 \\ 
 & \lesssim \|M_{\mu_0}h\|_{L^2(\mu_0)}\,\| p_{\mu_0} \|_{L^2(\mu_0|_{R_0})}
 \lesssim \|h\|_{L^2(\mu_0)}\,\| p_{\mu_0} \|_{L^2(\mu_0|_{R_0})}
 .
 \end{align*}
Thus, defining $p_\mu$ in the same way as $p_{\mu_0}$, with $\mu_0$ replaced by $\mu$, by Lemma \ref{lempoisson},
\begin{equation}\label{eqgg991}
\|\wt g\|_{L^2(\eta|_{R_0})}^2 =
\|g\|_{L^2(\mu_0|_{R_0})}^2\lesssim\|p_{\mu_0} \|_{L^2(\mu_0|_{R_0})}^2 \leq \|p_{\mu} \|_{L^2(\mu|_{R_0})}^2
\lesssim
\theta_0^{2/(n+1)}\Theta_\mu(B_0)^2\mu(R_0).
\end{equation}
Plugging this estimate into \rf{eqff593}, using \rf{eqsk592}, and recalling that $\|\wt g\|_{L^2(\eta|_{R_0})}
=\|g\|_{L^2(\mu_0|_{R_0})}$, we obtain
\begin{align}\label{eqshbm410}
\bigl\|\RR\eta - c_{R_0}\bigr\|_{L^2(\eta|_{S_0})}^2 & \lesssim 
\biggl(\ve+\ve_0+\kappa_0^{1/2} +\theta_0^{\frac1{(n+1)^2}}+ \frac{1}{\kappa_0^{2n+2}}\,\theta_0^{\frac2{n+1}}\biggr)\Theta_\mu(B_0)^2
\,\eta(R_0)\\
& \lesssim \biggl(\ve+\ve_0+\kappa_0^{1/2} + \frac{1}{\kappa_0^{2n+2}}\,\theta_0^{\frac1{(n+1)^2}}\biggr)
\,\Theta_\mu(B_0)^2\,\eta(R_0),\nonumber
\end{align}
\vv
which implies the lemma.
\end{proof}
\vv

\begin{proof}[Proof of the Key Lemma \ref{keylemma}]
Recall that we are assuming \rf{eqkeylemmacontra} and we want to reach a contradiction.
 We intend to apply Theorem \ref{teomain} to the measure $\eta$, with the ball $B_0$ replaced by $1.1B_0$ and $2B_0$ by $\frac32 B_0$
 (using Remark \ref{remtonto}).
To check that the assumptions of Theorem \ref{teomain} hold, remark first that 
 $1.1B_0$ is  $(\PP_\eta,C\,C_0)$, for $\ve_0$ and $\kappa_0$ small enough in 
\rf{eqfac99}.

Recall that $\ell(P)\lesssim \theta_0^{1/(n+1)}\rad(B_0)$ for the cubes 
$P\in\sss_0$.
From the assumptions (a) and (b) in the theorem and the construction of $\eta$,  with $\ve_0,\theta_0,\kappa_0$ small enough, and the fact that 
$$\|\mu-\mu_0\| \leq (2\ve_0 + \kappa_0^{1/2})\,\mu(R_0)\approx (2\ve_0 + \kappa_0^{1/2})\,\eta(1.1B_0),$$
we deduce that
$$\sup_{0<r\leq 4\rad(B_0)} \Theta_\eta(x,r) \lesssim C_1\Theta_\eta(B_0) \quad\mbox{ for all $x\in 2B_0$},$$
and that there exists a ball $B_1'$ centered in $1.1B_0$ 
 such that, for some positive constants $\alpha$, $\delta_0$, $\delta_1$,
$$\Theta_\eta(B_1')\gtrsim\alpha\,\Theta_\eta(1.1B_0),$$
with
$$\delta_0\,\rad(B_0)\leq \rad(B_1')\leq 2\delta_1\,\rad(B_0).$$

By Lemma \ref{lem200}, using the fact that
 $\frac32 B_0\cap\supp(\eta)\subset S_0$ and that $\eta(S_0)\approx\eta(1.1B_0)$, since $\eta(S_0)=\mu(R_0)\approx\mu(B_0)$, we have
\begin{equation}\label{eqetaM9f}
\int_{\frac32B_0} |\RR\eta - m_{\eta,\frac32B_0}(\RR\eta)|^2\,d\eta
\lesssim \int_{S_0} |\RR\eta - m_{\eta,S_0}(\RR\eta)|^2\,d\eta 
\lesssim \ve'\,\Theta_\eta(1.1B_0)^2\eta(S_0),
 \end{equation}
 with $\ve'$ as in Lemma \ref{lem200}.

Now we can apply Theorem \ref{teomain} to the measure $\eta$, and since $\theta_\eta^{n,*}(x)=0$ in $\frac32B_0$, we deduce that
\begin{equation}\label{eqfhvbn3p}
c_1 \Theta_{\eta}(1.1B_0)^2\,\eta(1.1B_0) \leq \int_{\frac32B_0} |\RR\eta - m_{\eta,\frac32B_0}(\RR\eta)|^2\,d\eta.
\end{equation}
If $\ve,\ve_0,\theta_0,\kappa_0$ are small enough (and so $\ve'$ small enough), using that $\eta(\tfrac32B_0)\approx\eta(1.1B_0)$, we get a contradiction with \rf{eqetaM9f}.
\end{proof}

\vv

\subsection{Proof of Theorem \ref{teomain2}}
With the Key Lemma \ref{keylemma} in hand, the proof of Theorem \ref{teomain2} follows by the same arguments as in Section 10 from \cite{Girela-Tolsa}: one constructs a uniformly $n$-rectifiable measure $\zeta$ supported in $CB_0$, with $C\approx1$, which coincides with $\mu$ in a non-negligible piece of $R_0$. More precisely, let
\begin{equation}\label{eqff09}
F = R_0\setminus \bigcup_{Q\in \LD} Q,
\end{equation}
where $R_0$ is the set  is defined
in \rf{eqR0910}.
Then one constructs $\zeta$ as in Section 10 from \cite{Girela-Tolsa}, with the cube $Q_0$ there replaced by $R_0$. Arguing as in Lemmas 10.2 and 10.3 from \cite{Girela-Tolsa}, one proves first that $\zeta$ is  
 $n$-Ahlfors regular, and by comparing it with $\mu$, that $\RR_\zeta$ is bounded in $L^2(\zeta)$, with bounds depending on $\theta_0$ and $\ve_0$. By \cite{NToV}, this implies that $\zeta$ is uniformly $n$-rectifiable.
 For more details, see Section 10 from \cite{Girela-Tolsa}. 
\qed


\subsection{A variant of Theorem \ref{teomain2} which does not rely on Theorem \ref{teoDTlocal00}}\label{secfinal}

In this section we show a weaker variant of Theorem \ref{teomain2} which suffices for some applications to harmonic measure. It has the advantage that its proof is based on Main Lemma \ref{lemaclau}, instead of Theorem \ref{teoDTlocal00}, which in turn relies on the lengthy arguments from \cite{DT}. Remark that the arguments for Main Lemma \ref{lemaclau} require neither Theorem \ref{teoDTlocal00} not the results from
\cite{DT}.

\begin{theorem}\label{teomain2*}
Let $\mu$ be a Radon measure in $\R^{n+1}$ and let $B_0$ be a $(\PP_\mu,C_0)$-doubling ball. Suppose that 
the following conditions hold:
\begin{itemize}
\item[(a)] $\RR_{\mu|_{2B_0}}$ is bounded in $L^2(\mu|_{2B_0})$ with
$$\|\RR_\mu\|_{L^2(\mu|_{2B_0})\to L^2(\mu|_{2B_0})}\leq C_1\Theta_\mu(B_0)$$
and
\begin{equation}\label{eqgrothry63}
\sup_{0<r\leq 4\rad(B_0)} \Theta_\mu(x,r) \leq C_1\Theta_\mu(B_0) \quad\mbox{ for all $x\in 2B_0$}.
\end{equation}
\item[(b')] There exists some ball $B_1$ centered in $B_0$ such that, for some $\delta_1\in (0,1/2)$,
$$\rad(B_1)\leq \delta_1\rad(B_0)$$
and for some  constant $\alpha>0$,
$$\Theta_\mu(\lambda B_1)\geq \alpha\,\Theta_\mu(B_0) \quad\mbox{ for $\lambda\geq 1$ such that $\lambda B_1\subset 2B_0$}.
$$

\item[(c)] For some $\ve>0$,
$$\int_{2B_0} |\RR\mu - m_{\mu,2B_0}(\RR\mu)|^2\,d\mu\leq \ve\,\Theta_\mu(B_0)^2\,\mu(B_0).$$ 
\end{itemize}
Suppose that $\delta_1$ and $\ve$ are small enough, depending on $n$, $C_0$, $C_1$, and $\alpha$. Then
there is some uniformly $n$-rectifiable set $\Gamma$ and some $\tau>0$ such that
$$\mu(\Gamma\cap 2B_0) \geq\tau\,\mu(B_0),$$
with $\tau$ and the uniform rectifiability  constant of $\Gamma$ depending on the above constants.
\end{theorem}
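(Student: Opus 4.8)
The plan is to reduce Theorem \ref{teomain2*} to the already established Theorem \ref{teomain2} by checking that its weaker hypotheses still yield all the assumptions needed there, while noting that the only place where Theorem \ref{teomain2} used Theorem \ref{teoDTlocal00} was to upgrade $\PP_\mu$-doubling balls in the family $\{2^jB_1\}$ to $(\mu,C_2)$-good balls. Since assumption (b') here gives a lower density bound on \emph{every} dilate $\lambda B_1\subset 2B_0$ (rather than just on a single small ball $B_1$), I can bypass that step: using the $L^2$ growth bound \rf{eqgrothry63} directly, each ball $2^jB_1$ with $2^jB_1\subset 2B_0$ automatically satisfies
$$\int_{2^jB_1} M_n(\chi_{2^jB_1}\mu)\,d\mu \leq \Big(\int_{2^jB_1}|M_n(\chi_{2^jB_1}\mu)|^2\,d\mu\Big)^{1/2}\mu(2^jB_1)^{1/2}\lesssim C_1\,\Theta_\mu(B_0)\,\mu(2^jB_1)\lesssim \alpha^{-1} C_1\,\Theta_\mu(2^jB_1)\,\mu(2^jB_1),$$
so \emph{all} of the roughly $N=\lfloor\log_2(\rad(B_0)/\rad(B_1))\rfloor$ balls $2^jB_1$ are $(\mu,C_2)$-good with $C_2$ depending only on $n$, $C_1$, $\alpha$. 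In particular we may take $\eta=1$ and $\delta_0$ of order $1/2$ in the role these parameters play below.

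With this observation, I would run exactly the argument proving the ``partial result'' subsection followed by the Key Lemma machinery. First, split according to the size of $F=\{x\in 2B_0:\theta_\mu^{n,*}(x)>0\}$. If $\mu(F)$ is a small proportion of $\mu(B_0)$, then since assumption (c) holds, Corollary \ref{corotxulo} (whose hypotheses are met: $B_0$ is $(\PP_\mu,C_0)$-doubling, and (b') supplies a ball $B_1$ with $\Theta_\mu(B_1)\geq\alpha\,\Theta_\mu(B_0)$ and $\delta_0\rad(B_0)\leq\rad(B_1)\leq\delta_1\rad(B_0)$ — here $\delta_0$ can be taken as small as we like since $\rad(B_1)$ itself is allowed to be small) gives
$$\int_{B_0}\int_0^{2\rad(B_0)}\beta_{2,\mu}^n(x,r)^2\,\Theta_\mu(x,r)\,\frac{dr}{r}\,d\mu(x)\lesssim(\ve+\gamma)\,\Theta_\mu(B_0)^2\,\mu(B_0),$$
and Theorem \ref{teoAT} produces the desired uniformly $n$-rectifiable $\Gamma$. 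If instead $\mu(F)$ is a large proportion of $\mu(B_0)$, Theorem \ref{teo1*} applied to $\mu|_F$ gives $n$-rectifiability of $F$, but this only yields rectifiability, not uniform rectifiability — so in that branch I must instead invoke the Key Lemma argument. The point is that actually the full strength of Theorem \ref{teomain2} is obtained by running the Key Lemma and the final construction of the uniformly rectifiable measure $\zeta$ exactly as in its proof, the only modification being that the good-ball input is now free from (b') and the $L^2$ growth condition as explained above. Since Main Lemma \ref{lemaclau}, Lemma \ref{lemmapprox}, Lemma \ref{lem200} and the construction of $\eta$ and $\zeta$ require no input from \cite{DT} or from Theorem \ref{teoDTlocal00} — they only use Proposition \ref{mainpropo} via Theorem \ref{teomain}, Lemma \ref{lemCDC}, and the covering/approximation lemmas — this gives a self-contained route.

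Concretely, the steps I would carry out are: (1) observe that (b') combined with \rf{eqgrothry63} forces all dilates $2^jB_1\subset 2B_0$ to be $(\mu,C_2)$-good, so Proposition \ref{mainpropo} applies directly to $B_0$ (with $\eta=1$), giving the density lower bound $\int_{2B_0}|\RR\mu-m_{\mu,2B_0}(\RR\mu)|^2\,d\mu+\int_{2B_0}\theta_\mu^{n,*}(x)^2\,d\mu\gtrsim\Theta_\mu(B_0)^2\mu(B_0)$ under the contrary-to-(c) hypothesis; (2) set up the David–Mattila lattice, the family $\LD$ of low-density cubes, the stopping family $\sss_0$, and the approximating measures $\mu_0$ and $\eta$ exactly as in the Key Lemma subsection — noting that $\theta_\eta^{n,*}\equiv 0$ in $\frac32B_0$ since $\eta$ is a finite sum of pieces absolutely continuous with respect to $\HH^{n+1}$ there; (3) verify via Lemma \ref{lem200} and Lemma \ref{lemfac93} that $\int_{\frac32B_0}|\RR\eta-m_{\eta,\frac32B_0}(\RR\eta)|^2\,d\eta\lesssim\ve'\Theta_\eta(1.1B_0)^2\eta(S_0)$ with $\ve'$ small; (4) apply Theorem \ref{teomain} (via Remark \ref{remtonto} with $\lambda=\frac43$) to $\eta$ with $B_0$ replaced by $1.1B_0$ — whose hypotheses hold because $1.1B_0$ is $(\PP_\eta,CC_0)$-doubling, $\eta$ still satisfies the density bound, and (b') transfers to a ball $B_1'$ centered in $1.1B_0$ — to get $\int_{\frac32B_0}|\RR\eta-m_{\eta,\frac32B_0}(\RR\eta)|^2\,d\eta\gtrsim\Theta_\eta(1.1B_0)^2\eta(1.1B_0)$, contradicting step (3) when $\ve,\ve_0,\theta_0,\kappa_0$ are small; this proves the analogue of Key Lemma \ref{keylemma}; (5) with that in hand, set $F=R_0\setminus\bigcup_{Q\in\LD}Q$ and construct the uniformly $n$-rectifiable measure $\zeta$ supported near $B_0$ coinciding with $\mu$ on $F$ exactly as in Section 10 of \cite{Girela-Tolsa}, using \cite{NToV} to conclude uniform rectifiability of $\supp\zeta$, and take $\Gamma=\supp\zeta$ with $\tau=\mu(F)/\mu(B_0)\gtrsim\ve_0$. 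The main obstacle I anticipate is \textbf{step (4)}: one must be careful that replacing $B_0$ by $1.1B_0$ and $2B_0$ by $\frac32B_0$ throughout is compatible with the statement of Theorem \ref{teomain} (hence the need for Remark \ref{remtonto}), and that the transfer of the $\PP$-doubling and lower-density conditions from $\mu$ to $\eta$ survives the total-variation error $\|\mu-\mu_0\|\lesssim(\ve_0+\kappa_0^{1/2})\mu(R_0)$ — this forces a specific order of choosing the small parameters ($\delta_1$ first depending on $n,C_0,C_1,\alpha$; then $\theta_0$; then $\kappa_0$; then $\ve_0$; then $\ve$), which must be tracked consistently with the dependencies asserted in the other lemmas.
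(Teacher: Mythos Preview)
Your core observation is right and is exactly the heart of the matter: assumption (b') together with the growth bound \rf{eqgrothry63} forces \emph{every} dilate $\lambda B_1\subset 2B_0$ to be $(\mu,C\alpha^{-1}C_1)$-good, so the step in the proof of Theorem \ref{teomain} that invoked Theorem \ref{teoDTlocal00} (upgrading $\PP_\mu$-doubling balls to good balls via Lemma \ref{lembonescales}) becomes unnecessary. Your plan would correctly prove the \emph{statement} of Theorem \ref{teomain2*}, but it fails to achieve its stated purpose --- a proof independent of Theorem \ref{teoDTlocal00} and the lengthy \cite{DT} machinery --- because in step (4) you apply Theorem \ref{teomain} to $\eta$ as a black box. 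That theorem depends on \cite{DT} in \emph{two} places, not one: once to produce good balls (which you note can be bypassed), and again through Proposition \ref{mainpropo}, whose proof uses Remark \ref{rem2} to bound $\int_{2B_0\setminus R_0}\int_{R_0}|x-y|^{-n}\,d\wt\sigma_k\,d\wt\sigma_k$. Remark \ref{rem2} is extracted from the proof of Theorem \ref{teoDTlocal00} and relies on Main Proposition 3.7 of \cite{DT}. So quoting Theorem \ref{teomain} (or even Proposition \ref{mainpropo}) does not give a \cite{DT}-free route; the same objection applies to your detour through Corollary \ref{corotxulo}.

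The paper's proof sidesteps this by not going through Theorem \ref{teomain} at all: it repeats the proof of Theorem \ref{teomain2} verbatim, except that at the step \rf{eqfhvbn3p} in Key Lemma \ref{keylemma} it applies Main Lemma \ref{lemaclau} \emph{directly} to $\eta$ (with $1.1B_0$ in place of $B_0$). This is possible because (i) $\eta$ already has the form $h\,\LL^{n+1}$ on $S_0$, so no approximation by $\wt\sigma_k$ is needed; (ii) the good-balls hypothesis of Main Lemma \ref{lemaclau} is supplied by (b') transferred to $\eta$ plus the growth bound --- the computation you did in step (1), but carried out for $\eta$, yielding all dilates $(\eta,C\alpha^{-1})$-good; and (iii) the remaining hypothesis \rf{eqvar110} of Main Lemma \ref{lemaclau} is verified not via Remark \ref{rem2} but via the elementary Lemma \ref{lemDMimproved} combined with the uniform growth bound \rf{eqgrothry63}, which forces $\Theta_\eta(2B_P)\lesssim C_1\Theta_\mu(B_0)$ for all $P$ and collapses the sum in Lemma \ref{lemDMimproved} to $\lesssim C_1^2\Theta_\mu(B_0)^2\mu(2Q)$. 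Point (iii) is the ingredient you are missing; it is precisely what allows one to avoid Remark \ref{rem2} and hence \cite{DT}. With $\delta_0=\delta_1$ in Main Lemma \ref{lemaclau}, all constants then depend only on $n,C_0,C_1,\alpha$, as the theorem asserts.
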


The difference between this theorem and Theorem \ref{teomain2} is that in the condition (b) in Theorem \ref{teomain2} we only require $\Theta_\mu(B_1)\geq \alpha\,\Theta_\mu(B_0)$ while in the condition (b') above we ask $\Theta_\mu(B)\geq \alpha\,\Theta_\mu(B_0)$ for all the balls $B$ concentric
with $B_1$ that contain $B_1$ and are contained in $2B_0$. It is not difficult to check that if, for example, we assume that $\beta_{1,\mu}(B_0)\ll \delta_1\Theta_\mu(B_0),$ then this assumption holds.

\begin{proof}[Proof of Theorem \ref{teomain2*}]
The only difference with the proof of Theorem \ref{teomain2} is that to get \rf{eqfhvbn3p} in Key Lemma \ref{keylemma},
instead of Theorem \ref{teomain}
we apply Main Lemma \ref{lemaclau} to the measure $\eta$,  with $B_0$ replaced by $1.1B_0$, with $R_0$ as in \rf{eqR0910}.

In this situation, the assumptions of Main Lemma \ref{lemaclau} ask that $\eta = h\,\LL^{n+1}$ in $2B_0$ but one can check that in fact it suffices to 
assume this in $S_0$. The condition \rf{eqvar110} in the Main Lemma is an immediate consequence of Lemma \ref{lemDMimproved} and the
growth assumption \rf{eqgrothry63}.
On the other hand, from the condition (b')
in Theorem \ref{teomain2*}, we easily deduce that for any ball $B$ concentric
with $B_1$ containing $2B_1$ and contained in $\frac32B_0$, it holds
$ \Theta_\eta(B)\gtrsim \alpha^{-1}\Theta_\mu(B_0).$
We leave the details for the reader. Then, using again \rf{eqgrothry63} we infer that
$$\int_{B} M_n(\chi_B \eta)\,d\eta \leq C \,\Theta_\mu(B_0)\eta(B)\leq C\alpha^{-1} \Theta_\eta(B)\eta(B),$$
so that $B$ is $(\eta,C\alpha^{-1})$-good. So all the assumptions in Main Lemma \ref{lemaclau} hold, and if $\delta_1$ is small enough (taking $\delta_0=\delta_1$) we get 
$$\int_{S_0} |\RR\eta - m_{\eta,S_0}(\RR\eta)|^2\,d\eta \geq c_1 \Theta_\eta(1.1B_0)^2\,\eta(1.1B_0),$$
which contradicts \rf{eqetaM9f}. 
\end{proof}

\vv

\end{document}